\newtheorem{theorem}{Théorème}
\newtheorem{proposition}{Proposition}
\newtheorem{definition}{Définition}
\newtheorem{remarque}{Remarque}
\newtheorem{lemme}{Lemme}
\newtheorem{propriete}{Propriété}
\newtheorem{corollaire}{Corollaire}
\newtheorem{exemple}{Exemple}
\newtheorem{conclusion}{Conclusion}
\newtheorem{notation}{Notation}
\begin{document}
\begin{titlepage}
\begin{center}
\begin{tabular}{lcl}
\multirow{3}{*}{\includegraphics[width=0.8in]{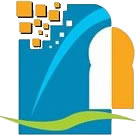}} \hspace*{0,8cm} & \textbf{UNIVERSITE SULTAN MOULAY SLIMANE } &
\multirow{3}{*}{\hspace*{0,3cm} \includegraphics[width=0.8in]{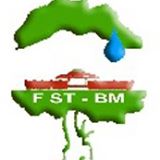}}\\ \hspace*{0,5cm} & \textbf{FACULTE DES SCIENCES ET TECHNIQUES}\\ & \textbf{BENI-MELLAL}
\end{tabular}
\end{center}

\vspace{1cm}

\begin{center}\Huge{\textbf{Rapport de projet de fin d'études}}\end{center}
\vspace{0.3cm}

\begin{center} \textit{Spécialité : Master Génie Mathématique et Applications }\\
\textit{Présenté à la Faculté des sciences et techniques de Béni Mellal}\\
\textit{Université Sultan Moulay Slimane}
\end{center}

\vspace{0,3cm}
\begin{center}\rule{14.6cm}{.12pt}\end{center}
\begin{center}\huge{\textbf{{\color{blue}{La théorié floue  et la dérivation fractionnaire : application aux équations différentielles hybrides}}}}\end{center}
\begin{center}
\rule{14.6cm}{.12pt}
\end{center}
\vspace{0,3cm}
\hspace*{1cm} \textit{Réalisé par:} \hspace*{8cm} \textit{Encadré par:  }\\
\hspace*{1cm} \textbf{- AZIZ EL-GHAZOUANI \hspace*{4.7 cm}- Pr : MHAMED EL-OMARI}\\
\hspace*{11.1 cm} \textbf{- Pr : SAID MELLIANI }\\
\hspace*{11cm}\textbf{- Pr : LALLA SAADIA CHADLI }\\

\begin{center}\textit{Soutenu le 16 juillet 2021 devant la commission d'examen composée de :} \end{center}

\vspace{0,3cm}

\begin{center}\begin{tabular}{llll}
Encadrant \hspace*{0.5cm}: & Pr :  & SAID MELLIANI & FST-Beni Mellal. \\
Encadrant: & Pr :  & LALLA SAADIA CHADLI & FST-Beni Mellal.\\
Encadrant \hspace*{0.3cm}: & Pr :  & MHAMED EL-OMARI  & FP-Beni Mellal.\\
Président: & Pr :  & KHALID HILAL & FST-Beni Mellal.\\
Examinateur: & Pr :  & ADIL ABBASSI & FST-Beni Mellal. \\
Examinateur: & Pr :  & CHAKIR ALLALOU & FST-Beni Mellal.
\end{tabular}\end{center}

\end{titlepage}

\begin{titlepage}
\tableofcontents
\end{titlepage}

\begin{titlepage}
\setcounter{page}{3}
\listoffigures
\addcontentsline{toc}{chapter}{La liste des figures}
\end{titlepage}

\begin{titlepage}
\setcounter{page}{4}
\listoftables
\addcontentsline{toc}{chapter}{La liste des tableaux}
\end{titlepage}
\setcounter{page}{5}
\chapter*{\begin{center} Dédicace\end{center}}
\addcontentsline{toc}{chapter}{Dédicace}
\begin{center}
\begin{Large}
Je dédie ce modeste travail à ceux qui m'ont encourage et soutenu moralement et
matériellement pendant les moments les plus difficiles et durant toute ma vie, et qui
me sont les plus chères sur cette planète :
\end{Large}
\end{center}
\begin{center}
{\large mon père et mon mère.}
\end{center}
\begin{center}
{\large A tous mes amies}
\end{center}
\begin{center}
{\large A tous ceux que j'aime}
\end{center}
\begin{center}
{\large A tous les étudiants de ma promotion}
\end{center}
\begin{center}
{\large Avec l'expression de tous mes sentiments de respect, je dédie ce mémoire.}
\end{center}
\vspace*{6 cm}
\hspace*{13 cm}\textbf{\textit{AZIZ EL-GHAZOUANI}}
\chapter*{\begin{center} Remerciements\end{center}}
\addcontentsline{toc}{chapter}{Remerciements}
Je tiens à témoigner ma reconnaissance à DIEU tout puissant, de m’avoir donner
le courage et la force de mener à terme ce projet. Qui m’a ouvert les portes du savoir.

Je tiens à exprimer ma profonde gratitude et sincères remerciements à mon encadrent \textbf{M. EL-OMARI} pour l’honneur qu’il m’a fait en assurant la direction et le suivi scientifique et technique, pour sa grande contribution à l’aboutissement de ce présent mémoire. Je vous remercie pour votre précieuse présence assistance, votre disponibilité et l’intérêt que vous avez manifésté pour ce modeste travail. Je vous remercie pour vos orientations et votre enthousiasme envers mon travail. Les judicieux conseils et rigueur que vous m’avez prodigué tout au long de ces années de travail m’ont permis de progresser dans mes études. Je vous remercie d’avoir cru en mes capacités et m’avoir fourni d’excelentes conditions me permettant d’aboutir à la production de ce mémoire qui n’aurait vu le jour sans votre confiance et votre générosité. Je vous remercie très chaleureusement de m’avoir continuellement encouragée, pour votre soutien scientifique et humain, pour votre gentillesse et votre hospitalité.
Je voudrais vous remercier très vivement de m’avoir fait découvrir le monde de la
recherche. Je vous suis très reconnaissant pour la confiance que vous m’avez témoigné
tout au long de la réalisation de ce mémoire. Merci pour la vivacité et la force de volonté
que vous avez su me transmettre. vous avez su me secouer aux moments ou j’en ai
vraiment besoin.

Je remercie vivement monsieur \textbf{S. MELLIANI} pour l’honneur qu’il me fait en assurant la direction et le suivi scientifique et technique de ce mémoire, Je le remercie chaleureusement pour sa participation à ma formation et de m’avoir tracer le chemin. Je le remercie
vivement autant que directeur du Laboratoire de Mathématiques Appliqué et calcul scientifiques (LMACS), pour m’avoir accuilli et mis à ma disposition tous les moyens nécessaires
pour mener ce mémoire à son terme.

Mes remerciement s’adressent aussi au Pr \textbf{L. S. CHADLI} pour l’honneur qu’il me fait en assurant la direction et le suivi scientifique et technique de ce mémoire et pour sa contribution à ma formation.

Je tiens également à exprimer mes vifs remerciements au Pr \textbf{A. ABBASSI},
pour sa contribution à ma formation et en acceptant de juger mon travail. 

Mes remerciements s’addressent aussi à monsieur \textbf{K. HILAL} pour l’honneur qu’elle me fait
en acceptant de juger et d’évaluer mon travail.

Je tiens également à exprimer mes vifs remerciements au Pr \textbf{C. ALLALOU},
pour sa contribution à ma formation et en acceptant de juger mon mémoire.

Enfin je m’incline respectueusement devant les deux êtres à qui je dois l’existence,
mon père et ma mère. Je leur exprime mes hauts et profonds signes de reconnaissances
et d’obéissance pour tous les efforts qu’ils ont fournis et tous les sacrifices qu’ils ont
généreusement faits, pour que je grandisse dans de parfaites conditions d’amour, de
satisfaction et d’épanouissement. je vous remercie mes chers parents qui sans vous je
n’arriverai pas là ou je suis actuellement, vous qui m’avez toujours guidés mes pas,
c’est donc à vous que je dédie ce fruit de mon travail.

\newpage
\hspace*{8.5cm} {\small “Les mathématiques sont une gymnastique }

\hspace*{8.5cm} {\small de l'esprit et une préparation à la philosophie.”}

\hspace*{13cm} {\small \textbf{Eric Temple Bell.}}

\hspace*{9cm} {\small “La musique est une mathématique sonore,}

\hspace*{9cm} {\small la mathématique est une musique silencieuse.”}

\hspace*{13cm} {\small  \textbf{Edouard HERRIOT.}}
\chapter*{\begin{center} Introduction \end{center}}
\addcontentsline{toc}{chapter}{Introduction}
\textbf{La théorie de dérivation fractionnaire} est un sujet presque aussi ancien que
le calcul classique tel que nous le connaissons aujourd'hui, ces origines remontent 
à la fin du $17^{\text {ème }}$ siècle, l'époque où Newton et Leibniz ont développé les fondements de calcul différentiel et intégral. En particulier, Leibniz a présenté le symbole $\frac{d^{n} f}{d t^{n}}$ pour désigner la $n^{\text {ème }}$ dérivée d'une fonction $f$. Quand il a annoncé dans une lettre à l'Hôpital
(apparemment avec l'hypothèse implicite que $n \in \mathbb{N}$ ), l'Hôpital a répondu :
\begin{figure}[h!]
\centering
\includegraphics[scale=0.9]{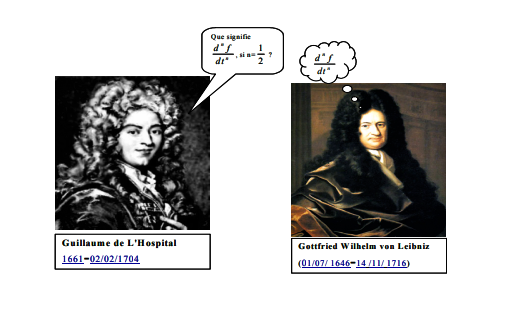}
\end{figure}

Que signifie $\frac{d^{n} f}{d t^{n}}$ si $n=\frac{1}{2} ?$

Cette lettre de l'Hôpital, écrite en 1695 , est aujourd'hui admise comme le premier
incident de ce que nous appelons la dérivation fractionnaire, et le fait que l'Hôpital a demandé spécifiquement pour $n=\frac{1}{2}$, c'est-à-dire une fraction (nombre rationnel) a en
fait donné lieu au nom de cette partie des mathématiques.

Une liste de mathématiciens qui ont fourni des contributions importantes au caclul fractionnaire jusqu'au milieu du $20^{\text {ème }}$ siècle, inclut :

P.S. Laplace (1812), J.B.J. Fourier (1822), N.H. Abel (1823-1826), J. Liouville (1832-
1873), B. Riemann (1847), H. Holmgren (1865-67), A.K. Grunwald (1867-1872), A.V.
Letnikov (1868-1872), H. Laurent (1884), P.A. Nekrassov (1888), A. Krug (1890), J.
Hadamard (1892), O. Heaviside (1892-1912) S. Pincherle (1902), G.H. Hardy et J.E.
Littlewood (1917-1928), H. Weyl (1917), P. L'evy (1923), A. Marchaud (1927), H.T.
Davis (1924-1936), A. Zygmund (1935-1945) E.R. Amour (1938-1996), A. Erd'elyi (1939-
1965), H. Kober (1940), D.V. Widder (1941), M. Riesz (1949).

Cependant, cette théorie peut être considérée comme un sujet nouveau aussi, depuis
seulement un peu plus de trente années elle a été objet de conférences spécialisées. Pour la
première conférence, le mérite est attribué à B. Ross qui a organisé la première conférence
sur les calculs fractionnaires et ses applications à l'université de New Haven en juin 1974 ,
et il a édité les débats. Pour la première monographie le mérite est attribué à K.B.
Oldham et J. Spanier, qui ont publié un livre consacré au calcul fractionnaire en 1974
après une collaboration commune, commencé en 1968 .

\textbf{Une autre théorie} se développe en parallèle de la dérivation fractionnaire telle est \textbf{la théorie des sous-ensembles flous}. C´est une théorie mathématique du domaine de
l'algèbre abstraite. Elle a été développée par Lotfi Zadeh en 1965.\\
 Le concept de nombres flous et d'opérations arithmétiques floues a été introduit
par Zadeh \cite{Ref1} et Dubois et Prade \cite{Ref3}. Depuis lors, plusieurs auteurs ont étudié les propriétés et les applications proposées des nombres flous $\cite{Ref4}$.\\
 L'une des principales applications de l'arithmétique des nombres flous est le traitement de systèmes linéaires flous et de systèmes linéaires complètement flous, et plusieurs problèmes dans divers domaines tels que l'économie, l'ingénierie et la physique se résument à la résolution d'un système linéaire d'équations.\\
 Les nombres flous sont utilisés dans les statistiques, la programmation informatique, l'ingénierie (en particulier les communications) et la science expérimentale. Le
concept prend en compte le fait que tous les phénomènes dans l'univers physique
ont un degré d'incertitude inhérente. En général, les opérations arithmétiques sur les
nombres flous peuvent être approchées soit par l'utilisation directe de la fonction d'appartenance (par le principe d'extension de Zadeh), soit par l'utilisation équivalente
de la représentation $\alpha$-coupe.\\
 Le but de \textbf{ce mémoire} est d´étudier les sous-ensembles floues, les nombres flous et leurs arithmétiques et les équations différentielles floues et leurs applications.Il contient quatre chapitres.\\
 \textbf{Dans le premier chapitre}, nous allons aborder les sous ensembles flous, ensuite nous donnons un exemple de ces sous-ensembles : il s´agit de sous-ensemble floue de $\mathbb{R}$, Il est divisé comme suit :
 \begin{itemize}
 \item[*] La section 1 : Sera résrevée aux Généralités sur les ensembles classiques,
 \item[*] La section 2 : Nous donnons des propriétées sur les sous-ensembles flous,
 \item[*] La section 3 : Nous donnons Les $ \alpha $-coupes d’un sous-ensemble flou,
 \item[*] La section 4 : Sera résrevée pour le produit cartésien de sous-ensembles flous,
 \item[*] La section 5 : Nous allons aborder les sous-ensembles flous convexes,
 \item[*] La section 6 : Nous parlons du principe d´extension de Zadeh,
 \item[*] La section 7 : Nous donnons les propriétées de sous ensemble flou de $\mathbb{R}$,
 \item[*] La section 8 : est consacrée pour les semi-groupes flou.
 \end{itemize}
 \textbf{Au deuxième chapitre} nous donnons une étude de l´arithmétique des nombtres flous.
 Plus précisément on va étudier :
 \begin{itemize}
 \item[*] La différence généralisé de HUKUHARA dans le cas des intervalles compacts dans $\mathbb{R}^{n}$,
 \item[*] Différence généralisé de HUKUHARA des nombres flous,
 \item[*] Division généralisée.
 \end{itemize}
 \textbf{Le troisième chapitre} s’étend à rappeler quelques résultats fondamentaux sur la dérivation fractionnaire, et elle traite aussi les équations différentielles floues.
  Plus précisément est divisé comme suit :
   \begin{itemize}
 \item[*] La section 1 : Sera résrevée aux outil de base,
 \item[*] La section 2 : Nous donnons la définition de l´intégration fractionnaire,
 \item[*] La section 3 : Nous allons aborder diverses Les approches des dérivées fractionnaires,
 \item[*] La section 4 : Nous donnons quelques propriétés des dérivées fractionnaires,
 \item[*] La section 5 : Est consacrée pour les équations différentielles floues.
 \end{itemize}
 \textbf{Le quatrième chapitre}  est consacré à une étude de problème fractionnaire suivant : 
$$
\left\{\begin{array}{l}
\frac{\mathrm{d}}{\mathrm{d} t}\left[\frac{u(t)}{f(t, u(t))}\right]=g(t, u(t)), \quad \text { a.e. } t \in J \\
u\left(t_{0}\right)=u_{0} \in \mathbb{R}
\end{array}\right.
$$
où $f \in C(J \times \mathbb{R}, \mathbb{R} \backslash\{0\})$ et $g \in \mathcal{C}(J \times \mathbb{R}, \mathbb{R})$,\\
Et les applications des équations différentielles floues,\\
Enfin, cette mémoire est clôturée par une \textbf{bibliographie}.
\chapter{Généralités sur les sous ensembles flous}
Les sous-ensembles flous  constituent une généralisation de notion
d'ensemble classique et ont été introduits par Lotfi Zadeh en $1965$ \cite{Ref5}.\\
Dans ce chapitre, nous donnons un aperçu sur la théorie des sous-ensembles flous
en présentant ses concepts de base ainsi que les opérations les plus couramment
utilisées. 
\section{Généralités sur les ensembles classiques}
Dans cette section, nous présentons quelques définitions et introduire la notation nécessaire, Avant d’entamer la définition de sous-ensemble flou on procède à définir l’ensemble classique.
\begin{definition}(Ensemble classique)\\
 Un ensemble classique $A$ de lensemble de référence $X$ est défni par une fonction caractéristique $\chi_{A}$ qui prend la valeur 0 pour les éléments de $X$ n'appartenant pas à $A$ et la valeur 1 pour ceux qui appartiennent à $A$
$$
\begin{array}{c}
\chi_{A}: X \rightarrow\{0,1\} \\
x \longmapsto\left\{\begin{array}{l}
1 \text { si } x \in A \\
0 \text { si } x \notin A
\end{array}\right.
\end{array}
$$
\end{definition}
\begin{exemple}
Soient $X=\mathbb{R}$ l'ensemble de référence et $A$ l'ensemble des nombres
compris entre 4 et 10 est caractérisé par la fonction caractéristique suivante
$$
\begin{array}{c}
\chi_{A}: \mathbb{R} \rightarrow\{0,1\} \\
\chi_{A}(x)=\left\{\begin{array}{l}
1 \text { si } x \in A \\
0 \text { si } x \notin A
\end{array}\right.
\end{array}
$$
\end{exemple}
\subsection{Opérations algébriques sur les ensembles}
Soit A; B deux ensemble de référentiel X,
\paragraph*{• L'inclusion :}
 on dit qu'un ensemble $A$ est inclus dans l'ensemble $B$, ou encore
que $A$ est un sous-ensemble ou une partie de $B$ si
$$
\forall x \in X,(x \in A) \Rightarrow(x \in B)
$$
On écrit alors $A \subseteq B$.
Si les relations suivantes sont satisfaites entre les deux ensembles $A$ et $B, A \subseteq B$
et $A \neq B$, alors $B$ a des éléments qui n'appartiennent pas à $A .$ Dans ce cas, $A$ est
appelé un sous-ensemble propre de $B$, et cette relation est désignée par : $A \subset B$.
\paragraph*{• L'égalité d'ensembles :}
soit deux ensembles $A$ et $B$ qui contiennent les mêmes
éléments sont dits égaux, et on écrit $A=B$
$$
A=B \Leftrightarrow A \subseteq B \text { et } B \subseteq A
$$
Dans le cas contraire on dit qu'ils sont distincts et on note $A \neq B$.

\paragraph*{• L'intersection :}
 l'intersection des ensembles $A$ et $B$ se compose des éléments communs aux deux ensembles $A$ et $B$.
$$
A \cap B=\{x \mid x \in A \text { et } x \in B\}
$$
L'intersection peut être généralisée entre les ensembles dans une famille d'ensembles
$$
\cap_{i \in I} A_{i}=\left\{x \mid x \in A_{i}, \forall i \in I\right\},
$$
où $\left\{A_{i} \mid i \in I\right\}$ est une famille d'ensembles.
L'union des ensembles $A$ et $B$ est l'ensemble des éléments qui appartiennent à $A$ ou à $B$
$$
A \cup B=\{x \mid x \in A \text { ou } x \in B\}
$$
\paragraph*{• L'union : } pourrait être définie entre plusieurs ensembles. Par exemple, la réunion
des ensembles de la famille suivante peuvent être définis comme suit.
$$
\cup_{i \in I} A_{i}=\{x \mid x \text { pour certains } i \in I\},
$$
où la famille d'ensembles est $\left\{A_{i} \mid i \in I\right\}$.
\paragraph*{• La différence : }  La différence de $B$ et $A$ noté $B-A$ ou $B \backslash A$ est constituée des
éléments qui sont en $B$, mais pas dans $A .$ C'est -à- dire :
$$
B-A=\{x \mid x \in B, x \notin A\}
$$
\paragraph*{• Le complément : }
 Soit $A$ un sous-ensemble de l'ensemble référentiel $X .$ Alors le
complément de $A$, noté $A^{c}$, est l'ensemble des éléments qui appartiennent à $X$
mais qui n'appartiennent pas à $A .$ De façon plus concise nous écrivons
$$
A^{c}=X-A=\{x \in X \text { tel que } x \notin A\}
$$
\begin{exemple}
Soit un référentiel $X=\{1,2,3, \ldots, 10\} .$ et soient $A, B$ deux sous-
ensembles de $X$ donnée par :
$$
A=\{2,3,4,5,6,7\}, B=\{4,6,8\}
$$
Alors on obtient :
$$
\begin{array}{l}
A \cap B=\{4,6\} \\
A \cup B=\{2,3,4,5,6,7,8\}, \\
A^{c}=\{1,8,9,10\}, \\
B^{c}=\{1,2,3,5,7,9,10\}, \\
B-A=\{8\}
\end{array}
$$
\end{exemple}

\paragraph*{• Propriétés des opérations sur les ensembles classiques :\\}
Dans ce tableau on va cité les différents propriétés des opérations sur les ensembles classiques :
\vspace*{3cm}
\begin{table}[!h]
$$
\begin{array}{|l|l|}
\hline \text { Involution } & \left(A^{c}\right)^{c}=A \\
\hline \text { Commutativité } & A \cup B=B \cup A \\
& A \cap B=B \cap A \\
\hline \text { Associativité } & A \cup(B \cup C)=(A \cup B) \cup C \\
& A \cap(B \cap C)=(A \cap B) \cap C \\
\hline \text { Distributivité } & A \cup(B \cap C)=(A \cup B) \cap(A \cup C) \\
& A \cap(B \cup C)=(A \cap B) \cup(A \cap C) \\
\hline \text { Idempotence } & A \cup A=A \\
& A \cap A=A \\
\hline \text { Absorption } & A \cup(A \cap B)=A \\
& A \cap(B \cup C)=A \\
\hline \text { Absorption avec } \varnothing \text { et } X & A \cup X=X \\
& A \cap \varnothing=\varnothing \\
\hline \text { Identité } & \begin{array}{l}
A \cup \varnothing=A \\
A \cap X=A
\end{array} \\
\hline \text { Loi de De Morgan } & \begin{array}{l}
(A \cup B)^{c}=A^{c} \cap B^{c} \\
(A \cap B)^{c}=A^{c} \cup B^{c}
\end{array} \\
\hline \text { Absorption de complément } & \begin{array}{l}
A \cup\left(A^{c} \cap B\right)=A \cup B \\
A \cap\left(A^{c} \cup B\right)=A \cap B
\end{array} \\
\hline \text { Loi de non contradiction } & A \cap A^{c}=\varnothing \\
\hline \text { Tiers exclu } & A \cup A^{c}=X \\
\hline
\end{array}
$$
\caption{Propriétés des opérations sur les ensembles classiques}
\end{table}
\section{Sous-ensembles flous}
Le concept de sous-ensemble flou constitue un assouplissement de celui de sous-ensemble d'un ensemble donné. Il n'existe pas d'ensembe flou au sens propre, tous les ensembles considérés étant classiques et bien définis. On utilise toutefois souvent le terme d'ensemble flou au lieu de sous-ensemble flou, par abus de langage, conformément à la traduction du terme original de "fuzzy set", que l'on oppose au «crisp set» désignant un sous-ensemble non flou. Pour un langage mathématique acceptable,nous utilisons indifféremment le terme sous-ensemble flou et ensemble flou.
\begin{definition}
Soit $X$ un ensemble de référence(classique), un sous-ensemble
flou $A$ de $X$ est défini par une fonction d'appartenance qui associe  chaque élément
$x$ de $X$, le degré $\mu_{A}(x)$, compris entre 0 et 1, avec lequel $x$ appartient à $A$
$$
\mu_{A}: X \rightarrow[0,1]
$$
\end{definition}
Ainsi, un sous-ensemble flou est toujours (et seulement) une fonction de $X$ dans $[0,1] .$ Par exemple, le graphe d'une fonction d'appartenance possible pour l'ensemble $B$ est donné dans la figure $2.2 .$
\begin{figure}[h!]
\begin{center}
\includegraphics[scale=0.6]{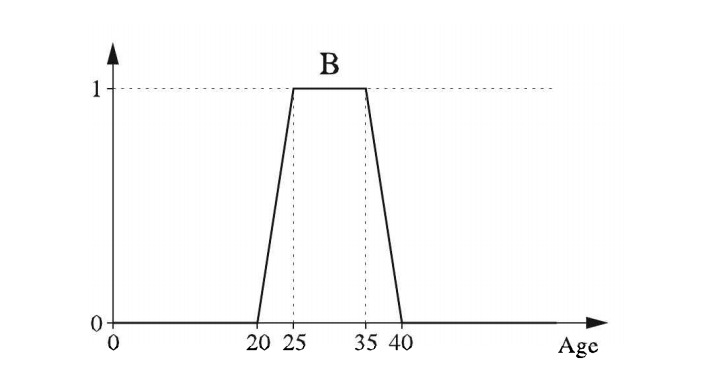}
\caption{La fonction d´apparteneance de l´ensemble B .}
\end{center}
\end{figure}
\begin{remarque}
Dans le cas particulier où $\mu_{A}$ ne prend que des valeurs égales à
0 ou 1 , le sous-ensemble flou $A$ est un sous-ensemble classique de $X$. Un sous-ensemble
classique est donc un cas pariculier d'un sous-ensemble flou.
\end{remarque}

Les cas extrêmes de sous-ensembles flous de $\mathrm{X}$ sont respectivement X lui-même, associé à une fonction d'appartenance $\mu_{X}$ prenant la valeur 1 pour tous les éléments de $X$, et l'ensemble vide $\varnothing$, associé à une fonction d'appartenance nulle sur tout $X$.
\begin{notation}
Soit $X$ l'ensemble de référence.\\
1. Un sous-ensemble flou $A=\left\{\left(x, \mu_{A}(x)\right), x \in X\right\}$ de $X$, noté par $A=\sum_{x \in A} \mu_{A}(x) / x$ si $X$ est dénombrable et par $A=\int_{x} \mu_{A}(x) / x$, si $X$ et non dénombrable.\\
2. L'ensemble de tous les sous-ensembles flous de $X$ noté par $F(X)$.
\end{notation}
\begin{exemple}
Soit l'univers $X$ des âges de 10 à 70 ans, on écrit $X=\{10,20,30,40,50,60,70\}$.
On peut définir les notions de "jeune" et de "vieux" respectivement par les ensembles flous $A$ et $B$ suivants :
$$
\begin{array}{l}
A=\{<10,1>,<20,0.8>,<30,0.6>,<40,0.2>,<50,0.1>,<60,0>,<70,0>\} \\
B=\{<10,0,>,<20,0.1,>,<30,0.3><40,0.5>,<50,0.7>,<60,0.9>,<70,1>\}.
\end{array}
$$
\end{exemple}
\subsection{Caractéristiques des sous ensembles flous}
Un sous-ensemble flou est complètement défini par la donnée de sa fonction d'appartenance. A partir d'une telle fonction, un certain nombre de caractéristiques du sous-ensemble flou peuvent être étudiées.
\paragraph*{• Noyau :}
Le noyau d'un sous-ensemble flou $A$ de $X$, noté $N o y(A)$, est l'ensemble de tous les éléments qui lui appartiennent totalement. Formellement:
$$
\operatorname{Noy}(A)=\left\{x \in X \mid \mu_{A}(x)=1\right\}
$$
\paragraph*{• Support :}
Le support d'un sous-ensemble flou $A$ de $X$, noté $\operatorname{Supp}(A)$, est l'ensemble de tous les éléments qui lui appartiennent au moins un petit peu. Formellement:
$$
\operatorname{Supp}(A)=\left\{x \in X \mid \mu_{A}(x)>0\right\}
$$
\paragraph*{• Hauteur :}
La hauteur d'un sous-ensemble flou $A$ de $X$, notée $h(A)$, est la valeur maximale atteinte sur le support de $A .$ Formellement :
$$
h(A)=\sup _{x \in X} \mu_{A}(x)
$$
\begin{definition}
On dira alors qu'un sous-ensemble flou est normalisé si sa hauteur $h(A)$ est égale à $1 .$
\end{definition}
\begin{figure}[h!]
\begin{center}
\includegraphics[scale=0.6]{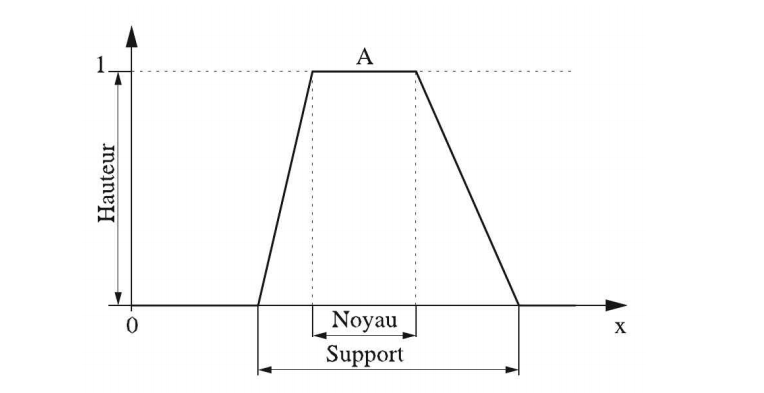}
\caption{Noyau,support et hauteur d´un sous ensemble flou.}
\end{center}
\end{figure}

\begin{exemple}
 Le noyau $\operatorname{Noy}(\boldsymbol{B})$ du sous-ensemble flou $B$ de la figure $2.1$ est l'intervalle $[25,35]$, son support $\operatorname{Supp}(B)$ est l'intervalle $] 20,40[$ et sa hauteur $h(B)=1$.

\end{exemple}
\paragraph*{• Cardinalité :}
La cardinalité d'un sous-ensemble flou $A$ de $X$, notée $|A|$, est le nombre d'éléments appartenant à $A$ pondéré par leur degré d'appartenance. Formellement, pour $A$ fini:
$$
|A|=\sum_{x \in X} \mu_{A}(x).
$$

\subsection{Opérations sur les sous-ensembles flous}
Le concept de sous-ensemble flou de l'ensemble $X$ étant une généralisation de la
notion de sous-ensemble classique de $X$, ces opérations sont choisies de façon à être
équivalentes aux opérations classiques de la théorie des ensembles lorsque les fonctions
d'appartenance ne prennent que les valeurs 0 ou $1 .$\\
Etant donné deux sous-ensembles flous $A$ et $B$ de $X$. Pour plus de détail voir $\cite{Ref6}$
\paragraph*{• Egalité :}
Deux sous-ensembles flous $A$ et $B$ de $X$ sont dits égaux s'ils ont des fonctions d'appartenance égales en tout point de $X .$ Formellement, $A=B$ si et seulement si :
$$
\forall x \in X, \mu_{A}(x)=\mu_{B}(x).
$$
\paragraph*{• Inclusion :}
Soient $A$ et $B$ deux sous-ensembles flous de $X .$ Si pour n'importe quel élément $x$ de $X, x$ appartient toujours moins à $A$ qu'à $B$, alors on dit que $A$ est inclus dans $B$ $(A \subseteq B) .$ Formellement, $A \subseteq B$ si et seulement si :
$$
\forall x \in X, \mu_{A}(x) \leq \mu_{B}(x).
$$
\paragraph*{• Union :}
L'union de deux sous-ensembles flous $A$ et $B$ de $X$ est le sous-ensemble flou constitué des éléments de $X$ affectés du plus grand des degrés avec lesquels ils appartiennent à $A$ et $B$. Formellement, $A \cup B$ est donné par:
$$
\mu_{A \cup B}(x)=\max \left(\mu_{A}(x), \mu_{B}(x)\right).
$$
\paragraph*{• Intersection}
L'intersection de deux sous-ensembles flous $A$ et $B$ de $X$ est le sous-ensemble flou constitué des éléments de $X$ affectés du plus petit des degrés avec lesquels ils appartiennent à $A$ et $B .$ Formellement, $A \cap B$ est donné par :
$$
\mu_{A \cap B}(x)=\min \left(\mu_{A}(x), \mu_{B}(x)\right)
$$
\paragraph*{• Propriété de l´union et d´intersection}
La plupart des propriétés des opérations sur les sous-ensembles classiques restent valides pour les opérations sur les sous-ensembles flous, c'est-à-dire que si $A$ et $B$ sont des sous-ensembles flous de $X$, on a :\\
- associativité de $\cap$ et $\cup$,\\
- commutativité de $\cap$ et $\cup$,\\
- $ A \cap X=A, A \cap \emptyset=\emptyset$,\\
- $ A \cup X=X, A \cup \emptyset=A$,\\
- $ A \cap B \subseteq A \subseteq A \cup B$\\
- $ A \cap\left(B \cup B^{\prime}\right)=(A \cap B) \cup\left(A \cap B^{\prime}\right)$\\
- $ A \cup\left(B \cap B^{\prime}\right)=(A \cup B) \cap\left(A \cup B^{\prime}\right)$\\
- $|A|+|B|=|A \cap B|+|A \cup B| .$\\

\paragraph*{• Complément :}
Le complément d'un sous-ensemble flou $A$ de $X$ est noté $\bar{A}$. Il est défini à partir de la fonction d'appartenance de $A$ par :
$$
\forall x \in X, \mu_{\bar{A}}(x)=1-\mu_{A}(x).
$$
\paragraph*{• Propriété de la complétion}
En général, contrairement aux sous-ensembles classiques, la propriété de noncontradiction n'est pas satisfaite par les sous-ensembles flous $(A \cap \bar{A} \neq \emptyset) .$ De même, la propriété du tiers exclus n'est pas satisfaite non plus $(A \cup \bar{A} \neq X) .$ Les autres propriétés sont conservées:\\
$-\overline{A \cap B}=\bar{A} \cup \bar{B}, \overline{A \cup B}=\bar{A} \cap \bar{B}$ (lois de De Morgan),\\
$-\overline{\bar{A}}=A$\\
$-\bar{\emptyset}=X$\\
$-\bar{X}=\emptyset$\\
$-|A|+|\bar{A}|=|X|$ si $X$ est fini.
\paragraph*{• L'addition $A+B$ : }
$$
A+B=\left\{\left\langle x, \mu_{A}(x)+\mu_{B}(x)-\mu_{A}(x) \cdot \mu_{B}(x)\right\rangle \mid x \in X\right\}
$$
\paragraph*{• La multiplication $A \cdot B$ : }
$$
A \cdot B=\left\{\left\langle x, \mu_{A}(x) \cdot \mu_{B}(x)\right\rangle \mid x \in X\right\}
$$
\paragraph*{• La différence :}
 la différence de deux ensembles flous $A$ et $B$ dans $X$, est l'ensemble
flou $A-B=A \cap B^{c}$ dont la fonction d'appartenance est
$$
\mu_{A-B}(x)=\min \left\{\mu_{A}(x), 1-\mu_{B}(x)\right\}
$$
\begin{remarque}
Contrairement aux ensembles classiques, il vérifie généralement
$A^{c} \cap A \neq \varnothing$ et $A^{c} \cup A \neq X$, c'est-à-dire qu'il ne vérifie pas les propriétés classiques de la non-contradiction et du tiers exclu. Les autres propriétés de la théorie des ensembles classiques sont cependant satisfaites.
\end{remarque}
\section{Les $\alpha-$coupes d’un sous-ensemble flou}
Il peut être utile de décrire un sous-ensemble flou en se référant à des sous-ensembles ordinaires. Une façon de réaliser une approximation d'un sous-ensemble flou consiste à fixer un seuil inférieur sur les degrés d'appartenance. Le sous-ensemble ordinaire $A_{\alpha}$ de $X$ associé à $A$ pour le seuil $\alpha$ est l'ensemble des éléments qui appartiennent à $A$ avec un degré au moins égal à $\alpha .$ On dit que $A_{\alpha}$ est $l^{\prime} \alpha$-coupe de $A .$ Formellement :
$$
A_{\alpha}=\left\{x \in X \mid \mu_{A}(x) \geq \alpha\right\}.
$$
et $A_{\alpha}$ est un sous-ensemble ordinaire de fonction caractéristique :
$$
\chi_{A_{\alpha}}(x)=\left\{\begin{array}{ll}
1 & \text { si } \mu_{A}(x) \geq \alpha \\
0 & \text { sinon }
\end{array}\right.
$$
\begin{definition}
Pour tout niveau $\alpha$ de $[0,1]$, on définit la $\alpha$-coupe stricte de $A$
comme le sous-ensemble
$$
A^{\alpha}=\left\{x \in X / \mu_{A}(x)>\alpha\right\}
$$
\end{definition}
\begin{exemple}
Soit A un ensemble flou donnée par : \\
$A=\{<x, 0.3>,<y, 1.0>,<z, 0.6>\}$,\\
$A_{0.6}=\left\{x \in X \mid \mu_{A}(x) \geq 0.6\right\}=\{y, z\}$,\\
$A_{0.7}=\left\{x \in X \mid \mu_{A}(x) \geq 0.7\right\}=\{y\}$,\\
$A^{0.6}=\left\{x \in X \mid \mu_{A}(x)>0.6\right\}=\{y\}$
\end{exemple}
\begin{proposition}
Soient A, B deux sous-ensembles flous alors :\\
1. Si $A \subset B$ alors $A_{\alpha} \subset B_{\alpha}$,\\
2. $(A \cap B)_{\alpha}=A_{\alpha} \cap B_{\alpha}$,\\
3. $(A \cup B)_{\alpha}=A_{\alpha} \cup B_{\alpha}$.
\end{proposition}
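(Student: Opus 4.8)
The three assertions are all statements about level sets of membership functions, and each follows by unwinding the definition $A_\alpha=\{x\in X\mid \mu_A(x)\ge\alpha\}$ together with the elementary facts that the operations on fuzzy subsets are defined pointwise via $\le$, $\min$ and $\max$. The strategy in every case is the same: fix $\alpha\in[0,1]$, take an arbitrary $x\in X$, and translate membership in the left-hand set into an inequality on membership degrees, then manipulate that inequality and translate back. I expect no genuine obstacle; the only point requiring a little care is the direction $A_\alpha\cup B_\alpha\subseteq(A\cup B)_\alpha$ in part 3, where one must argue by cases on which of $\mu_A(x)$, $\mu_B(x)$ realizes the maximum.

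\textbf{Part 1.} Assume $A\subseteq B$, i.e. $\mu_A(x)\le\mu_B(x)$ for all $x\in X$. Let $x\in A_\alpha$. Then $\alpha\le\mu_A(x)\le\mu_B(x)$, so $\mu_B(x)\ge\alpha$, hence $x\in B_\alpha$. This gives $A_\alpha\subseteq B_\alpha$.

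\textbf{Part 2.} Recall $\mu_{A\cap B}(x)=\min(\mu_A(x),\mu_B(x))$. For any $x\in X$ we have the chain of equivalences
$$
x\in(A\cap B)_\alpha \iff \min(\mu_A(x),\mu_B(x))\ge\alpha \iff \bigl(\mu_A(x)\ge\alpha \text{ et } \mu_B(x)\ge\alpha\bigr) \iff x\in A_\alpha\cap B_\alpha,
$$
where the middle step uses the basic property that $\min(s,t)\ge\alpha$ iff $s\ge\alpha$ and $t\ge\alpha$. Hence $(A\cap B)_\alpha=A_\alpha\cap B_\alpha$.

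\textbf{Part 3.} Recall $\mu_{A\cup B}(x)=\max(\mu_A(x),\mu_B(x))$. For $x\in X$,
$$
x\in(A\cup B)_\alpha \iff \max(\mu_A(x),\mu_B(x))\ge\alpha \iff \bigl(\mu_A(x)\ge\alpha \text{ ou } \mu_B(x)\ge\alpha\bigr) \iff x\in A_\alpha\cup B_\alpha,
$$
using $\max(s,t)\ge\alpha$ iff $s\ge\alpha$ or $t\ge\alpha$. The forward implication of that middle equivalence is where the case distinction lives: if the maximum equals $\mu_A(x)$ then $\mu_A(x)\ge\alpha$, otherwise $\mu_B(x)\ge\alpha$. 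Therefore $(A\cup B)_\alpha=A_\alpha\cup B_\alpha$, which completes the proof. $\square$
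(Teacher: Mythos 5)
Votre démonstration est correcte : chacune des trois assertions se réduit bien, point par point, aux équivalences $\min(s,t)\ge\alpha \Leftrightarrow (s\ge\alpha \text{ et } t\ge\alpha)$ et $\max(s,t)\ge\alpha \Leftrightarrow (s\ge\alpha \text{ ou } t\ge\alpha)$, et la partie 1 découle directement de la définition de l'inclusion floue par $\mu_A\le\mu_B$. Le mémoire énonce cette proposition sans en donner de preuve, de sorte qu'il n'y a rien à comparer ; votre argument est exactement celui, standard et définitionnel, que l'on attendrait à cet endroit du texte.
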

\begin{theorem}(Théorème de décomposition)\\
Tout sous-ensemble flou $A$ de l'ensemble de référence $X$ est défini à partir de ses $\alpha$ -coupes par :
$$
\forall x \in X \mid \mu_{A}(x)=\sup _{\alpha \in[0,1]}\left(\alpha \cdot \chi_{A_{\alpha}}(x)\right)
$$
où "sup" indique la borne supérieure des valeures possibles et $\chi_{A_{\alpha}}$ est la fonction caractéristique de $A_{\alpha} .$
\end{theorem}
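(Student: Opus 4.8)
We must show that for every fuzzy subset $A$ of $X$ with membership function $\mu_A : X \to [0,1]$, one has
$$
\mu_A(x) = \sup_{\alpha \in [0,1]} \bigl(\alpha \cdot \chi_{A_\alpha}(x)\bigr)
\quad \text{for all } x \in X,
$$
where $A_\alpha = \{x \in X \mid \mu_A(x) \geq \alpha\}$ is the $\alpha$-cut and $\chi_{A_\alpha}$ its characteristic function.

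**Plan of attack.** The natural strategy is to fix an arbitrary $x \in X$, set $t := \mu_A(x) \in [0,1]$, and compute the supremum on the right-hand side directly by analyzing the function $\alpha \mapsto \alpha \cdot \chi_{A_\alpha}(x)$ on $[0,1]$. The key observation, which comes straight from the definition of the $\alpha$-cut, is that $\chi_{A_\alpha}(x) = 1$ precisely when $\mu_A(x) \geq \alpha$, i.e. when $\alpha \leq t$, and $\chi_{A_\alpha}(x) = 0$ otherwise. Hence the quantity inside the supremum equals $\alpha$ for every $\alpha \in [0,t]$ and equals $0$ for every $\alpha \in (t,1]$. I would write this out as a two-line case split and then take the sup over each piece.

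**Carrying it out.** First I would handle the generic case $t = \mu_A(x) > 0$. Then $\{\alpha \cdot \chi_{A_\alpha}(x) : \alpha \in [0,1]\} = [0,t] \cup \{0\} = [0,t]$, whose supremum is exactly $t = \mu_A(x)$; note the value $t$ is actually attained at $\alpha = t$ since $\mu_A(x) \geq t$ means $x \in A_t$. Second, the degenerate case $t = \mu_A(x) = 0$: here $x \in A_\alpha$ only for $\alpha = 0$ (since $\mu_A(x) \geq \alpha \geq 0$ forces $\alpha = 0$), so $\alpha \cdot \chi_{A_\alpha}(x) = 0 \cdot 1 = 0$ there and $= 0$ elsewhere; the supremum is $0 = \mu_A(x)$. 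In both cases the identity holds, and since $x$ was arbitrary we are done. For a cleaner one-shot argument I could instead prove the two inequalities separately: for "$\geq$", observe that taking $\alpha = \mu_A(x)$ gives $\alpha \cdot \chi_{A_\alpha}(x) = \mu_A(x) \cdot 1 = \mu_A(x)$ (using $x \in A_{\mu_A(x)}$), so the sup is at least $\mu_A(x)$; for "$\leq$", note that for every $\alpha$ either $\chi_{A_\alpha}(x) = 0$ (contributing $0 \leq \mu_A(x)$) or $\chi_{A_\alpha}(x) = 1$, which forces $\alpha \leq \mu_A(x)$ and hence $\alpha \cdot \chi_{A_\alpha}(x) = \alpha \leq \mu_A(x)$; taking the sup preserves the bound.

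**Anticipated difficulty.** There is essentially no deep obstacle here — the result is a direct unwinding of definitions. The only point demanding a sliver of care is the boundary behaviour at $\alpha = \mu_A(x)$: one must use the \emph{non-strict} inequality $\mu_A(x) \geq \alpha$ in the definition of $A_\alpha$ to guarantee that the supremum is attained rather than merely approached, and one must separately check the case $\mu_A(x) = 0$ so that the formula $\alpha = \mu_A(x)$ does not mislead (there $\alpha = 0$ and the product is $0$ regardless). I would also remark, for completeness, that the identity implicitly encodes the decomposition $A = \bigcup_{\alpha \in [0,1]} \alpha A_\alpha$ in the usual fuzzy-set sense, which is why it is called the decomposition theorem.
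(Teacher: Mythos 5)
Your proposal is correct and follows essentially the same route as the paper's own proof: the lower bound by choosing $\alpha = \mu_{A}(x)$ (so that $\alpha\cdot\chi_{A_{\alpha}}(x)=\mu_{A}(x)$), and the upper bound by noting that for every level $\beta$ either $\chi_{A_{\beta}}(x)=0$ or $\beta\le\mu_{A}(x)$, hence $\beta\cdot\chi_{A_{\beta}}(x)\le\mu_{A}(x)$ and the supremum is bounded by $\mu_{A}(x)$. Your additional case split on $\mu_{A}(x)=0$ versus $\mu_{A}(x)>0$ is a harmless elaboration of the same argument, so nothing further is needed.
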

\begin{proof}
Soit $x \in X$, supposons $\mu_{A}(x)=\beta\left(\beta \in[0,1], x \in A .\right.$ Donc 
$$
\mu_{A}(x)=\beta \cdot \chi_{A_{\beta}}(x) \text { et } \mu_{A}(x) \leq \sup _{\alpha \in[0,1]}\left(\alpha \cdot \chi_{A_{\alpha}}(x)\right)
$$
Réciproquement, soit $x \in X$ pour tout niveau $\beta$ on a
$$
\left\{\begin{array}{lll}
\chi_{A_{\beta}}(x)=0 & \mathrm{si} & \mu_{\alpha}(x)<\beta \\
\chi_{A_{\beta}}(x)=1 & \mathrm{si} & \mu_{\alpha}(x) \geq \beta
\end{array}\right.
$$
Donc,
$$
\left\{\begin{array}{r}
\beta \cdot \chi_{A_{\beta}}(x)=\beta \text { si } \mu_{\alpha}(x) \geq \beta \\
\beta \cdot \chi_{A_{\beta}}(x)=0 \quad \text { si } \quad \mu_{\alpha}(x)<\beta
\end{array}\right.
$$
Donc dans les deux cas, $\beta \cdot \chi_{A_{\beta}}(x) \leq \mu_{A}(x) .$ D'où, $\sup _{\alpha \in[0,1]} \alpha \cdot \chi_{A_{\alpha}}(x) \leq \mu_{A}(x)$.
Par conséquent, pour tout $x \in X, \mu_{A}(x)=\sup _{\alpha \in[0,1]}\left(\alpha \cdot \chi_{A_{\alpha}}(x)\right)$.
\end{proof}

\section{Produit cartésien de sous-ensembles flous}
Lorsque la situation nécessite plusieurs ensembles de référence $X_{1}, \ldots, X_{r}$, on est amené à considérer un univers global $X$ composé des différents ensembles de référence $:$ le produit cartésien $X=X_{1} \times, \ldots, \times X_{r} .$ Les éléments de $X$ sont des
$$
r \text { -uplets }\left(x_{1}, \ldots, x_{r}\right) \text { avec } x_{1} \in X_{1}, \ldots, x_{r} \in X_{r}
$$
Le produit cartésien de $r$ sous-ensembles flous $A_{1}, \ldots A_{r}$, définis respectivement sur $X_{1}, \ldots, X_{r}$, est le sous-ensemble flou $A$ de $X$ de fonction d'appartenance :
$$
\forall x=\left(x_{1}, \ldots, x_{r}\right), \mu_{A}(x)=\min \left(\mu_{A_{1}}\left(x_{1}\right), \ldots, \mu_{A_{r}}\left(x_{r}\right)\right)
$$
\begin{exemple}
Soient $X_{1}=\{x, y, z\}, X_{2}=\{\alpha, \beta\}$ et soient $A, B$ deux sous-ensembles
flous réspèctivement définis sur $X_{1}, X_{2}$ donnée par :\\
$
\begin{array}{l}
A=\{<x, 0.7>,<y, 0.5>,<z, 1.0>\} \\
B=\{<\alpha, 0.7>,<\beta, 0.4>\}, \\
\end{array}
$\\
Alors on obtient :
$$
A \times B=\left\{\begin{array}{l}
<(x, \alpha), 0.7>,<(x, \beta), 0.4>,<(y, \alpha), 0.5> \\
<(y, \beta), 0.4>,<(z, \alpha), 0.7>,<(z, \beta), 0.4>
\end{array}\right\}
$$
\end{exemple}
\section{Sous-ensembles flous convexes}
\begin{definition}
Un sous-ensemble flou $A$ de l'ensemble $X$ des nombre réels est
convexe si, pour tout couple d'élément $a$ et $b$ de $X$, et pour tout nombre $\lambda$ de $[0,1]$,
la fonction d'appartenance de $A$ vérifie
$$
\mu_{A}(\lambda a+(1-\lambda) b) \geq \min \left(\mu_{A}(a), \mu_{A}(b)\right)
$$
\end{definition}
\begin{exemple}
Soit le sous-ensemble flou $A$ de $\mathbb{R}$ tel que
$$
\mu_{A}(x)=\left\{\begin{array}{lr}
0 & \text { si } x \leq 10 \\
\left(1+(x-10)^{-2}\right)^{-1} & \text { si } x>10
\end{array}\right.
$$
Alors, A est un sous-ensemble flou convexes de $\mathbb{R}$.
\end{exemple}
\begin{propriete}
Un sous-ensemble flou $A$ de $\mathbb{R}$ est convexe si toutes ses $\alpha$-coupes
$A_{\alpha}$ sont convexes, c'est-à-dire si, pour tout couple d'éléments a et $b$ de $A_{\alpha}$ et pour
tout nombre $\lambda$ de $[0,1], x=\lambda a+(1-\lambda) b$ appartient aussi à $A_{\alpha} .$
\end{propriete}
\begin{theorem}
Si $A$ et $B$ sont deux sous-ensembles flous convexes de $\mathbb{R}$, leur intersection et convexe.
\end{theorem}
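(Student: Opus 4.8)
L'approche que je suivrais est directe, via les fonctions d'appartenance et la caractérisation $\mu_{A\cap B}=\min(\mu_A,\mu_B)$. On se donne $a,b\in\mathbb{R}$ et $\lambda\in[0,1]$, et on pose $x=\lambda a+(1-\lambda)b$. La première étape consiste à appliquer séparément la convexité de $A$ et de $B$ pour obtenir $\mu_A(x)\geq\min(\mu_A(a),\mu_A(b))$ et $\mu_B(x)\geq\min(\mu_B(a),\mu_B(b))$, puis à combiner ces deux inégalités en utilisant la monotonie de l'opérateur $\min$ :
$$
\mu_{A\cap B}(x)=\min(\mu_A(x),\mu_B(x))\geq\min\big(\min(\mu_A(a),\mu_A(b)),\ \min(\mu_B(a),\mu_B(b))\big).
$$

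La deuxième étape, purement algébrique, est de réordonner le membre de droite grâce à la commutativité et à l'associativité de $\min$ :
$$
\min\big(\min(\mu_A(a),\mu_A(b)),\ \min(\mu_B(a),\mu_B(b))\big)=\min\big(\min(\mu_A(a),\mu_B(a)),\ \min(\mu_A(b),\mu_B(b))\big)=\min\big(\mu_{A\cap B}(a),\ \mu_{A\cap B}(b)\big).
$$
On en déduit $\mu_{A\cap B}(\lambda a+(1-\lambda)b)\geq\min(\mu_{A\cap B}(a),\mu_{A\cap B}(b))$ pour tous $a,b\in\mathbb{R}$ et tout $\lambda\in[0,1]$, ce qui est exactement la définition de la convexité de $A\cap B$.

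\textbf{Variante par les $\alpha$-coupes.} On pourrait aussi s'appuyer sur la Propriété précédente, qui affirme que $A\cap B$ est convexe si et seulement si toutes ses $\alpha$-coupes le sont. Par la Proposition, $(A\cap B)_\alpha=A_\alpha\cap B_\alpha$; comme $A$ et $B$ sont convexes, $A_\alpha$ et $B_\alpha$ sont des parties convexes de $\mathbb{R}$, et l'intersection de deux parties convexes de $\mathbb{R}$ (deux intervalles) est convexe. Ainsi $(A\cap B)_\alpha$ est convexe pour tout $\alpha\in[0,1]$, d'où le résultat.

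\textbf{Difficulté principale.} Il n'y a pas d'obstacle véritable : la seule subtilité est le réarrangement des $\min$ imbriqués dans la preuve directe (ou, dans la variante, le rappel élémentaire que l'intersection de deux convexes de $\mathbb{R}$ reste convexe). Il faut simplement veiller à orienter correctement chaque inégalité et à faire parcourir à $\lambda$ l'intervalle $[0,1]$ tout entier.
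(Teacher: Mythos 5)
Votre preuve directe est correcte et suit essentiellement le même chemin que celle du mémoire : application séparée de la convexité de $A$ et $B$, combinaison par $\min$, puis réarrangement des $\min$ imbriqués pour faire apparaître $\min(\mu_{A\cap B}(a),\mu_{A\cap B}(b))$. La variante par les $\alpha$-coupes est un complément valable mais non nécessaire, l'argument principal coïncidant avec celui du texte.
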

\begin{proof}
Soit $C=A \cap B$, alors
$$
\mu_{C}(\lambda a+(1-\lambda) b)=\min \left(\mu_{A}(\lambda a+(1-\lambda) b), \mu_{B}(\lambda a+(1-\lambda) b)\right)
$$
Maintenant, puisque $A$ et $B$ sont convexes
$$
\begin{aligned}
\mu_{A}(\lambda a+(1-\lambda) b) & \geq \min \left(\mu_{A}(a), \mu_{A}(b)\right) \\
\mu_{B}(\lambda a+(1-\lambda) b) & \geq \min \left(\mu_{B}(a), \mu_{B}(b)\right),
\end{aligned}
$$
et donc
$$
\mu_{C}(\lambda a+(1-\lambda) b) \geq \min \left(\operatorname { m i n } \left(\mu_{A}(a), \mu_{A}(b), \min \left(\mu_{B}(a), \mu_{B}(b)\right)\right.\right.
$$
ou équivalent
$$
\mu_{C}(\lambda a+(1-\lambda) b) \geq \min \left(\operatorname { m i n } \left(\mu_{A}(a), \mu_{B}(a), \min \left(\mu_{A}(b), \mu_{B}(b)\right)\right.\right.
$$
et ainsi
$$
\mu_{C}(\lambda a+(1-\lambda) b) \geq \min \left(\mu_{C}(a), \mu_{C}(b)\right).
$$
\end{proof}
\section{Principe d´extension de Zadeh}
Un principe d'extension est utilisé pour étendre une fonction mathématique classique aux ensembles flous. Etant donné une fonction $\phi$ définie sur un univers classique $X($ par exemple, $X=\mathbb{R})$, l'idée de base d'un principe d'extension est de permettre l'utilisation de cette fonction pour des sous-ensembles flous de $X$. Connaissant un sous-ensemble flou $A$ de l'univers $X$ et une application $\phi: X \longrightarrow Y$, on veut pouvoir construire l'image de $A$ par $\phi .$ C'est donc un principe fondamental pour utiliser les fonctions mathématiques classiques pour des valeurs imprécises et qui autorise donc la prise en compte d'un tel type de données dans des mécanismes de calculs élaborés.\\
Le principe d'extension le plus connu a été introduit par \cite{Ref1} et il est très utilisé, en particulier en arithmétique floue . Pour plus de détails sur ce principe, voir aussi \cite{Ref2}.
\begin{definition}
Etant donné un sous-ensemble flou A de X et une application $\phi$ de X vers $Y$, le principe d'extension permet de définir un sous-ensemble flou $B$ de $Y$ associé à A par l'intermédiaire de $\phi$ :
\begin{equation}
\forall y \in Y, \mu_{B}(y)=\left\{\begin{array}{ll}
\sup _{\{x \in X \mid y=\phi(x)\}} \mu_{A}(x) & \text { si } \phi^{-1}(y) \neq \varnothing \\
0 & \text { si } \phi^{-1}(y)=\varnothing
\end{array}\right.
\end{equation}
\end{definition}
Le sous-ensemble flou $B$ est l'image du sous-ensemble flou $A$ par la fonction $\phi$
\section{Sous ensemble flou de $\mathbb{R}$}
On note $ E^{1} $ la classe de fonction définie comme suit:
$$ E^{1}=\{u: \mathbb{R} \rightarrow[0,1], \quad u \quad \texttt{ satisfait (1-4) ci-dessous } \} $$
1) $u$ est normal, c'est-à-dire qu'il y a un $x_{0} \in \mathbb{R}$ tel que $u\left(x_{0}\right)=$ 1;\\
2) $u$ est un ensemble convexe flou;\\
3) $u$ est semi-continue supérieure;\\
4) $[u]^{0}=c l\{x \in \mathbb{R} \mid u(x)>0\}$ est compact.

Alors $E^{1}=\mathbb{R}_{\mathcal{F}}$ est appelé l'espace des nombres flous. Évidemment, $\mathbb{R} \subset \mathbb{R}_{\mathcal{F}} .$ Pour $0<\alpha \leq 1$ dénotons $[u]^{\alpha}=\{x \in \mathbb{R} \mid u(x) \geq \alpha\}$, alors de (1) à (4) il s'ensuit que le $\alpha$-coupe définit $[u]^{\alpha} \in P_{K}(\mathbb{R})$ pour tout $0 \leq \alpha \leq 1$ est un intervalle fermé borné que l'on note $[u]^{\alpha}=\left[u_{1}^ {\alpha}, u_{2}^{\alpha}\right]$.

Où $P_{K}(\mathbb{R})$ désigne la famille de tous les sous-ensembles convexes compacts non vides de $\mathbb{R}$ et définit l'addition et la multiplication scalaire dans $P_{K}(\mathbb{R} )$ comme d'habitude. La propriété des nombres flous est que les $\alpha$-coupe $[u]^{\alpha}$ sont des ensembles fermés pour tout $\alpha \in[0,1]$.

Par l'extension principale de Zadeh nous avons:
$$
\begin{array}{c}
(u+v)^{\alpha}=u^{\alpha}+v^{\alpha} \\
(\lambda u)^{\alpha}=\lambda u^{\alpha}
\end{array}
$$
 Pour tous $u, v \in E^{1}$ et $\lambda \in \mathbb{R}$ La distance entre deux éléments de $E^{1}$ est donné par :
$$
\begin{aligned}
d(u, v) &=\sup _{\alpha \in(0,1]} d_{H}\left(u^{\alpha}, v^{\alpha}\right) \\
&=\sup _{\alpha \in(0,1]} \max \{|\bar{u}(\alpha)-\bar{v}(\alpha)|,|\underline{u}(\alpha)-\underline{v}(\alpha)|\}
\end{aligned}
$$
\begin{theorem}
Soit $u \in E^{1}$ et notons $C_{\alpha}=[u]^{\alpha}$ pour $\alpha \in[0,1]$. Alors :

1. $C_{\alpha}$ est un ensemble convexe compact non vide dans $\mathbb{R}$ pour chaque $\alpha \in[0,1],$

2. $C_{\beta} \subseteq C_{\alpha}$ pour $0<\alpha \leq \beta \leq 1,$

3. $C_{\alpha}=\bigcap_{i=1}^{\infty} C_{\alpha_{i}}$, pour toute suite non décroissante $\alpha_{i} \rightarrow \alpha$ sur $[0 ,1].$
\end{theorem}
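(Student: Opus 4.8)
\noindent The plan is to treat the three assertions in turn, reducing each to an elementary property of the superlevel sets of $u$, and to invoke the standing hypotheses (1)--(4) defining $E^{1}$ together with the Propriété relating convexity of a fuzzy set to convexity of its $\alpha$-coupes. Throughout, recall that for $\alpha\in(0,1]$ one has $C_{\alpha}=\{x\in\mathbb{R}\mid u(x)\ge\alpha\}$, while $C_{0}=\mathrm{cl}\{x\in\mathbb{R}\mid u(x)>0\}$, so the case $\alpha=0$ must always be checked separately because it is defined by a closure and by a strict inequality.

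\noindent For assertion~1 I would first dispose of nonemptiness: by normality (property~1) there is $x_{0}$ with $u(x_{0})=1$, hence $x_{0}\in C_{\alpha}$ for every $\alpha\in(0,1]$, and $x_{0}\in C_{1}\subseteq C_{0}$ as well. For compactness, $C_{0}$ is compact directly by hypothesis~(4); for $\alpha\in(0,1]$ I would use that upper semicontinuity of $u$ (property~3) makes the superlevel set $C_{\alpha}=\{u\ge\alpha\}$ closed, and since $C_{\alpha}\subseteq C_{0}$ it is bounded, hence compact in $\mathbb{R}$. Convexity for $\alpha\in(0,1]$ is precisely the Propriété on convex fuzzy sets applied to property~2 of $E^{1}$; for $\alpha=0$ I would observe that $\{u>0\}=\bigcup_{n\ge1}C_{1/n}$ is an increasing union of convex sets, hence convex, and that the closure of a convex subset of $\mathbb{R}$ (an interval) is convex, so $C_{0}$ is convex.

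\noindent Assertion~2 is immediate from the definition of the $\alpha$-coupe: if $0<\alpha\le\beta\le1$ and $x\in C_{\beta}$ then $u(x)\ge\beta\ge\alpha$, so $x\in C_{\alpha}$.

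\noindent For assertion~3, fix a nondecreasing sequence $\alpha_{i}\to\alpha$ in $[0,1]$. The inclusion $C_{\alpha}\subseteq\bigcap_{i}C_{\alpha_{i}}$ follows from assertion~2 since $\alpha_{i}\le\alpha$ for all $i$. For the reverse inclusion, take $x\in\bigcap_{i}C_{\alpha_{i}}$: if $\alpha>0$ then $u(x)\ge\alpha_{i}$ for every $i$, and letting $i\to\infty$ gives $u(x)\ge\sup_{i}\alpha_{i}=\alpha$, i.e. $x\in C_{\alpha}$; the case $\alpha=0$ is trivial, since a nonnegative nondecreasing sequence with limit $0$ is identically $0$ and the intersection is literally $C_{0}$. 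I expect the main obstacle to lie not in assertion~3 (which, notably, uses only the definition $C_{\alpha}=\{u\ge\alpha\}$ and the monotonicity of the $\alpha_i$, and not upper semicontinuity at all) but in assertion~1: correctly combining upper semicontinuity with hypothesis~(4) to get closedness and boundedness of each $C_{\alpha}$, and reconciling the closure-based definition of $C_{0}=\mathrm{cl}\{u>0\}$ with the $\ge$-definition of the higher cuts when checking convexity and nonemptiness.
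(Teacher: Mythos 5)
Your proof is correct, and it is worth saying plainly that the paper itself states this theorem \emph{without any proof} (it is the standard representation result for the $\alpha$-coupes of a fuzzy number), so there is no argument in the text to compare yours against: your write-up supplies exactly what the memoir omits, and by the standard route (normality for nonemptiness, upper semicontinuity for closedness of $\{u\ge\alpha\}$, inclusion in the compact $C_{0}$ for boundedness, fuzzy convexity for convexity of the cuts, and the elementary $\sup$ argument for the intersection property). Two small touch-ups. First, for convexity of $C_{\alpha}$, $\alpha\in(0,1]$, you cite the Propriété of the chapter on sous-ensembles flous convexes, but as written there the implication goes in the other direction (convexity of all the $\alpha$-coupes entraîne la convexité du sous-ensemble flou); what you actually need is the converse, which is the one-line computation from property 2) of the definition of $E^{1}$: if $a,b\in C_{\alpha}$ then $u(\lambda a+(1-\lambda)b)\ge\min\bigl(u(a),u(b)\bigr)\ge\alpha$, so $\lambda a+(1-\lambda)b\in C_{\alpha}$ — better to state this directly than to lean on the Propriété. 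Second, in assertion 3 the inclusion $C_{\alpha}\subseteq\bigcap_{i}C_{\alpha_{i}}$ is covered by assertion 2 only for those $i$ with $\alpha_{i}>0$; if some $\alpha_{i}=0$ you need $C_{\alpha}\subseteq C_{0}$, which you have in fact already established in assertion 1 (it is precisely your boundedness step), so a single remark closes this. Everything else — the treatment of $C_{0}=\mathrm{cl}\{u>0\}$ as the closure of the increasing union $\bigcup_{n}C_{1/n}$ of convex sets, and your observation that assertion 3 uses only the definition of the cuts and monotonicity of the $\alpha_{i}$, not upper semicontinuity — is accurate.
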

\begin{proposition}
L'espace métrique $\left(E^{1}, d\right)$ est complet, séparable et localement compact et les propriétés suivantes pour la métrique $ d $ sont valides:\\
1)$  d(u+v, u+w)=d(u, v) ;$\\
2) $d(\lambda u, \lambda v)=|\lambda| d(u, v) ;$\\
3) $ d(u+v, w+z) \leq d(u, w)+d(v, z) $
\end{proposition}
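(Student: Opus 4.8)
The plan is to dispatch the three algebraic identities for $d$ first, since they reduce to elementary facts about the Hausdorff distance on compact intervals, and then to treat the three topological assertions (completeness, separability, local compactness) one at a time, each built on the $\alpha$-cut representation furnished by the preceding Theorem.

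For the metric identities I would start from $d(u,v)=\sup_{\alpha\in(0,1]}d_H([u]^\alpha,[v]^\alpha)$ together with the Zadeh-extension rules $(u+v)^\alpha=u^\alpha+v^\alpha$ and $(\lambda u)^\alpha=\lambda u^\alpha$, and the closed form $d_H([a,b],[c,d])=\max\{|a-c|,|b-d|\}$. Translation invariance: adding the fixed interval $[u]^\alpha$ to both $[v]^\alpha$ and $[w]^\alpha$ shifts each endpoint by the same amount, so $d_H([u]^\alpha+[v]^\alpha,[u]^\alpha+[w]^\alpha)=d_H([v]^\alpha,[w]^\alpha)$, and taking the supremum over $\alpha$ gives $d(u+v,u+w)=d(v,w)$. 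For (2), $\lambda[a,b]$ equals $[\lambda a,\lambda b]$ or $[\lambda b,\lambda a]$ according to the sign of $\lambda$, and in both cases $d_H(\lambda[u]^\alpha,\lambda[v]^\alpha)=|\lambda|\,d_H([u]^\alpha,[v]^\alpha)$, so the supremum scales by $|\lambda|$. For (3), I would use sub-additivity of the Hausdorff distance under Minkowski sums, $d_H(A+B,C+D)\le d_H(A,C)+d_H(B,D)$, applied cut-wise, then $\sup_\alpha\bigl(f(\alpha)+g(\alpha)\bigr)\le\sup_\alpha f+\sup_\alpha g$. All three are short once the interval formula is available.

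For completeness, given a Cauchy sequence $(u_n)$ in $(E^1,d)$, for each fixed $\alpha$ the sequence $\bigl([u_n]^\alpha\bigr)_n$ is Cauchy in $(P_K(\mathbb{R}),d_H)$, which is complete; let $C_\alpha$ be its limit, a nonempty compact interval, and note the convergence is uniform in $\alpha$ because $(u_n)$ is Cauchy for the sup-metric. I would then check that $\{C_\alpha\}_{\alpha\in[0,1]}$ satisfies hypotheses 1--3 of the preceding Theorem (nonempty compact convexity, nestedness $C_\beta\subseteq C_\alpha$ for $\alpha\le\beta$, and $C_\alpha=\bigcap_i C_{\alpha_i}$ for $\alpha_i\uparrow\alpha$), each property passing to the $d_H$-limit from the corresponding property of the $[u_n]^\alpha$; that Theorem then yields a unique $u\in E^1$ with $[u]^\alpha=C_\alpha$, and uniformity in $\alpha$ gives $d(u_n,u)\to 0$. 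For separability, I would take the countable family of fuzzy numbers whose lower and upper endpoint functions are rational-valued monotone step functions (finite "stacks" of rational intervals) and show density: for any $u\in E^1$ the endpoint maps $\underline{u}(\cdot)$, $\overline{u}(\cdot)$ are bounded and monotone, hence approximable uniformly on $[0,1]$, within any $\varepsilon$, by monotone rational step functions preserving $\underline{u}\le\overline{u}$, and the resulting fuzzy number is $\varepsilon$-close to $u$ in $d$.

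The delicate point, and the step I expect to be the main obstacle, is local compactness. The natural route is to show that some closed ball $\overline{B}(u,r)=\{v:d(u,v)\le r\}$ is compact, i.e. that every sequence in it has a $d$-convergent subsequence; elements of the ball correspond to pairs of uniformly bounded monotone endpoint functions, so Helly's selection principle gives a subsequence converging pointwise in $\alpha$, but upgrading pointwise to the uniform-in-$\alpha$ convergence that $d$ demands is precisely what can fail. I would therefore need either an extra equicontinuity-type control (a modulus of continuity for the cut maps, uniform over the ball) so as to run an Arzelà--Ascoli argument, or a restriction to a subspace on which such control holds; absent that, the blanket assertion of local compactness for the full $(E^1,d)$ should be flagged as the critical gap, while completeness, separability, and the three metric identities go through as sketched.
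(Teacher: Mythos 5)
The paper states this proposition without proof, so there is nothing internal to compare your plan against; judged on its own, your handling of the metric identities and of completeness is the standard argument and is fine: cut-wise computations with $d_{H}$ on intervals (note that item 1) as printed should read $d(u+v,u+w)=d(v,w)$, which is what you actually prove), and for completeness the cut-wise Cauchy limits $C_{\alpha}$ in $(P_{K}(\mathbb{R}),d_{H})$ reassembled via the stacking conditions of the preceding theorem, with uniformity in $\alpha$ giving convergence in $d$.

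The genuine gap is in the separability step, not only in local compactness. Uniform-in-$\alpha$ approximation by a countable family of ``rational step'' fuzzy numbers fails: an endpoint function with a jump at an irrational level $t$ cannot be approximated in supremum norm by step functions whose jump levels range over a countable set, since on the small interval between $t$ and the nearest admissible jump level the two functions differ by the full jump height. Concretely, for $t\in(0,1)$ let $u_{t}\in E^{1}$ be defined by $[u_{t}]^{\alpha}=[0,1]$ for $\alpha\leq t$ and $[u_{t}]^{\alpha}=\{0\}$ for $\alpha>t$; then $d(u_{s},u_{t})=1$ whenever $s\neq t$, so $(E^{1},d)$ contains an uncountable $1$-separated family and is \emph{not} separable for the supremum metric $d$ defined in this chapter. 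Rescaling the same family to $[0,r]$ and translating by an arbitrary $u$ (using your identity 1)) shows that no ball is totally bounded, so your suspicion about local compactness should be promoted from ``critical gap'' to outright failure: with $d=\sup_{\alpha}d_{H}$ the space is complete but neither separable nor locally compact (those extra properties hold only for weaker metrics, e.g.\ integral-type metrics $d_{p}$, which are then not complete). The correct outcome of your plan is therefore: identities 1)--3) and completeness proved; separability and local compactness refuted rather than proved.
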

\begin{remarque} 
L'espace $\left(E^{1}, d\right)$ est un espace normé linéaire avec $\|u\|=d(u, 0)$.
\end{remarque}
\begin{definition}
Soit $u, v \in E^{1} .$ nous mettons $u^{\alpha}=[\underline{u}(\alpha), \bar{u}(\alpha)]$
et $v^{\alpha}=[\underline{v}(\alpha), \bar{v}(\alpha)]$. On définit la multiplication de $ u $ et $ v $ par:
$$
\begin{aligned}
(u v)^{\alpha}=&[\min \{\underline{u}(\alpha) \underline{v}(\alpha), \underline{u}(\alpha) \bar{v}(\alpha), \bar{u}(\alpha) \underline{v}(\alpha), \bar{u}(\alpha) \bar{u}(\alpha)\}\\
&\max \{\underline{u}(\alpha) \underline{v}(\alpha), \underline{u}(\alpha) \bar{v}(\alpha), \bar{u}(\alpha) \underline{v}(\alpha), \bar{u}(\alpha) \bar{u}(\alpha)\}]
\end{aligned}
$$
\end{definition}
\begin{definition}
La différence généralisée de Hukuhara de deux nombres flous $ u, v \in E^{1} $ est définie comme suit:
$$
u-_{g} v=w \Leftrightarrow\left\{\begin{array}{ll}
u= & v+w \\
\text { or } & v=u+(-1) w
\end{array}\right.
$$
\end{definition}
En terme de $\alpha$-niveaux que nous avons :
$$
\left(u-_{g} v\right)^{\alpha}=[\min \{\underline{u}(\alpha)-\underline{v}(\alpha), \bar{u}(\alpha)-\bar{v}(\alpha)\},\max \{\underline{u}(\alpha)-\underline{v}(\alpha), \bar{u}(\alpha)-\bar{v}(\alpha)\}].$$
et les conditions d'existence de $w=u-_{g} v \in E^{1}$ sont \\
\begin{center}
cas (i) $\left\{\begin{array}{l}\underline{w}(\alpha)=\underline{u}(\alpha)-\underline{v}(\alpha) \text { and } \bar{w}(\alpha)=\bar{u}(\alpha)-\bar{v}(\alpha) \\ \text { with } \underline{w}(\alpha) \text { increasing, } \bar{w}(\alpha) \text { decreasing, } \underline{w}(\alpha) \leq \bar{w}(\alpha)\end{array}\right.$\\
cas(ii) $\left\{\begin{array}{l}\underline{w}(\alpha)=\bar{u}(\alpha)-\bar{v}(\alpha) \text { and } \bar{w}(\alpha)=\underline{u}(\alpha)-\underline{v}(\alpha) \\ \text { with } \underline{w}(\alpha) \text { increasing, } \bar{w}(\alpha) \text { decreasing, } \underline{w}(\alpha) \leq \bar{w}(\alpha)\end{array}\right.$\\
\end{center}
pour tout $\alpha \in[0,1]$.\\
Dans le reste de cet article, nous supposons que $u-_{g}$ $v \in E^{1}$
\begin{proposition}
$$
\left\|u-_{g} v\right\|=d(u, v)
$$
\end{proposition}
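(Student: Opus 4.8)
L'idée est de tout exprimer au moyen des $\alpha$-coupes, en combinant la formule explicite de la distance $d$, celle de la norme $\|\cdot\|$, et l'expression des $\alpha$-niveaux de $u-_g v$ rappelée juste avant l'énoncé. Je commencerais par écrire, par définition de la norme et puisque la coupe de niveau $\alpha$ du nombre flou nul est l'intervalle dégénéré $[0,0]$,
$$
\|u-_g v\| = d(u-_g v,0) = \sup_{\alpha\in(0,1]}\max\left\{\bigl|\underline{(u-_g v)}(\alpha)\bigr|,\ \bigl|\overline{(u-_g v)}(\alpha)\bigr|\right\}.
$$
Ensuite, en posant pour chaque $\alpha$ les réels $a_\alpha=\underline u(\alpha)-\underline v(\alpha)$ et $b_\alpha=\bar u(\alpha)-\bar v(\alpha)$, la formule des $\alpha$-niveaux de la différence généralisée de Hukuhara donne $\underline{(u-_g v)}(\alpha)=\min\{a_\alpha,b_\alpha\}$ et $\overline{(u-_g v)}(\alpha)=\max\{a_\alpha,b_\alpha\}$.

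\textbf{Point central.} Le seul ingrédient à établir est l'identité élémentaire suivante : pour tous réels $a,b$,
$$
\max\bigl\{|\min\{a,b\}|,\ |\max\{a,b\}|\bigr\} = \max\bigl\{|a|,\ |b|\bigr\}.
$$
Elle est immédiate car $\{\min\{a,b\},\max\{a,b\}\}=\{a,b\}$ en tant que paire (seul l'ordre des deux éléments change), et le maximum des valeurs absolues prises sur ces deux mêmes nombres ne dépend pas de l'ordre dans lequel on les range. En appliquant cette identité avec $a=a_\alpha$, $b=b_\alpha$ pour chaque $\alpha\in(0,1]$, puis en passant au supremum, on obtient
$$
\|u-_g v\| = \sup_{\alpha\in(0,1]}\max\bigl\{|\underline u(\alpha)-\underline v(\alpha)|,\ |\bar u(\alpha)-\bar v(\alpha)|\bigr\} = d(u,v),
$$
cette dernière égalité étant exactement la formule pour $d(u,v)$ rappelée plus haut.

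\textbf{Obstacle.} Il n'y en a pas de réel : la seule subtilité est que l'ordre relatif entre $a_\alpha$ et $b_\alpha$ peut varier avec $\alpha$ — c'est précisément ce qui distingue la différence généralisée de la différence de Hukuhara classique, et ce qui fait qu'on ne peut pas simplement « faire sortir » le $-_g$ — mais l'identité des valeurs absolues ci-dessus est insensible à cette alternance de cas, de sorte que le passage par $-_g$ laisse la distance inchangée. On prendra garde de rappeler l'hypothèse, déjà posée dans le texte, que $u-_g v\in E^1$ : elle assure que $\underline{(u-_g v)}$ et $\overline{(u-_g v)}$ sont bien les fonctions bornes inférieure et supérieure d'un nombre flou, donc que $d(u-_g v,0)$ et le supremum considéré ont un sens.
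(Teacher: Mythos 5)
Votre démonstration est correcte et complète : la réduction $\|u-_g v\|=d(u-_g v,\tilde 0)=\sup_{\alpha}\max\{|\underline{(u-_gv)}(\alpha)|,|\overline{(u-_gv)}(\alpha)|\}$, la formule des $\alpha$-niveaux de la différence généralisée, puis l'identité $\max\{|\min\{a,b\}|,|\max\{a,b\}|\}=\max\{|a|,|b|\}$ (vraie parce que $\{\min\{a,b\},\max\{a,b\}\}=\{a,b\}$ en tant que paire) donnent exactement $d(u,v)$, et vous avez raison de rappeler l'hypothèse $u-_g v\in E^{1}$ posée dans le texte juste avant l'énoncé, qui garantit que toutes les quantités manipulées ont un sens. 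Notez toutefois que le mémoire énonce cette proposition sans aucune démonstration, de sorte qu'il n'y a pas de preuve interne à laquelle comparer la vôtre ; le passage le plus proche est la proposition du chapitre 2 établissant $\|A\Theta_g B\|=\rho_2(A,B)$ pour des convexes compacts, prouvée au moyen des fonctions de support et du changement de variable $p\mapsto -q$ dans l'intégrale définissant $\rho_2$. Votre argument est l'analogue unidimensionnel de ce résultat pour la métrique $d$ de $E^{1}$ : il est plus élémentaire (pas de fonctions de support, seulement les bornes des $\alpha$-coupes) et il traite uniformément les deux cas (i) et (ii) de la différence $-_g$ — y compris la possibilité que le cas change avec $\alpha$ — de la même manière que la preuve par fonctions de support traite uniformément $s_C=s_A-s_B$ et $s_C=s_{-B}-s_{-A}$.
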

Puisque $\|.\|$ est une norme sur $E^{1}$ et par la proposition $ 3 $ nous avons :
\begin{proposition}
$$
\left\|\lambda u-_{g} \mu u\right\|=|\lambda-\mu|\|u\|
$$
\end{proposition}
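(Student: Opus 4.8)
The plan is to reduce the claimed identity to the positive homogeneity of $\|\cdot\|$, which is already available from the properties of $d$ listed earlier: taking $v=0$ in the property $d(\lambda u,\lambda v)=|\lambda|\,d(u,v)$ gives $\|\lambda u\|=|\lambda|\,\|u\|$ for every scalar $\lambda$ and every $u\in E^1$, since $\lambda\cdot 0=0$ and $\|w\|=d(w,0)$. If the level-wise identity $\lambda u -_g \mu u=(\lambda-\mu)\,u$ holds in $E^1$, the proposition then follows in one line,
\[
\big\|\lambda u -_g \mu u\big\|=\big\|(\lambda-\mu)\,u\big\|=|\lambda-\mu|\,\|u\| .
\]
An equivalent route is to start from the preceding proposition $\|u -_g v\|=d(u,v)$, so that $\|\lambda u -_g \mu u\|=d(\lambda u,\mu u)$, and then compute $d(\lambda u,\mu u)$ from the $\alpha$-cut description of $d$; this leads to the same bookkeeping.

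To establish $\lambda u -_g \mu u=(\lambda-\mu)\,u$ I would argue cut by cut. Fix $\alpha\in[0,1]$, write $u^\alpha=[\underline u(\alpha),\bar u(\alpha)]$, use $(\lambda u)^\alpha=\lambda u^\alpha$ and $(\mu u)^\alpha=\mu u^\alpha$ to read off the endpoint functions of $\lambda u$ and $\mu u$, and substitute them into the $\alpha$-level formula for $-_g$ recalled just before the statement. When $\lambda,\mu\ge 0$ the two candidate differences in that formula are $(\lambda-\mu)\underline u(\alpha)$ and $(\lambda-\mu)\bar u(\alpha)$, whose $\min$ and $\max$ rebuild exactly the interval $(\lambda-\mu)\,u^\alpha$: no reordering if $\lambda\ge\mu$, and a swap of the two endpoints if $\lambda<\mu$, which is precisely the effect of scaling $u^\alpha$ by the negative number $\lambda-\mu$. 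The case $\lambda,\mu\le 0$ runs identically after recording that now $\underline{(\lambda u)}(\alpha)=\lambda\bar u(\alpha)$ and $\overline{(\lambda u)}(\alpha)=\lambda\underline u(\alpha)$. Since the level-wise equality holds for every $\alpha$ and, by the standing assumption made just above the proposition, $\lambda u -_g \mu u$ is already an element of $E^1$, the two fuzzy numbers coincide, and the reduction above finishes the proof.

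The hard part is precisely this case analysis on the signs of $\lambda$, $\mu$ and of $\lambda-\mu$: the generalized Hukuhara difference is not bilinear, so one cannot factor $\lambda-\mu$ out by a formal manipulation, and the mixed-sign configuration ($\lambda\ge 0\ge\mu$ or $\mu\ge 0\ge\lambda$) is the genuinely delicate one, because there the lower endpoint of $\lambda u^\alpha$ and the lower endpoint of $\mu u^\alpha$ are inherited from opposite endpoints of $u^\alpha$, so the $\min/\max$ in the level formula must be recomputed from scratch. This is the step to scrutinise: one must check whether the correctly ordered interval $(\lambda-\mu)\,u^\alpha$ still emerges in that regime, and if it does not, the clean statement should be read under the natural extra hypothesis $\lambda\mu\ge 0$ (which is all that is needed in the applications that follow). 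Everything past the level-wise identity is then immediate from the homogeneity of $\|\cdot\|$ quoted at the outset.
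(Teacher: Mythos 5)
Your proposal is correct and, in its second route, coincides with what the paper actually does: the paper gives essentially a one-line argument, invoking Proposition 3 ($\|u-_{g}v\|=d(u,v)$) together with the metric/norm properties of $d$, so that $\|\lambda u-_{g}\mu u\|=d(\lambda u,\mu u)$ and the factor $|\lambda-\mu|$ is extracted from translation invariance and homogeneity of $d$ (writing $\lambda u=\mu u+(\lambda-\mu)u$ for scalars of the same sign). Your first route — proving the level-wise identity $\lambda u-_{g}\mu u=(\lambda-\mu)u$ from the $\alpha$-cut formula and then using $\|(\lambda-\mu)u\|=|\lambda-\mu|\,\|u\|$ — is more explicit and self-contained than the paper's, and the case analysis you carry out for $\lambda,\mu\geq 0$ and $\lambda,\mu\leq 0$ is exactly right. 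Your caution about the mixed-sign regime is also well founded and is a point the paper silently glosses over: for a symmetric fuzzy number $u$ with $[u]^{\alpha}=[-a(\alpha),a(\alpha)]$ and $\lambda=1$, $\mu=-1$ one has $\lambda u=\mu u$, hence $\lambda u-_{g}\mu u=\{0\}$ and the left-hand side vanishes, while the right-hand side equals $2\|u\|$; so the identity genuinely requires $\lambda\mu\geq 0$ (which is all that is needed where the paper uses it, namely for the difference quotients with $h>0$ in the fuzzy semigroup section). In short, both of your routes are valid under that restriction, and your explicit identification of where the unrestricted statement breaks down is an improvement on the paper's terse justification.
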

\section{Semi-groupe flou }
Cette section est consacrée à la notion de semigroupe flou et aux propriétés liées à ce concept.
\subsection{Domain de semi-groupe flou }
Dans cette sous-section, nous présenterons le domaine du semigroupe flou d'opérateur en utilisant la différence généralisée de Hukuhara.\\
 Nous avons commencé par le théorème suivant.
\begin{theorem}
 Il existe un véritable espace de Banach $ X $ tel que $ E ^ {1} $ puisse être incorporé comme un cône convexe $ C $ avec le sommet $0$ dans $ X $. En outre, les conditions suivantes sont vérifiées:\\
(i) l'incorporation $ j $ est isométrique,\\
(ii) l'addition en $ X $ induit l'addition en $E^{1}$,\\
(iii) la multiplication par un nombre réel non négatif en $ X $ induit l'opération correspondante en $E^{1}$,\\
(iv) $C-C=\left\{a-b / a, b \in E^{1}\right\}$ est dense dans $X$,\\
(v) $C$ est fermé.
\end{theorem}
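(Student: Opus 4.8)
This is the fuzzy analogue of Rådström's embedding theorem: encode each fuzzy number by the support functions of its $\alpha$-cuts and land in a concrete space of bounded functions, then pass to the closed linear span. Concretely, let $B$ be the real vector space of all bounded maps $f:[0,1]\to\mathbb R^2$, written $f(\alpha)=(f_1(\alpha),f_2(\alpha))$, normed by $\|f\|=\sup_{\alpha\in[0,1]}\max\{|f_1(\alpha)|,|f_2(\alpha)|\}$; this is a Banach space. For $u\in E^1$ write $[u]^\alpha=[\underline u(\alpha),\bar u(\alpha)]$ (licit, since by the defining properties of $E^1$ every level set is a nonempty compact interval contained in the compact set $[u]^0$), and define $j:E^1\to B$ by $j(u)(\alpha)=(\underline u(\alpha),\bar u(\alpha))$. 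Then $j(u)$ is bounded because all level sets sit inside $[u]^0$, so $j$ is well defined; and $j$ is injective because a fuzzy number is recovered from its family of $\alpha$-cuts (decomposition theorem), hence from the pair $(\underline u,\bar u)$.

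\textbf{Properties (i)--(iii).} The isometry (i) is immediate from the explicit formula for $d$ recalled just above: $\|j(u)-j(v)\|=\sup_{\alpha}\max\{|\underline u(\alpha)-\underline v(\alpha)|,|\bar u(\alpha)-\bar v(\alpha)|\}=d(u,v)$. For (ii), the Zadeh-extension identity $(u+v)^\alpha=u^\alpha+v^\alpha$ gives $\underline{(u+v)}=\underline u+\underline v$ and $\overline{(u+v)}=\bar u+\bar v$, i.e. $j(u+v)=j(u)+j(v)$, so the addition of $B$ restricted to $C:=j(E^1)$ is the addition of $E^1$. For (iii), $(\lambda u)^\alpha=\lambda u^\alpha$ for $\lambda\ge 0$ gives $j(\lambda u)=\lambda\,j(u)$, so non-negative scalar multiplication in $B$ restricts on $C$ to that of $E^1$.

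\textbf{The cone $C$, the space $X$, and properties (iv)--(v).} Since $j(\tilde 0)=0\in B$ for the crisp number $\tilde 0$, and $\lambda C\subseteq C$ for $\lambda\ge 0$ by (iii), while $\lambda u+(1-\lambda)v\in E^1$ with $j(\lambda u+(1-\lambda)v)=\lambda j(u)+(1-\lambda)j(v)$ by (ii)--(iii), the set $C$ is a convex cone with vertex $0$. Consequently $C-C=\{a-b:a,b\in C\}$ is a linear subspace of $B$ (closed under addition since $C$ is, and under scalars since $\lambda(a-b)=\lambda a-\lambda b$ for $\lambda\ge0$ and $-(a-b)=b-a$). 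Put $X=\overline{C-C}$, the closure in $B$; as a closed subspace of a Banach space it is a real Banach space, and $C-C$ is dense in $X$ by construction, giving (iv). Finally, by (i) the map $j$ is an isometry of $(E^1,d)$ onto $C$; since $(E^1,d)$ is complete (Proposition above), $C$ is a complete, hence closed, subset of $X$, giving (v).

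\textbf{Main obstacle.} Almost everything is bookkeeping resting on facts already in hand (the Zadeh-extension behaviour of $\alpha$-cuts, the explicit form of $d$, completeness of $(E^1,d)$, the decomposition theorem). The genuinely structural points to get right are: that $j$ faithfully encodes $u$ (injectivity, via level sets) and produces a \emph{bounded} function; and that $C-C$ is a linear subspace --- which is precisely where the convex-cone structure of $C$ is used, and without which $X=\overline{C-C}$ would not be a vector space. Neither is deep, but the second is the conceptual heart of the statement.
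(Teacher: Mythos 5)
The paper itself gives no proof of this theorem: it is stated as a known result (the Rådström--Kaleva/Puri--Ralescu embedding) and only used afterwards through the notation $j$, e.g.\ in $T_{1}(t)=j\,T(t)\,j^{-1}$. So your argument can only be measured against the standard proofs in the literature, and as such it is correct and self-contained. Your route is the concrete one: encode $u\in E^{1}$ by its endpoint functions $\alpha\mapsto(\underline{u}(\alpha),\bar{u}(\alpha))$ inside the Banach space of bounded $\mathbb{R}^{2}$-valued functions on $[0,1]$, and every item of the statement then reduces to facts the paper already records — levelwise addition and non-negative scalar multiplication via Zadeh's extension give (ii)--(iii), the explicit formula for $d$ gives the isometry (i), the decomposition theorem gives injectivity of $j$, completeness of $(E^{1},d)$ gives closedness of $C$ (v), and your observation that $C-C$ is a linear subspace (the real structural point, correctly argued via $-(a-b)=b-a$ and non-negative homogeneity) legitimizes $X=\overline{C-C}$, whence (iv). The classical proof proceeds differently: $X$ is built abstractly as the completion of the group of formal differences $E^{1}\times E^{1}$ modulo $(u,v)\sim(u',v')\Leftrightarrow u+v'=u'+v$, which requires the cancellation law $u+w=v+w\Rightarrow u=v$ and a check that the quotient metric is well defined; your construction avoids that machinery entirely, at the price of being tied to the interval (endpoint) representation of the $\alpha$-cuts — in $\mathbb{R}^{n}$ one would replace it by the support functions $s_{u}(p;\alpha)$ of Chapter 2, which is exactly the multidimensional version of the same idea.

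Two small points deserve a sentence each, though neither is a gap. First, your norm takes the supremum over $\alpha\in[0,1]$ while $d$ is defined by a supremum over $(0,1]$; these coincide because $[u]^{0}=cl\bigl(\bigcup_{\alpha>0}[u]^{\alpha}\bigr)$, so $\underline{u}(0)$ and $\bar{u}(0)$ are the limits of $\underline{u}(\alpha)$ and $\bar{u}(\alpha)$ as $\alpha\to 0^{+}$, and the value at $\alpha=0$ never exceeds the supremum over $(0,1]$. Second, you use implicitly that $C\subseteq C-C$, which holds since $0=j(\tilde{0})\in C$ and $a=a-0$; stating it makes explicit that $C$ really sits inside $X=\overline{C-C}$, as the theorem requires.
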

 Nous donnons également la définition suivante de la carte linéaire floue:
\begin{definition}
Une application $p: E^{1} \rightarrow E^{1}$ est dit une application linéaire si:\\
1) $p(x+y)=p(x)+p(y), \quad \forall x, y \in E^{1}$;\\
2) $p(\lambda x)=\lambda p(x), \quad \forall \lambda \in \mathbb{R}$.
\end{definition}
\begin{definition}
 Une famille $\{T(t), t \geq 0\}$ d'opérateurs linéaires flous de $E^{1}$ en lui-même est un semi-groupe flou fortement continu si\\
(i) $T(0)=i$, l´application identité sur $E^{1}$,\\
(ii) $T(t+s)=T(t) T(s)$ pour tout $t, s \geq 0$,\\
(iii) La fonction $g:\left[0, \infty\left[\rightarrow E^{1}\right.\right.$, definé par $g(t)=T(t) x$ est continue en $t=0$ pour tout $x \in E^{1}$ c'est à dire
$$
\lim _{t \rightarrow 0^{+}} T(t) x=x.
$$
\end{definition}
Nous nous inspirons du cas «net», nous obtenons la définition suivante.
\begin{definition}
Soit $\{T(t), t \geq 0\}$ un semi-groupe flou fortement continu sur $E^{1}$ et $x \in E^{1}$. nous mettons
$$
A x=\lim _{h \rightarrow 0^{+}} \frac{T(h) x-_{g} x}{h}
$$
chaque fois que cette limite existe dans l'espace métrique $\left(E^{1}, d\right)$. Alors l'opérateur $ A $ défini sur :
$$
D(A)=\left\{x \in E^{1}: \lim _{h \rightarrow 0^{+}} \frac{T(h) x-_{g} x}{h} \text { exists }\right\} \subset E^{1}
$$
est appelé le générateur infinitésimal du semi-groupe flou $\{T(t), t \geq 0\} .$
\end{definition}
\begin{theorem}
 Soit $T(t)$ un semi-groupe flou fortement continu. Il existe une constante $\omega \geq 0$ et $M \geq 1$ telle que
$$
d(T(t) x, \tilde{0}) \leq M e^{\omega t}, \quad \forall x \in E^{1}.
$$
\end{theorem}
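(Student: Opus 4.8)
The plan is to transplant the classical uniform-boundedness argument for $C_{0}$-semigroups to the present metric-cone setting. First I would record that, by the identification $\|u\|=d(u,\tilde{0})$, the claimed bound is $d(T(t)x,\tilde{0})=\|T(t)x\|\le Me^{\omega t}$, and that, since each $T(t)$ is a fuzzy linear operator, it is natural to set $\|T(t)\|:=\sup\{\|T(t)x\|:\ x\in E^{1},\ \|x\|\le 1\}$; positive homogeneity of $T(t)$ then gives $\|T(t)x\|\le\|T(t)\|\,\|x\|$ for every $x$, so it suffices to produce $M\ge 1$ and $\omega\ge 0$ with $\|T(t)\|\le Me^{\omega t}$. (One could instead transport the semigroup to the Banach space $X$ of the embedding theorem stated above and quote the classical result verbatim, but the essential difficulty — showing $\|T(t)\|$ is finite and bounded near $t=0$ — is the same either way, so I would argue directly in $E^{1}$.)

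The decisive step is local boundedness: there are $\eta>0$ and $M\ge 1$ with $\|T(t)\|\le M$ for $t\in[0,\eta]$. I would argue by contradiction. If no such $\eta$ works, choose $t_{n}\to 0^{+}$ with $\|T(t_{n})\|\to\infty$. For each fixed $x$, strong continuity at $0$ gives $\delta_{x}>0$ with $\|T(t)x-_{g}x\|\le 1$, hence $\|T(t)x\|\le\|x\|+1$, for $t\in[0,\delta_{x}]$; as $t_{n}\le\delta_{x}$ for all large $n$ and each of the remaining finitely many $\|T(t_{n})x\|$ is finite, $\sup_{n}\|T(t_{n})x\|<\infty$. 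Now I would run Banach--Steinhaus by hand on the \emph{complete} metric space $(E^{1},d)$: the sets $F_{k}=\{x\in E^{1}:\ \sup_{n}\|T(t_{n})x\|\le k\}$ are closed (using continuity of each $T(t_{n})$) and cover $E^{1}$, so by Baire some $F_{k_{0}}$ has non-empty interior and contains a ball $\bar B(x_{0},r)$. For $\|y\|\le r$, translation invariance of the metric gives $d(x_{0}+y,x_{0})=d(y,\tilde{0})=\|y\|\le r$, so $x_{0}+y\in F_{k_{0}}$; additivity of $T(t_{n})$ forces the generalized Hukuhara difference $T(t_{n})(x_{0}+y)-_{g}T(t_{n})x_{0}$ to exist and to equal $T(t_{n})y$, whence, by the identity $\|u-_{g}v\|=d(u,v)$ and the triangle inequality through $\tilde{0}$, $\|T(t_{n})y\|=d(T(t_{n})(x_{0}+y),T(t_{n})x_{0})\le 2k_{0}$. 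Thus $\|T(t_{n})\|\le 2k_{0}/r$ for all $n$, contradicting $\|T(t_{n})\|\to\infty$. Hence local boundedness holds, and $M:=\sup_{0\le t\le\eta}\|T(t)\|\ge\|T(0)\|=1$.

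The remaining step is routine. Given $t\ge 0$, write $t=n\eta+\delta$ with $n\in\mathbb{N}$ and $0\le\delta<\eta$; iterating the semigroup law $T(t+s)=T(t)T(s)$ yields $T(t)=T(\delta)\,T(\eta)^{n}$, and submultiplicativity of $\|\cdot\|$ (immediate from $\|T(s)z\|\le\|T(s)\|\,\|z\|$) gives $\|T(t)\|\le\|T(\delta)\|\,\|T(\eta)\|^{n}\le M^{\,n+1}\le M\cdot M^{\,t/\eta}=Me^{\omega t}$ with $\omega:=\eta^{-1}\ln M\ge 0$. Combining with $\|T(t)x\|\le\|T(t)\|\,\|x\|$ gives $d(T(t)x,\tilde{0})\le Me^{\omega t}\,\|x\|$ for all $x\in E^{1}$, which contains the stated estimate.

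I expect the first step to be the real obstacle: Banach--Steinhaus is a linear-space theorem, whereas $E^{1}$ is only a complete ``normed cone'', so it cannot be invoked off the shelf. The way around it is exactly the computation above — replay Baire's theorem on $(E^{1},d)$ and \emph{linearize locally} using the three structural facts available here, namely additivity of $T(t)$, translation invariance $d(u+v,u+w)=d(v,w)$, and $\|u-_{g}v\|=d(u,v)$ — while checking that the relevant gH-differences genuinely exist (they do, since $T(t_{n})(x_{0}+y)=T(t_{n})x_{0}+T(t_{n})y$ exhibits $T(t_{n})y$ as such a difference). A lesser point is that closedness of the $F_{k}$ requires each $T(t)$ to be continuous on $E^{1}$; this should be read into the notion of fuzzy linear operator, or added as a standing hypothesis on the semigroup.
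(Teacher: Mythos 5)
Your proof is correct (modulo the caveat you yourself flag), but it is not the paper's route. The paper disposes of the theorem in two lines via the embedding theorem stated earlier in the chapter: $E^{1}$ sits isometrically as a convex cone $C$ in a real Banach space $X$ through $j$, the semigroup is transported to $T_{1}(t)=jT(t)j^{-1}$, and the classical exponential bound $\|T_{1}(t)\|\le Me^{\omega t}$ for $C_{0}$-semigroups on Banach spaces is simply quoted and pulled back through the isometry to give $d(T(t)x,\tilde{0})\le Me^{\omega t}$. You instead redo the classical estimate intrinsically in $(E^{1},d)$: pointwise boundedness near $t=0$ from strong continuity, a hand-made Banach--Steinhaus via Baire on the complete metric space $(E^{1},d)$ --- legitimate because translation invariance $d(u+v,u+w)=d(v,w)$, additivity and homogeneity of $T(t)$, and $\|u-_{g}v\|=d(u,v)$ let you linearize on a ball --- followed by the usual splitting $t=n\eta+\delta$ and submultiplicativity. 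The paper's route buys brevity, but it hides exactly the work you do explicitly: to quote the Banach-space theorem one must check that each $jT(t)j^{-1}$ extends to a bounded linear operator on $X=\overline{C-C}$ and that the extended family is a strongly continuous semigroup there, which again rests on additivity/homogeneity and on each $T(t)$ being continuous --- an assumption the paper's definition of fuzzy semigroup never states and that you rightly identify as needed (in your case for closedness of the sets $F_{k}$). A small additional merit of your version is that it produces the estimate in its correct homogeneous form $d(T(t)x,\tilde{0})\le Me^{\omega t}\|x\|$, whereas the statement and the paper's two-line proof silently drop the factor $\|x\|$, i.e.\ implicitly restrict to $\|x\|\le 1$.
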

\begin{proof}
 Soit $x \in E^{1}$. On a :
 $$j T(t) x j^{-1}=T_{1}(t) j x$$
  puisque : 
  $$\left\|T_{1}(t)\right\| \leq M e^{\omega t},$$
   alors :
   $$d(T(t) x, \tilde{0}) \leq M e^{\omega t}$$
 \end{proof}
 \begin{corollaire}
Si $T(t)$ est un semi-groupe flou d'opérateurs alors pour chaque $x \in E^{1}, t \rightarrow T(t) x$ est une fonction continue de $\mathbb{R}^{+}$ vers $E^{1}$.
\end{corollaire}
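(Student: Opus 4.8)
Le plan est de ramener l'énoncé au résultat classique correspondant pour les semi-groupes fortement continus sur un espace de Banach, via l'immersion isométrique $j : E^{1} \hookrightarrow X$ fournie par le théorème d'immersion de $E^{1}$ dans un espace de Banach. Sous cette immersion, le semi-groupe flou $\{T(t)\}$ induit une famille $\{T_{1}(t)\}$ d'opérateurs linéaires bornés sur $X$ vérifiant $j\,T(t)x = T_{1}(t)\,jx$ pour tout $x \in E^{1}$ et $\|T_{1}(t)\| \le M e^{\omega t}$, comme établi dans la preuve du théorème précédent ; de plus l'identité $T(t+s) = T(t)T(s)$ se transporte en $T_{1}(t+s) = T_{1}(t)T_{1}(s)$. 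Comme $j$ est isométrique, il suffit de montrer que $t \mapsto T_{1}(t)\,jx$ est continue de $\mathbb{R}^{+}$ dans $X$, puisqu'alors $d\big(T(s)x, T(t)x\big) = \|T_{1}(s)jx - T_{1}(t)jx\|_{X}$.

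Les étapes concrètes seraient les suivantes. D'abord, de l'hypothèse (iii) de forte continuité et de l'isométrie de $j$ on tire $\|T_{1}(h)jx - jx\|_{X} = d\big(T(h)x, x\big) \to 0$ lorsque $h \to 0^{+}$. Ensuite, pour la continuité à droite en un point $t \ge 0$ : on écrit $T_{1}(t+h)jx - T_{1}(t)jx = T_{1}(t)\big(T_{1}(h)jx - jx\big)$, d'où $\|T_{1}(t+h)jx - T_{1}(t)jx\|_{X} \le \|T_{1}(t)\|\,\|T_{1}(h)jx - jx\|_{X} \le M e^{\omega t}\,\|T_{1}(h)jx - jx\|_{X}$, qui tend vers $0$ quand $h \to 0^{+}$. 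Pour la continuité à gauche en un point $t > 0$ : pour $0 < h < t$ on écrit $T_{1}(t-h)jx - T_{1}(t)jx = T_{1}(t-h)\big(jx - T_{1}(h)jx\big)$, d'où $\|T_{1}(t-h)jx - T_{1}(t)jx\|_{X} \le M e^{\omega (t-h)}\,\|jx - T_{1}(h)jx\|_{X} \le M e^{\omega t}\,\|jx - T_{1}(h)jx\|_{X} \to 0$. En combinant les limites à droite et à gauche on obtient la continuité de $t \mapsto T_{1}(t)jx$ en tout $t > 0$, et la continuité à droite en $0$ qui est exactement l'hypothèse (iii) ; en revenant par l'isométrie, $t \mapsto T(t)x$ est continue de $\mathbb{R}^{+}$ dans $(E^{1}, d)$.

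Le point le plus délicat n'est pas dans ces estimations, qui sont de routine, mais dans le fait que $T_{1}(t)$ soit effectivement un opérateur linéaire borné bien défini sur tout l'espace $X$ : a priori il n'est défini que sur le cône $C = j(E^{1})$ ; on l'étend d'abord à $C - C$ par linéarité, en utilisant que $T(t)$ respecte l'addition et la multiplication par un réel positif, puis à $X = \overline{C - C}$ par densité et continuité, la borne uniforme $\|T_{1}(t)\| \le M e^{\omega t}$ sur chaque intervalle borné de $t$ garantissant que l'extension reste bornée et que l'identité de semi-groupe ainsi que la borne exponentielle survivent à ces prolongements. Or tout ceci est précisément le contenu du théorème d'immersion et du théorème précédent, que l'on peut donc invoquer ; il ne reste alors qu'à mettre bout à bout les majorations ci-dessus. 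On remarquera enfin qu'en $t = 0$ seule la continuité à droite a un sens, et qu'elle coïncide avec la définition même d'un semi-groupe flou fortement continu.
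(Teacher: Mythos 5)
Votre démonstration est correcte et repose, au fond, sur la même estimation clé que celle du mémoire : $d(T(t+h)x,T(t)x)\le M e^{\omega t}\, d(T(h)x,x)$, obtenue en combinant la loi de semi-groupe $T(t+h)=T(t)T(h)$ et la borne exponentielle du théorème précédent, puis en faisant $h\to 0^{+}$ grâce à la continuité forte (iii). La différence est de présentation et de complétude : le mémoire écrit l'estimation directement dans $(E^{1},d)$ sous la forme $d(T(t)T(h)x,T(t)x)\le d(T(t)x,\widetilde{0})\,d(T(h)x,x)$ — un produit de distances qui n'a de sens que comme l'estimation en norme d'opérateur $\|T_{1}(t)\|\,\|T_{1}(h)jx-jx\|$ déguisée — et ne traite que la limite $h\to 0^{+}$, donc seulement la continuité à droite ; vous, au contraire, passez explicitement par l'immersion isométrique $j$ et l'opérateur induit $T_{1}(t)$ sur l'espace de Banach $X$, ce qui rend l'inégalité licite, et vous ajoutez l'argument de continuité à gauche via $T_{1}(t-h)jx-T_{1}(t)jx=T_{1}(t-h)\bigl(jx-T_{1}(h)jx\bigr)$, que le mémoire omet et qui ne s'écrit d'ailleurs pas directement dans $E^{1}$ faute de structure vectorielle. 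Ce que chaque approche achète : la preuve du mémoire est plus courte mais s'appuie sur une inégalité non justifiée telle quelle et reste incomplète ; la vôtre est complète et rigoureuse, au prix de devoir garantir que $T_{1}(t)$ s'étend bien en un opérateur linéaire borné sur $X=\overline{C-C}$ avec $\|T_{1}(t)\|\le M e^{\omega t}$, point que vous signalez honnêtement et que vous déléguez, comme le mémoire lui-même, au théorème d'immersion et au théorème précédent ; notez seulement que la borne sur le cône $C$ n'entraîne pas automatiquement celle de l'extension linéaire sur $C-C$, de sorte que cette délégation est réellement nécessaire et non un simple détail.
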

\begin{proof}
Soit $t, h \geq 0$, on a
$$
\begin{aligned}
d(T(t+h) x, T(t) x) & \leq d(T(t) T(h) x, T(t) x) \\
& \leq d(T(t) x, \widetilde{0}) d(T(h) x, x) \\
& \leq M e^{\omega t} d(T(h) x, x)
\end{aligned}
$$
Mais $M e^{\omega t} d(T(h) x, x) \rightarrow 0$, comme $h \rightarrow 0$.
\end{proof}
Parmi les conséquences de la différentiabilité générale, le lemme suivant.
\begin{lemme}
 Soit $A$  le générateur d'un semi-groupe flou $\{T(t), t \geq 0\}$ sur $E^{1}$, alors pour tout $x \in E^{1}$ tel que $T(t) x \in$ $D(A)$ pour tout $t \geq 0$, l´application $t \rightarrow T(t) x$ est differentiable et
$$
{ }_{g H} \frac{d}{d t}(T(t) x)=A T(t) x, \quad \forall t \geq 0
$$
\end{lemme}

\begin{proof}
 Par définition de A, nous avons :
 $$ A x=\lim _{t \rightarrow 0} \frac{T(t) x-_{g}^{x}}{t}$$
1) Si $t \rightarrow T(t)$ est $i$-diff alors
$$
\frac{d}{d t} T(t)=A.
$$
2) Si $t \rightarrow T(t)$ est $i i$-diff alors
$$
\frac{d}{d t} T(t)=(-1) A.
$$
Donc
$$
g H \frac{d}{d t} T(t)=A
$$
\end{proof}
\begin{lemme} Soit $A: E^{1} \rightarrow E^{1}$ et $A_{1}=j A j^{-1}$ :
$C \rightarrow C$ deux opérateurs (non linéaires). $ A $ est le générateur infinitésimal d'un semi-groupe flou $\{T(t), t \geq 0\}$ sur $E^{1}$ si et seulement si $A_{1}$ est le générateur infinitésimal du semi-groupe  $\left\{T_{1}(t), t \geq 0\right\}$ défini sur l'ensemble fermé convexe $ C $ par :
$$
T_{1}(t)=j \bar{T}(t) j^{-1} \text { for } t \geq 0.
$$
\end{lemme}
\begin{exemple}
Nous définissons sur $E^{1}$ la famille d'opérateur $\{T(t), t \geq 0\}$ par :
$$
T(t) x=e^{k t} x, \quad k \in \mathbb{R} .
$$
Pour $k \geq 0,\{T(t), t \geq 0\}$ est un semi-groupe flou fortement continu sur $E^{1}$, et l'opérateur linéaire $ A $ défini par $ A x = k x $ est le générateur infinitésimal de ce demi-groupe flou.
\end{exemple}
\chapter{ Arithmétique des nombres flous}
Dans cette chapitre, nous analysons la possibilité de généraliser la différence de Hukuhara pour les ensembles compacts et convexes, y compris le cas particulier des intervalles unidimensionnels et multidimensionnels. Nous suggèrons une définition de l'opération de différence $ \Theta_{g}, $ appelée difference-$ g H $ , qui étend la différence bien connue de Hukuhara, et nous caractérisons son existence (pour les ensembles convexes compacts) en termes de fonctions de support. La différence de gH proposée existe toujours dans le cas de l'intervalle unidimensionnel et une condition simple nécessaire et suffisante pour son existence dans le cas des intervalles multidimensionnels est donnée. La différence gH est également suggérée pour les nombres flous et nous donnons des conditions simples pour son existence et les régles pour la déterminer.

Nous adoptons une approche similaire pour définir une généralisation de la division $ \div_{g}, $ appelée $ g$-division, des intervalles unidimensionnels (pour lesquels il existe toujours) et pour les nombres flous et nous montrons les propriétés et les conditions d'existence.

Les travaux en cours utilisent l'opérateur de différence gH proposé pour définir la différentiabilité généralisée pour les fonctions d'intervalle et de valeurs floues et pour la théorie et les algorithmes numériques dans les équations différentielles d'intervalle et floues.

Dans cette chapitre on étudie l´arithmétique des nombres flous \cite{Ref11}.
\section{Définitions et propriétés}
Considérons un espace vectoriel métrique $X$ avec la topologie induite et en particulier l'espace $X = \mathbb{R}^{n},$ $ n\geq 1$, de vecteurs réels équipés d'opérations standards d'addition et de multiplication scalaire.\\
notons $K(X)$ et $K_{C}(X)$ les espaces des ensembles convexes compacts et compacts non vides de X.
\begin{definition}
étant donné deux sous-ensembles $A, B \subseteq X $ et $ k \in R$, addition de Minkowski et les multiplications scalaires sont définies par :
$$ A + B = \{a + b | a \in A, b \in B \} $$
Et 
$$ k A = \{ka | a \in A \}. $$
\end{definition}
Il est bien connu que l'addition est associative et commutative et à élément neutre $ \{0 \}.$\\
Si $k = -1$, la multiplication scalaire donne l'opposé :
\begin{tcolorbox}{}
$$-A = (-1) A = \{-a | a \in A \}$$
\end{tcolorbox}
Mais, en général, $A + (-A) \neq {0}$, i.e l'opposé de A est pas l'inverse de A dans l'addition de Minkowski (sauf si $A = \{a\}$ est un singleton).
\begin{definition}
La différence de Minkowski est :
$$ A - B = A + (-1) B = \{a - b | a \in A, b \in B\}.$$
\end{definition}
Une première implication de ce fait est que, en général, même s'il est vrai que :
\begin{tcolorbox}
$$ (A + C = B + C) \Leftrightarrow A = B,$$ 
\end{tcolorbox}
La simplification d'addition / soustraction n'est pas valide, c'est-à-dire :
\begin{tcolorbox}
$$ (A + B) - B \neq A.$$
\end{tcolorbox}
Pour surmonter partiellement cette situation, Hukuhara a ajouté la différence H suivante:
\begin{tcolorbox}
\begin{equation}
A \Theta B = C \Leftrightarrow A = B + C
\label{6}
\end{equation}
\end{tcolorbox}
Et une propriété importante de $ \color{purple}{\Theta} $ est que :
\begin{tcolorbox}
$$ A\Theta A = {0}, \forall A \in K (X) $$
\end{tcolorbox}
Et
\begin{tcolorbox}
$$ (A + B)\Theta B = A, \forall A, B \in K (X).$$
\end{tcolorbox}

La différence H est unique, mais une condition nécessaire pour que A $\color{purple}{\Theta}$ B existe est que A contienne une translation $\{c\} + B$ de B.\\
 En général,
 \begin{tcolorbox}
  $$A-B\neq A\Theta B.$$
  \end{tcolorbox}
  D'un point de vue algébrique, la différence de deux ensembles A et B peut être interprétée à la fois en termes d'addition comme dans (\ref{6}) ou en termes d'addition négative, i.e.
  \begin{tcolorbox}
  \begin{equation}
 A\Theta B = C \Leftrightarrow B = A + (-1)C, 
 \label{7}
  \end{equation}
  \end{tcolorbox}
  Où $(-1)C$  est l'ensemble opposé de C.\\
  
  Les conditions (\ref{6}) et (\ref{7}) sont compatibles entre elles et cela suggère une généralisation de la différence de Hukuhara:
 \begin{definition}
 Soit $A, B \in K(X)$; nous définissons la différence Hukuhara généralisée de A et B comme $C \in K(X)$ telle que :
 \begin{equation}
A \Theta_{g} B=C \Longleftrightarrow\left\{\begin{array}{ll}
\text{(i) } A=B+C \\
\text{or} \\
\text{(ii) } B=A+(-1) C
\end{array}\right.
\label{8}
\end{equation}
 \end{definition}
 \begin{proposition}(Unicité de $ A \Theta_{g} B $)\\
  Si $C = A\Theta_{g}B$ existe, il est unique et si aussi $ A\Theta B $ existe alors $ A \Theta_{g} B =A\Theta B. $
 \end{proposition}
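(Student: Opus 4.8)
The claim has two parts: uniqueness of $C=A\,\Theta_g B$ when it exists, and the fact that it coincides with the usual Hukuhara difference $A\,\Theta B$ whenever the latter exists. I would prove both by exploiting the cancellation law for Minkowski addition on convex compact sets, namely $(A+C=B+C)\Leftrightarrow A=B$, which is recalled earlier in the excerpt and follows from Rådström's cancellation lemma.

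\textbf{Step 1 (Uniqueness).} Suppose $C_1,C_2\in K(X)$ both satisfy the defining relation \eqref{8}. Each of them realizes either case (i) or case (ii), so there are four combinations to examine. If both realize case (i), then $A=B+C_1$ and $A=B+C_2$, hence $B+C_1=B+C_2$, and cancellation gives $C_1=C_2$. Symmetrically, if both realize case (ii), then $B=A+(-1)C_1=A+(-1)C_2$, so $(-1)C_1=(-1)C_2$, and applying the map $x\mapsto -x$ (a bijection) yields $C_1=C_2$. The only genuinely delicate combination is the mixed one: say $C_1$ satisfies (i), i.e. $A=B+C_1$, while $C_2$ satisfies (ii), i.e. $B=A+(-1)C_2$. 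Substituting the first into the second gives $B=B+C_1+(-1)C_2$, i.e. $B+\{0\}=B+\big(C_1+(-1)C_2\big)$; by cancellation $C_1+(-1)C_2=\{0\}$. Now a sum of two convex compact sets equals a singleton only if each summand is a singleton, so $C_1=\{c\}$ and $C_2=\{c'\}$ with $c-c'=0$, giving $C_1=C_2$. I would spell out this last observation (that $P+Q=\{0\}$ forces $P,Q$ to be singletons) as a small lemma or inline remark, since it is the one nontrivial point; it follows by comparing diameters or support functions: $h_{P+Q}=h_P+h_Q$ must be linear, forcing $h_P$ and $h_Q$ to be linear, i.e. $P,Q$ singletons.

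\textbf{Step 2 (Compatibility with $\Theta$).} Assume $A\,\Theta B$ exists; call it $C_0$, so $A=B+C_0$ by \eqref{6}. Then $C_0$ satisfies case (i) of \eqref{8}, hence $C_0$ is \emph{a} value of $A\,\Theta_g B$. If additionally $A\,\Theta_g B$ exists (i.e. is well-defined), uniqueness from Step 1 forces $A\,\Theta_g B=C_0=A\,\Theta B$. Strictly one should note that the hypothesis of the Proposition is precisely that $C=A\,\Theta_g B$ exists, so there is nothing circular: existence of $A\,\Theta B$ exhibits one admissible $C$, existence of $A\,\Theta_g B$ plus uniqueness pins it down.

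\textbf{Main obstacle.} The only real subtlety is the mixed case in Step 1, and within it the fact that $P+(-1)Q=\{0\}$ implies $P$ and $Q$ are singletons — without this, two distinct values of $\Theta_g$ could in principle coexist, one of each type. Everything else is a direct application of Minkowski cancellation. I would therefore organize the write-up so that this point is isolated and justified (via support functions or via the elementary diameter inequality $\operatorname{diam}(P+Q)\ge\operatorname{diam}(P)+\operatorname{diam}(Q)$ when... actually $\operatorname{diam}(P+Q)=\operatorname{diam}(P)+\operatorname{diam}(Q)$ for convex sets in the right norm — using support functions is cleanest), and the rest of the argument reads off immediately.
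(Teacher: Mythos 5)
Your proof is correct and follows essentially the same route as the paper's: a case analysis on which of (i)/(ii) each candidate difference satisfies, Minkowski cancellation in the two pure cases, and in the mixed case the identity $C_1+(-1)C_2=\{0\}$ forcing both sets to be singletons, which also yields the compatibility with $\Theta$ since any $A\Theta B$ realizes case (i). The only difference is that you explicitly justify the singleton step (via support functions), which the paper merely asserts.
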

 \begin{proof}
Si $C=A \Theta_{g} B$ existe dans le cas (i), on obtient $C=A \Theta B$ ce qui est unique. Supposons que le cas (ii) soit satisfait pour $C$ et $D$, c'est à dire. $B=A+(-1) C$ et $B=A+(-1) D$; alors $A+(-1) C=A+(-1) D \Longrightarrow(-1) C=(-1) D=$ $\Rightarrow C=D$. Si le cas (i) est satisfait pour $ C $ et le cas (ii) est satisfait pour $ D $, c'est-à-dire $A=B+C$ et $B=A+(-1) D$, alors $B=B+C+(-1) D \Longrightarrow\{0\}=C-D$ et cela n'est possible que si $C=D=\{c\}$ est un singleton.
 \end{proof}
La différence généralisée de Hukuhara $A \Theta_{g} B $ sera appelée la différence gH de A et B.
\begin{remarque}
Une condition  necéssaire pour $ A \Theta_{g} B $ exister est que soit A contient une translation de B (comme pour $ A \Theta B $) ou B contient une translation de A. En fait, pour tout $c \in C$, on a $ B + {c} \subseteq A $ de (i) or $ A + {-c} \subseteq B $ de (ii).
\end{remarque}
\begin{remarque}
Il est possible que $ A = B + C $ et $ B = A + (-1)C $ tenir simultanément; dans ce cas, A et B se traduisent l'un dans l'autre et C est un singleton. En fait, $ A = B + C $ implique $ B + {c} \subseteq A \forall c \in C $ et $ B = A + (-1)C $ implique
$ A - {c} \subseteq B \forall c \in C $ i.e. $ A \subseteq B + {c} $; il s'ensuit que $ A = B + {c} $ et $ B = A + {-c}.$ En revanche, si $ c^{\prime}, c^{\prime\prime} \in C $ alors $ A = B + {c^{\prime}} = B + {c^{\prime\prime}}$ et cela nécessite $c^{\prime} = c^{\prime\prime}.$
\end{remarque}
 \begin{remarque}
 Si $ A \Theta_{g} B $ existe, alors $ B \Theta_{g} A $ existe et $ B \Theta_{g} A =- (A \Theta_{g} B ) . $
 \end{remarque}
 
 \begin{proposition}
La différence gH $ \Theta_{g} $ a les propriétés suivantes:
\begin{enumerate}
\item[1)] $A\Theta_{g} A = \{0\}.$
\item[2)] $\left\{  \begin{array}{lll}
 (a) (A + B)\Theta_{g} B = A ,\\
 (b) A\Theta_{g}(A - B) = B,\\
 (c) A\Theta_{g}(A + B) = -B.
\end{array}\right.$
\item[3)] $A\Theta_{g} B $ existe si et seulement si $ B \Theta_{g} A $ et $ (-B) \Theta_{g} (-A) $ existe et $A\Theta_{g} B  = (-B)\Theta_{g} (-A) = -(B\Theta_{g} A).$
\item[4)] En générale, $ B \Theta_{g} A  = A\Theta_{g} B$ n´implique pas $ A = B $; mais $ A\Theta_{g} B = B\Theta_{g} A = C $ si et seulement si $ C = -C $ et, en particulier, $ C = \{0\} $ si et seulement si $ A = B.$
\item[5)] Si $B\Theta_{g} A $ existe alors chaque $ A + (B\Theta_{g} A) = B $ ou $ B - (B\Theta_{g} A) = A $ et les deux égalités sont valables si et seulement si $ B\Theta_{g} A $ est un singleton.
\item[6)] Si $ B\Theta_{g} A = C  $ existe, alors pour tout $ D \in K(X)$ either  $ (B + D)\Theta_{g} A = C + D $ ou $ B\Theta_{g}(A + D) = C - D.$
\end{enumerate}
\end{proposition}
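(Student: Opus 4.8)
The plan is to reduce every one of the six items to the two alternatives in the definition of $\Theta_g$, namely (i) $A = B + C$ and (ii) $B = A + (-1)C$, and then to push them through three elementary facts already available: the Minkowski cancellation law $(A + C = B + C) \Leftrightarrow A = B$; distributivity, $(-1)(P + Q) = (-1)P + (-1)Q$ together with $(-1)(-1)P = P$; and the observation, used already in the proof of uniqueness, that $C + (-1)C = \{0\}$ forces $C$ to be a singleton. I would also invoke freely, without reproving, the uniqueness of $A\Theta_g B$ and the earlier remark that $B\Theta_g A = -(A\Theta_g B)$ whenever either side exists. Items 1 and 2 are then direct substitutions: for item 1, alternative (i) of $A\Theta_g A = C$ reads $A = A + C$, so cancelling gives $C = \{0\}$; for 2(a), alternative (i) of $(A+B)\Theta_g B = C$ reads $A + B = C + B$, hence $C = A$; for 2(b), writing $A - B = A + (-1)B$, alternative (ii) of $A\Theta_g(A-B) = C$ reads $A + (-1)B = A + (-1)C$, hence $B = C$; for 2(c), alternative (ii) of $A\Theta_g(A+B) = C$ reads $A + B = A + (-1)C$, hence $C = -B$. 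In each case the other alternative either yields the same value or forces a singleton, so uniqueness pins down the answer; note 2(a) recovers the Hukuhara identity $(A+B)\Theta B = A$.

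For item 3 I would multiply both alternatives for $A\Theta_g B = C$ by $-1$: (i) becomes $-A = -B + (-1)C$, which is alternative (ii) for $(-B)\Theta_g(-A)$ with the same $C$, and (ii) becomes $-B = -A + C$, which is alternative (i); hence $(-B)\Theta_g(-A) = A\Theta_g B$ by uniqueness, the argument being reversible so that existence transfers both ways, while the identity with $-(B\Theta_g A)$ is the cited remark. Item 4 follows at once: if $A\Theta_g B = B\Theta_g A = C$, then $C = -(A\Theta_g B) = -C$, and conversely $C = -C$ gives $B\Theta_g A = -(A\Theta_g B) = -C = C$; the subcase $C = \{0\}$ is equivalent to $A = B$ because either alternative with $C = \{0\}$ says exactly $A = B$, while the reverse uses item 1. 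For the non-implication in item 4 I would exhibit a one-dimensional example, e.g. $A = \{0\}$ and $B = [-1,1]$: alternative (ii) gives $A\Theta_g B = [-1,1]$ and alternative (i) gives $B\Theta_g A = [-1,1]$, so the two gH-differences coincide although $A \neq B$.

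Item 5 is simply alternatives (i) and (ii) read as $A + (B\Theta_g A) = B$ and $B - (B\Theta_g A) = A$ respectively; if both hold, substituting one into the other and cancelling leaves $C + (-1)C = \{0\}$, so $C$ is a singleton, and conversely if $C = \{c\}$ then adding $\{-c\}$ or $\{c\}$ turns whichever of (i), (ii) is known into the other. Item 6 carries the same dichotomy one step further: from alternative (i), $B + D = A + (C + D)$ gives $(B+D)\Theta_g A = C + D$, and from alternative (ii), $A + D = B + (-1)(C - D)$ (using $(-1)(C-D) = (-1)C + D$) gives $B\Theta_g(A + D) = C - D$. There is no genuine obstacle in this proposition; the only point requiring care is keeping the two alternatives bookkept correctly throughout, and recognizing that the degenerate coincidences $C + (-1)C = \{0\}$ occur precisely for singletons — which is exactly where the uniqueness proposition and the preceding remarks do the work.
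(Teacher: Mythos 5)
Your proposal is correct and follows essentially the same route as the paper: each item is reduced to the two alternatives (i)/(ii) of the definition of $\Theta_{g}$, then settled by Minkowski cancellation, the singleton degeneracy $C+(-1)C=\{0\}$, uniqueness, and the remark $B\Theta_{g}A=-(A\Theta_{g}B)$. The only differences are cosmetic: for the second part of 4) you invoke that remark instead of the paper's explicit four-case analysis, and for the non-implication in 4) you give the concrete example $A=\{0\}$, $B=[-1,1]$ where the paper gives the general one-dimensional condition $a^{-}+a^{+}=b^{-}+b^{+}$; both are valid.
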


\begin{proof}
La propriété 1 est immédiate. Prouver $2(\mathrm{a})$ si $C=(A+B) \Theta_{g} B$ alors soit $A+B=C+B$ ou  $B=(A+B)+(-1) C=$ $B+(A+(-1) C) ;$ dans le premier cas, il s'ensuit que $C=A$, dans le second cas $A+(-1) C=\{0\}$ et $A$ et $C$ sont des singleton donc $A=C .$ Avec un argument similaire, $2(\mathrm{~b})$ et $2(\mathrm{c})$ peut être prouvé. Pour prouver la première partie de $(3)$ soit $C=A \Theta_{g} B$ selon le cas (i), c'est-à-dire $A=B+C$, alors $A=B-(-C)$ et $B \odot_{g} A=-C$ selon le cas (ii); si à la place $C=A \Theta_{g} B$ selon le cas (ii), c'est-à-dire $B=A-C$, alors $B=A+(-C)$ et $B \Theta_{g} A=-C$ selon le cas (i); d'autre part, si $A=B+C$ ou $B=A-C$, alors $-A=-B+(-C)$ ou $-B=-A+C$ et cela signifie $(-B) \Theta_{g}(-A)=C .$ Pour voir la première partie de (4) considérons par exemple le cas unidimensionnel $A=\left[a^{-}, a^{+}\right], B=\left[b^{-}, b^{+}\right] ;$ égalité $A-B=B-A$ est valide si $a^{-}+a^{+}=b^{-}+b^{+}$ et cela ne nécessite pas $A=B$ (à moins que $A$ et $B$ sont des singletons). Pour la deuxième partie de (4), à partir de $\left(A \Theta_{g} B\right)=\left(B \Theta_{g} A\right)=C$,considérant les quatre combinaisons dérivées de (3), l'un des quatre cas suivants est valide: $(A=B+C$ et $B=A+C)$ ou $(A=B+C$ et $A=B-C)$ ou $(B=A+(-1) C$ et $B=A+C)$ ou $(B=A+(-1) C$ et $A=B+(-1) C) ;$ dans chacun d'eux nous déduisons $C=-C$ et $C=\{0\}$ si et seulement si $ A = B $. Pour voir (5), considérez que si $\left(B \odot_{g} A\right)$ existe au sens de (i) la première égalité est valide et si elle existe au sens de (ii) le second est valide. Pour prouver (6), si $B=A+C$ alors $B+D=A+C+D$ et $(B+D) \Theta_{g} A=C+D$ selon le cas (i); si $ A = B-C $ alors $ A + D = B-C + D = B- (C-D) $ et $B \odot_{g}(A+D)=C-D$ selon le cas (ii).
\end{proof}

\begin{remarque}
 L'équivalence $(A\Theta B) = C \Leftrightarrow (A\Theta C) = B $ est valable uniquement pour $ \Theta $, ou pour  $ \Theta_{g} $ dans le cas (i).
 En fait, si $ A\Theta_{g} B = C $ dans le sens (ii), alors $ B = A-C $   et cela n'implique pas $ A = B+C $ ni $ C = A - B $ à moins que $ B = B+C -C $ (i.e. $ C = \{\widehat{c}\} $ ) ou $ B = B + A - A$  (i.e. $ A =\{\widehat{a}\} $ ). Notez également que, en général, $A+\left(B \Theta_{g} A\right) \neq A$ et $A-\left(A \Theta_{g} B\right) \neq B$
\end{remarque}
Pour les ensembles $   A, B \in K(X)$ sur un espace normé $(X,\parallel .\parallel)$ la distance de Hausdorff est définie comme d'habitude par:
$$ H(A, B) = \max \{d_{H} (A, B), d_{H} (B, A) \}, $$
où
$$ d_{H} (A, B) = \sup_{a\in A}\inf_{b\in B}\parallel a - b\parallel $$ 
et
 $$ d_{H} (B, A) = \sup_{b\in B}\inf_{a\in A}\parallel a - b\parallel $$
 
nous dénotons $\parallel A\parallel_{H} = H(A, \{0\}) = \sup_{a\in A}\parallel a\parallel .$\\
Si $ X = \mathbb{R}^{n} $, $ n \geq 1 $ est l'espace vectoriel réel à n dimensions avec produit interne $ <x, y>$ et norme correspondante $\parallel  x \parallel =\sqrt{<x, x>} $, nous désignons par $ K^{n} $ et $ K^{n}_{c} $ les espaces d'ensembles convexes compacts et compacts (non vides) de $ \mathbb{R}^{n} $, respectivement.\\
Si $ A \subseteq \mathbb{R}^{n} $ et $ S^{n-1} = \{p |p \in \mathbb{R}^{n},\parallel p\parallel = 1\}$ est la sphère unitaire, la fonction de support associée à A est
$$
\begin{aligned}
s_{A}    &: \mathbb{R}^{n} \longrightarrow \mathbb{R} \\
& p \longrightarrow s_{A}(p) =\sup \{\langle p, a\rangle \mid a \in A\}.
\end{aligned}
$$
Si $ A \neq \varnothing  $ est compact, alors $ s_{A}(p) \in \mathbb{R},\forall p \in S^{n-1}$. Les propriétés suivantes sont bien connues:
\begin{enumerate}
\item[1)] Toute fonction $s: \mathbb{R}^{n} \longrightarrow \mathbb{R}$ qui est continue (ou, plus généralement, semi-continue supérieure), positivement homogène $s(t p)=t s(p), \forall t \geq 0, \forall p \in \mathbb{R}^{n}$ et sous-additif $s\left(p^{\prime}+p^{\prime \prime}\right) \leq s\left(p^{\prime}\right)+s\left(p^{\prime \prime}\right), \forall p^{\prime}, p^{\prime \prime} \in \mathbb{R}^{n}$ est une fonction de support d'un ensemble convexe compact; la restriction $\widehat{s}$ de $s$ à $\mathcal{S}^{n-1}$ est telle que $\widehat{s}(p /\|p\|)=(1 /\|p\|) s(p), \forall p \in \mathbb{R}^{n}, p \neq 0$ et nous pouvons considérer $s$ limité à $\mathcal{S}^{n-1}$. Il s'ensuit également que $s: \mathcal{S}^{n-1} \longrightarrow \mathbb{R}$ est une fonction convexe.
\item[2)] Si $ A \in K^{n}_{C} $ est un ensemble convexe compact, alors il se caractérise par sa fonction de support et
$$
A=\left\{x \in \mathbb{R}^{n} \mid\langle p, x\rangle \leq s_{A}(p), \forall p \in \mathbb{R}^{n}\right\}=\left\{x \in \mathbb{R}^{n} \mid\langle p, x\rangle \leq s_{A}(p), \forall p \in \mathcal{S}^{n-1}\right\}
$$
\item[3)] Pour $A, B \in \mathcal{K}_{C}^{n}$ et $\forall p \in \mathcal{S}^{n-1}$ on a  $s_{\{0\}}(p)=0$ et
$$
\begin{aligned}
A & \subseteq B \Longrightarrow s_{A}(p) \leq s_{B}(p) ; \quad A=B \Longleftrightarrow s_{A}=s_{B} \\
s_{k A}(p) &=k s_{A}(p), \forall k \geq 0 ; \quad s_{k A+h B}(p)=s_{k A}(p)+s_{h B}(p), \quad \forall k, h \geq 0
\end{aligned}
$$
et en particulier
$$ s_{A+B}(p) = s_{A}(p) + s_{B}(p)$$
\item[4)] Si $s_{A}$ est la fonction de support de $A \in \mathcal{K}_{C}^{n}$ et $s_{-A}$ est la fonction de support de $-A \in \mathcal{K}_{C}^{n}$, alors $\forall p \in \mathcal{S}^{n-1}, s_{-A}(p)=$ $s_{A}(-p),$
\item[5)] Si $v$ est une mesure sur $\mathbb{R}^{n}$ tel que $v\left(\mathcal{S}^{n-1}\right)=\int_{\mathcal{S}^{n-1}} v(d p)=1,$ une distance est définie par
$$
\rho_{2}(A, B)=\left\|s_{A}-s_{B}\right\|=\left(n \int_{\mathcal{S}^{n-1}}\left[s_{A}(p)-s_{B}(p)\right]^{2} v(d p)\right)^{1 / 2}
$$
La distance $\rho_{2}(\cdot, \cdot)$ induit la norme sur $\mathcal{K}_{C}^{n}$ Défini par $\|A\|=\rho_{2}(A,\{0\})$.
\item[6)] Le point Steiner de $A \in \mathcal{K}_{C}^{n}$ est défini par $\sigma_{A}=n \int_{\mathcal{S}^{n-1}} p s_{A}(p) v(d p)$ et $\sigma_{A} \in A$.
\end{enumerate}

Nous pouvons exprimer la différence Hukuhara généralisée des ensembles convexes compacts $A, B \in \mathcal{K}_{C}^{n}$ par l'utilisation des fonctions de support. Considérer $A, B, C \in \mathcal{K}_{C}^{n}$ avec $C=A \Theta_{g} B$ tel que défini dans (\ref{8}); soit $s_{A}, s_{B}, s_{C}$ et $s_{(-1) C}$ être les fonctions de support de $A, B, C,$ et $(-1) C,$ respectivement. Dans le cas (i) nous avons $s_{A}=s_{B}+s_{C}$ et dans le cas (ii) nous avons $s_{B}=s_{A}+s_{(-1) C} .$ Alors, $\forall p \in \mathcal{S}^{n-1}$
$$
s_{C}(p)=\left\langle\begin{array}{ll}
s_{A}(p)-s_{B}(p) & \text { in case (i), } \\
s_{B}(-p)-s_{A}(-p) & \text { in case (ii) }
\end{array}\right.
$$
i.e.
\begin{equation}
s_{C}(p)=\left\langle\begin{array}{ll}
s_{A}(p)-s_{B}(p) & \text { in case (i), } \\
s_{(-1) B}(p)-s_{(-1) A}(p) & \text { in case (ii). }
\end{array}\right.
\label{9}
\end{equation}
Maintenant, $s_{C}$ dans (\ref{9}) est une fonction de support correcte si elle est continue (semi-continue supérieure), positivement homogène et sous-additive et cela nécessite que, dans les cas correspondants (i) et (ii), $s_{A}-s_{B}$ et/ou $s_{-B}-s_{-A}$ être des fonctions de support, en supposant que $s_{A}$ et $s_{B}$ sont des fonctions supports.

Considérons $s_{1}=s_{A}-s_{B}$ et $s_{2}=s_{B}-s_{A}$. Continuité de $s_{1}$ et $s_{2}$ est évidente. Pour voir leur homogénéité positive soit $t \geq 0 ;$ on a $s_{1}(t p)=s_{A}(t p)-s_{B}(t p)=t s_{A}(p)-t s_{B}(p)=t s_{1}(p)$ et de même pour $s_{2} .$ Mais $s_{1}$ et/ou $s_{2}$ peut ne pas être sous-additif et les quatre cas suivants, liés à la définition de la différence de gH, sont possibles.

\begin{proposition}
Soit $s_{A}$ et $s_{B}$ être les fonctions de support de $ A, B \in K^{n}_{C}$ et considérons $ s_{1} = s_{A} - s_{B}, s_{2} = s_{B} - s_{A}$; les quatre cas suivants s'appliquent:
\begin{enumerate}
\item[1)] 1. Si $ s_{1} $ et $ s_{2} $ sont tous les deux sous-additifs, alors $ A \Theta_{g} B $ existe; (i) et (ii) sont satisfaits simultanément et $ A \Theta_{g} B = \{c\}.$ 
\item[2)] Si $ s_{1} $ est sous-additif et $ s_{2} $ n'est pas, alors $ C =  A \Theta_{g} B $ existe, (i) est satisfait et $ s_{C} = s_{A} - s_{B} $.
\item[3)] Si $ s_{1} $ n'est pas sous-additif et $ s_{2} $ est, alors $ C =  A \Theta_{g} B $ existe, (ii)  est satisfait et $ s_{C} = s_{-B} - s_{-A} $.
\item[4)] Si $ s_{1} $ et $ s_{2} $ ne sont pas tous les deux sous-additifs, alors $ A \Theta_{g} B $ n'existe pas.
\end{enumerate}
\end{proposition}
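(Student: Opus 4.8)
The plan is to reduce the whole statement to the language of support functions and to invoke the characterization recalled in item~1) above: a function $s:\mathbb{R}^n\to\mathbb{R}$ that is continuous, positively homogeneous and sub-additive is exactly the support function of a unique set of $\mathcal{K}_C^n$. Since $s_A$ and $s_B$ are support functions, $s_1=s_A-s_B$ and $s_2=s_B-s_A$ are automatically continuous and positively homogeneous (as checked just before the statement), so the only property that may or may not hold for them is sub-additivity; this is precisely what the four cases test.

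First I would rewrite the defining conditions~(\ref{8}) through the identities leading to~(\ref{9}). If $C=A\Theta_g B$, then in case~(i) one has $s_A=s_B+s_C$, i.e.\ $s_C=s_1$; in case~(ii) one has $s_B=s_A+s_{(-1)C}$, i.e.\ $s_{(-1)C}=s_2$, equivalently $s_C(p)=s_2(-p)=s_{(-1)B}(p)-s_{(-1)A}(p)$. Conversely, prescribing $s_C:=s_1$ (resp.\ $s_C(p):=s_2(-p)$) produces a legitimate element $C\in\mathcal{K}_C^n$ if and only if that function is a support function, and since continuity and positive homogeneity come for free, this happens exactly when $s_1$ (resp.\ $s_2$) is sub-additive. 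Hence: case~(i) is realizable $\iff s_1$ is sub-additive, and case~(ii) is realizable $\iff s_2$ is sub-additive.

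Assertions~2), 3), 4) then follow by a short case split. If neither $s_1$ nor $s_2$ is sub-additive, neither~(i) nor~(ii) can hold, so $A\Theta_g B$ does not exist. If $s_1$ is sub-additive and $s_2$ is not, take $C\in\mathcal{K}_C^n$ with $s_C=s_1$; then $s_{B+C}=s_B+s_1=s_A$ forces $B+C=A$, so~(i) holds, $C=A\Theta_g B$ (uniqueness by the proposition on uniqueness of $A\Theta_g B$), and $s_C=s_A-s_B$; case~(ii) is impossible since $s_2$ is not a support function. Assertion~3) is the mirror image, exchanging the roles of $s_1$ and $s_2$ and giving $s_C=s_{-B}-s_{-A}$.

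The delicate point, which I expect to be the main obstacle, is assertion~1), where both $s_1$ and $s_2$ are sub-additive. Then $s_1$ and $s_2=-s_1$ are both support functions, hence both sub-linear (convex and positively homogeneous). A function that is simultaneously convex and concave is affine, and together with $s_1(0)=0$ and positive homogeneity this forces $s_1(p)=\langle p,c\rangle$ for some fixed $c\in\mathbb{R}^n$; equivalently, one can observe that the width $s_1(p)+s_1(-p)\ge 0$ of the body attached to $s_1$ and the width $s_2(p)+s_2(-p)=-\bigl(s_1(p)+s_1(-p)\bigr)\ge 0$ of the body attached to $s_2$ must both vanish in every direction $p$, which characterizes a single point. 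Either way $C=A\Theta_g B=\{c\}$, and then $s_A=s_B+s_C$ together with $s_B=s_A+s_{(-1)C}$ both hold, i.e.\ (i) and (ii) are satisfied simultaneously — exactly the situation already described in the remark on the simultaneous validity of (i) and (ii). This closes the argument.
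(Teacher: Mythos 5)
Votre preuve est correcte et suit essentiellement la même démarche que celle du mémoire : tout ramener à la caractérisation des fonctions de support (la continuité et l'homogénéité positive étant acquises, seule la sous-additivité est en jeu), constater que le cas (i) est réalisable exactement lorsque $s_{1}$ est une fonction de support et le cas (ii) exactement lorsque $s_{2}$ l'est, ce qui règle immédiatement les points 2), 3) et 4). Pour le point 1), votre argument (à la fois $s_{1}$ et $-s_{1}$ sous-linéaires, donc $s_{1}$ linéaire, ou encore largeur nulle dans toute direction, donc $C$ est un singleton) est une version un peu plus propre du calcul du mémoire, qui tire des deux inégalités de sous-additivité l'égalité (additivité de $s_{1}$), puis en prenant $p'=-p''$ obtient $A-A=B-B$ et conclut que $A$ et $B$ sont translatés l'un de l'autre.
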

\begin{proof}
Dans le cas 1 sous-additivité de $s_{1}$ et $s_{2}$ signifie que, $\forall p^{\prime}, p^{\prime \prime} \in \mathcal{S}^{n-1}$
$$
\begin{array}{l}
s_{1}: s_{A}\left(p^{\prime}+p^{\prime \prime}\right)-s_{B}\left(p^{\prime}+p^{\prime \prime}\right) \leq s_{A}\left(p^{\prime}\right)+s_{A}\left(p^{\prime \prime}\right)-s_{B}\left(p^{\prime}\right)-s_{B}\left(p^{\prime \prime}\right) \\
s_{2}: s_{B}\left(p^{\prime}+p^{\prime \prime}\right)-s_{A}\left(p^{\prime}+p^{\prime \prime}\right) \leq s_{B}\left(p^{\prime}\right)+s_{B}\left(p^{\prime \prime}\right)-s_{A}\left(p^{\prime}\right)-s_{A}\left(p^{\prime \prime}\right)
\end{array}
$$
il s'ensuit que
$$
s_{A}\left(p^{\prime}+p^{\prime \prime}\right)-s_{A}\left(p^{\prime}\right)-s_{A}\left(p^{\prime \prime}\right) \leq s_{B}\left(p^{\prime}+p^{\prime \prime}\right)-s_{B}\left(p^{\prime}\right)-s_{B}\left(p^{\prime \prime}\right)
$$
$$
s_{B}\left(p^{\prime}+p^{\prime \prime}\right)-s_{B}\left(p^{\prime}\right)-s_{B}\left(p^{\prime \prime}\right) \leq s_{A}\left(p^{\prime}+p^{\prime \prime}\right)-s_{A}\left(p^{\prime}\right)-s_{A}\left(p^{\prime \prime}\right)
$$
de sorte que l'égalité tient :
$$
s_{B}\left(p^{\prime}+p^{\prime \prime}\right)-s_{A}\left(p^{\prime}+p^{\prime \prime}\right)=s_{B}\left(p^{\prime}\right)+s_{B}\left(p^{\prime \prime}\right)-s_{A}\left(p^{\prime}\right)-s_{A}\left(p^{\prime \prime}\right) .
$$
Prons $p^{\prime}=-p^{\prime \prime}=p$ on a, $\forall p \in \mathcal{S}^{n-1}, s_{B}(p)+s_{B}(-p)=s_{A}(p)+s_{A}(-p)$ c'est-à-dire $_{B}(p)+s_{-B}(p)=s_{A}(p)+s_{-A}(p)$
i.e. $s_{B-B}(p)=s_{A-A}(p)$ et $B-B=A-A\left(A\right.$ Et $B$ se traduire les uns dans les autres); il s'ensuit que $\exists c \in \mathbb{R}^{n}$ tel que $A=B+\{c\}$ et $B=A+\{-c\}$ de sorte que $A \Theta_{g} B=\{c\}$.

Dans le cas 2, nous avons cet être $s_{1}$ une fonction de support il caractérise un ensemble non vide $C \in \mathcal{K}_{C}^{n}$ et $s_{C}(p)=s_{1}(p)=$ $s_{A}(p)-s_{B}(p), \forall p \in \mathcal{S}^{n-1} ;$ alors $s_{A}=s_{B}+s_{C}=s_{B+C}$ et $A=B+C$ à partir de laquelle (i) est satisfait.

Dans le cas 3, nous avons cela $s_{2}$ la fonction de support d'un ensemble non vide $D \in \mathcal{K}_{C}^{n}$ et $s_{D}(p)=s_{B}(p)-s_{A}(p), \forall p \in \mathcal{S}^{n-1}$ de sorte que $s_{B}=s_{A}+s_{D}=s_{A+D}$ et $B=A+D .$ Définissons $C=(-1) D$ (ou  $\left.D=(-1) C\right)$ on obtient $C \in \mathcal{K}_{C}^{n}$ avec $s_{C}(p)=s_{-D}(p)=s_{D}(-p)=s_{B}(-p)-s_{A}(-p)=s_{-B}(p)-s_{-A}(p)$ et (ii) est satisfaite.

Dans le cas 4, il n'y a pas $C \in \mathcal{K}_{C}^{n}$ tel que $A=B+C$ (autrement $s_{1}=s_{A}-s_{B}$ est une fonction de support) et il n'y a pas $D \in \mathcal{K}_{C}^{n}$ tel que $B=A+D$ (autrement $s_{2}=s_{B}-s_{A}$ est une fonction de support); il s'ensuit que (i) et (ii) ne peuvent être satisfaits et que $ A \Theta_{g} B $ n'existe pas.
\end{proof}
\begin{proposition}
Si $C=A \Theta_{g} B$ existe, alors $\left\|A \Theta_{g} B\right\|=\rho_{2}(A, B) ;$ il s'ensuit que $$ \left\|A \Theta_{g} B\right\|=0 \Longleftrightarrow A=B$$.
\end{proposition}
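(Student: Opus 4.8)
Write $C=A\,\Theta_g B$, which exists by hypothesis. By definition of the norm induced by $\rho_2$ we have $\|C\|=\rho_2(C,\{0\})$, and since $s_{\{0\}}\equiv 0$ this reads
$$\|A\,\Theta_g B\|^{2}=\|C\|^{2}=n\int_{\mathcal S^{n-1}} s_C(p)^{2}\,v(dp).$$
So the whole point is to insert the explicit form of $s_C$ supplied by the preceding proposition (the one distinguishing the four cases). Since $C$ exists, we are in case 1, 2 or 3 of that proposition, and in each of them either $s_C=s_A-s_B$ (situation (i)) or $s_C=s_{-B}-s_{-A}$ (situation (ii)); case 4 is excluded. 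I would treat these two situations separately.

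\textbf{Situation (i).} Here $s_C(p)=s_A(p)-s_B(p)$ for all $p\in\mathcal S^{n-1}$, hence
$$\|C\|^{2}=n\int_{\mathcal S^{n-1}}\bigl(s_A(p)-s_B(p)\bigr)^{2}v(dp)=\|s_A-s_B\|^{2}=\rho_2(A,B)^{2},$$
which is exactly what is wanted.

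\textbf{Situation (ii).} Here $s_C(p)=s_{-B}(p)-s_{-A}(p)=s_B(-p)-s_A(-p)$ by property 4 of support functions, so $s_C(p)^{2}=\bigl(s_A(-p)-s_B(-p)\bigr)^{2}$. Substituting $q=-p$ in the integral gives
$$\|C\|^{2}=n\int_{\mathcal S^{n-1}}\bigl(s_A(-p)-s_B(-p)\bigr)^{2}v(dp)=n\int_{\mathcal S^{n-1}}\bigl(s_A(q)-s_B(q)\bigr)^{2}v(dq)=\rho_2(A,B)^{2},$$
\emph{provided} $v$ is invariant under the antipodal map $p\mapsto -p$. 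This invariance is the one point that genuinely needs to be invoked (it holds for the normalized rotation-invariant surface measure on $\mathcal S^{n-1}$ that underlies all of this section, in particular the Steiner-point construction), and I regard it as the only real obstacle; everything else is a bookkeeping manipulation. Taking square roots in both situations yields $\|A\,\Theta_g B\|=\rho_2(A,B)$.

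\textbf{The equivalence.} From the identity just proved, $\|A\,\Theta_g B\|=0$ iff $\rho_2(A,B)=0$, i.e. iff $\int_{\mathcal S^{n-1}}(s_A-s_B)^{2}\,v(dp)=0$. Because $s_A$ and $s_B$ are continuous on $\mathcal S^{n-1}$ and $v$ has full support there, this forces $s_A=s_B$ on $\mathcal S^{n-1}$, hence $s_A=s_B$ on all of $\mathbb R^{n}$ by positive homogeneity; and by the characterization of a compact convex set through its support function (property 2 recalled above), $s_A=s_B$ is equivalent to $A=B$. The converse implication $A=B\Rightarrow\|A\,\Theta_g B\|=0$ is immediate from $A\,\Theta_g A=\{0\}$. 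This completes the argument.
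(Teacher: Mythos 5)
Votre démonstration est correcte et suit essentiellement la même démarche que celle du mémoire : dans le cas (i) l'identité est immédiate, et dans le cas (ii) on utilise $s_{-A}(p)=s_{A}(-p)$ avec le changement de variable $q=-p$ (le texte écrit $v(-dq)$, supposant implicitement l'invariance antipodale de $v$ que vous énoncez explicitement), puis on conclut l'équivalence du fait que $\rho_{2}$ est une distance et que $A\,\Theta_{g}A=\{0\}$. Vos remarques supplémentaires sur la symétrie de $v$ et sur le support plein nécessaire pour déduire $\rho_{2}(A,B)=0\Rightarrow A=B$ ne font qu'expliciter ce que le texte laisse implicite.
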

\begin{proof}
En fait $\rho_{2}(A, B)=\left\|s_{A}-s_{B}\right\|$ et, si $A \Theta_{g} B$ existe, alors soit $s_{C}=s_{A}-s_{B}$ ou $s_{C}=s_{-B}-s_{-A} ;$ mais $\left\|s_{A}-s_{B}\right\|=\left\|s_{-A}-s_{-B}\right\|$ as, modification de la variable $p$ dans $-q$ et rappelant que $s_{-A}(p)=s_{A}(-p)$, on a :
$$
\begin{aligned}
\left\|s_{-A}-s_{-B}\right\| &=\int_{\mathcal{S}^{n-1}}\left[s_{-A}(p)-s_{-B}(p)\right]^{2} v(d p)=\int_{\mathcal{S}^{n-1}}\left[s_{A}(-p)-s_{B}(-p)\right]^{2} v(d p) \\
&=\int_{\mathcal{S}^{n-1}}\left[s_{A}(q)-s_{B}(q)\right]^{2} v(-d q)=\left\|s_{A}-s_{B}\right\|
\end{aligned}
$$
la dernière propriété découle du fait que $\left\|A \Theta_{g} B\right\|=0$ implique $\rho_{2}(A, B)=0$ de sorte que $ A = B; $ d'autre part, pour $A=B, A \odot_{g} A=\{0\}$
\end{proof}
Une propriété intéressante relie le point de Steiner de $ A \Theta_{g} B $ aux points de Steiner de A et B.
\begin{proposition}
 Si $C=A \Theta_{g} B$ existe, soit $\sigma_{A}, \sigma_{B}$ et $\sigma_{C}$ être les points Steiner de $A, B$ et $C,$ respectivement; alors $\sigma_{C}=\sigma_{A}-\sigma_{B}$
 \end{proposition}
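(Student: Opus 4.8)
The plan is to reduce everything to two elementary facts about Steiner points and then read off the conclusion from the definition of $A\Theta_{g}B$ together with the support-function description in~(\ref{9}).

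First I would record that the Steiner map is additive and odd. Since $s_{A+B}=s_{A}+s_{B}$ (property 3 of support functions), the defining integral $\sigma_{A}=n\int_{\mathcal{S}^{n-1}}p\,s_{A}(p)\,v(dp)$ is linear in $s_{A}$, hence $\sigma_{A+B}=\sigma_{A}+\sigma_{B}$ for all $A,B\in\mathcal{K}_{C}^{n}$. Next, using $s_{-A}(p)=s_{A}(-p)$ (property 4) and the antipodal change of variable $p\mapsto -q$ — exactly the substitution already used in the proof that $\|A\Theta_{g}B\|=\rho_{2}(A,B)$, where the invariance of $v$ under $p\mapsto -p$ is invoked — one gets $\sigma_{-A}=n\int_{\mathcal{S}^{n-1}}p\,s_{A}(-p)\,v(dp)=-n\int_{\mathcal{S}^{n-1}}q\,s_{A}(q)\,v(dq)=-\sigma_{A}$.

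With these in hand the statement is immediate. By definition of $C=A\Theta_{g}B$ either (i) $A=B+C$, so $\sigma_{A}=\sigma_{B}+\sigma_{C}$ and therefore $\sigma_{C}=\sigma_{A}-\sigma_{B}$; or (ii) $B=A+(-1)C$, so $\sigma_{B}=\sigma_{A}+\sigma_{-C}=\sigma_{A}-\sigma_{C}$, and again $\sigma_{C}=\sigma_{A}-\sigma_{B}$. Equivalently, and perhaps more in the spirit of the previous proposition, one can argue directly on support functions: by~(\ref{9}) we have $s_{C}=s_{A}-s_{B}$ in case (i) and $s_{C}=s_{-B}-s_{-A}$ in case (ii); inserting each into the Steiner integral and using linearity — in case (ii) after the antipodal change of variable and $s_{-A}(p)=s_{A}(-p)$ — yields $\sigma_{C}=\sigma_{A}-\sigma_{B}$ in both cases. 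The only point that is not pure bookkeeping is the antipodal symmetry of $v$ needed for $\sigma_{-A}=-\sigma_{A}$ (equivalently, for handling case (ii)); but this is the same standing hypothesis on $v$ already used earlier in the text, so no additional assumption is introduced.
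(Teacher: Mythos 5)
Your proof is correct and follows essentially the same route as the paper: the paper plugs the support-function description $s_{C}=s_{A}-s_{B}$ (case (i)) or $s_{C}=s_{-B}-s_{-A}$ (case (ii)) into the Steiner integral and concludes by additivity of the integral and the antipodal change of variable, which is exactly your second formulation, while your first formulation (additivity and oddness of the Steiner map applied to $A=B+C$ or $B=A+(-1)C$) is just a repackaging of the same computation. Your explicit remark that the step $\sigma_{-A}=-\sigma_{A}$ uses the antipodal invariance of $v$, already implicitly invoked in the proof that $\left\|A\Theta_{g}B\right\|=\rho_{2}(A,B)$, is a fair and welcome clarification of a point the paper leaves tacit.
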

 \begin{proof}
 Pour les points Steiner, nous avons
$$
\sigma_{A}=n \int_{\mathcal{S}^{n-1}} p s_{A}(p) v(d p), \quad \sigma_{B}=n \int_{\mathcal{S}^{n-1}} p s_{B}(p) v(d p)=-n \int_{\mathcal{S}^{n-1}} q s_{B}(q) v(-d q)
$$
and
$$
\sigma_{C}=\left\{\begin{array}{l}
n \int_{\mathcal{S}^{n-1}} p\left[s_{A}(p)-s_{B}(p)\right] v(d p) \quad \text { or } \\
n \int_{\mathcal{S}^{n-1}} q\left[s_{A}(q)-s_{B}(q)\right] v(-d q)
\end{array}\right.
$$
le résultat découle de l'additivité de l'intégrale.
 \end{proof}

\section{Le cas des intervalles compacts dans $ \mathbb{R}^{n} $}

Dans cette section, nous considérons la différence gH des intervalles compacts dans  $ \mathbb{R}^{n}$. Si $ n = 1 $, c'est-à-dire pour les intervalles compacts unidimensionnels, la différence gH existe toujours. En fait, soit $ A = [a^{-}, a^{+}]$ et $ B = [b^{-}, b^{+}]$ être deux intervalles; la différence gH est
$$
\left[a^{-}, a^{+}\right] \Theta_{g}\left[b^{-}, b^{+}\right]=\left[c^{-}, c^{+}\right] \Longleftrightarrow\left\{\begin{array}{l}
\text { (i) }\left\{\begin{array}{l}
a^{-}=b^{-}+c^{-}, \\
a^{+}=b^{+}+c^{+}
\end{array}\right. \\
\text {or (ii) }\left\{\begin{array}{l}
b^{-}=a^{-}-c^{+} \\
b^{+}=a^{+}-c^{-}
\end{array}\right.
\end{array}\right.
$$
de sorte que $\left[a^{-}, a^{+}\right] \Theta_{g}\left[b^{-}, b^{+}\right]=\left[c^{-}, c^{+}\right]$ est toujours défini par :
  $$c^{-}=\min \left\{a^{-}-b^{-}, a^{+}-b^{+}\right\}, \quad c^{+}=\max \left\{a^{-}-b^{-}, a^{+}-b^{+}\right\}$$
i.e.
$$
[a, b] \Theta_{g}[c, d]=[\min \{a-c, b-d\}, \max \{a-c, b-d\}]
$$

Les conditions (i) et (ii) sont satisfaites simultanément si et seulement si les deux intervalles ont la même longueur et $c^{-} = c^{+}$. En outre, le résultat est $\{0\}$ si et seulement si $a^{-} = b^{-}$ et $a^{+} = b^{+}.$

Deux exemples simples sur des intervalles compacts réels illustrent la généralisation :
                $$[-1,1] \Theta[-1,0]= [0,1]$$
 comme en fait (i) est : 
 $$[-1,0]+[0,1]=[-1,1]$$ 
 mais 
 $$[0,0] \Theta_{g}[0,1]=[-1,0]$$ 
 et $[0,1] \Theta_{g}\left[-\frac{1}{2}, 1\right]=\left[0, \frac{1}{2}\right]$ satisfaire (ii).
 
Les intervalles symétriques sont intéressants $A=[-a, a]$ et $B=[-b, b]$ avec $a, b \geq 0 ;$ il est bien connu que les opérations de Minkowski à intervalles symétriques sont telles que $A-B=B-A=A+B$ et, en particulier, $A-A=A+A=2 A$. On a $$[-a, a] \Theta_{g}[-b, b]=[-|a-b|,|a-b|].$$
As $\mathcal{S}^{0}=\{-1,1\}$ and the support functions satisfy $s_{A}(-1)=-a^{-}, s_{A}(1)=a^{+}, s_{B}(-1)=-b^{-}, s_{B}(1)=b^{+}$, the same results as before can be deduced from definition (\ref{9}).

\begin{remarque}
Une représentation alternative d'un intervalle $A=\left[a^{-}, a^{+}\right]$ est par l'utilisation du point médian $\widehat{a}=a^{-}+a^{+} / 2$ et la (semi) largeur $\bar{a}=a^{+}-a^{-} / 2$ et nous pouvons écrire $A=(\widehat{a}, \bar{a}), \bar{a} \geq 0,$ de sorte que $a^{-}=\widehat{a}-\bar{a}$ et $a^{+}=\widehat{a}+\bar{a}$.
Si $B=(\widehat{b}, \bar{b}), \bar{b} \geq 0$ est un deuxième intervalle, l'addition de Minkowski est $A+B=(\widehat{a}+\widehat{b}, \bar{a}+\bar{b})$ et La différence gH est obtenue par $A \Theta_{g} B=(\widehat{a}-\widehat{b},|\bar{a}-\bar{b}|) .$On voit tout de suite que $A \Theta_{g} A=\{0\}, A=B \quad \Longleftrightarrow A \Theta_{g} B=\{0\}$,
$(A+B) \Theta_{g} B=A,$ mais $A+\left(B \Theta_{g} A\right)=B$ seulement si $\bar{a} \leq \bar{b}$
\end{remarque}
 Soit maintenant $A=\times_{i=1}^{n} A_{i}$ et $B=\times_{i=1}^{n} B_{i}$ où $A_{i}=\left[a_{i}^{-}, a_{i}^{+}\right], B_{i}=\left[b_{i}^{-}, b_{i}^{+}\right]$ sont des intervalles réels compacts $\left(\times_{i=1}^{n}\right.$
désigne le produit cartésien). Si $A \Theta_{g} B$ existe, alors l'égalité suivante est vraie:
$$
A \Theta_{g} B=\times_{i=1}^{n}\left(A_{i} \Theta_{g} B_{i}\right)
$$
En fait, considérez la fonction de support de $A$ (et de même pour $B$ ), Défini par
$s_{A}(p)=\max _{x}\left\{\langle p, x\rangle \mid a_{i}^{-} \leq x_{i} \leq a_{i}^{+}\right\}, \quad p \in \mathcal{S}^{n-1}$

il peut être obtenu simplement par $s_{A}(p)=\sum_{p_{i}>0} p_{i} a_{i}^{+}+\sum_{p_{i}<0} p_{i} a_{i}^{-}$ en tant que maxima contraint par la boîte des fonctions objectifs linéaires $\langle p, x\rangle$ ci-dessus sont atteints aux sommets $\widehat{x}(p)=\left(\widehat{x}_{1}(p), \ldots, \widehat{x}_{i}(p), \ldots, \widehat{x}_{n}(p)\right)$ de $A,$ i.e. $\widehat{x}_{i}(p) \in\left\{a_{i}^{-}, a_{i}^{+}\right\}, i=1,2, \ldots, n .$ Alors
$$
s_{A}(p)-s_{B}(p)=\sum_{p_{i}>0} p_{i}\left(a_{i}^{+}-b_{i}^{+}\right)+\sum_{p_{i}<0} p_{i}\left(a_{i}^{-}-b_{i}^{-}\right)
$$
et, étant
 $$s_{-A}(p)=s_{A}(-p)=-\sum_{p_{i}<0} p_{i} a_{i}^{+}-\sum_{p_{i}>0} p_{i} a_{i}^{-}$$
$$
s_{-B}(p)-s_{-A}(p)=\sum_{p_{i}>0} p_{i}\left(a_{i}^{-}-b_{i}^{-}\right)+\sum_{p_{i}<0} p_{i}\left(a_{i}^{+}-b_{i}^{+}\right)
$$

Des relations ci-dessus, nous en déduisons que
$$
A \Theta_{g} B=C \Longleftrightarrow\left\{\begin{array}{l}
\text { (i) }\left\{\begin{array}{l}
C=\times_{i=1}^{n}\left[a_{i}^{-}-b_{i}^{-}, a_{i}^{+}-b_{i}^{+}\right] \\
\text {provided that } a_{i}^{-}-b_{i}^{-} \leq a_{i}^{+}-b_{i}^{+},
\end{array} \quad \forall i\right. \\
\text { or (ii) }\left\{\begin{array}{l}
C=\times_{i=1}^{n}\left[a_{i}^{+}-b_{i}^{+}, a_{i}^{-}-b_{i}^{-}\right] \\
\text {provided that } a_{i}^{-}-b_{i}^{-} \geq a_{i}^{+}-b_{i}^{+},
\end{array}\right.
\end{array}\right.
$$
et :
\begin{proposition}
La différence gH $ A \Theta_{g} B $ existe si et seulement si l'une des deux conditions est satisfaite:\\
(i) $\quad a_{i}^{-}-b_{i}^{-} \leq a_{i}^{+}-b_{i}^{+}, \quad i=1,2, \ldots, n$\\
ou\\
(ii) $a_{i}^{-}-b_{i}^{-} \geq a_{i}^{+}-b_{i}^{+}, \quad i=1,2, \ldots, n .$
\end{proposition}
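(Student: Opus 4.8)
The plan is to obtain this existence criterion as a direct repackaging of the support-function computation carried out immediately before the statement, combined with the four-case Proposition proved above (the one characterizing $A\Theta_g B$ through sub-additivity of $s_1$ and $s_2$). Recall that that Proposition says $A\Theta_g B$ exists exactly when at least one of the two functions $s_1=s_A-s_B$ or $s_2=s_B-s_A$ is sub-additive (its cases 1, 2, 3) and fails to exist otherwise (its case 4). Hence it suffices to prove the two equivalences: $s_1$ is sub-additive iff condition (i) holds, and $s_2$ is sub-additive iff condition (ii) holds.

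For the first, I would start from the explicit expression derived above, $s_1(p)=\sum_{p_i>0}p_i(a_i^+-b_i^+)+\sum_{p_i<0}p_i(a_i^--b_i^-)$, and rewrite it as $s_1(p)=\sum_{i=1}^n g_i(p_i)$, where $g_i\colon\mathbb{R}\to\mathbb{R}$ is the piecewise-linear map with $g_i(0)=0$, slope $a_i^+-b_i^+$ on $[0,\infty)$ and slope $a_i^--b_i^-$ on $(-\infty,0]$. Since $s_1$ is continuous and positively homogeneous, its sub-additivity on $\mathcal{S}^{n-1}$ is equivalent to sub-additivity on all of $\mathbb{R}^n$, so I may work on $\mathbb{R}^n$. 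Then I would establish the elementary lemma that a coordinate-separable function $p\mapsto\sum_i g_i(p_i)$ is sub-additive iff each $g_i$ is sub-additive on $\mathbb{R}$: the ``if'' part by summing the one-dimensional inequalities, the ``only if'' part by evaluating at vectors supported in a single coordinate.

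It then remains to decide when such a two-slope piecewise-linear $g$ through the origin, with right slope $m_+$ and left slope $m_-$, is sub-additive. A short case analysis settles this: when $s$ and $t$ have the same sign one gets equality in $g(s+t)\le g(s)+g(t)$; when they have opposite signs, splitting according to the sign of $s+t$ yields exactly the condition $m_+\ge m_-$. Applied to $g_i$ this gives that $s_1$ is sub-additive iff $a_i^+-b_i^+\ge a_i^--b_i^-$ for every $i$, i.e. iff condition (i) holds. Running the identical argument on $s_2$, whose $i$-th right and left slopes are $b_i^+-a_i^+$ and $b_i^--a_i^-$, shows $s_2$ is sub-additive iff condition (ii) holds. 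Feeding these two equivalences into the four-case Proposition yields the claim.

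The mildly delicate points are the reduction from the unit sphere to all of $\mathbb{R}^n$ (handled by positive homogeneity, already invoked for support functions above) and keeping the opposite-sign subcases in the one-dimensional computation straight; neither is a genuine obstacle, since the heavy lifting, namely the box support-function formulas, was done before the statement, and the Proposition is essentially their corollary.
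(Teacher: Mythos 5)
Your argument is correct and follows essentially the same route as the paper: the box support-function formulas for $s_A-s_B$ and $s_{-B}-s_{-A}$ fed into the earlier four-case sub-additivity proposition. The only difference is that you make explicit the step the paper leaves implicit — the coordinate-separable lemma and the one-dimensional two-slope criterion showing $s_1$ (resp. $s_2$) is sub-additive exactly under condition (i) (resp. (ii)) — which is a sound and welcome filling-in of detail.
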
 
\begin{exemple}
 1. cas (i): $A_{1}=[5,10], A_{2}=[1,3], B_{1}=[3,6], B_{2}=[2,3]$ for which $\left(A_{1} \Theta_{g} B_{1}\right)=[2,4],\left(A_{2} \Theta_{g} B_{2}\right)=$
[-1,0] and $A \Theta_{g} B=C=[2,4] \times[-1,0]$ existe avec $B+C=A, A+(-1) C \neq B$

2. cas (ii): $A_{1}=[3,6], A_{2}=[2,3], B_{1}=[5,10], B_{2}=[1,3]$ Pour qui $\left(A_{1} \Theta_{g} B_{1}\right)=[-4,-2],\left(A_{2} \Theta_{g} B_{2}\right)=$
[0,1] et $A \Theta_{g} B=C=[-4,-2] \times[0,1]$ existe avec $B+C \neq A, A+(-1) C=B$.

3. cas (i)+(ii): $A_{1}=[3,6], A_{2}=[2,3], B_{1}=[5,8], B_{2}=[3,4]$ Pour qui $\left(A_{1} \Theta_{g} B_{1}\right)=[-2,-2]=\{-2\},$
$\left(A_{2} \Theta_{g} B_{2}\right)=[-1,-1]=\{-1\}$ et $A \Theta_{g} B=C=\{(-2,-1)\}$ existe avec $B+C=A$ et $A+(-1) C=B.$
\end{exemple}
Nous avons vu que, généralement pour  $A, B \in \mathcal{K}_{C}^{n},$ une condition nécessaire pour $A \Theta_{g} B$ existe c'est que soit $A$ contient une translation de $B$ ou $B$ contient une translation de $A$. Dans le cas d'intervalles multidimensionnels, la même condition est également suffisante.

\begin{proposition}
Soit $A=\times_{i=1}^{n} A_{i}$ et $B=\times_{i=1}^{n} B_{i}$ où $A_{i}=\left[a_{i}^{-}, a_{i}^{+}\right], B_{i}=\left[b_{i}^{-}, b_{i}^{+}\right] .$ Si A contient une translation de $B$ ou si $B$ contient une translation de $A($ en particulier si $A \subseteq B$ ou si $B \subseteq A)$, alors $A \Theta_{g} B=\times_{i=1}^{n}\left(A_{i} \Theta_{g} B_{i}\right) .$
\end{proposition}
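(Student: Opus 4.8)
The goal is to show that if $A = \times_{i=1}^n A_i$ and $B = \times_{i=1}^n B_i$ are Cartesian products of compact real intervals, and if one of them contains a translation of the other, then $A \Theta_g B$ exists and equals $\times_{i=1}^n (A_i \Theta_g B_i)$. By the preceding Proposition characterizing existence of the $gH$-difference for multidimensional intervals, it suffices to verify that the translation hypothesis forces one of the two sign conditions
$$
\text{(i)}\quad a_i^- - b_i^- \le a_i^+ - b_i^+ \ \forall i
\qquad\text{or}\qquad
\text{(ii)}\quad a_i^- - b_i^- \ge a_i^+ - b_i^+ \ \forall i
$$
to hold uniformly in $i$; once that is known, the explicit formula $A\Theta_g B = \times_{i=1}^n(A_i\Theta_g B_i)$ follows from the componentwise description already derived just before the statement.

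First I would translate the hypothesis into coordinates. Saying that $A$ contains a translation of $B$ means there is $c = (c_1,\dots,c_n) \in \mathbb{R}^n$ with $\{c\} + B \subseteq A$; since $A$ and $B$ are boxes, this is equivalent to $b_i^- + c_i \ge a_i^-$ and $b_i^+ + c_i \le a_i^+$ for every $i$, i.e. $a_i^- - b_i^- \le c_i \le a_i^+ - b_i^+$ for all $i$. In particular $a_i^- - b_i^- \le a_i^+ - b_i^+$ for every $i$, which is exactly condition (i). Symmetrically, if $B$ contains a translation of $A$, the same computation with the roles of $A$ and $B$ exchanged gives $b_i^- - a_i^- \le b_i^+ - a_i^+$ for all $i$, i.e. $a_i^- - b_i^- \ge a_i^+ - b_i^+$ for all $i$, which is condition (ii). The special cases $A\subseteq B$ and $B\subseteq A$ are covered by taking $c = 0$.

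Having established that one of (i) or (ii) holds uniformly, I invoke the Proposition on existence: in case (i), $A\Theta_g B = \times_{i=1}^n [a_i^- - b_i^-,\, a_i^+ - b_i^+]$, and each factor is precisely $A_i \Theta_g B_i$ computed in the one-dimensional case (where the $gH$-difference always exists and equals $[\min\{a_i^--b_i^-, a_i^+-b_i^+\}, \max\{\dots\}]$, which under (i) is $[a_i^--b_i^-, a_i^+-b_i^+]$); similarly in case (ii). In both cases we get $A\Theta_g B = \times_{i=1}^n(A_i\Theta_g B_i)$, as claimed.

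**Main obstacle.** There is no deep difficulty here; the content is entirely in correctly unwinding "contains a translation of" into the $2n$ scalar inequalities on the endpoints and noticing that the existence of a common $c_i$ sandwiched between $a_i^- - b_i^-$ and $a_i^+ - b_i^+$ for every $i$ forces the inequality to point the same way in every coordinate. The one point that needs a little care is the edge case where some intervals are degenerate (singletons) or where (i) and (ii) hold simultaneously in some coordinates — but these cause no trouble, since then the corresponding factor $A_i\Theta_g B_i$ is a singleton and the product formula is unaffected. So the proof is essentially a two-line reduction to the already-proven existence proposition.
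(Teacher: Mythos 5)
Your proof is correct and follows essentially the same route as the paper: both reduce the statement to the endpoint conditions $a_i^--b_i^-\le a_i^+-b_i^+$ for all $i$ (or the reverse) and then invoke the preceding proposition together with the componentwise formula $A\Theta_g B=\times_{i=1}^n\left(A_i\Theta_g B_i\right)$. The only cosmetic difference is that you unpack the translation hypothesis directly into the sandwich inequality $a_i^--b_i^-\le c_i\le a_i^+-b_i^+$, whereas the paper first treats the inclusion cases and then observes that the inequalities are invariant under the translation, which amounts to the same computation.
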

\begin{proof}
Considérons d'abord le cas $B \subseteq A$, c'est à dire $\left[b_{i}^{-}, b_{i}^{+}\right] \subseteq\left[a_{i}^{-}, a_{i}^{+}\right]$ et $b_{i}^{-} \geq a_{i}^{-}, b_{i}^{+} \leq a_{i}^{+} \forall i ;$ alors la proposition 10 s'applique et $A \Theta_{g} B$ existe selon le cas (i). De manière analogue, si $A \subseteq B$, c'est à dire $\left[a_{i}^{-}, a_{i}^{+}\right] \subseteq\left[b_{i}^{-}, b_{i}^{+}\right]$ et $b_{i}^{-} \leq a_{i}^{-}, b_{i}^{+} \geq a_{i}^{+} \forall i$, alors la proposition 10 s'applique et $A \Theta_{g} B$ existe selon le cas (ii). Si $A$ contient une translation de $B$, i.e. $\exists \widehat{b}=\left(\widehat{b}_{1}, \widehat{b}_{2}, \ldots, \widehat{b}_{n}\right) \in$ $\mathbb{R}^{n}$ tel que $\{\widehat{b}\}+B \subseteq A$, alors $A \Theta_{g}(\{\widehat{b}\}+B)=C^{\prime}$ existe selon le cas (ii) avec $a_{i}^{-}-b_{i}^{-}-\widehat{b}_{i} \leq a_{i}^{+}-b_{i}^{+}-\widehat{b}_{i}$
$\forall i ;$ si suit ça $a_{i}^{-}-b_{i}^{-} \leq a_{i}^{+}-b_{i}^{+}, \forall i$ et par proposition 10 $ A \Theta_{g} B$ existe selon le cas (i). Enfin, par un raisonnement similaire, si $ B $ contient une translation $\{\widehat{a}\}+A$ de $A$, alors $(\{\widehat{a}\}+A) \Theta_{g} B$ existe selon le cas (ii) de sorte que $a_{i}^{-}-b_{i}^{-} \geq a_{i}^{+}-b_{i}^{+}, \forall i$ et par proposition 10 $ A \Theta_{g} B$ existe selon le cas (ii).
\end{proof}
\section{Différence généralisé de HUKUHARA des nombres flous}

Un ensemble flou général sur un ensemble (ou espace) donné $\mathbb{X}$ des éléments (l'univers) est généralement défini par sa fonction d'appartenance $\mu: \mathbb{X} \longrightarrow \mathbb{T} \subseteq[0,1]$ et un flou (sous-)ensemble $u$ de $\mathbb{X}$ est uniquement caractérisé par les paires $\left(x, \mu_{u}(x)\right)$ pour chaque $x \in \mathbb{X} ;$ la valeur $\mu_{u}(x) \in[0,1]$ est la qualité de membre de $x$ à l'ensemble flou $u$ et $\mu_{u}$ est la fonction d'appartenance d'un ensemble flou $u$ over $\mathbb{X}$ pour les origines de la théorie des ensembles flous). Le support de $ u $ est le sous-ensemble des points de $\left.\mathbb{X} \right.$ telle que $\mu_{u}(x)$ est positive: $\operatorname{supp}(u)=\left\{x \mid x \in \mathbb{X}, \mu_{u}(x)>0\right\} .$ Pour $\left.\left.\alpha \in\right] 0,1\right],$ le $\alpha$-niveau $coupe$ de $u$ (ou simplement le $\alpha-Coupe$ ) est défini par $[u]_{\alpha}=\left\{x \mid x \in \mathbb{X}, \mu_{u}(x) \geq \alpha\right\}$ et pour $\alpha=0$ (ou $\alpha \rightarrow+0$ ) par la fermeture du support $[u]_{0}=c l\left\{x \mid x \in \mathbb{X}, \mu_{u}(x)>0\right\}$

Nous examinerons le cas $\mathbb{X}=\mathbb{R}^{n}$ avec $n \geq 1$. Une classe particulière d'ensembles flous $ u $ est lorsque le support est un ensemble convexe et la fonction d'appartenance est quasi-concave (i.e. $\mu_{u}\left((1-t) x^{\prime}+t x^{\prime \prime}\right) \geq \min \left\{\mu_{u}\left(x^{\prime}\right), \mu_{u}\left(x^{\prime \prime}\right)\right\}$ pour tout $x^{\prime}, x^{\prime \prime} \in \operatorname{supp}(u)$
et $t \in[0,1]) .$ De manière équivalente, $\mu_{u}$ est quasi-concave si le niveau  $[u]_{\alpha}$ sont des ensembles convexes pour tous $\alpha \in[0,1] .$ Nous exigerons également que  $[u]_{\alpha}$ sont des ensembles fermés pour tous $\alpha \in[0,1]$ et que la fonction d'appartenance est normale,c'est à dire. le noyau $[u]_{1}=\left\{x \mid \mu_{u}(x)=1\right\}$ est compact et non vide.

Les propriétés suivantes caractérisent les ensembles flous normaux, convexes et semi-continus supérieurs (en termes de coupes de niveau):

(F1) $[u]_{\alpha} \in \mathcal{K}_{C}\left(\mathbb{R}^{n}\right)$ pour tout $\alpha \in[0,1] ;$

(F2) $[u]_{\alpha} \subseteq[u]_{\beta}$ pour $\alpha \geq \beta$ (c'est à dire ils sont imbriqués);

(F3) $[u]_{\alpha}=\bigcap_{k=1}^{\infty}[u]_{\alpha_{k}}$ pour toutes les séquences croissantes $\alpha_{k} \uparrow \alpha$ convergent vers $\alpha$.

De plus, toute famille $\left\{U_{\alpha} \mid \alpha \in[0,1]\right\}$  satisfaisant les conditions $(\mathrm{F} 1)-(\mathrm{F} 3)$ représente les coupes de niveau d'un ensemble flou $u$ ayant $[u]_{\alpha}=U_{\alpha}$

Nous désignerons par $\mathcal{F}^{n}$ l'ensemble des ensembles flous avec les propriétés ci-dessus (également appelés quantités floues). L'espace $\mathcal{F}^{n}$ des quantités floues réelles est structurée par une addition et une multiplication scalaire, définies soit par les level sets, soit, de manière équivalente, par le principe d'extension de Zadeh.

Soit $u, v \in \mathcal{F}^{n}$ avoir des fonctions d'adhésion $\mu_{u}, \mu_{v}$ et $\alpha-coupe$ $s[u]_{\alpha},[v]_{\alpha}, \alpha \in[0,1],$ respectivement. Dans le cas unidimensionnel $u \in \mathcal{F},$ nous désignerons par $[u]_{\alpha}=\left[u_{\alpha}^{-}, u_{\alpha}^{+}\right]$ les intervalles compacts formant les coupes $ \alpha $ et les quantités floues seront appelés nombres flous.

L'addition $u+v \in \mathcal{F}^{n}$ et la multiplication scalaire $k u \in^{n} \mathcal{F}$ avoir des coupes de niveau :
\begin{equation}
\mid[u+v]_{\alpha}=[u]_{\alpha}+[v]_{\alpha}=\left\{x+y \mid x \in[u]_{\alpha}, y \in[v]_{\alpha}\right\}
\label{10}
\end{equation}
\begin{equation}
[k u]_{\alpha}=k[u]_{\alpha}=\left\{k x \mid x \in[u]_{\alpha}\right\}
\label{11}
\end{equation}
Dans le contexte flou ou dans le contexte arithmétique d'intervalle, l'équation $u=v+w$ n'est pas équivalent à $w=u-v=u+(-1) v$ ou pour $v=u-w=u+(-1) w$ et cela a motivé l'introduction de la différence Hukuhara suivante :

\begin{definition}
Étant donné $u, v \in \mathcal{F}^{n},$ la différence H est définie par
$$u \Theta v=w \Longleftrightarrow u=v+w .$$
Clairement, $u \Theta u=\{0\} ;$ si $u \Theta v$ existe, c'est unique.
\end{definition}
Dans le cas unidimensionnel $(n=1),$ le $\alpha$-coupe de la différence H sont $[u \Theta v]_{\alpha}=\left[u_{\alpha}^{-}-v_{\alpha}^{-}, u_{\alpha}^{+}-v_{\alpha}^{+}\right]$ où $[u]_{\alpha}=$ $\left[u_{\alpha}^{-}, u_{\alpha}^{+}\right]$ and $[v]_{\alpha}=\left[v_{\alpha}^{-}, v_{\alpha}^{+}\right]$

La différence Hukuhara est également motivée par le problème de l'inversion de l'addition : si $x, y$ sont des chiffres précis alors $(x+y)-y=x$ mais ce n'est pas vrai si $x, y$ sont flous. Il est possible de voir que, si $u$ et $v$ sont des nombres flous (et pas en général des ensembles flous), alors $(u+v) \Theta v=u$ c'est à dire la différence H inverse l'addition de nombres flous.

La différence gH pour les nombres flous peut être définie comme suit:
\begin{definition}
 Étant donné $u, v \in \mathcal{F}^{n},$ la différence gH est la quantité floue $w \in \mathcal{F}^{n},$ s'il existe, de telle sorte que
\begin{equation}
u \Theta_{g} v=w \Longleftrightarrow\left\{\begin{aligned}
\text { (i) } u=v+w \\
\text { or (ii) } v=u+(-1) w
\end{aligned}\right.
\label{aa}
\end{equation}
Si $u \Theta_{g} v$ et $u \Theta v$ existe, $u \ominus v=u \Theta_{g} v ;$ si (i) et (ii) sont satisfaits simultanément, alors $ w $ est une quantité nette. Aussi, $u \Theta_{g} u=u \Theta u=\{0\}$
\end{definition}

\subsection{Fonctions de support et différence gh floue}
Une définition équivalente de $w=u \Theta_{g} v$ pour les nombres flous multidimensionnels peuvent être obtenus en termes de fonctions de support d'une manière similaire à Eq. (\ref{10}):
\begin{equation}
\color{purple}{
s_{w}(p ; \alpha)=\left\langle\begin{array}{ll}
s_{u}(p ; \alpha)-s_{v}(p ; \alpha) & \text { in case (i), } \\
s_{(-1) v}(p ; \alpha)-s_{(-1) u}(p ; \alpha) & \text { in case (ii) }
\end{array} \quad \alpha \in[0,1]\right.}
\label{12}
\end{equation}
où, pour une quantité floue $ u, $ les fonctions de support sont considérées pour chaque $\alpha-coupe$ et défini par :
$$
\color{purple}{
\begin{aligned}
s_{u} &: \mathbb{S}^{n-1} \times[0,1] \longrightarrow \mathbb{R} \\
s_{u}(p ; \alpha) &=\sup \left\{\langle p, x\rangle \mid x \in[u]_{\alpha}\right\} \quad \text { Pour tout } p \in \mathbb{S}^{n-1}, \alpha \in[0,1]
\end{aligned}}
$$
En tant que fonction de $\alpha, s_{u}(p, \cdot)$ n'augmente pas pour tous $p \in \mathbb{S}^{n-1},$ en raison de la propriété de nidification du $\alpha$-coupe.

\begin{proposition}
 Soit $s_{u}(p, \alpha)$ et $s_{v}(p, \alpha)$ être les fonctions de support de deux quantités floues $u, v \in \mathcal{F}^{n} .$ Considérons $s_{1}=$ $s_{u}-s_{v}, s_{2}=s_{v}-s_{u} ;$ les quatre cas suivants s'appliquent:
 
1) Si $s_{1}$ et $s_{2}$ sont tous les deux sous-additifs en p pour tout $\alpha \in[0,1]$ et n'augmentent pas pour tous $p,$ alors $u \Theta_{g} v$ existe; (i) $a n d$ (ii) in (\ref{12}) sont satisfaits simultanément et $u \Theta_{g} v$ est crisp.

2) Si $s_{1}$ est sous-additif dans $p$ pour tout $\alpha \in[0,1]$ et sans augmentation pour tous $p$ et $s_{2}$ n'est pas, alors $w=u \Theta_{g} v$ existe,(i) est satisfait et $s_{w}=s_{u}-s_{v}$

3) Si $s_{2}$ est sous-additif dans $p$ pour tout $\alpha \in[0,1]$ et sans augmentation pour tous $p$ et $s_{1}$ n'est pas, alors $w=u \Theta_{g} v$ existe,(ii) est satisfait et $s_{w}=s_{-v}-s_{-u}$

4) Si $s_{1}$ et $s_{2}$ ne sont pas à la fois sous-additifs et non croissants pour tous $p,$ alors $u \Theta_{g} v$ n'existe pas.

\end{proposition}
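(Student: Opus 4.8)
L'idée est de se ramener à la proposition précédente sur les fonctions de support des ensembles convexes compacts de $\mathbb{R}^{n}$, en l'appliquant coupe de niveau par coupe de niveau, puis de vérifier que la famille de $\alpha$-coupes ainsi obtenue est bien la famille des coupes d'une quantité floue, c'est-à-dire qu'elle satisfait les conditions (F1)--(F3).

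D'abord, je fixerais $\alpha\in[0,1]$. Comme $[u]_{\alpha},[v]_{\alpha}\in\mathcal{K}_{C}(\mathbb{R}^{n})$ d'après (F1) et que $s_{u}(\cdot;\alpha),s_{v}(\cdot;\alpha)$ en sont les fonctions de support, j'appliquerais la proposition sur $s_{1}=s_{A}-s_{B}$ et $s_{2}=s_{B}-s_{A}$ dans $\mathcal{K}_{C}^{n}$ aux ensembles $A=[u]_{\alpha}$, $B=[v]_{\alpha}$. Selon que $s_{1}(\cdot;\alpha)$ et/ou $s_{2}(\cdot;\alpha)$ est sous-additif en $p$, ceci fournit l'existence de l'ensemble $C_{\alpha}:=[u]_{\alpha}\Theta_{g}[v]_{\alpha}$ et la forme de sa fonction de support: $s_{C_{\alpha}}(p)=s_{u}(p;\alpha)-s_{v}(p;\alpha)$ dans le cas (i), $s_{C_{\alpha}}(p)=s_{(-1)v}(p;\alpha)-s_{(-1)u}(p;\alpha)=s_{v}(-p;\alpha)-s_{u}(-p;\alpha)$ dans le cas (ii), et $C_{\alpha}$ réduit à un point lorsque les deux sont sous-additifs. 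Si, pour un certain $\alpha$, ni $s_{1}(\cdot;\alpha)$ ni $s_{2}(\cdot;\alpha)$ n'est sous-additif, alors $C_{\alpha}$ n'existe pas, donc $u\Theta_{g}v$ non plus: c'est le point~4.

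Dans les cas 1--3, la sous-additivité étant supposée \emph{pour tout} $\alpha$, on obtient $C_{\alpha}$ pour chaque $\alpha$, et l'étape précédente donne (F1) pour la famille $\{C_{\alpha}\}$ (l'homogénéité positive et la continuité en $p$ de $s_{C_{\alpha}}$ s'héritent trivialement de celles de $s_{u},s_{v}$). Il resterait à établir (F2) et (F3). Pour l'emboîtement (F2), $C_{\alpha}\subseteq C_{\beta}$ quand $\alpha\geq\beta$ équivaut à $s_{C_{\alpha}}(p)\leq s_{C_{\beta}}(p)$ pour tout $p$, soit la décroissance de $\alpha\mapsto s_{C_{\alpha}}(p)$: c'est exactement l'hypothèse que $s_{1}$ ne croît pas pour tous $p$ dans le cas 2, que $s_{2}$ ne croît pas pour tous $p$ dans le cas 3 (où $s_{C_{\alpha}}(p)=s_{2}(-p;\alpha)$), et dans le cas 1 le fait que $s_{1}$ et $s_{2}=-s_{1}$ soient toutes deux décroissantes en $\alpha$ force $s_{1}$ constante en $\alpha$, donc $C_{\alpha}=\{c\}$ indépendant de $\alpha$ (une famille de singletons n'est emboîtée que si elle est constante) --- d'où le résultat net. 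Pour (F3), j'utiliserais que $s_{u}(p;\cdot)$ et $s_{v}(p;\cdot)$ sont continues à gauche en $\alpha$: si $\alpha_{k}\uparrow\alpha$, alors (F3) pour $u$ donne $[u]_{\alpha}=\bigcap_{k}[u]_{\alpha_{k}}$, et pour une suite décroissante de compacts non vides le supremum d'une fonctionnelle linéaire sur l'intersection est la limite des suprema (argument de compacité: extraire une sous-suite convergente d'une suite de points quasi-maximisants). Donc $\alpha\mapsto s_{C_{\alpha}}(p)$, combinaison affine de $s_{u},s_{v}$, est continue à gauche, d'où $s_{C_{\alpha}}(p)=\lim_{k}s_{C_{\alpha_{k}}}(p)=\inf_{k}s_{C_{\alpha_{k}}}(p)$, ce qui équivaut à $C_{\alpha}=\bigcap_{k}C_{\alpha_{k}}$. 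La famille $\{C_{\alpha}\}$ satisfaisant (F1)--(F3), c'est la famille des coupes d'une quantité floue $w\in\mathcal{F}^{n}$, et $w$ vérifie (\ref{aa}) par construction, donc $w=u\Theta_{g}v$; les expressions annoncées pour $s_{w}$ dans chaque cas sont celles de l'étape de réduction.

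\noindent\textbf{Principal obstacle.} Le contenu « ensembliste » étant déjà acquis, la seule difficulté propre au cadre flou est de vérifier que les $C_{\alpha}$ se recollent en une quantité floue, c'est-à-dire que (F2) et (F3) tiennent. C'est là qu'interviennent de manière essentielle les hypothèses de décroissance en $\alpha$ (pour (F2)) et la propriété (F3) transmise de $u$ et $v$ via la continuité à gauche en $\alpha$ des fonctions de support (pour (F3)); une fois ce recollement justifié, chaque assertion découle de la proposition déjà démontrée sur $\mathcal{K}_{C}^{n}$.
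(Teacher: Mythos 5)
Votre démarche est pour l'essentiel celle du texte : réduction niveau par niveau à la proposition correspondante sur $\mathcal{K}_{C}^{n}$ appliquée à $A=[u]_{\alpha}$, $B=[v]_{\alpha}$, puis vérification que la famille $\{C_{\alpha}\}$ obtenue satisfait (F1)--(F3) --- (F1) par la sous-additivité à chaque niveau, (F2) par la décroissance en $\alpha$, (F3) par un passage à la limite. Votre justification de (F3) par la continuité à gauche de $\alpha\mapsto s_{u}(p;\alpha)$ (héritée de (F3) pour $u$ et $v$ par un argument de compacité) est une variante correcte de l'argument de semi-continuité supérieure du texte, et votre traitement des cas 1 à 3, y compris la réduction du cas 1 à un singleton constant via $s_{2}=-s_{1}$, est juste.

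Le point faible est le cas 4. Tel qu'écrit, vous ne traitez que la situation où, pour un certain $\alpha$, ni $s_{1}(\cdot;\alpha)$ ni $s_{2}(\cdot;\alpha)$ n'est sous-additive. Or l'hypothèse du cas 4 est la négation conjointe de \emph{sous-additive pour tout $\alpha$ et non croissante pour tout $p$} pour $s_{1}$ et pour $s_{2}$ : elle couvre aussi le cas où la sous-additivité est acquise à chaque niveau mais où la monotonie en $\alpha$ fait défaut. Par exemple, si $[v]_{\alpha}=[u]_{\alpha}+\{c_{\alpha}\}$ avec $c_{\alpha}$ non constant (ce qui est compatible avec $u,v\in\mathcal{F}^{n}$ valides), alors $s_{1}$ et $s_{2}$ sont linéaires en $p$, donc sous-additives à chaque niveau, mais aucune n'est non croissante pour tout $p$ ; les différences niveau par niveau sont des singletons $\{-c_{\alpha}\}$ non emboîtés et $u\Theta_{g}v$ n'existe pas, alors que votre argument ne détecte aucune obstruction dans ce cas. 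La démonstration complète du cas 4 est celle, laissée implicite dans le texte par le renvoi au cas net : si $w=u\Theta_{g}v$ existait, alors d'après (\ref{aa}) on aurait soit $s_{w}=s_{u}-s_{v}=s_{1}$ (cas (i)), soit $s_{(-1)w}=s_{v}-s_{u}=s_{2}$ (cas (ii)) ; comme la fonction de support d'une quantité floue (ou de son opposée $(-1)w$, qui est aussi une quantité floue) est nécessairement sous-additive en $p$ à chaque niveau et non croissante en $\alpha$ pour tout $p$, l'une des deux fonctions $s_{1}$, $s_{2}$ posséderait ces deux propriétés, ce qui contredit l'hypothèse du cas 4. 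Avec ce complément, votre preuve coïncide avec celle du texte.
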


\begin{proof}
La preuve est similaire à la preuve de la proposition $7.$ Pour $\alpha \in] 0,1]$ considérez les ensembles $W_{\alpha}=\left\{x \in \mathbb{R}^{n} \mid\langle p, x\rangle \leq s_{u}(p ; \alpha)\right.$ pour tout $p \in \mathbb{R}^{n}$ avec $\left.\|p\|=\alpha\right\}$, et $W_{0}=c l\left(\bigcup_{\alpha \in] 0,1]} W_{\alpha}\right) .$ Propriété (F1) pour $\left\{W_{\alpha}\right\}$ est assurée par la validité de la propriété sous-additive pour tous $\alpha \in[0,1] .$ La condition de monotonie $\forall p$ assure la propriété d'imbrication (F2) du $\alpha$-coupe. Il reste à montrer la propriété (F3), c'est-à-dire $W_{\alpha}=\bigcap_{k=1}^{\infty} W_{\alpha_{k}}$ pour toutes les séquences croissantes $\alpha_{k} \uparrow \alpha$ convergent vers $\left.\left.\alpha \in\right] 0,1\right]$. Comme $W_{\alpha} \subseteq W_{\alpha_{k}}$ on a  $W_{\alpha} \subseteq \bigcap_{k=1}^{\infty} W_{\alpha_{k}} ;$ soit maintenant $x \in \bigcap_{k=1}^{\infty} W_{\alpha_{k}}$, pour tout $p$ ayant $\|p\|=\alpha$ et tout $k=1,2, \ldots$ on a 
$\left\langle\left(\alpha_{k} / \alpha\right) p, x\right\rangle \leq s_{u}\left(\left(\alpha_{k} / \alpha\right) p ; \alpha\right)$ et en prenant la limite pour $k \longrightarrow \infty$ nous obtenons, comme $s_{w}$ est continue (semi-continue supérieure), $\langle p, x\rangle=\lim \left\langle\left(\alpha_{k} / \alpha\right) p, x\right\rangle \leq \lim \sup s_{u}\left(\left(\alpha_{k} / \alpha\right) p ; \alpha\right)=s_{u}(p ; \alpha)$ et $x \in W_{\alpha} .$ La preuve est complète.
\end{proof}

Il suit immédiatement une condition nécessaire et suffisante pour que  $u \Theta_{g} v$ soit existe :
\begin{proposition}
 Soit $u, v \in \mathcal{F}^{n}$ être donné avec des fonctions de supports $s_{u}(p, \alpha)$ et $s_{v}(p, \alpha) ;$ alors $u \Theta_{g} v \in \mathcal{F}^{n}$ existe si et seulement si au moins une des deux fonctions $s_{u}-s_{v}, s_{-v}-s_{-u}$ est une fonction de support et n'augmente pas avec $\alpha$ pour tout $p .$
 \end{proposition}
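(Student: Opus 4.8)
Le plan est de déduire cette caractérisation de la Proposition~12 (celle qui fournit la dichotomie en quatre cas pour les quantités floues), qui contient déjà tout le contenu substantiel : la tâche restante est essentiellement de reformulation, c'est-à-dire de traduire les conditions analytiques « sous-additif en $p$ et non croissant en $\alpha$ » portant sur $s_1 = s_u - s_v$ et $s_2 = s_v - s_u$ dans l'énoncé compact ci-dessus.

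D'abord je rappellerais que $s_1$ et $s_2$ héritent automatiquement de la continuité (semi-continuité supérieure) et de l'homogénéité positive de $s_u$ et $s_v$, comme observé dans la discussion précédant la Proposition~7 ; par conséquent, à $\alpha$ fixé, dire que $s_1(\cdot;\alpha)$ \emph{est une fonction de support} revient exactement à dire qu'elle est sous-additive en $p$. L'hypothèse « $s_u - s_v$ est une fonction de support et n'augmente pas avec $\alpha$ pour tout $p$ » est donc précisément la conjonction des deux conditions portant sur $s_1$ dans la Proposition~12. Ensuite je ferais le lien avec $s_{-v} - s_{-u}$ au moyen de l'identité $s_{-A}(p) = s_A(-p)$ (propriété 4 des fonctions de support) : on a $(s_{-v} - s_{-u})(p;\alpha) = s_v(-p;\alpha) - s_u(-p;\alpha) = s_2(-p;\alpha)$, et comme le changement de variable $p \mapsto -p$ préserve $\mathbb{S}^{n-1}$, « $s_{-v} - s_{-u}$ est une fonction de support qui ne croît pas avec $\alpha$ » équivaut à « $s_2$ est sous-additive en $p$ et non croissante en $\alpha$ ».

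Il reste à conclure en appliquant la Proposition~12 : ses cas 1), 2), 3) sont exactement ceux où au moins l'une de $s_1$, $s_2$ remplit les deux conditions, et dans chacun d'eux $u \Theta_{g} v \in \mathcal{F}^n$ existe (avec $s_w = s_u - s_v$ ou $s_w = s_{-v} - s_{-u}$ selon le cas) ; le cas 4), où aucune des deux ne les remplit, est exactement celui où $u \Theta_{g} v$ n'existe pas. En injectant les deux traductions ci-dessus, « être dans le cas 1, 2 ou 3 » se lit « au moins l'une de $s_u - s_v$, $s_{-v} - s_{-u}$ est une fonction de support qui ne croît pas avec $\alpha$ », ce qui donne l'équivalence annoncée.

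Le seul point qui n'est pas purement formel est celui déjà traité dans la preuve de la Proposition~12 : vérifier que, lorsque $s_w := s_u - s_v$ (ou $s_{-v} - s_{-u}$) est une fonction de support non croissante en $\alpha$, la famille $[w]_\alpha = \{x \in \mathbb{R}^n : \langle p,x\rangle \le s_w(p;\alpha)\ \forall p\}$ satisfait (F1)--(F3) et définit donc un véritable $w \in \mathcal{F}^n$ --- (F1) parce que $s_w(\cdot;\alpha)$ est une fonction de support, (F2) par la monotonie en $\alpha$, (F3) par semi-continuité supérieure en $p$ et passage à la limite le long d'une suite $\alpha_k \uparrow \alpha$, la normalité et la compacité de $[w]_1$ étant automatiques dès que $s_w(\cdot;1)$ est une fonction de support finie. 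Comme ce raisonnement est fait dans la preuve de la Proposition~12, la présente démonstration se ramène à un simple corollaire ; l'« obstacle » est uniquement de bien isoler, dans l'énoncé, la condition de monotonie en $\alpha$ que la notion de fonction de support ne recouvre pas.
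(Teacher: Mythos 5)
Votre démonstration est correcte et suit essentiellement la même voie que le texte : le mémoire énonce cette proposition comme conséquence immédiate de la Proposition~12 (« Il suit immédiatement\ldots ») sans preuve séparée, et votre argument est précisément cette déduction, en explicitant la traduction « fonction de support $=$ sous-additivité en $p$ » et l'identité $s_{-v}-s_{-u} = s_2(-\,\cdot\,;\alpha)$ que le texte laisse implicites.
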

Dans le cas unidimensionnel, les conditions de la définition de $w=u \Theta_{g} v$ sont
$$
[w]_{\alpha}=\left[w_{\alpha}^{-}, w_{\alpha}^{+}\right]=[u]_{\alpha} \Theta_{g}[v]_{\alpha}
$$
et
\begin{equation}
\left\{\begin{array}{l}
w_{\alpha}^{-}=\mid \min \left\{u_{\alpha}^{-}-v_{\alpha}^{-}, u_{\alpha}^{+}-v_{\alpha}^{+}\right\} \\
w_{\alpha}^{+}=\max \left\{u_{\alpha}^{-}-v_{\alpha}^{-}, u_{\alpha}^{+}-v_{\alpha}^{+}\right\}
\end{array}\right.
\label{13}
\end{equation}
à condition que $w_{\alpha}^{-}$ ne diminue pas, $w_{\alpha}^{+}$ n'augmente pas et $w_{1}^{-} \leq w_{1}^{+} ;$ en particulier, pour $\alpha \in[0,1],$\\
\begin{tcolorbox}
\begin{equation}
\begin{aligned}
(1) \left\{\begin{array}{l}w_{\alpha}^{-}=u_{\alpha}^{-}-v_{\alpha}^{-} \\ w_{\alpha}^{+}=u_{\alpha}^{+}-v_{\alpha}^{+}\end{array} \quad\right. if \operatorname{len}\left([u]_{\alpha}\right) \leq \operatorname{len}\left([v]_{\alpha}\right),\\
(2) \left\{\begin{array}{l}w_{\alpha}^{-}=u_{\alpha}^{+}-v_{\alpha}^{+} \\ w_{\alpha}^{+}=u_{\alpha}^{-}-v_{\alpha}^{-}\end{array} \quad\right. if \operatorname{len}\left([u]_{\alpha}\right) \geq \operatorname{len}\left([v]_{\alpha}\right),
\end{aligned}
\label{14}
\end{equation}
\end{tcolorbox}
où $\operatorname{len}\left([u]_{\alpha}\right)=u_{\alpha}^{+}-u_{\alpha}^{-}$ est la longueur du $\alpha$ -coupe de $u$ (de la même manière $\operatorname{len}\left([v]_{\alpha}\right)$ pour $\left.v\right)$

\begin{proposition}
 Soit $u, v \in \mathcal{F}$ être deux nombres flous avec $\alpha$-coupe donné par$[u]_{\alpha}$ et $[v]_{\alpha},$ respectivement; les $g H$ -difference $u \Theta_{g} v \in \mathcal{F}$ existe si et seulement si l'une des deux conditions est satistaite :
 $$
\text { (a) }\left\{\begin{array}{l}
\operatorname{len}\left([u]_{\alpha}\right) \geq \operatorname{len}\left([v]_{\alpha}\right) \text { for all } \alpha \in[0,1], \\
u_{\alpha}^{-}-v_{\alpha}^{-} \text {is increasing with respect to } \alpha, \\
u_{\alpha}^{+}-v_{\alpha}^{+} \text {is decreasing with respect to } \alpha
\end{array}\right.
$$
ou
$$
\text { (b) }\left\{\begin{array}{l}
\operatorname{len}\left([u]_{\alpha}\right) \leq \operatorname{len}\left([v]_{\alpha}\right) \text { for all } \alpha \in[0,1], \\
u_{\alpha}^{+}-v_{\alpha}^{+} \text {is increasing with respect to } \alpha, \\
u_{\alpha}^{-}-v_{\alpha}^{-} \text {is decreasing with respect to } \alpha .
\end{array}\right.
$$
 
\end{proposition}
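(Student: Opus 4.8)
L'idée directrice est de lire la définition (\ref{aa}) de $u\,\Theta_g v$ au niveau des $\alpha$-coupes, en utilisant que l'addition et la multiplication scalaire des nombres flous s'effectuent coupe par coupe (voir (\ref{10})--(\ref{11})). Le point essentiel, qui distingue le cas flou du cas purement ensembliste, est que l'existence de $w=u\,\Theta_g v$ n'est \emph{pas} une propriété locale en $\alpha$: par (\ref{aa}), elle impose que l'une des deux égalités \emph{globales} $u=v+w$ (cas (i)) ou $v=u+(-1)w$ (cas (ii)) soit satisfaite, et chacune de ces alternatives détermine entièrement la famille $\{[w]_\alpha\}_{\alpha\in[0,1]}$.

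Pour le sens direct, je supposerais que $w:=u\,\Theta_g v\in\mathcal{F}$ existe. Dans le cas (i), (\ref{10}) donne $[u]_\alpha=[v]_\alpha+[w]_\alpha$ pour tout $\alpha$, d'où $w_\alpha^-=u_\alpha^--v_\alpha^-$ et $w_\alpha^+=u_\alpha^+-v_\alpha^+$; la condition $w_\alpha^-\le w_\alpha^+$ (car $[w]_\alpha$ est un intervalle compact non vide) se récrit $\operatorname{len}([u]_\alpha)\ge\operatorname{len}([v]_\alpha)$, tandis que (F1)--(F2) appliquées à $w$ imposent $\alpha\mapsto u_\alpha^--v_\alpha^-$ croissante et $\alpha\mapsto u_\alpha^+-v_\alpha^+$ décroissante: c'est exactement la condition (a). De même, dans le cas (ii), (\ref{10})--(\ref{11}) donnent $[v]_\alpha=[u]_\alpha+(-1)[w]_\alpha=[\,u_\alpha^--w_\alpha^+,\ u_\alpha^+-w_\alpha^-\,]$, donc $w_\alpha^-=u_\alpha^+-v_\alpha^+$ et $w_\alpha^+=u_\alpha^--v_\alpha^-$, et les mêmes contraintes fournissent la condition (b).

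Pour la réciproque, sous (a) je poserais $W_\alpha:=[\,u_\alpha^--v_\alpha^-,\ u_\alpha^+-v_\alpha^+\,]$. La première ligne de (a) assure $W_\alpha\in\mathcal{K}_C(\mathbb{R})$, soit (F1); les deux autres lignes donnent l'emboîtement (F2). Pour (F3), étant donné une suite croissante $\alpha_k\uparrow\alpha$, j'utiliserais (F3) pour $u$ et pour $v$ — qui entraîne $u_{\alpha_k}^-\to u_\alpha^-$, $v_{\alpha_k}^-\to v_\alpha^-$, et de même pour les bornes supérieures — puis le fait que l'intersection d'une suite emboîtée d'intervalles compacts est l'intervalle formé par les limites de leurs bornes, pour obtenir $\bigcap_k W_{\alpha_k}=W_\alpha$. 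Ainsi $\{W_\alpha\}$ définit un nombre flou $w$ tel que $[w]_\alpha=W_\alpha$, et un calcul immédiat de coupes montre $v+w=u$ (cas (i)): $u\,\Theta_g v=w$ existe. Le cas (b) est symétrique, en obtenant cette fois $v=u+(-1)w$ (cas (ii)).

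L'obstacle principal est, à mon sens, précisément cette dichotomie. La formule niveau par niveau (\ref{13})--(\ref{14}) produit pour chaque $\alpha$ un intervalle $[u]_\alpha\,\Theta_g[v]_\alpha=[\min\{a,b\},\max\{a,b\}]$ avec $a=u_\alpha^--v_\alpha^-$ et $b=u_\alpha^+-v_\alpha^+$; mais il faut voir que ce procédé fournit un $u\,\Theta_g v$ au sens de (\ref{aa}) seulement lorsque le signe de $\operatorname{len}([u]_\alpha)-\operatorname{len}([v]_\alpha)=b-a$ ne change pas avec $\alpha$ — sinon aucune des deux égalités globales $u=v+w$ ou $v=u+(-1)w$ ne peut être vérifiée, même si les $[u]_\alpha\,\Theta_g[v]_\alpha$ forment une famille emboîtée d'intervalles. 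C'est cette remarque qui réduit les quatre sous-cas de (\ref{14}) à la seule alternative (a)/(b), le reste n'étant qu'une vérification de routine des propriétés (F1)--(F3) esquissée ci-dessus.
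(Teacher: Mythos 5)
Votre démonstration est correcte, mais elle suit une voie réellement différente de celle du mémoire. Le texte obtient cette proposition comme spécialisation unidimensionnelle du critère par fonctions de support établi juste avant pour $\mathcal{F}^{n}$ : avec $\mathcal{S}^{0}=\{-1,1\}$, $s_{u}(-1;\alpha)=-u_{\alpha}^{-}$ et $s_{u}(1;\alpha)=u_{\alpha}^{+}$, il calcule $s_{u}-s_{v}$ et $s_{-v}-s_{-u}$ (formules (\ref{16})--(\ref{17})), puis invoque la proposition sur la sous-additivité en $p$ et la monotonie en $\alpha$ pour aboutir à (\ref{18}). Vous travaillez au contraire directement au niveau des $\alpha$-coupes : dans le sens direct, vous lisez les égalités globales $u=v+w$ ou $v=u+(-1)w$ coupe par coupe, la comparaison des longueurs sortant de $w_{\alpha}^{-}\leq w_{\alpha}^{+}$ et la monotonie de l'emboîtement (F2) des coupes de $w$ ; dans le sens réciproque, vous vérifiez à la main que la famille candidate $W_{\alpha}$ satisfait (F1)--(F3), la continuité à gauche des bornes de $u$ et $v$ donnant (F3), puis vous concluez par un calcul de coupes que $v+w=u$ (ou $v=u+(-1)w$). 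Les deux arguments sont valides ; le vôtre est plus élémentaire et autonome, ne reposant que sur le théorème de représentation par coupes de niveau et rendant explicite la vérification de (F3) que le mémoire délègue à la preuve par fonctions de support, tandis que celui du mémoire est plus court une fois cette machinerie disponible et conserve le lien avec le cas multidimensionnel. Votre remarque finale sur le caractère global de la dichotomie (i)/(ii) — le signe de $\operatorname{len}([u]_{\alpha})-\operatorname{len}([v]_{\alpha})$ ne doit pas changer avec $\alpha$, faute de quoi les différences gH niveau par niveau n'assemblent aucune des deux égalités globales — est exactement le contenu des clauses « pour tout $\alpha$ » de (\ref{18}) et est bien identifiée.
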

\begin{proof}
En fait, considérons la fonction de support de $ u $ (et de même pour $ v $), obtenue par :
\begin{equation}
\begin{aligned}
s_{u}(p ; \alpha) &=\max _{x}\left\{p x \mid u_{\alpha}^{-} \leq x \leq u_{\alpha}^{+}\right\} \\
p & \in \mathcal{S}^{0}=\{-1,1\} \\
\alpha & \in[0,1]
\end{aligned}
\label{15}
\end{equation}
c'est-à-dire simplement par $s_{u}(-1 ; \alpha)=-u_{\alpha}^{-}$ et $s_{u}(1 \alpha)=u_{\alpha}^{+}$. Alors
\begin{equation}
s_{u}(p ; \alpha)-s_{v}(p ; \alpha)=\left\{\begin{array}{ll}
-\left(u_{\alpha}^{-}-v_{\alpha}^{-}\right) & \text {if } p=-1 \\
u_{\alpha}^{+}-v_{\alpha}^{+} & \text {if } p=1
\end{array}\right.
\label{16}
\end{equation}
et
\begin{equation}
s_{-v}(p ; \alpha)-s_{-u}(p ; \alpha)=\left\{\begin{array}{ll}
-\left(u_{\alpha}^{+}-v_{\alpha}^{+}\right) & \text {if } p=-1 \\
u_{\alpha}^{-}-v_{\alpha}^{-} & \text {if } p=1
\end{array}\right.
\label{17}
\end{equation}
À partir des relations ci-dessus et de la proposition 13, nous en déduisons que (symboles $\nearrow$ et $\searrow$ signifie que la fonction augmente ou diminue, respectivement):

\begin{equation}
u \Theta_{g} v=w \Longleftrightarrow\left\{\begin{array}{l}
\text { (a) }\left\{\begin{array}{l}
{[w]_{\alpha}=\left[u_{\alpha}-v_{\alpha}^{-}, u_{\alpha}^{+}-v_{\alpha}^{+}\right]} \\
\text {provided that } u_{\alpha}-v_{\alpha}^{-} \leq u_{\alpha}^{+}-v_{\alpha}^{+}, \quad \forall \alpha \\
\text { and } u_{\alpha}-v_{\alpha}^{-} \nearrow, u_{\alpha}^{+}-v_{\alpha}^{+} \searrow
\end{array}\right. \\
\text { or } \\
\text { (b) }\left\{\begin{array}{l}
{[w]_{\alpha}=\left[u_{\alpha}^{+}-v_{\alpha}^{+}, u_{\alpha}-v_{\alpha}^{-}\right]} \\
\text {provided that } u_{\alpha}^{-}-v_{\alpha}^{-} \geq u_{\alpha}^{+}-v_{\alpha}^{+}, \\
\text {and } u_{\alpha}^{+}-v_{\alpha}^{+} \nearrow, u_{\alpha}-v_{\alpha}^{-}
\end{array} \forall \alpha\right.
\end{array}\right.
\label{18}
\end{equation}
et la preuve est complète.
\end{proof}
 La monotonie de $u_{\alpha}^{-}-v_{\alpha}^{-}$ et $u_{\alpha}^{+}-v_{\alpha}^{+}$ selon (a) ou (b) dans \ref{18} est une condition importante pour l'existence de $u \Theta_{g} v$ et doit être vérifié explicitement car en fait, il peut ne pas être satisfait. Considérons $[u]_{\alpha}=[5+4 \alpha, 11-2 \alpha]$ et $[v]_{\alpha}=[12+3 \alpha, 19-4 \alpha] ;$ alors $u_{\alpha}^{-}-v_{\alpha}^{-}=-6+\alpha, u_{\alpha}^{+}-v_{\alpha}^{+}=-8+2 \alpha$ et $u_{\alpha}^{+}-v_{\alpha}^{+}<u_{\alpha}^{-}-v_{\alpha}^{-}$ mais $u_{\alpha}^{-}-v_{\alpha}^{-}$ ne diminue pas comme dans \ref{18} (b).
\begin{remarque}
Les conditions (a) et (b) de la proposition ci-dessus sont toutes deux valides si $\operatorname{len}\left([u]_{\alpha}\right)=\operatorname{len}\left([v]_{\alpha}\right)$ pour tout $\alpha \in[0,1] ;$ dans ce cas, $u \Theta_{g} v$ est une quantité grisp.
\end{remarque}
\begin{exemple}(Cas d'appartenance linéaire)\\
 Si $u$ et $v$ sont des nombres flous de forme linéaire trapézoïdale, désignés par $u=$ $\left\langle u_{0}^{-}, u_{1}^{-}, u_{1}^{+}, u_{0}^{+}\right\rangle$ avec $u_{0}^{-} \leq u_{1}^{-} \leq u_{1}^{+} \leq u_{0}^{+}$ (de même pour $v$ ) et $\alpha-coupe$ $ [u]_{\alpha}=\left[u_{0}^{-}+\alpha\left(u_{1}^{-}-u_{0}^{-}\right), u_{0}^{+}+\alpha\left(u_{1}^{+}-u_{0}^{+}\right)\right],$
alors $u \Theta_{g} v$ existe si et seulement si :

(a) $u_{0}^{-}-v_{0}^{-} \leq u_{1}^{-}-v_{1}^{-} \leq u_{1}^{+}-v_{1}^{+} \leq u_{0}^{+}-v_{0}^{+}$

ou

 (b) $u_{0}^{-}-v_{0}^{-} \geq u_{1}^{-}-v_{1}^{-} \geq u_{1}^{+}-v_{1}^{+} \geq u_{0}^{+}-v_{0}^{+}$ 
 
En particulier, si $u$ et $v$ sont des nombres flous triangulaires (linéaires) (i.e. $u_{1}^{-}=u_{1}^{+}$ et $\left.v_{1}^{-}=v_{1}^{+}\right),$ désigné par $u=\left\langle u_{0}^{-}, \widehat{u}, u_{0}^{+}\right\rangle$ et $v=\left\langle v_{0}^{-}, \widehat{v}, v_{0}^{+}\right\rangle,$ then $w=u \Theta_{g} v$ existe si et seulement si :

(a) $u_{0}^{-}-v_{0}^{-} \leq \widehat{u}-\widehat{v} \leq u_{0}^{+}-v_{0}^{+}$

ou 

(b) $u_{0}^{-}-v_{0}^{-} \geq \widehat{u}-\widehat{v} \geq u_{0}^{+}-v_{0}^{+}$

\end{exemple}

Pour illustrer:

 $\langle 12,15,19\rangle \Theta_{g}\langle 5,9,11\rangle$ n'existe pas en tant que $u_{\alpha}^{-}-v_{\alpha}^{-}=-6+\alpha$ et $u_{\alpha}^{+}-v_{\alpha}^{+}=-8+2 \alpha$ augmentent tous les deux par rapport à $\alpha \in[0,1]$ et les conditions (\ref{18}) ne sont pas satisfaites;

 $\langle 12,15,19\rangle \odot_{g}\langle 5,7,10\rangle=\langle 7,8,9\rangle$, selon (a);
 
  $\langle 12,15,19\rangle \Theta_{g}\langle 9,13,18\rangle=\langle 1,2,3\rangle$, selon (b).
  
Si $u \Theta_{g} v$ est un nombre flou, il a les mêmes propriétés que celles illustrées dans la section 2 pour les intervalles. En particulier, les mêmes propriétés que dans la proposition 6 sont également immédiates.
\begin{proposition}
 Soit $u, v \in \mathcal{F} .$ Si $u \Theta_{g} v$ existe, it est unique et possède les propriétés suivantes $(0$ désigne l'ensemble croustillant \{0\} ):
 
(1) $u \ominus_{g} u=0$

(2) (a) $(u+v) \Theta_{g} v=u ;$ (b) $u \Theta_{g}(u-v)=v$.

(3) Si $u \Theta_{g} v$ existe alors aussi $(-v) \Theta_{g}(-u)$ fait et $0 \ominus_{g}\left(u \Theta_{g} v\right)=(-v) \Theta_{g}(-u)$.

(4) $u \Theta_{g} v=v \Theta_{g} u=w$ si et seulement si $w=-w$ (en particulier $w=0$ si et seulement si $\left.u=v\right)$.

(5) Si $v \Theta_{g} u$ existe alors soit $u+\left(v \Theta_{g} u\right)=u$ ou $v-\left(v \Theta_{g} u\right)=u$ et si les deux égalités tiennent alors $v \Theta_{g} u$ est un crisp ensemble.
\end{proposition}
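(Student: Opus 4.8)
The plan is to exploit that the defining relation (\ref{aa}) for $u\Theta_{g}v$ is formally the same as (\ref{8}), now with $\mathcal{K}_{C}(\mathbb{R}^{n})$ replaced by $\mathcal{F}^{n}$, and that the arithmetic on $\mathcal{F}^{n}$ defined level-wise by (\ref{10})--(\ref{11}) inherits from $\mathcal{K}_{C}(\mathbb{R}^{n})$ exactly the structural facts used in Propositions 5 and 6: associativity and commutativity of $+$, the neutral element $\{0\}=0$, the cancellation law $u+w=v+w\Longleftrightarrow u=v$, and the identities $(-1)(-1)u=u$ and $(-1)(u+v)=(-1)u+(-1)v$ (each obtained by applying the corresponding property of compact convex sets to every $\alpha$-cut). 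Consequently the proofs of Proposition 5 and Proposition 6 transfer almost verbatim, and I would carry out this transfer property by property.

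First I would settle uniqueness as in Proposition 5: if $w=u\Theta_{g}v$ arises through case (i) it coincides with the $H$-difference $u\Theta v$, which is unique by definition; if two candidates $w$ and $w_{1}$ both satisfy case (ii) then $u+(-1)w=u+(-1)w_{1}$ gives $w=w_{1}$ by cancellation; and if one satisfies case (i) while the other satisfies case (ii) one reaches $\{0\}=w+(-1)w_{1}$, so passing to $\alpha$-cuts and using $[w+(-1)w_{1}]_{\alpha}=[w_{\alpha}^{-}-w_{1,\alpha}^{+},\,w_{\alpha}^{+}-w_{1,\alpha}^{-}]$ one obtains $w_{1,\alpha}^{+}=w_{\alpha}^{-}\leq w_{\alpha}^{+}=w_{1,\alpha}^{-}\leq w_{1,\alpha}^{+}$ for every $\alpha$, which collapses all four endpoints and forces $w=w_{1}$ to be a crisp number.

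Then I would check (1)--(5) by the same manipulations as in Proposition 6. Property (1): $u=u+0$ puts $0$ in case (i), so $u\Theta_{g}u=0$. For (2a), set $w=(u+v)\Theta_{g}v$; case (i) gives $u+v=w+v$, hence $w=u$ by cancellation, while case (ii) gives $\{0\}=u+(-1)w$, forcing $u$ and $w$ crisp and equal; (2b) is analogous. For (3), from $u=v+w$ rewrite $u=v-(-w)$, which is case (ii) for $v\Theta_{g}u$, so $v\Theta_{g}u=-w=0\ominus_{g}(u\Theta_{g}v)$, and rewrite $-u=-v+(-w)$, which gives $(-v)\Theta_{g}(-u)=w$; the case-(ii) origin of $u\Theta_{g}v$ is handled by the symmetric rewriting. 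For (4), combining the four case-combinations arising from $u\Theta_{g}v=v\Theta_{g}u=w$ yields $w=-w$ in each, and the explicit one-dimensional formula (\ref{13}) shows $w=0$ precisely when $u=v$. For (5), if $v\Theta_{g}u$ exists through case (i) then $u+(v\Theta_{g}u)=v$, and if through case (ii) then $v-(v\Theta_{g}u)=u$; both hold simultaneously exactly when $\operatorname{len}([u]_{\alpha})=\operatorname{len}([v]_{\alpha})$ for all $\alpha$, that is, when $v\Theta_{g}u$ is crisp, as noted in the remark following Proposition 14.

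The only step requiring genuine care, rather than bookkeeping copied from Propositions 5 and 6, is the recurring deduction that $\{0\}=w+(-1)w_{1}$ (or $\{0\}=u+(-1)w$) forces the operands to be crisp and equal: here one must descend to the $\alpha$-cut description and observe that the chain of endpoint inequalities collapses. I expect no further obstacle, since addition and scalar multiplication on $\mathcal{F}^{n}$ act on each level set exactly like the Minkowski operations on compact convex sets, so every remaining identity is a direct translation of the interval statements already established.
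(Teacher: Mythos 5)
Your approach coincides with what the paper itself does: it gives no separate proof of this proposition, saying just before it that these are the same properties as in Proposition 6 for intervals and are "immédiates", i.e., obtained by transferring the set/interval arguments level-wise via (\ref{10})--(\ref{11}) — which is exactly the transfer you carry out, including the uniqueness argument and the $\alpha$-cut collapse for $\{0\}=w+(-1)w_{1}$. One point in item (3) needs to be made explicit: your own computation yields $(-v)\Theta_{g}(-u)=w=u\Theta_{g}v$ on one side and $0\ominus_{g}(u\Theta_{g}v)=-w=v\Theta_{g}u$ on the other, so the two quantities you produce are opposites; the identity as printed, $0\ominus_{g}(u\Theta_{g}v)=(-v)\Theta_{g}(-u)$, therefore holds only when $w=-w$ (it fails for the paper's own example $\langle 2,4,6\rangle\Theta_{g}\langle -2,1,4\rangle=\langle 2,3,4\rangle$). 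What your derivation actually establishes is the form consistent with Proposition 6(3), namely $u\Theta_{g}v=(-v)\Theta_{g}(-u)$ and $0\ominus_{g}(u\Theta_{g}v)=(-u)\Theta_{g}(-v)$, so you should state that corrected identity (or note the misprint) instead of leaving the two halves juxtaposed as if they gave the printed equality. Everything else — uniqueness, (1), (2), (4), (5), including your silent correction of $u+(v\Theta_{g}u)=u$ to $u+(v\Theta_{g}u)=v$ in (5) and the use of the remark on equal lengths of $\alpha$-cuts for the crispness claim — is correct and at the same level of detail as the crisp proofs the paper relies on.
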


\subsection{Décomposition des nombres flous et différence gh}
Étant donné $u \in \mathcal{F}$ avec $\alpha$ -coupe $[u]_{\alpha}=\left[u_{\alpha}^{-}, u_{\alpha}^{+}\right],$ définir les quantités suivantes:
\begin{equation}
\widehat{u}=\left[\widehat{u}^{-}, \widehat{u}^{+}\right].
\label{19}
\end{equation}
 le noyau $\left[u_{1}^{-}, u_{1}^{+}\right],$ correspondant au $(\alpha=1)-c u t ;$
\begin{equation}
\tilde{u}_{\alpha}=\frac{u_{\alpha}^{-}+u_{\alpha}^{+}}{2}-\frac{\widehat{u}^{-}+\widehat{u}^{+}}{2}
\label{20}
\end{equation}

le profil de symétrie (brièvement le profil) de $u$;
\begin{equation}
\bar{u}_{\alpha}=\frac{u_{\alpha}^{+}-u_{\alpha}^{-}}{2}-\frac{\widehat{u}^{+}-\widehat{u}^{-}}{2}
\label{21}
\end{equation}
La composante floue symétrique de $ u $. Clairement, nous avons
\begin{equation}
u_{\alpha}^{-}=\widehat{u}^{-}+\widetilde{u}_{\alpha}-\bar{u}_{\alpha}, \quad u_{\alpha}^{+}=\widehat{u}^{+}+\tilde{u}_{\alpha}+\bar{u}_{\alpha}
\label{22}
\end{equation}
et, en notation d'intervalle,
\begin{equation}
[u]_{\alpha}=\widehat{u}+\left\{\tilde{u}_{\alpha}\right\}+\left[-\bar{u}_{\alpha}, \bar{u}_{\alpha}\right], \quad \alpha \in[0,1]
\label{23}
\end{equation}
On obtient alors une décomposition de $u \in \mathcal{F}$ en termes de trois composants:

1. $\widehat{u}=\left[\widehat{u}^{-}, \widehat{u}^{+}\right] \in \mathbb{I}$ est un intervalle réel compact standard;

2. $\tilde{u}:[0,1] \longrightarrow \mathbb{R}$ est une fonction donnée telle que, pour $\alpha=1, \tilde{u}_{1}=0 ;$ dénoter avec $\mathbb{P}$ l'ensemble de toutes ces fonctions (de profil);

3. $\bar{u}$ est un nombre flou symétrique dont le noyau est donné par le singleton \{0\} ; dénoté par $\mathbb{S}_{0}$ la famille de tous ces nombres flous.

En utilisant les trois éléments précédents, tout nombre ou intervalle flou peut être représenté par un triplet (avec un petit abus de notation, on note $ \bar{u} $ à la fois la fonction dans (\ref{21)} et le 0 -symétrique flou nombre obtenu avec):
$$
u=(\widehat{u}, \tilde{u}, \bar{u}) \in \mathbb{\mathbb{I}} \times \mathbb{P} \times \mathbb{S}_{0}
$$
i.e.

 $\widehat{u} \in \mathbb{I}$ (nombre ou intervalle précis),
 
  $\tilde{u} \in \mathbb{P}$ (profil de symétrie net),
  
   $\bar{u} \in \mathbb{S}_{0} \quad(0$ -nombre flou symétrique $)$
   
La fonction de profil $\tilde{u} \in \mathbb{P}$ et le nombre flou symétrique $\bar{u} \in \mathbb{S}_{0}$ forment ce que nous appelons une paire valide:
\begin{definition}
 Une paire d'éléments $(\tilde{u}, \bar{u}) \in \mathbb{P} \times \mathbb{S}_{0}$ est dit former une paire valide s'il représente un nombre flou ayant $\alpha$ -coupe $\left[\tilde{u}_{\alpha}-\bar{u}_{\alpha}, \tilde{u}_{\alpha}+\bar{u}_{\alpha}\right]$, i.e. si les conditions suivantes sont satisfaits:
$$
\alpha^{\prime}<\alpha^{\prime \prime} \Longrightarrow\left\{\begin{array}{l}
\tilde{u}_{\alpha^{\prime}}-\bar{u}_{\alpha^{\prime}} \leq \widetilde{u}_{\alpha^{\prime \prime}}-\bar{u}_{\alpha^{\prime \prime}} \\
\widetilde{u}_{\alpha^{\prime}}+\bar{u}_{\alpha^{\prime}} \geq \widetilde{u}_{\alpha^{\prime \prime}}+\bar{u}_{\alpha^{\prime \prime}},
\end{array}\right.
$$
i.e. si $\tilde{u}_{\alpha}-\bar{u}_{\alpha}$ est une fonction non décroissante et $\tilde{u}_{\alpha}+\bar{u}_{\alpha}$ est une fonction non croissante (notez que pour $\alpha=1$ tout les deux $\widetilde{u}$ et $\bar{u}$ sont nuls).
\end{definition}
Par la définition ci-dessus, nous pouvons définir la décomposition suivante des nombres flous (intervalles) $u \in \mathcal{F}:$

\begin{proposition}
Tout nombre flou (intervalle) $u \in \mathcal{F}$ avec $\alpha$ -coupe $[u]_{\alpha}=\left[u_{\alpha}^{-}, u_{\alpha}^{+}\right]$ peut être représenté sous la forme $u=$ $(\widehat{u}, \tilde{u}, \bar{u}) \in \mathbb{\mathbb{I}} \times \mathbb{P} \times \mathbb{S}_{0}$ défini dans $(\ref{19})-(\ref{21})$ et $(\widetilde{u}, \bar{u})$ est une paire valide. Vice versa, tout triplet valide $(\widehat{u}, \tilde{u}, \bar{u}) \in \mathbb{I} \times \mathbb{P} \times \mathbb{S}_{0}$
(i.e. $(\widetilde{u}, \bar{u})$ est une paire valide) représente un nombre flou (intervalle) $u \in \mathcal{F}$ avec $\alpha$ -coupe donné par (\ref{23}).
\end{proposition}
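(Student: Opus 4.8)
The statement is a dictionary between the two descriptions of a one‑dimensional fuzzy number, so the plan is to reduce everything to the level‑cut characterization recalled above: a nested family $\{U_{\alpha}\}_{\alpha\in[0,1]}$ of nonempty compact intervals $U_{\alpha}=[\ell_{\alpha},r_{\alpha}]$ is the family of $\alpha$‑cuts of a unique $u\in\mathcal{F}$ exactly when it satisfies (F1)--(F3), i.e.\ (for $n=1$) $\ell_{\alpha}$ is nondecreasing, $r_{\alpha}$ is nonincreasing, $\ell_{1}\le r_{1}$, and $U_{\alpha}=\bigcap_{k}U_{\alpha_{k}}$ for every $\alpha_{k}\uparrow\alpha$. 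The proof then splits into the direct construction and its converse, and one checks the two are mutually inverse.

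\textbf{Direct implication.} Start from $u\in\mathcal{F}$ with $[u]_{\alpha}=[u_{\alpha}^{-},u_{\alpha}^{+}]$ and set $\widehat{u}=[u_{1}^{-},u_{1}^{+}]$, with $\tilde{u}_{\alpha},\bar{u}_{\alpha}$ as in (\ref{20})--(\ref{21}). First I would record the one‑line cancellations giving (\ref{22})--(\ref{23}), namely $u_{\alpha}^{-}=\widehat{u}^{-}+\tilde{u}_{\alpha}-\bar{u}_{\alpha}$ and $u_{\alpha}^{+}=\widehat{u}^{+}+\tilde{u}_{\alpha}+\bar{u}_{\alpha}$, together with $\tilde{u}_{1}=\bar{u}_{1}=0$. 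Then I would check the three memberships: $\widehat{u}\in\mathbb{I}$ because the core $[u]_{1}$ is a nonempty compact interval by normality; $\tilde{u}\in\mathbb{P}$ since $\tilde{u}_{1}=0$ is the only requirement; and $\bar{u}\in\mathbb{S}_{0}$ because $[\bar{u}]_{\alpha}=[-\bar{u}_{\alpha},\bar{u}_{\alpha}]$ with $\bar{u}_{\alpha}=\tfrac{1}{2}\bigl(\operatorname{len}[u]_{\alpha}-\operatorname{len}[u]_{1}\bigr)\ge 0$, $\bar{u}_{\alpha}$ nonincreasing, $\bar{u}_{1}=0$, and (F3) inherited from $u$. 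Finally $(\tilde{u},\bar{u})$ is a valid pair because $\tilde{u}_{\alpha}-\bar{u}_{\alpha}=u_{\alpha}^{-}-u_{1}^{-}$ is nondecreasing and $\tilde{u}_{\alpha}+\bar{u}_{\alpha}=u_{\alpha}^{+}-u_{1}^{+}$ is nonincreasing, which is exactly the nesting of the cuts of $u$.

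\textbf{Converse.} Given $\widehat{u}\in\mathbb{I}$, $\tilde{u}\in\mathbb{P}$, $\bar{u}\in\mathbb{S}_{0}$ with $(\tilde{u},\bar{u})$ valid, set $U_{\alpha}=\widehat{u}+\{\tilde{u}_{\alpha}\}+[-\bar{u}_{\alpha},\bar{u}_{\alpha}]=[\widehat{u}^{-}+\tilde{u}_{\alpha}-\bar{u}_{\alpha},\ \widehat{u}^{+}+\tilde{u}_{\alpha}+\bar{u}_{\alpha}]$ as in (\ref{23}). I would verify (F1)--(F3) for $\{U_{\alpha}\}$: (F1) holds since $\widehat{u}^{-}-\widehat{u}^{+}\le 0\le 2\bar{u}_{\alpha}$ makes each $U_{\alpha}$ a nonempty compact interval; (F2) is precisely the validity of the pair; (F3) holds because along $\alpha_{k}\uparrow\alpha$ the sets $U_{\alpha_{k}}$ decrease, so $\bigcap_{k}U_{\alpha_{k}}$ is the interval bounded by the limits of the two endpoint sequences, and these limits are the endpoints of $U_{\alpha}$ by the left‑continuity contained in the notion of a valid pair (which, by definition, represents a fuzzy number). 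The characterization then yields a unique $u\in\mathcal{F}$ with $[u]_{\alpha}=U_{\alpha}$; computing $[u]_{1}=\widehat{u}$ and then reapplying (\ref{20})--(\ref{21}) to this $u$ returns $\tilde{u}_{\alpha}$ and $\bar{u}_{\alpha}$, so the original triplet is recovered and the two constructions are mutually inverse.

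\textbf{Where the work is.} Almost everything is bookkeeping around the identities (\ref{22})--(\ref{23}); the one point requiring genuine care is, in the direct implication, that the symmetric component is a \emph{bona fide} fuzzy number, i.e.\ $\bar{u}_{\alpha}\ge 0$ and $\bar{u}_{\alpha}$ nonincreasing. Both rest on the nesting property (F2): the core is the smallest cut, hence of smallest length, and $u_{\alpha}^{-}\uparrow$, $u_{\alpha}^{+}\downarrow$ force $\operatorname{len}[u]_{\alpha}\downarrow$. In the converse, the analogous delicate point is recognising that (F3) for $\{U_{\alpha}\}$ is exactly the left‑continuity implicit in ``$(\tilde{u},\bar{u})$ represents a fuzzy number'', so that the definition of a valid pair is precisely what is needed --- neither more nor less.
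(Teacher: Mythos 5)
Your argument is correct, and it is worth noting that the paper does not actually prove this proposition: its ``proof'' consists of a citation to Stefanini--Guerra, so you are supplying the missing argument rather than reproducing one. Your route is the natural one and matches what the cited source does in spirit: reduce both directions to the level-cut characterization (F1)--(F3), use the identities (\ref{22})--(\ref{23}) as pure bookkeeping, and isolate the only substantive facts, namely that in the direct implication $\bar{u}_{\alpha}=\tfrac{1}{2}\bigl(\operatorname{len}[u]_{\alpha}-\operatorname{len}[u]_{1}\bigr)$ is nonnegative and nonincreasing (a consequence of nesting, since $u_{\alpha}^{-}$ is nondecreasing and $u_{\alpha}^{+}$ nonincreasing), and that $\tilde{u}_{\alpha}-\bar{u}_{\alpha}=u_{\alpha}^{-}-u_{1}^{-}$, $\tilde{u}_{\alpha}+\bar{u}_{\alpha}=u_{\alpha}^{+}-u_{1}^{+}$ give the validity of the pair; conversely, that validity plus $\bar{u}_{\alpha}\ge 0$ yields (F1)--(F2) for the family (\ref{23}). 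One point deserves the care you gave it: the paper's definition of a valid pair says ``représente un nombre flou \ldots\ i.e.'' followed only by the two monotonicity conditions, and monotonicity alone does not force the left-continuity needed for (F3), since $\tilde{u}\in\mathbb{P}$ is an arbitrary function with $\tilde{u}_{1}=0$. You correctly anchor (F3) in the ``represents a fuzzy number'' clause of the definition rather than in the monotonicity conditions; if one instead took the monotonicity conditions as the whole definition, a left-continuity requirement on $\tilde{u}_{\alpha}\pm\bar{u}_{\alpha}$ would have to be added for the converse to hold verbatim. With that reading made explicit, your proof is complete and self-contained, which is more than the paper offers.
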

\begin{proof}
see \cite{Ref9}
\end{proof}
\begin{definition} Nous appelons
$$
u=(\widehat{u}, \tilde{u}, \bar{u}) \in \mathbb{I} \times \mathbb{P} \times \mathbb{S}_{0}
$$
(où nous supposons $(\tilde{u}, \bar{u})$ être une paire valide) comme la décomposition CPS de $u \in \mathcal{F}(\mathrm{C}=\mathrm{Crisp}, \mathrm{P}=$ Profile $, \mathrm{S}=$ 0-Symétrique floue). Nous écrivons ceci comme
$$\mathcal{F}=\mathbb{\mathbb{I}} \times \mathbb{V}\left(\mathbb{P}, \mathbb{S}_{0}\right)$$
où $\mathbb{V}\left(\mathbb{P}, \mathbb{S}_{0}\right) \subset \mathbb{P} \times \mathbb{S}_{0}$ est l'ensemble de toutes les paires valides.
\end{definition}
Maintenant, nous pouvons trouver la différence gH $u \Theta_{g} v$ de $u, v \in \mathcal{F}$ en termes de décomposition CPS.

\begin{proposition}
 Étant donné $u=(\widehat{u}, \tilde{u}, \bar{u}) \in \mathcal{F}$ and $v=(\widehat{v}, \tilde{v}, \bar{v}) \in \mathcal{F},$ aa $g H$-difference $w=u \Theta_{g} v$ est donné par
  $$
\begin{aligned}
w=(\widehat{w}, \widetilde{w}, \bar{w}) & \in \mathcal{F} \text { with } \\
\widehat{w} &=\widehat{u} \Theta_{g} \widehat{v} \\
\widetilde{w}_{\alpha} &=\widetilde{u}_{\alpha}-\widetilde{v}_{\alpha} \\
\bar{w}_{\alpha} &=\left\{\begin{array}{ll}
\bar{u}_{\alpha}-\bar{v}_{\alpha} & \text { if } \bar{u}-\bar{v} \in \mathbb{S}_{0} \\
\bar{v}_{\alpha}-\bar{u}_{\alpha} & \text { if } \bar{v}-\bar{u} \in \mathbb{S}_{0}
\end{array}\right.
\end{aligned}
$$
et $u \Theta_{g} v$ existe si et seulement si l'une des deux conditions est vérifié:

(1) $\bar{u}-\bar{v} \in \mathbb{S}_{0}, \widehat{u} \Theta_{g} \widehat{v}$ existe dans le cas
(i) et $(\tilde{u}-\tilde{v}, \bar{u}-\bar{v}) \in \mathbb{V}\left(\mathbb{P}, S_{0}\right)$

 ou

(2) $\bar{v}-\bar{u} \in \mathcal{S}_{0}, \widehat{u} \Theta_{g} \widehat{v}$ existe dans le cas (ii) $a n d(\tilde{u}-\tilde{v}, \bar{v}-\bar{u}) \in \mathbb{V}\left(\mathbb{P}, S_{0}\right)$

En particulier, si $u=(\widehat{u}, \tilde{u}, \bar{u}) \in \mathcal{F}$ and $v=(\widehat{v}, \tilde{v}, \bar{v}) \in \mathcal{F}$ sont des nombres flous, i.e. $\widehat{u}^{-}=\widehat{u}^{+}=\widehat{u}$ et $\widehat{v}^{-}=\widehat{v}^{+}=\widehat{v}$
alors $u \Theta_{g} v$ existe (dans ce case $\widehat{w}=\widehat{u}-\widehat{v}$ est croustillant) si et seulement si l'une des deux conditions est satisfaite:

$$\left(1^{\prime}\right) \bar{u}-\bar{v} \in \mathbb{S}_{0} \texttt{ et } (\tilde{u}-\widetilde{v}, \bar{u}-\bar{v}) \in \mathbb{V}\left(\mathbb{P}, \mathbb{S}_{0}\right)$$

 ou
 
$$\left(2^{\prime}\right) \bar{v}-\bar{u} \in \mathbb{S}_{0} \texttt{ et } (\tilde{u}-\widetilde{v}, \bar{v}-\bar{u}) \in \mathbb{V}\left(\mathbb{P}, S_{0}\right)$$
\end{proposition}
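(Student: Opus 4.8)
Le plan est de tout ramener au niveau des $\alpha$-coupes unidimensionnelles, puis de lire les composantes CPS du résultat. On part de la caractérisation de la différence $gH$ floue par les coupes, d'après (\ref{13})--(\ref{18}): $w=u\Theta_{g}v$ existe si et seulement si l'un des deux cas monotones (a), (b) de (\ref{18}) est satisfait, et dans ces cas $[w]_{\alpha}$ est explicite --- $[w]_{\alpha}=[u_{\alpha}^{-}-v_{\alpha}^{-},\,u_{\alpha}^{+}-v_{\alpha}^{+}]$ dans le cas (a), qui correspond à $\operatorname{len}([u]_{\alpha})\ge\operatorname{len}([v]_{\alpha})$ pour tout $\alpha$ (avec $u_{\alpha}^{-}-v_{\alpha}^{-}$ croissante et $u_{\alpha}^{+}-v_{\alpha}^{+}$ décroissante), et $[w]_{\alpha}=[u_{\alpha}^{+}-v_{\alpha}^{+},\,u_{\alpha}^{-}-v_{\alpha}^{-}]$ dans le cas (b). On y injecte la décomposition (\ref{22}), soit $u_{\alpha}^{\pm}=\widehat{u}^{\pm}+\widetilde{u}_{\alpha}\pm\bar{u}_{\alpha}$ et $v_{\alpha}^{\pm}=\widehat{v}^{\pm}+\widetilde{v}_{\alpha}\pm\bar{v}_{\alpha}$, et on scinde l'intervalle obtenu en point médian et demi-largeur: le profil n'apparaît que dans le médian, la composante symétrique que dans la demi-largeur, de sorte que les deux contributions ne se mélangent jamais.

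On vérifie alors que la condition (1) équivaut au cas (a) et la condition (2) au cas (b). À partir de (1): ``$\widehat{u}\Theta_{g}\widehat{v}$ existe dans le cas (i)'' équivaut à $\operatorname{len}(\widehat{v})\le\operatorname{len}(\widehat{u})$ avec $\widehat{w}=[\widehat{u}^{-}-\widehat{v}^{-},\widehat{u}^{+}-\widehat{v}^{+}]$; joint à $\bar{u}_{\alpha}\ge\bar{v}_{\alpha}$ (puisque $\bar{u}-\bar{v}\in\mathbb{S}_{0}$) cela donne $\operatorname{len}([u]_{\alpha})=\operatorname{len}(\widehat{u})+2\bar{u}_{\alpha}\ge\operatorname{len}(\widehat{v})+2\bar{v}_{\alpha}=\operatorname{len}([v]_{\alpha})$ pour tout $\alpha$, et la validité de la paire $(\widetilde{u}-\widetilde{v},\,\bar{u}-\bar{v})$ est exactement la monotonie requise de $u_{\alpha}^{-}-v_{\alpha}^{-}=(\widehat{u}^{-}-\widehat{v}^{-})+(\widetilde{u}_{\alpha}-\widetilde{v}_{\alpha})-(\bar{u}_{\alpha}-\bar{v}_{\alpha})$ et de $u_{\alpha}^{+}-v_{\alpha}^{+}$; on est dans le cas (a). Réciproquement, si $w=u\Theta_{g}v$ existe, (\ref{18}) impose (a) ou (b); dans le cas (a), comme $\widetilde{u}_{1}=\bar{u}_{1}=\widetilde{v}_{1}=\bar{v}_{1}=0$, l'évaluation de $\operatorname{len}([u]_{\alpha})\ge\operatorname{len}([v]_{\alpha})$ en $\alpha=1$ donne $\operatorname{len}(\widehat{u})\ge\operatorname{len}(\widehat{v})$, donc $[w]_{1}$ est bien $\widehat{u}\Theta_{g}\widehat{v}$ existant dans le cas (i). Dans les deux sens, le point médian de $[u_{\alpha}^{-}-v_{\alpha}^{-},u_{\alpha}^{+}-v_{\alpha}^{+}]$ moins celui de $\widehat{w}$ vaut $\widetilde{u}_{\alpha}-\widetilde{v}_{\alpha}$ et sa demi-largeur moins celle de $\widehat{w}$ vaut $\bar{u}_{\alpha}-\bar{v}_{\alpha}$, avec $\widehat{w}=\widehat{u}\Theta_{g}\widehat{v}$; et, $w$ étant un authentique nombre flou, son triplet CPS est valide, ce qui par la proposition de décomposition CPS entraîne $\bar{u}-\bar{v}\in\mathbb{S}_{0}$ et $(\widetilde{u}-\widetilde{v},\bar{u}-\bar{v})\in\mathbb{V}(\mathbb{P},\mathbb{S}_{0})$: c'est la condition (1). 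Le cas (b) se traite à l'identique en échangeant les bornes (il fournit $\bar{w}_{\alpha}=\bar{v}_{\alpha}-\bar{u}_{\alpha}$ et la condition (2)). Le cas particulier des nombres flous en résulte immédiatement: $\widehat{u}$, $\widehat{v}$ sont des singletons, $\operatorname{len}(\widehat{u})=\operatorname{len}(\widehat{v})=0$ et $\widehat{u}\Theta_{g}\widehat{v}=\{\widehat{u}-\widehat{v}\}$ existe à la fois dans les cas (i) et (ii), d'où $(1')$, $(2')$.

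La difficulté principale sera d'articuler correctement les deux ``cas'' aux deux niveaux: il faut voir que le cas (a) pour la différence floue force $\operatorname{len}(\widehat{u})\ge\operatorname{len}(\widehat{v})$ --- donc le cas (i) pour les noyaux --- par simple évaluation en $\alpha=1$, et réciproquement que la clause $\bar{u}-\bar{v}\in\mathbb{S}_{0}$ de la condition (1) est précisément ce qui propage cette inégalité de longueurs de $\alpha=1$ à tout $\alpha\in[0,1]$ (rôle symétrique de $\bar{v}-\bar{u}\in\mathbb{S}_{0}$ dans (2)). Le reste est du calcul déjà encapsulé ailleurs: la scission point médian / demi-largeur de $[u_{\alpha}^{-}-v_{\alpha}^{-},u_{\alpha}^{+}-v_{\alpha}^{+}]$, et l'équivalence entre ``$w$ est un nombre flou'' et ``$(\widetilde{w},\bar{w})$ forme une paire valide avec $\bar{w}\in\mathbb{S}_{0}$'', qui est exactement le contenu de la proposition de décomposition CPS.
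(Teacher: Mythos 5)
Votre preuve est correcte et suit essentiellement la même voie que celle du texte : réduction aux $\alpha$-coupes via les formules (\ref{13})--(\ref{18}), lecture des composantes CPS (point médian / demi-largeur) de $[u]_{\alpha}\Theta_{g}[v]_{\alpha}$, et recours à la proposition de décomposition pour traduire « $w$ est un nombre flou » en « $\bar{u}-\bar{v}\in\mathbb{S}_{0}$ (ou $\bar{v}-\bar{u}\in\mathbb{S}_{0}$) et paire valide ». Vous rendez seulement explicite, par l'évaluation en $\alpha=1$ et la propagation de l'inégalité des longueurs grâce à $\bar{u}-\bar{v}\in\mathbb{S}_{0}$, la correspondance entre le cas (i)/(ii) de $\widehat{u}\Theta_{g}\widehat{v}$ et les cas (a)/(b), point que la preuve du mémoire laisse implicite.
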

\begin{proof}
Tout d'abord, comme $\widehat{u}, \widehat{v} \in \mathbb{l}$, alors $\widehat{w}=\widehat{u} \Theta_{g} \widehat{v} \in \mathbb{l}$ existe toujours et si $\tilde{u}, \widetilde{v} \in \mathbb{P}$ alors $\widetilde{w}=\tilde{u}-\widetilde{v} \in \mathbb{P} .$ Si $u \odot_{g} v$ existe alors clairement $\bar{u}-\bar{v} \in \mathbb{S}_{0}$ ou $\bar{v}-\bar{u} \in \mathbb{S}_{0}$ (comme en fait $\bar{w}=\bar{u}-\bar{v}$ ou $\bar{w}=\bar{v}-\bar{u}$ de Eqs. (\ref{14})) et $(\widetilde{u}-\widetilde{v}, \bar{u}-\bar{v})$ ou $(\tilde{u}-\widetilde{v}, \bar{v}-\bar{u})$ est une paire valide. Vice versa, si $\bar{u}-\bar{v} \in \mathbb{S}_{0}$ et $(\tilde{u}-\widetilde{v}, \bar{u}-\bar{v})$ est une paire valide (ou si $\bar{v}-\bar{u} \in \mathbb{S}_{0}$ et $(\widetilde{u}-\widetilde{v}, \bar{v}-\bar{u})$ is une paire valide), puis le triplet $\left(\widehat{u} \odot_{g} \widehat{v}, \tilde{u}-\widetilde{v}, \bar{u}-\bar{v}\right)$ (ou le triplet $\left.\left(\widehat{u} \odot_{g} \widehat{v}, \tilde{u}-\widetilde{v}, \bar{v}-\bar{u}\right)\right)$ représente un nombre flou (intervalle) et par $(\ref{13})$ c'est $u \Theta_{g} v .$ La deuxième partie est immédiate car la différence de gH $\widehat{u} \Theta_{g} \widehat{v}$ est la différence nette standard et $\widehat{w}$ est croustillant.
\end{proof}
\begin{remarque}
 Si les fonctions $u_{\alpha}^{-}$ et $u_{\alpha}^{+}$ sont différenciables par rapport à $\alpha$, alors $\tilde{u}_{\alpha}$ et $\bar{u}_{\alpha}$ sont différenciables et la condition pour $(\widetilde{u}, \bar{u})$ être une paire valide est $\left.\left|\tilde{u}_{\alpha}^{\prime}\right| \leq-\bar{u}_{\alpha}^{\prime} \forall \alpha \in\right] 0,1\left[\right.$ (rappeler que $\bar{u}_{\alpha}$ est une fonction décroissante).
\end{remarque}

\begin{exemple}
 Si nous avons des nombres flous symétriques $u, v \in \mathcal{F}$ (i.e. $\widehat{u}^{-}=\widehat{u}^{+}=\widehat{u}, \widehat{v}^{-}=\widehat{v}^{+}=\widehat{v},$ et $\tilde{u}_{\alpha}=\widetilde{v}_{\alpha}=0$
$\forall \alpha \in[0,1])$ alors (seule) condition $|\bar{u}-\bar{v}| \in \mathbb{S}_{0}$ i.e. $\left(\bar{u}-\bar{v} \in \mathbb{S}_{0}\right.$ or $\left.\bar{v}-\bar{u} \in \mathbb{S}_{0}\right)$ est nécessaire et suffisant pour $u \Theta_{g} v$
d'exister et $(\widehat{u}, 0, \bar{u}) \Theta_{g}(\widehat{v}, 0, \bar{v})=(\widehat{u}-\widehat{v}, 0,|\bar{u}-\bar{v}|)$ i.e. avec $\alpha$ -coupe
$$
\left[u \Theta_{g} v\right]_{\alpha}=\left[(\widehat{u}-\widehat{v})-\left|\bar{u}_{\alpha}-\bar{v}_{\alpha}\right|,(\widehat{u}-\widehat{v})+\left|\bar{u}_{\alpha}-\bar{v}_{\alpha}\right|\right]
$$
Par exemple, $\left(1,0,1-\alpha^{2}\right) \ominus_{g}(0,0,2-2 \alpha)=\left(1,0,(1-\alpha)^{2}\right),$ selon le cas (ii) de différence de gH, a $\alpha$ -coupe
$\left[1-(1-\alpha)^{2}, 1+(1-\alpha)^{2}\right].$
\end{exemple}
\begin{exemple}
Si nous avons des nombres flous triangulaires symétriques (avec appartenance linéaire) $u=\langle\widehat{u}-\Delta u, \widehat{u}, \widehat{u}+\Delta u\rangle$ et $v=\langle\widehat{v}-\Delta v, \widehat{v}, \widehat{v}+\Delta v\rangle$ alors $w=u \Theta_{g} v$ existe toujours et est le nombre flou symétrique triangulaire
$$
\begin{array}{l}
w=\langle\widehat{w}-\Delta w, \widehat{w}, \widehat{w}-\Delta w\rangle \quad \text { where } \\
\widehat{w}=\widehat{u}-\widehat{v} \text { and } \Delta w=|\Delta u-\Delta v| .
\end{array}
$$
En fait, dans ce cas, $\left|\bar{u}_{\alpha}-\bar{v}_{\alpha}\right|=|\Delta u-\Delta v|(1-\alpha)$, i.e. $\bar{u}-\bar{v} \in \mathbb{S}_{0}$ si $\Delta u-\Delta v \geq 0$ ou $\bar{v}-\bar{u} \in \mathbb{S}_{0}$ si $\Delta u-\Delta v \leq 0$ de sorte que
$$
\left[u \Theta_{g} v\right]_{\alpha}=[(\widehat{u}-\widehat{v})-|\Delta u-\Delta v|(1-\alpha),(\widehat{u}-\widehat{v})+|\Delta u-\Delta v|(1-\alpha)]
$$

En particulier, si $\Delta u=\Delta v$ alors $u \Theta_{g} v$ est un nombre précis.

 Pour illustrer, $\langle 2,4,6\rangle \Theta_{g}\langle-2,1,4\rangle=\langle 2,3,4\rangle$, selon le cas (ii) de la différence de gH, et en fait

$\langle 2,4,6\rangle+(-1)\langle 2,3,4\rangle=\langle 2,4,6\rangle+\langle-4,-3,-2\rangle=\langle-2,1,4\rangle$;

$\langle-2,1,4\rangle \Theta_{g}\langle 2,4,6\rangle=\langle-4,-3,-2\rangle$, selon le cas (i) de la différence de gH, et en fait

 $\langle 2,4,6\rangle+\langle-4,-3,-2\rangle=$ $\langle-2,1,4\rangle$
\end{exemple}

\subsection{Différence approximative floue gh}
Si les différences de gH $[u]_{\alpha} \Theta_{g}[v]_{\alpha}$ ne définissez pas un nombre flou approprié, nous pouvons utiliser la propriété imbriquée du $\alpha$ -coupe et obtenir un nombre flou approprié en
$$
[u \widetilde{\Theta} v]_{\alpha}:=c l\left(\bigcup_{\beta \geq \alpha}\left([u]_{\beta} \Theta_{g}[v]_{\beta}\right)\right) \quad \text { for } \alpha \in[0,1]
$$
Comme chaque différence de gH $[u]_{\beta} \Theta_{g}[v]_{\beta}$ existe pour $\beta \in[0,1]$ et la formule ci-dessus définit un nombre flou propre, il s'ensuit que $z=u \widetilde{\Theta} v$ peut être considérée comme une généralisation de la différence de Hukuhara pour les nombres flous, existant pour tout $u, v .$
\begin{exemple}
 $\langle 12,15,19\rangle$ $\Theta_{g}\langle 5,9,11\rangle$ n'existe pas; on obtient $[u]_{\beta} \Theta_{g}[v]_{\beta}=[8-2 \beta, 7-\beta]$ and $[u \widetilde{\Theta} v]_{\alpha}=[6,7-\alpha]$.
\end{exemple}
Une version discrétisée de $z=u \widetilde{\Theta} v$ peut être obtenu en choisissant une partition $0=\alpha_{0}<\alpha_{1}<\cdots<\alpha_{N}=1$ do $[0,1]$ et de $\left[w_{i}^{-}, w_{i}^{+}\right]=[u]_{\alpha_{i}} \odot_{g}[v]_{\alpha_{i}}$ par l'itération inverse suivante:
$$
\begin{aligned}
z_{N}^{-} &=w_{N}^{-}, \quad z_{N}^{+}=w_{N}^{+} \\
\text {For } k &=N-1, \ldots, 0:\left\{\begin{array}{l}
z_{k}^{-}=\min \left\{z_{k+1}^{-}, w_{k}^{-}\right\} \\
z_{k}^{+}=\max \left\{z_{k+1}^{+}, w_{k}^{+}\right\}
\end{array}\right.
\end{aligned}
$$
Une troisième possibilité pour une différence gH de nombres flous peut être obtenue en définissant $z=u \widetilde{\Theta}_{g} v$ être le nombre flou dont $\alpha$-coupes sont aussi proches que possible des différences de gH $[u]_{\alpha} \Theta_{g}[v]_{\alpha}$, par exemple en minimisant la fonction $\left(\omega_{\alpha} \geq 0\right.$ et $\gamma_{\alpha} \geq 0$ sont des fonctions de pondération)
\begin{equation}
G(z \mid u, v)=\int_{0}^{1}\left(\omega_{\alpha}\left[z_{\alpha}-\left(u \Theta_{g} v\right)_{\alpha}\right]^{2}+\gamma_{\alpha}\left[z_{\alpha}^{+}-\left(u \Theta_{g} v\right)_{\alpha}^{+}\right]^{2}\right) d \alpha
\label{24}
\end{equation}
telle que $z_{\alpha}^{-}$ augmente avec $\alpha, z_{\alpha}^{+}$ diminue avec $\alpha$ et $z_{\alpha}^{-} \leq z_{\alpha}^{+} \forall \alpha \in[0,1]$. Une version discrétisée de $G(z \mid u, v)$ peut être obtenu en choisissant une partition $0=\alpha_{0}<\alpha_{1}<\cdots<\alpha_{N}=1$ de $[0,1]$ et définissant le discrétisé $G(z \mid u, v)$ comme
$$
G_{N}(z \mid u, v)=\sum_{i=0}^{N} \omega_{i}\left[z_{i}^{-}-\left(u \Theta_{g} v\right)_{i}^{-}\right]^{2}+\gamma_{i}\left[z_{i}^{+}-\left(u \Theta_{g} v\right)_{i}^{+}\right]^{2},
$$

nous minimisons $G_{N}(z \mid u, v)$ avec le donné $\operatorname{data}\left(u \odot_{g} v\right)_{i}^{-}=\min \left\{u_{\alpha_{i}}-v_{\alpha_{i}}^{-}, u_{\alpha_{i}}^{+}-v_{\alpha_{i}}^{+}\right\}$ et $\left(u \odot_{g} v\right)_{i}^{+}=\max \left\{u_{\alpha_{i}}-v_{\alpha_{i}}^{-}, u_{\alpha_{i}}^{+}-\right.$
$\left.v_{\alpha_{i}}^{+}\right\}$, soumis aux contraintes $z_{0}^{-} \leq z_{1}^{-} \leq \cdots \leq z_{N}^{-} \leq z_{N}^{+} \leq z_{N-1}^{+} \leq \cdots \leq z_{0}^{+} .$ On obtient une minimisation des moindres carrés contraints linéairement de la forme

$$
\min_{z \in \mathbb{R}^{2 N+2}}(z-w)^{T} D^{2}(z-w) \text { s.t. } E z \geq 0 
$$
où
$
 z=\left(z_{0}^{-}, z_{1}^{-}, \ldots, z_{N}^{-}, z_{N}^{+}, z_{N-1}^{+}, \ldots, z_{0}^{+}\right), w_{i}^{-}=\left(u \Theta_{g} v\right)_{i}^{-}, w_{i}^{+}=\left(u \odot_{g} v\right)_{i}^{+}, w=\left(w_{0}^{-}, w_{1}^{-}, \ldots, w_{N}^{-}, w_{N}^{+},\right.
 $
 $
\left.w_{N-1}^{+}, \ldots, w_{0}^{+}\right), D=\operatorname{diag}\left\{\sqrt{\omega_{0}}, \ldots, \sqrt{\omega_{N}}, \sqrt{\gamma_{N}}, \ldots, \sqrt{\gamma_{0}}\right\}$ et $E$ est le $(N, N+1)$ matrice
$$
E=\left[\begin{array}{cccccc}
-1 & 1 & 0 & \cdots & \cdots & 0 \\
0 & -1 & 1 & 0 & \cdots & 0 \\
\vdots & \vdots & \vdots & \vdots & \cdots & \vdots \\
0 & 0 & \cdots & \cdots & -1 & 1
\end{array}\right]
$$
qui peut être résolu par des procédures efficaces standard. Si, à la solution $z^{*}$, on a  $z^{*}=w$, alors nous obtenons la différence de gH telle que définie dans (\ref{aa}).

\section{Division généralisée}
Une idée similaire à la différence gH peut être utilisée pour introduire une division des intervalles réels et des nombres flous. Considérons d'abord le cas d'intervalles compacts réels $A=\left[a^{-}, a^{+}\right]$ et $B=\left[b^{-}, b^{+}\right]$ avec $b^{-}>0$ ou $b^{+}<0$ (i.e. $0 \notin B)$. L'intervalle $C=\left[c^{-}, c^{+}\right]$ définir la multiplication $C=A B$ est donné par
$$
c^{-}=\min \left\{a^{-} b^{-}, a^{-} b^{+}, a^{+} b^{-}, a^{+} b^{+}\right\}, \quad c^{+}=\max \left\{a^{-} b^{-}, a^{-} b^{+}, a^{+} b^{-}, a^{+} b^{+}\right\}
$$
et l '«inverse» multiplicatif (ce n'est pas l'inverse au sens algébrique) d'un intervalle $B$ est défini par $B^{-1}=$ $\left[1 / b^{+}, 1 / b^{-}\right]$
\begin{definition}
 Pour $A=\left[a^{-}, a^{+}\right]$ et $B=\left[b^{-}, b^{+}\right]$ on définit la division généralisée (g-division) $\div g$ comme suit:
 \begin{equation}
 A \div_{g} B=C \Longleftrightarrow \left\{
 \begin{array}{ll}
 (i) & A=B C  \texttt{ Or  }\\

 (ii) & B=A C^{-1}.
\end{array}\right.
 \label{25}
 \end{equation}
\end{definition}
Si les deux cas (i) et (ii) sont valides, nous avons $C C^{-1}=C^{-1} C=\{1\}$, i.e. $C=\{\widehat{c}\}, C^{-1}=\{1 / \widehat{c}\}$ avec $\widehat{c} \neq 0$. Il est immédiat de voir que $A \div_{g} B$ existe toujours et est unique pour donné $A=\left[a^{-}, a^{+}\right]$ et $B=\left[b^{-}, b^{+}\right]$ avec $0 \notin B$.\\
Il est facile de voir que les six cas suivants sont possibles, avec les règles indiquées:\\
Case 1: Si $0<a^{-} \leq a^{+}$ et $b^{-} \leq b^{+}<0$ alors
\begin{center}
Si $a^{-} b^{-} \geq a^{+} b^{+}$ alors $c^{-}=\frac{a^{+}}{b^{-}}, c^{+}=\frac{a^{-}}{b^{+}}$ et (i) est satisfait,\\
if $a^{-} b^{-} \leq a^{+} b^{+}$ alors $c^{-}=\frac{a^{-}}{b^{+}}, c^{+}=\frac{a^{+}}{b^{-}}$ et (ii) est satisfait.
\end{center}
Case 2: If $0<a^{-} \leq a^{+}$ et $0<b^{-} \leq b^{+}$ alors
\begin{center}
if $a^{-} b^{+} \leq a^{+} b^{-}$ alors $c^{-}=\frac{a^{-}}{b^{-}}, c^{+}=\frac{a^{+}}{b^{+}}$ et (i) est satisfait,,\\
si $a^{-} b^{+} \geq a^{+} b^{+}$ alors $c^{-}=\frac{a^{+}}{b^{+}}, c^{+}=\frac{a^{-}}{b^{-}}$ et (ii) est satisfait.
\end{center}
Cas 3 : Si $a^{-} \leq a^{+}<0$ et $b^{-} \leq b^{+}<0$ alors
\begin{center}
Si $a^{+} b^{-} \leq a^{-} b^{+}$ alors $c^{-}=\frac{a^{+}}{b^{+}}, c^{+}=\frac{a^{-}}{b^{-}}$ et (i) est satisfait,\\
si $a^{+} b^{-} \geq a^{-} b^{+}$ alors $c^{-}=\frac{a^{-}}{b^{-}}, c^{+}=\frac{a^{+}}{b^{+}}$ et (ii) est satisfait.
\end{center}
Cas 4: Si $a^{-} \leq a^{+}<0$ et $0<b^{-} \leq b^{+}$ alors
\begin{center}
si $a^{-} b^{-} \leq a^{+} b^{+}$ alors $c^{-}=\frac{a^{-}}{b^{+}}, c^{+}=\frac{a^{+}}{b^{-}}$ et (i) est satisfait,\\
si $a^{-} b^{-} \geq a^{+} b^{+}$ alors $c^{-}=\frac{a^{+}}{b^{-}}, c^{+}=\frac{a^{-}}{b^{+}}$ et (ii) est satisfait.
\end{center}
Cas 5 : Si $a^{-} \leq 0, a^{+} \geq 0$ et $b^{-} \leq b^{+}<0$ alors la solution ne dépend pas de $b^{+}$,
\begin{center}
$c^{-}=\frac{a^{+}}{b^{-}}, \quad c^{+}=\frac{a^{-}}{b^{-}} \quad$ et (i) est satisfait.
\end{center}
Cas 6: Si $a^{-} \leq 0, a^{+} \geq 0$ et $0<b^{-} \leq b^{+}$ alors la solution ne dépend pas de $b^{-}$,
\begin{center}
$c^{-}=\frac{a^{-}}{b^{+}}, \quad c^{+}=\frac{a^{+}}{b^{+}} \quad$ et (i) est satisfait.
\end{center}

\begin{remarque}
 Si $0 \in] b^{-}, b^{+}\left[\right.$ la division g n'est pas définie; pour les intervalles $B=\left[0, b^{+}\right]$ ou $B=\left[b^{-}, 0\right]$ la division est possible mais obtenir des résultats illimités $C$ de la forme $C=\left]-\infty, c^{+}\right]$ or $C=\left[c^{-},+\infty\left[:\right.\right.$ nous travaillons avec $B=\left[\varepsilon, b^{+}\right]$ ou $B=\left[b^{-},-\varepsilon\right]$ et nous obtenons le résultat par la limite pour $\varepsilon \longrightarrow 0^{+}$. Example: Pour $[-2,-1] \div g[0,3]$ nous considérons $[-2,-1] \div g[\varepsilon, 3]=\left[c_{\varepsilon}^{-}, c_{\varepsilon}^{+}\right]$ avec (cas 2) $c_{\varepsilon}^{-}=\min \left\{\frac{-2}{3},-1 / \varepsilon\right\}$ et $c_{\varepsilon}^{+}=\max \left\{-2 / \varepsilon, \frac{-1}{3}\right\}$ et obtenez le résultat $C=\left[-\infty,-\frac{1}{3}\right]$ à la limite $\varepsilon \longrightarrow 0^{+} .$
\end{remarque}
Les propriétés suivantes sont immédiates.
\begin{proposition}
 Pour toute $A=\left[a^{-}, a^{+}\right]$ et $B=\left[b^{-}, b^{+}\right]$ avec $0 \notin B,$ nous avons (ici 1 est le même que \{1\} ):\\
1. $B \div_{g} B=1, B \div_{g} B^{-1}=\left\{b^{-} b^{+}\right\}\left(=\left\{\widehat{b}^{2}\right\}\right.$ if $\left.b^{-}=b^{+}=\widehat{b}\right)$\\
2. $(A B) \div_{g}  B=A$.\\
3. $1 \div_{g} B=B^{-1}$ et $1 \div_{g} B^{-1}=B$.\\
4. Au moins une des égalités $B(A \div_{g} B)=A$ or $A(A \div_{g} B)^{-1}=B$ est valide et les deux tiennent si et seulement si $A \div_{g} B$ est un singleton.
\end{proposition}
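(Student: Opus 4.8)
The plan is to dispatch all four items by the same device used to establish the analogous properties of $\Theta_{g}$: exhibit an explicit candidate $C$ realising one of the two defining alternatives (i)--(ii) of $(\ref{25})$, and then invoke the fact (recorded just after the definition of $\div_{g}$, and made concrete by the six case formulas) that $A\div_{g}B$ exists and is unique whenever $0\notin B$. Throughout I will use freely that $0\notin B$ forces $b^{-}b^{+}>0$, that $B^{-1}=[1/b^{+},1/b^{-}]$ again avoids $0$, that $(B^{-1})^{-1}=B$, and that interval multiplication is commutative and associative.

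\textbf{Items 1--3.} For $B\div_{g}B$, the singleton $C=\{1\}$ gives $BC=B$, so alternative (i) holds and uniqueness yields $B\div_{g}B=\{1\}$. For $B\div_{g}B^{-1}$, take $C=\{b^{-}b^{+}\}$, a genuine singleton since $b^{-}b^{+}>0$; then $B^{-1}C=[b^{-}b^{+}/b^{+},\,b^{-}b^{+}/b^{-}]=[b^{-},b^{+}]=B$, so (i) holds and $B\div_{g}B^{-1}=\{b^{-}b^{+}\}$, which is $\{\widehat{b}^{2}\}$ when $b^{-}=b^{+}=\widehat{b}$. For item~2, commutativity gives $AB=BA$, so $C=A$ satisfies alternative (i) of $(AB)\div_{g}B$, and uniqueness gives $(AB)\div_{g}B=A$. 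For item~3, $C=B^{-1}$ satisfies alternative (ii) of $1\div_{g}B$ because $C^{-1}=(B^{-1})^{-1}=B$, while $C=B$ satisfies alternative (ii) of $1\div_{g}B^{-1}$ because $C^{-1}=B^{-1}$; each time uniqueness closes the argument.

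\textbf{Item 4 and the main obstacle.} Write $C=A\div_{g}B$. By $(\ref{25})$ either (i) $A=BC$, which is precisely $B\,(A\div_{g}B)=A$, or (ii) $B=AC^{-1}$, which is precisely $A\,(A\div_{g}B)^{-1}=B$; hence at least one of the two equalities holds. If $C=\{c\}$ is a singleton (necessarily $c\neq0$), then whichever of (i),(ii) holds, the other follows immediately: from $A=cB$ one gets $AC^{-1}=cB\cdot\{c^{-1}\}=B$, and symmetrically from (ii); so both hold. Conversely, if both hold then $A=BC$ and $B=AC^{-1}$, whence $B=B\,(CC^{-1})$. Here $D:=CC^{-1}$ contains $1$ (as $c\cdot(1/c)=1$ for $c\in C$) and contains no $0$ (a product of two intervals avoiding $0$), so $D=[d^{-},d^{+}]$ with $0<d^{-}\le 1\le d^{+}$; the identity $B=BD$ with $0\notin B$ then forces $d^{-}=d^{+}=1$, i.e. $CC^{-1}=\{1\}$, and for an interval this happens only if $C$ is a singleton. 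Everything else is routine interval arithmetic; the only delicate point is this last implication, where one must locate the endpoints of $CC^{-1}$ from $1\in CC^{-1}$ and $0\notin CC^{-1}$, and then cancel $B$ from $B=B\,(CC^{-1})$ — exactly the naive cancellation that generally fails for intervals and motivated the $g$-division in the first place, but which is legitimate here precisely because $0\notin B$.
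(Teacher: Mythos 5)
Your proof is correct. There is nothing to compare it with in the paper: the proposition is introduced there with ``Les propriétés suivantes sont immédiates'' and no argument is given, so your direct verification from the definition of $\div_{g}$ (exhibit a candidate $C$ realising alternative (i) or (ii), then invoke the existence--uniqueness statement made right after the definition) is exactly the intended reading, and items 1--3 are indeed routine computations of the kind you perform. The only place where genuine content is needed is the converse half of item 4, and your argument there is sound: for $0\notin C$ one has $CC^{-1}=[c^{-}/c^{+},\,c^{+}/c^{-}]$ in the positive case (and the mirrored interval in the negative case), so $D:=CC^{-1}=[d^{-},d^{+}]$ with $0<d^{-}\le 1\le d^{+}$; then $B=BD$ with $0\notin B$ gives $b^{-}d^{-}=b^{-}$, $b^{+}d^{+}=b^{+}$ when $B>0$ (respectively $b^{-}d^{+}=b^{-}$, $b^{+}d^{-}=b^{+}$ when $B<0$), hence $d^{-}=d^{+}=1$ and $C$ is a singleton -- the cancellation you flag as the delicate step does go through for exactly the reason you give, and associativity of the set-wise product justifies $B=(BC)C^{-1}=B(CC^{-1})$. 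Two tacit assumptions, which the paper shares, deserve a mention: in item 4 one needs $0\notin A\div_{g}B$ for $(A\div_{g}B)^{-1}$ to be defined at all (if $0$ lies in the interior of $C$, as in cases 5--6 of the paper's list, alternative (ii) is vacuous and only the ``at least one equality'' half is meaningful), and the degenerate situation $A=\{0\}$, where $C=\{0\}$ is a singleton yet the second equality is undefined, is silently excluded -- your parenthetical ``necessarily $c\neq 0$'' rests on this exclusion rather than on an argument.
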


\subsection{Le cas flou}
La recherche de définitions alternatives de l'opérateur de division entre les nombres flous a reçu une certaine attention dans la littérature récente, avec l'objectif d'inverser la multiplication; \\
Ici, nous suggérons une approche de la division en tant qu'opérateur inverse de multiplication floue, similaire à la différence gH (en tant qu'opérateur inverse d'addition floue).
\begin{definition}
 Soit $u, v \in \mathcal{F}$ avoir $\alpha-coupe [u]_{\alpha}=\left[u_{\alpha}^{-}, u_{\alpha}^{+}\right],[v]_{\alpha}=\left[v_{\alpha}^{-}, v_{\alpha}^{+}\right]$, avec $0 \notin[v]_{\alpha} \forall \alpha \in[0,1]$. la division généralisée
$\div g$ est l'opération qui calcule le nombre flou $w=u \div_{g} v \in \mathcal{F}$ avoir des coupes de niveau $[w]_{\alpha}=\left[w_{\alpha}^{-}, w_{\alpha}^{+}\right]$ Défini par
$$
[u]_{\alpha} \div g[v]_{\alpha}=[w]_{\alpha} \Longleftrightarrow\left\{\begin{array}{l}
\text { (i) }[u]_{\alpha}=[v]_{\alpha}[w]_{\alpha} \\
\text { or }(\mathrm{ii})[v]_{\alpha}=[u]_{\alpha}[w]_{\alpha}^{-1}
\end{array}\right.
$$
à condition que $ w $ soit un nombre flou approprié, où les multiplications entre les intervalles sont effectuées dans le paramètre arithmétique d'intervalle standard.
\end{definition}
La division g floue $\div g$ est bien défini si le $\alpha$ -coupe $[w]_{\alpha}$ sont tels que $w \in \mathcal{F}\left(w_{\alpha}^{-}\right.$ ne diminue pas, $w_{\alpha}^{+}$ n'augmente pas, $\left.w_{1}^{-} \leq w_{1}^{+}\right).$\\
 Clairement, si $u \div g \in \mathcal{F}$ existe, il a les propriétés déjà illustrées pour le cas d'intervalle.
\begin{proposition}
 Soit $u, v \in \mathcal{F}$ (ici 1 est le même que\{1\}$) .$ On a :\\
1. si $0 \notin[u]_{\alpha} \forall \alpha,$ alors $u \div_{g} u=1$\\
2. si $0 \notin[v]_{\alpha} \forall \alpha,$ alors $(u v) \div_{g} v=u ;$\\
3. si $0 \notin[v]_{\alpha} \forall \alpha,$ alors $1 \div g v=v^{-1}$ et $1 \div g v^{-1}=v$;\\
4. si $v \div g$ u existe alors Soit $u(v \div g u)=v$ ou $v(v \div g u)^{-1}=u$ et les deux égalités sont valables si et seulement si $v \div_{g} u$ est croustillant.
\end{proposition}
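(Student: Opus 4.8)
Le plan est de tout ramener, niveau par niveau, à la proposition précédente sur la $g$-division des intervalles compacts réels. Pour chaque $\alpha \in [0,1]$ les $\alpha$-coupes $[u]_\alpha = [u_\alpha^-,u_\alpha^+]$ et $[v]_\alpha = [v_\alpha^-,v_\alpha^+]$ sont des intervalles compacts, et l'hypothèse $0 \notin [v]_\alpha$ (resp. $0 \notin [u]_\alpha$) garantit que $[u]_\alpha \div_g [v]_\alpha$ est bien défini et unique. Je commencerais par rappeler que l'arithmétique d'intervalle standard donne $[uv]_\alpha = [u]_\alpha[v]_\alpha$ pour tout $\alpha$ (via le principe d'extension de Zadeh), puis j'appliquerais les quatre propriétés de la proposition sur les intervalles coupe par coupe ; à chaque fois il restera à vérifier que la famille d'intervalles obtenue satisfait $(\mathrm{F1})$--$(\mathrm{F3})$, c'est-à-dire que la borne inférieure est croissante en $\alpha$, la borne supérieure décroissante et $w_1^- \le w_1^+$, de sorte qu'elle représente bien un nombre flou de $\mathcal{F}$.

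Pour le point 1, la propriété 1 pour les intervalles donne $[u]_\alpha \div_g [u]_\alpha = \{1\}$ pour chaque $\alpha$ ; la famille constante $\{1\}$ est trivialement valide, d'où $u \div_g u = 1$. Pour le point 2, la propriété 2 pour les intervalles fournit $([u]_\alpha[v]_\alpha) \div_g [v]_\alpha = [u]_\alpha$, et comme $\{[u]_\alpha\}_\alpha$ est la famille des $\alpha$-coupes de $u$ (donc valide), on obtient $(uv) \div_g v = u$. Le point 3 s'obtient en prenant $u = 1$ et en utilisant la propriété 3 pour les intervalles : $1 \div_g [v]_\alpha = [v]_\alpha^{-1} = [1/v_\alpha^+, 1/v_\alpha^-]$ ; ici il faut remarquer que, $0$ n'appartenant à aucune coupe $[v]_\alpha$ et celles-ci étant emboîtées, le signe de $[v]_\alpha$ est constant, ce qui entraîne que $1/v_\alpha^+$ est croissante et $1/v_\alpha^-$ décroissante --- la famille $\{[v]_\alpha^{-1}\}_\alpha$ est donc valide et définit $v^{-1} \in \mathcal{F}$ avec $1 \div_g v = v^{-1}$. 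L'égalité $1 \div_g v^{-1} = v$ en résulte en remplaçant $v$ par $v^{-1}$ et en notant $(v^{-1})^{-1} = v$.

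Le point 4 est le seul qui demande un argument non routinier, et c'est là qu'est la principale difficulté. Supposons que $w := v \div_g u$ existe dans $\mathcal{F}$, donc $[w]_\alpha = [v]_\alpha \div_g [u]_\alpha$ pour tout $\alpha$ ; la propriété 4 pour les intervalles dit que, pour chaque $\alpha$, soit $[u]_\alpha[w]_\alpha = [v]_\alpha$ (cas (i)), soit $[u]_\alpha[w]_\alpha^{-1} = [v]_\alpha$ (cas (ii)), les deux ayant lieu exactement lorsque $[w]_\alpha$ est un singleton. Pour conclure $u w = v$ ou $v w^{-1} = u$ il faut montrer que le cas retenu est le même pour tous les $\alpha$ où $[w]_\alpha$ n'est pas un singleton : je compte l'établir en exploitant l'emboîtement des coupes de $u$, $v$, $w$ et la monotonie de la multiplication d'intervalles pour l'inclusion, les niveaux à $[w]_\alpha$ singleton ne créant aucune obstruction puisque les deux cas y sont simultanément vrais. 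Une fois cette cohérence obtenue, on relève l'égalité d'intervalles en l'égalité floue voulue. Enfin, si les deux égalités valent à la fois, alors $[w]_\alpha$ est un singleton pour chaque $\alpha$, donc $w$ est croustillant ; réciproquement, si $w$ est croustillant, $[w]_\alpha^{-1}$ est un singleton à chaque niveau, les cas (i) et (ii) coïncident et les deux égalités sont valides.
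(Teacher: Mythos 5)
Vos points 1 à 3 sont corrects et, de fait, plus détaillés que le mémoire lui-même : celui-ci ne démontre pas cette proposition, il se borne à écrire juste avant l'énoncé que, si $u\div_{g}v\in\mathcal{F}$ existe, il possède « les propriétés déjà illustrées pour le cas d'intervalle ». Votre réduction coupe par coupe à la proposition sur les intervalles, complétée par la vérification que les familles d'intervalles obtenues sont emboîtées, de bornes monotones et définissent donc un élément de $\mathcal{F}$ (en particulier l'argument de signe constant qui rend $v^{-1}$ bien défini au point 3), est exactement ce qu'il fallait écrire pour ces trois points, là où le texte n'offre aucune preuve à comparer.

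Le point 4, en revanche, contient un vrai trou, et la stratégie que vous annoncez (emboîtement des coupes et monotonie de la multiplication d'intervalles pour l'inclusion) ne peut pas le combler : avec la définition retenue dans le mémoire, où la $g$-division floue est définie niveau par niveau « à condition que $w$ soit un nombre flou », l'uniformité du cas (i) ou (ii) en $\alpha$ est fausse en général. Prenez $u$ l'intervalle net $[1,2]$ (coupes $[u]_{\alpha}=[1,2]$ pour tout $\alpha$) et $v$ le nombre flou en escalier de coupes $[v]_{\alpha}=[1,8]$ pour $\alpha\le 1/2$ et $[v]_{\alpha}=[3,4]$ pour $\alpha>1/2$ (fonction d'appartenance semi-continue supérieurement, donc $v\in\mathcal{F}$). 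Niveau par niveau, $[v]_{\alpha}\div_{g}[u]_{\alpha}=[1,4]$ pour $\alpha\le 1/2$, le cas (i) étant strict puisque $[1,2][1,4]=[1,8]$ tandis que $[1,8][1/4,1]=[1/4,8]\ne[1,2]$, et $[v]_{\alpha}\div_{g}[u]_{\alpha}=[2,3]$ pour $\alpha>1/2$, le cas (ii) étant strict puisque $[3,4][1/3,1/2]=[1,2]$ tandis que $[1,2][2,3]=[2,6]\ne[3,4]$. Ces coupes sont emboîtées et de bornes monotones, donc $w=v\div_{g}u$ existe au sens de la définition ; pourtant $u\,w\ne v$ (échec pour $\alpha>1/2$) et $v\,w^{-1}\ne u$ (échec pour $\alpha\le 1/2$), de sorte qu'aucune des deux égalités floues n'a lieu. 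Votre idée que le basculement ne peut se produire qu'à un niveau où la coupe de $w$ dégénère en singleton n'est juste que si $\alpha\mapsto u_{\alpha}^{\pm},v_{\alpha}^{\pm}$ sont continues : un argument de valeurs intermédiaires fournit alors un niveau où la coupe de $w$ est un singleton, l'emboîtement force $w$ à être net au-dessus de ce niveau, et le même cas vaut sur tous les niveaux non dégénérés, ce qui permet de conclure. Sans cette hypothèse de continuité, la seule façon de sauver le point 4 est de lire la définition de $\div_{g}$ globalement, c'est-à-dire « $u=v\,w$ ou $v=u\,w^{-1}$ en tant que nombres flous », comme le fait le mémoire pour la différence gH floue ; l'énoncé devient alors essentiellement tautologique. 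Il faut donc soit ajouter l'hypothèse de continuité et rédiger l'argument de valeurs intermédiaires, soit préciser que le point 4 s'entend avec la définition globale ; la dernière équivalence (les deux égalités valent si et seulement si $v\div_{g}u$ est net) reste correcte telle que vous la traitez.
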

Dans le cas flou, il est possible que la $g$-division de deux nombres flous n'existe pas. Par exemple, nous pouvons considérer un nombre flou triangulaire $u=(1,1.5,5)$ et $v=(-4,-2,-1)$ au niveau des coupes, les divisions $\mathrm{g}$ existent mais les intervalles résultants ne sont pas les $\alpha$-coupes d'un nombre flou.

Pour résoudre cette lacune, dans \cite{Ref11} une nouvelle division entre les nombres flous a été proposée, une division qui existe toujours.

Nous illustrons la division généralisée $\div_{g}$ avec quelques exemples (les nombres flous sont définis en fonction de leur $\alpha$-coupe).\\
1.$[1+2 \alpha, 7-4 \alpha] \div_{g}[-3+\alpha,-1-\alpha]=[(7-4 \alpha) /(-3+\alpha),(1+2 \alpha) /(-1-\alpha)]$ selon le cas (i) \\
2.$[-3+3 \alpha, 2-2 \alpha] \div_{g}[3+2 \alpha, 8-3 \alpha]=[(-3+3 \alpha) /(8-3 \alpha),(2-2 \alpha) /(8-3 \alpha)]$ selon le cas (i); notez que dans ce cas, le résultat ne dépend pas de $ v_{\alpha}^{-} .$\\
3. $[1+0.5 \alpha, 5-3.5 \alpha] \div g[-4+2 \alpha,-1-\alpha]$ does not exist\\
4. $[-7+2 \alpha,-4-\alpha] \div g[12+5 \alpha,-4-3 \alpha]=[(-7+2 \alpha) /(-12+5 \alpha),(-4-\alpha) /(-4-3 \alpha)]$ selon le cas (ii).\\
5. $[-5+\alpha,-3-\alpha] \div_{g}[4+2 \alpha, 11-5 \alpha]=[(-3-\alpha) /(4+2 \alpha),(-5+\alpha) /(11-5 \alpha)$ ] selon le cas (ii). Notez que pour les exemples 4 et 5 la division $u \oslash v$ n'existe pas.

\subsection{Division généralisé floue approximative}
Si les divisions en g $[u]_{\alpha} \div g[v]_{\alpha}$ ne définissez pas un nombre flou propre, nous pouvons procéder de la même manière que ce qui est fait dans la sous-section $ 2.3.3 $ et obtenir une division floue approchée avec $ \alpha $-coupe
\begin{equation}
[u \stackrel{\sim}\div v]_{\alpha}:=c l\left(\bigcup_{\beta \geq \alpha}\left([u]_{\beta} \div g[v]_{\beta}\right)\right)
\label{26}
\end{equation}
Comme chaque g-division $[u]_{\beta} \div_{g}[v]_{\beta}$ existe pour $\beta \in[0,1], z=u \div v$ peut être considérée comme une généralisation de la division des nombres flous, existant pour tout $u, v$ avec $0 \notin[v]_{\beta}$ pour tout $\beta \in[0,1]$.

Une version discrétisée de $z=u \stackrel{\sim}\div  v $ sur une partition $0=\alpha_{0}<\alpha_{1}<\cdots<\alpha_{N}=1$ de $[0,1]$ est obtenu en utilisant $\left[w_{i}^{-}, w_{i}^{+}\right]=[u]_{\alpha_{i}} \div_{g}[v]_{\alpha_{i}}$ et $ z_{N}^{-} =w_{N}^{-}, \quad z_{N}^{+}=w_{N}^{+} $
$$
\text{For } k =N-1, \ldots, 0:\left\{\begin{array}{l}
z_{k}^{-}=\min \left\{z_{k+1}^{-}, w_{k}^{-}\right\} \\
z_{k}^{+}=\max \left\{z_{k+1}^{+}, w_{k}^{+}\right\}
\end{array}\right.
$$
\begin{exemple}
Pour toutes les valeurs de $\alpha \in[0,1]$ l'intervalle g-divisions

 $[1+0.5 \alpha, 5-3.5 \alpha] \div g[-4+2 \alpha,-1-\alpha]=$ $[(5-3.5 \alpha) /(-4+2 \alpha),(1+0.5 \alpha) /(-1-\alpha)]$ existent mais les intervalles qui en résultent ne sont pas les $\alpha$-coupes d'un nombre flou; appliquer $(\ref{26})$ on obtient le nombre flou $[1+0.5 \alpha, 5-3.5 \alpha] \stackrel{\sim}{\div}[-4+2 \alpha,-1-\alpha]=[(5-3.5 \alpha) /(-4+2 \alpha),-0.75]$.
\end{exemple}
\chapter{La dérivation fractionnaire}
L’idée principale de la dérivation et d’intégration fractionnaire est la généralisation
de la dérivation et d’intégration itérées. Le terme fractionnaire est un terme trompeur
mais il est retenu pour suivre l’usage dominant.
\section{Outil de base}
\subsection{La fonction Gamma et la fonction Béta}
1. La fonction $\Gamma$ d'Euler est une fonction qui prolonge la factorielle aux valeurs réelles et complexes \cite{Ref64}.
Pour $\operatorname{Re}(\alpha)>0$ on définie $\Gamma(\alpha)$ par :
$$
\Gamma(\alpha)=\int_{0}^{+\infty} t^{\alpha-1} e^{-t} d t
$$
La fonction $\Gamma$ s'étend (en une fonction holomorphe) a $\mathbb{C} \backslash Z^{-}$ tout entier.

On a $\Gamma(\alpha+1)=\alpha \Gamma(\alpha)$ et pour $n$ entier on a $n !=\Gamma(n+1)$, pour plus d'informations sur la fonction $\Gamma$ voir \cite{Ref64}.\\

2. La fonction Béta est définie par :
$$
B(p, q)=\int_{0}^{1} \tau^{p-1}(1-\tau)^{q-1} d \tau=\frac{\Gamma(p) \Gamma(q)}{\Gamma(p+q)} \text { avec } \operatorname{Re}(p)>0 \text { et } \operatorname{Re}(q)>0.
$$
\section{Intégration fractionnaire}
Le moyen le plus standard et naturel pour définir les dérivées fractionnaires est au moyen de leur connexion aux intégrales fractionnaires. Il s'avère que les définitions et les propriétés des dérivées fractionnaires dépendent essentiellement des espaces de fonctions, où elles sont définies (un opérateur est un triple $(A, X, Y)$ constitué du domaine $X$, l'intervalle $Y$, et la correspondance $A : X \rightarrow Y$ ). Dans cette section, nous discutons les dérivées fractionnaires connues comme les dérivées fractionnaires de Riemann-Liouville, de Caputo et Hilfer et de Hukuhara.
\begin{definition}
Soit $I^{\alpha}, \alpha \geq 0$ la famille des intégrales fractionnaires de Riemann-Liouville définies par :
$$
\left(I^{\alpha} f\right)(x)=\left\{\begin{array}{ll}
\frac{1}{\Gamma(\alpha)} \int_{0}^{x}(x-t)^{\alpha-1} f(t) d t, & \alpha>0 \\
f(x), & \alpha=0
\end{array}\right.
$$
Une famille à un paramètre $D^{\alpha}, \alpha \geq 0$ des opérateurs linéaires est appelée les dérivées fractionnaires si et seulement si elle satisfait le théorème fondamental du calcul fractionnaire formulé ci-dessous.
\end{definition}
\begin{theorem}(Théorème fondamental de CF).\\
 Pour les dérivées fractionnaires $D^{\alpha}, \alpha \geq 0$  et les intégrales fractionnaires de Riemann-Liouville $I^{\alpha}, \alpha \geq 0$, la relation
\begin{equation}
\left(D^{\alpha} I^{\alpha} \phi\right)(x)=\phi(x), x \in[0,1]
\label{w1}
\end{equation}
est vrai sur les espaces de fonctions non triviaux appropriés.
\end{theorem}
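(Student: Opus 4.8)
Pour établir (\ref{w1}), le plan est d'exhiber un candidat explicite pour la famille $D^{\alpha}$ --- la dérivée fractionnaire de Riemann--Liouville --- et de ramener l'identité au théorème fondamental classique du calcul. Pour $\alpha \geq 0$, on pose $n=\lceil \alpha \rceil$ (avec $n=0$ si $\alpha=0$) et on définit
$$
D^{\alpha} f = \frac{d^{n}}{dx^{n}}\, I^{n-\alpha} f,
$$
où $\frac{d^{n}}{dx^{n}}$ est la dérivée ordinaire d'ordre $n$ et $I^{n-\alpha}$ l'intégrale fractionnaire de Riemann--Liouville de la définition précédente. On se place sur l'espace des fonctions $\phi$ continues sur $[0,1]$ (ou, plus généralement, $\phi \in L^{1}[0,1]$), ce qui garantit que $I^{\alpha}\phi$ et $I^{n-\alpha}(I^{\alpha}\phi)$ sont bien définies.

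Le cœur de la preuve sera la \emph{propriété de semi-groupe} des intégrales de Riemann--Liouville :
$$
\bigl(I^{\mu} I^{\nu} \phi\bigr)(x) = \bigl(I^{\mu+\nu}\phi\bigr)(x), \qquad \mu,\nu \geq 0,\ x \in [0,1].
$$
Je l'établirais en écrivant $I^{\mu}I^{\nu}\phi$ comme une intégrale double, en appliquant le théorème de Fubini pour intervertir l'ordre d'intégration, puis en faisant le changement de variable $s = t+(x-t)\tau$ dans l'intégrale interne ; celle-ci devient $(x-t)^{\mu+\nu-1}\int_{0}^{1}\tau^{\nu-1}(1-\tau)^{\mu-1}\,d\tau = (x-t)^{\mu+\nu-1}\,B(\nu,\mu)$. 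Les facteurs $\Gamma(\mu)$ et $\Gamma(\nu)$ issus des normalisations se simplifient alors avec $B(\nu,\mu) = \Gamma(\nu)\Gamma(\mu)/\Gamma(\mu+\nu)$, ce qui laisse exactement $1/\Gamma(\mu+\nu)$ devant le noyau $(x-t)^{\mu+\nu-1}$ : c'est précisément pour cette simplification que la fonction Béta a été introduite parmi les outils de base.

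On conclut ensuite en deux temps. D'abord le cas entier : pour $n \in \mathbb{N}$, le théorème fondamental du calcul appliqué $n$ fois donne $\frac{d^{n}}{dx^{n}} I^{n}\phi = \phi$ (les $n-1$ premières dérivées de $I^{n}\phi$ s'annulant en $0$). Puis on combine avec la propriété de semi-groupe appliquée à $\mu = n-\alpha$ et $\nu = \alpha$ :
$$
\bigl(D^{\alpha} I^{\alpha}\phi\bigr)(x) = \frac{d^{n}}{dx^{n}}\Bigl(I^{n-\alpha}\bigl(I^{\alpha}\phi\bigr)\Bigr)(x) = \frac{d^{n}}{dx^{n}}\bigl(I^{n}\phi\bigr)(x) = \phi(x), \qquad x \in [0,1]
$$
(l'égalité ayant lieu partout si $\phi$ est continue, presque partout si $\phi \in L^{1}$).

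La principale difficulté sera de préciser l'« espace de fonctions non trivial approprié » mentionné dans l'énoncé et de justifier rigoureusement l'interversion de Fubini : il faut que $\phi$ soit assez régulière pour que $I^{n-\alpha}(I^{\alpha}\phi) = I^{n}\phi$ soit effectivement $n$ fois dérivable au sens classique, ce qui est assuré dès que $\phi \in C[0,1]$ (alors $I^{n}\phi \in C^{n}[0,1]$), ou plus finement lorsque $\phi$ appartient à un espace tel que $I^{n}\phi$ soit absolument continue ainsi que ses $n-1$ premières dérivées. On remarquera enfin que seule l'identité $D^{\alpha}I^{\alpha} = \mathrm{Id}$ est en jeu ici : la relation inverse $I^{\alpha}D^{\alpha}\phi = \phi$ est fausse en général (elle exige des conditions sur les « valeurs initiales » de $\phi$), ce qui explique la dissymétrie de l'énoncé.
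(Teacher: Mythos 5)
Your proposal is mathematically sound, but it takes a different route from the paper. The paper does not actually prove the identity at this point: immediately after the statement it declares that the relation \emph{fait partie de la définition} d'une dérivée fractionnaire (un opérateur linéaire est une dérivée fractionnaire si et seulement si c'est un inverse à gauche de $I^{\alpha}$), et le contenu non trivial --- l'existence de familles satisfaisant (\ref{w1}) sur des espaces non triviaux --- est fourni plus loin par les constructions explicites : la dérivée de Riemann--Liouville sur $X^{0}=I^{\alpha}(L_{1}(0,1))$ puis sur $L_{1}(0,1)$ (formule (\ref{w11}), simplement affirmée), et des vérifications détaillées pour Caputo (formule (\ref{w18})) et Hilfer (formule (\ref{w26})), menées pour $0<\alpha\leq 1$ via la représentation $I^{\alpha}f=I^{1}\phi$ et l'injectivité de $I^{\alpha}$. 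Vous, au contraire, exhibez directement le candidat $D^{\alpha}=\frac{d^{n}}{dx^{n}}I^{n-\alpha}$ pour tout $\alpha\geq 0$ et ramenez tout à la propriété de semi-groupe $I^{\mu}I^{\nu}=I^{\mu+\nu}$ (Fubini plus fonction Béta) et au théorème fondamental classique itéré $n$ fois : c'est une vérification autonome, valable pour tout ordre, et c'est essentiellement l'argument standard que le paper sous-entend pour (\ref{w11}) sans l'écrire. Ce que chaque approche apporte : la vôtre donne une preuve complète et directe pour la famille de Riemann--Liouville ; celle du paper met en avant le point de vue structurel (toute une classe de dérivées, dont Caputo et Hilfer, caractérisées comme inverses à gauche) et des espaces plus fins ($X^{0}$, $X_{FT}$). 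Deux détails à régler dans votre rédaction : traitez à part les cas dégénérés $\mu=0$ ou $\nu=0$ dans la propriété de semi-groupe (où $I^{0}=\mathrm{Id}$ par définition, donc quand $\alpha$ est entier), et précisez que pour $\phi\in L^{1}$ l'égalité finale n'a lieu que presque partout, comme vous l'indiquez d'ailleurs entre parenthèses.
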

En fait, le théorème fondamental de CF fait partie de la définition $21$, c'est-à-dire qu'un opérateur linéaire est appelé une dérivée fractionnaire si et seulement si c'est un opérateur inverse à gauche de l'intégrale fractionnaire de Riemann-Liouville sur un certain espace de fonctions. Il s'avère qu'il existe une infinité de familles différentes de dérivées fractionnaires au sens de la définition $21$ Dans la suite de cette section, nous discutons de quelques familles connues et nouvelles de dérivées fractionnaires sur l'intervalle $[0,1]$.
\begin{remarque}
En calcul, la formule de type (\ref{w1}) avec $\alpha=1$ est généralement appelée le 1er théorème fondamental du calcul. Le 2e théorème fondamental du calcul stipule que
$$\int_{0}^{x} f^{\prime}(t) d t=f(x)-f(0) .$$
\end{remarque}
\begin{remarque}
Il convient de mentionner que la formule (\ref{w1}) et la relation $\left(I^{0} f\right)(x)=$ $f(x)$ définissent de manière unique la dérivée fractionnaire d'ordre $\ alpha=0$ comme opérateur d'identité :
$$\left(D^{0} f\right)(x)=f(x) .$$
 Par conséquent, dans ce qui suit, nous restreignons principalement notre attention au cas $\alpha>0$.
\end{remarque}
\begin{remarque}
L'intégrale fractionnaire de Riemann-Liouville $I^{\alpha}$ est injective sur $L_{1}(0,1)$, c'est-à-dire que son noyau ne contient que la fonction nulle $f(x)=0$ a.e. sur $[0,1]$. Évidemment, cette affirmation est vraie pour tout opérateur linéaire qui possède un opérateur linéaire inverse à gauche. Pour $I^{\alpha}$, cela résulte de la réalisation du théorème fondamental de CF pour les dérivées fractionnaires de Riemann-Liouville.
\end{remarque}
\begin{remarque}
Par souci de simplicité des formulations, dans ce qui suit, nous nous restreignons à l'espace des fonctions $L_{1}(0,1)$ et ses sous-espaces (une théorie similaire peut être développée pour, disons, $L_{p}(0 ,1), 1<p<+\infty$ et ses sous-espaces) et aux ordres $\alpha, 0<\alpha \leq 1$ des dérivées fractionnaires (le cas $n-1<\alpha \leq n \ in \mathbb{N}$ peut être couvert par analogie à la théorie connue des dérivées fractionnaires de Riemann-Liouville d'ordre $\left.\alpha, \alpha \in \mathbb{R}_{+}\right)$.
\end{remarque}
\section{Les approches des dérivées fractionnaires}
Il y a beaucoup d’approches pour la dérivation fractionnaire, nous allons souligner les
approches qui sont fréquemment utilisées dans les applications.
\subsection{Approche de Riemann-Liouville}
 Dans cette sous-section, certains résultats connus sont présentés sous une forme légèrement différente adaptée à nos constructions ultérieures.

On part de la formule (\ref{w1}) et on la réécrit sous la forme équivalente de deux équations :
\begin{equation}
\left(I^{\alpha} \phi\right)(x)=f(x), \quad\left(D^{\alpha} f\right)(x)=\phi(x), x \in[0,1].
\label{w2}
\end{equation}
La seconde des équations de (\ref{w2}) définit la dérivée fractionnaire $D^{\alpha}$ d'une fonction $f$ comme la solution $\phi$ de l'équation intégrale d'Abel avec le membre de droite $f .$
\begin{theorem}
Sur l'espace des fonctions $X^{0}=I^{\alpha}\left(L_{1}(0,1)\right)$, la dérivée fractionnaire unique $D^{\alpha}$ d'ordre $\alpha, 0<\alpha<1$ est donné par la formule
\begin{equation}
\left(D^{\alpha} f\right)(x)=\frac{d}{d x}\left(I^{1-\alpha} f\right)(x)=\frac{d}{d x} \frac{1}{\Gamma(1-\alpha)} \int_{0}^{x}(x-t)^{-\alpha} f(t) d t .
\label{w3}
\end{equation}
Alors l'équation intégrale d'Abel (la première formule dans (\ref{w2})) a une solution unique donnée par la formule $(\ref{w3})$.

La première partie de la formule (\ref{w3}) peut être utilisée pour définir la dérivée fractionnaire $D^{\alpha}$ de l'ordre $\alpha=1$ comme dérivée du premier ordre :
\begin{equation}
\left(D^{1} f\right)(x)=\frac{d}{d x}\left(I^{0} f\right)(x)=\frac{d f}{d x}
\label{w4}
\end{equation}
Dans ce qui suit, on se réfère à l'opérateur $D^{\alpha}=\frac{d}{dx} I^{1-\alpha} : X^{0} \rightarrow L_{1}(0,1) $ avec $0<\alpha \leq 1$ quand à la dérivée fractionnaire de base de Riemann-Liouville d'ordre $\alpha$ (le terme "basique" fait référence au domaine $X^{0}$ de $D^{\alpha} $ ).
\end{theorem}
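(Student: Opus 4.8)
Le cœur de la preuve est la propriété de semi-groupe des intégrales fractionnaires de Riemann--Liouville, $I^{\beta}I^{\gamma}=I^{\beta+\gamma}$ pour $\beta,\gamma\ge 0$, jointe au théorème de Lebesgue sur la dérivation des primitives. Le plan est d'abord de montrer que $D^{\alpha}:=\frac{d}{dx}I^{1-\alpha}$ est un inverse à gauche de $I^{\alpha}$ sur $X^{0}$. Soit $f\in X^{0}$, c'est-à-dire $f=I^{\alpha}\phi$ avec $\phi\in L_{1}(0,1)$. On calcule $I^{1-\alpha}f=I^{1-\alpha}I^{\alpha}\phi=I^{1}\phi=\int_{0}^{x}\phi(t)\,dt$ ; comme $\phi\in L_{1}(0,1)$, cette fonction est absolument continue sur $[0,1]$, donc dérivable presque partout avec $\frac{d}{dx}\int_{0}^{x}\phi(t)\,dt=\phi(x)$ p.p. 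D'où $(D^{\alpha}I^{\alpha}\phi)(x)=\phi(x)$ p.p., soit la relation fondamentale (\ref{w1}) sur l'espace $X^{0}$.

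Pour la propriété de semi-groupe, seul le cas $I^{1-\alpha}I^{\alpha}=I^{1}$ est nécessaire. J'écrirais explicitement
\[
(I^{1-\alpha}I^{\alpha}\phi)(x)=\frac{1}{\Gamma(1-\alpha)\Gamma(\alpha)}\int_{0}^{x}(x-s)^{-\alpha}\left(\int_{0}^{s}(s-t)^{\alpha-1}\phi(t)\,dt\right)ds ,
\]
puis j'échangerais l'ordre d'intégration par Fubini--Tonelli, ce qui est licite car le produit $(x-s)^{-\alpha}(s-t)^{\alpha-1}|\phi(t)|$ est intégrable sur le triangle $\{0\le t\le s\le x\}$, d'intégrale $B(\alpha,1-\alpha)\int_{0}^{x}|\phi(t)|\,dt<\infty$. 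L'intégrale intérieure $\int_{t}^{x}(x-s)^{-\alpha}(s-t)^{\alpha-1}\,ds$ se ramène, via le changement de variable $s=t+(x-t)\tau$, à la fonction Béta $B(\alpha,1-\alpha)=\Gamma(\alpha)\Gamma(1-\alpha)/\Gamma(1)$, indépendamment de $x$ et $t$ ; on obtient donc $(I^{1-\alpha}I^{\alpha}\phi)(x)=\int_{0}^{x}\phi(t)\,dt=(I^{1}\phi)(x)$.

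Restent l'unicité et l'équation d'Abel. L'injectivité de $I^{\alpha}$ sur $L_{1}(0,1)$ découle de ce qui précède : si $I^{\alpha}\phi=0$, alors $I^{1}\phi=I^{1-\alpha}I^{\alpha}\phi=0$, donc $\int_{0}^{x}\phi=0$ pour tout $x\in[0,1]$ et $\phi=0$ p.p. Comme $X^{0}=I^{\alpha}(L_{1}(0,1))$ par définition, l'application $I^{\alpha}:L_{1}(0,1)\to X^{0}$ est une bijection, et tout $f\in X^{0}$ s'écrit de façon unique $f=I^{\alpha}\phi$. Si $\widetilde{D}^{\alpha}$ est un autre opérateur linéaire sur $X^{0}$ vérifiant $\widetilde{D}^{\alpha}I^{\alpha}\phi=\phi$ pour tout $\phi\in L_{1}(0,1)$, alors $\widetilde{D}^{\alpha}f=\phi=D^{\alpha}f$, d'où l'unicité de la dérivée fractionnaire sur $X^{0}$. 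De même, l'équation d'Abel $I^{\alpha}\phi=f$ possède, pour $f\in X^{0}$, une solution $\phi\in L_{1}(0,1)$ unique (existence par définition de $X^{0}$, unicité par injectivité) ; en lui appliquant $D^{\alpha}$ on trouve $\phi=D^{\alpha}I^{\alpha}\phi=D^{\alpha}f=\frac{d}{dx}I^{1-\alpha}f$, ce qui est exactement (\ref{w3}). Le cas $\alpha=1$ redonne trivialement $D^{1}=\frac{d}{dx}$ via (\ref{w4}).

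L'étape que j'anticipe comme la plus délicate est la justification mesure-théorique : finitude presque partout de l'intégrale singulière $I^{\alpha}\phi$ pour $\phi\in L_{1}(0,1)$, applicabilité de Fubini sur le triangle avec un noyau singulier sur ses deux côtés, et absolue continuité de $I^{1}\phi$ assurant $\frac{d}{dx}I^{1}\phi=\phi$ p.p. Une fois la propriété de semi-groupe acquise, les autres points (inverse à gauche, unicité, équation d'Abel) relèvent de l'algèbre d'opérateurs élémentaire.
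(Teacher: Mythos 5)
Le mémoire énonce ce théorème sans en donner de démonstration (il est repris de la littérature sur le théorème fondamental du calcul fractionnaire, avec renvoi à Samko--Kilbas--Marichev) ; il n'y a donc pas de preuve interne à laquelle comparer la vôtre, et il faut la juger sur ses propres mérites : elle est correcte et complète. Vous établissez d'abord $I^{1-\alpha}I^{\alpha}=I^{1}$ sur $L_{1}(0,1)$ par Fubini--Tonelli, l'intégrale intérieure se réduisant à $B(\alpha,1-\alpha)$, échange licite puisque l'intégrale du majorant vaut $B(\alpha,1-\alpha)\int_{0}^{x}\lvert\phi(t)\rvert\,dt<\infty$ ; l'absolue continuité de $I^{1}\phi$ et le théorème de différentiation de Lebesgue donnent ensuite $\left(D^{\alpha}I^{\alpha}\phi\right)(x)=\phi(x)$ p.p., c'est-à-dire la relation (\ref{w1}) sur $X^{0}$, avec de plus $D^{\alpha}f=\phi\in L_{1}(0,1)$. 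L'injectivité de $I^{\alpha}$ sur $L_{1}(0,1)$, déduite de la même identité, vous fournit à la fois l'unicité de $D^{\alpha}$ parmi les inverses à gauche de $I^{\alpha}$ sur $X^{0}$ (ce qui est bien la notion d'unicité requise par la définition de dérivée fractionnaire adoptée dans ce chapitre) et l'existence-unicité de la solution de l'équation d'Abel $I^{\alpha}\phi=f$ pour $f\in X^{0}$, solution donnée par la formule (\ref{w3}) ; le cas $\alpha=1$ n'est qu'une convention, conformément à (\ref{w4}). C'est exactement l'argument standard attendu ici, et les points « délicats » que vous identifiez (finitude p.p. de $I^{\alpha}\phi$, validité de Fubini, dérivation p.p. de la primitive) sont précisément ceux que votre majoration et l'absolue continuité règlent.
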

\begin{remarque}
La dérivée fractionnaire de base de Riemann-Liouville $D^{\alpha} : X^{0} \rightarrow L_{1}(0,1)$ est une application binivoque de $X^{0}$ sur $ L_{1}(0,1) .$ Pour $0<\alpha<1$, cela peut être facilement vérifié directement.
\end{remarque}
Continuons maintenant à définir la dérivée fractionnaire de Riemann-Liouville sur l'intervalle $[0,1] .$ Évidemment, la formule (\ref{w3}) a un sens pour un espace de fonctions plus grand que $X^{0}$ , à savoir pour l'espace
\begin{equation}
X_{R L}^{1}=\left\{f: I^{1-\alpha} f \in \mathrm{AC}([0,1])\right\}
\label{w5}
\end{equation}
En effet, pour $f \in X_{R L}^{1}$, la représentation
\begin{equation}
\left(I^{1-\alpha} f\right)(x)=\left(I^{1-\alpha} f\right)(0)+\int_{0}^{x} \phi(t) d t, x \in[0,1], \phi \in L_{1}(0,1)
\label{w6}
\end{equation}
est vrai, et donc
\begin{equation}
\left(D^{\alpha} f\right)(x)=\frac{d}{d x}\left(I^{1-\alpha} f\right)(x)=\phi(x), x \in[0,1]
\label{w7}
\end{equation}
\begin{definition}
L'extension de la dérivée fractionnaire de base de Riemann-Liouville $D^{\alpha}: X^{0} \rightarrow L_{1}(0,1)$ au domaine $X_{R L}^{1}$ est appelée la dérivée fractionnaire de Riemann-Liouville d'ordre  $\alpha, 0<\alpha \leq 1:$
\begin{equation}
\left(D_{R L}^{\alpha} f\right)(x)=\frac{d}{d x}\left(I^{1-\alpha} f\right)(x), D_{R L}^{\alpha}: X_{R L}^{1} \rightarrow L_{1}(0,1)
\label{w8}
\end{equation}
\end{definition}
Contrairement à la dérivée fractionnaire de base de Riemann-Liouville, la dérivée fractionnaire de RiemannLiouville n'est pas injective et son noyau est un espace vectoriel à une dimension :
\begin{equation}
\operatorname{Ker}\left(D_{R L}^{\alpha}\right)=\left\{c_{1} x^{\alpha-1}, c_{1} \in \mathbb{R}\right\}
\label{w9}
\end{equation}
Selon la formule de l'intégrale fractionnaire de Riemann-Liouville d'une fonction de loi de puissance suivante :
$$
\left(I^{\alpha} t^{\beta}\right)(x)=\frac{\Gamma(\beta+1)}{\Gamma(\alpha+\beta+1)} x^{\alpha+\beta}, \alpha \geq 0, \beta>-1
$$
et Remarque 14, Il convient de mentionner que la fonction de base $f_{1}(x)=x^{\alpha-1}$ n'appartient pas à l'espace $X^{0}$ car $\left(I^{1-\alpha} t^{\alpha-1}\right)(x) \equiv \Gamma(\alpha) \forall x \in[0,1]$ qui contredit la condition $\left(I^{1-\alpha} f\right)(0)=0$
qui est remplie pour tout $f \in X^{0}$. Sinon, nous avons les inclusions
\begin{equation}
X^{0} \subset X_{R L}^{1}, \mathrm{AC}([0,1]) \subset X_{R L}^{1}
\label{w10}
\end{equation}
Pour les autres propriétés de la dérivée fractionnaire de Riemann-Liouville introduite ci-dessus, nous renvoyons les lecteurs à $\cite{Ref12}$.

Ici, nous mentionnons juste que pour la dérivée fractionnaire de Riemann-Liouville, le théorème fondamental de CF (théorème (5) est évidemment valable sur un espace encore plus grand des fonctions $X_{RL}^{2}=L_{1}(0, 1), X_{RL}^{1} \subset X_{RL}^{2}$, c'est-à-dire la formule
\begin{equation}
\left(D_{R L}^{\alpha} I^{\alpha} f\right)(x)=f(x), x \in[0,1], f \in X_{R L}^{2}
\label{w11}
\end{equation}
qui est vrai.

\begin{exemple}{\color{blue}{La dérivée non entière d’une fonction constante au sens de Riemann-Liouville.}}

En générale la dérivée non entière d’une fonction constante au sens de RiemannLiouville n’est pas nulle ni constante, mais on a :
$$
{ }^{R} D^{p} C=\frac{C}{\Gamma(1-p)}(t-a)^{-p}.
$$
\end{exemple}
\begin{exemple}{\color{blue}{La dérivée de $f(t)=(t-a)^{\alpha}$ au sens de Riemann-Liouville.}}\\
Soit $p$ non entier et $0 \leq n-1<p<n$ et $\alpha>-1$, alors on a :
$$
{ }^{R} D^{p}(t-a)^{\alpha}=\frac{1}{\Gamma(n-p)} \frac{d^{n}}{d t^{n}} \int_{a}^{t}(t-\tau)^{n-p-1}(\tau-a)^{\alpha} d \tau
$$
En faisant le changement de variable $\tau=a+s(t-a)$, on aura :
$$
\begin{aligned}
{ }^{R} D^{p}(t-a)^{\alpha} &=\frac{1}{\Gamma(n-p)} \frac{d^{n}}{d t^{n}}(t-a)^{n+\alpha-p} \int_{a}^{t}(1-s)^{n-p-1} s^{\alpha} d s \\
&=\frac{\Gamma(n+\alpha-p+1) B(n-p, \alpha+1)}{\Gamma(n-p)}(t-\alpha)^{\alpha-p} \\
&=\frac{\Gamma(n+\alpha-p+1) \Gamma(n-p) \Gamma(\alpha+1)}{\Gamma(n-p) \Gamma(\alpha-p+1) \Gamma(n+\alpha-p+1)}(t-\alpha)^{\alpha-p} \\
&=\frac{\Gamma(\alpha+1)}{\Gamma(\alpha-p+1)}(t-\alpha)^{\alpha-p} .
\end{aligned}
$$
A titre d’exemple
$$
{ }^{R} D^{0.5} t^{0.5}=\frac{\Gamma(1.5)}{\Gamma(1)}=\Gamma(1.5).
$$
\end{exemple}

\subsection{Approche de Caputo}
Comme indiqué dans le théorème $6$, la dérivée fractionnaire de base de Riemann-Liouville est l'unique dérivée fractionnaire à un paramètre sur l'espace des fonctions $X^{0}=I^{\alpha}\left(L_{1}(0,1 )\right) .$ Son extension au plus grand espace $X_{RL}^{1}$ qui conduit à la dérivée fractionnaire de Riemann-Liouville est également unique. car en mathématiques on travaille généralement avec les domaines maximaux et les extensions de formules valables sur ces domaines, du point de vue mathématique, les dérivées fractionnaires de RiemannLiouville peuvent être considérées comme la seule famille "correcte" à un paramètre de les dérivées fractionnaires définies sur un intervalle fini. En effet, dans la littérature mathématique classique  et plusieurs centaines de références, principalement cette dérivée et certaines de ses modifications ont été considérées sur un intervalle fini. Où est alors la place de la dérivée fractionnaire de Caputo ?

L'astuce avec sa définition est qu'il faut d'abord contracter l'espace de base $X^{0}$ et introduire un espace de fonctions, où la dérivée du premier ordre commute avec l'intégrale fractionnaire d'ordre de Riemann-Liouville $1-\alpha(0<\alpha \leq 1)$ :
\begin{equation}
X_{C}^{0}=\left\{f \in X^{0}: \frac{d}{d x} I^{1-\alpha} f=I^{1-\alpha} \frac{d f}{d x}\right\}
\label{w12}
\end{equation}
En particulier, l'espace $X_{C}^{0}$ contient les fonctions $f \in \mathrm{AC}([0,1])$ qui vérifient la condition $f(0)=0 .$ En effet, ces fonctions peuvent être représentées sous la forme
$$
f(x)=\int_{0}^{x} \phi(t) d t, x \in[0,1], \phi \in L_{1}(0,1) .
$$
On a alors la chaîne de relations suivante $(0 \leq \alpha)$ :
$$
\begin{gathered}
\left(I^{\alpha} \frac{d f}{d x}\right)(x)=\left(I^{\alpha} \frac{d}{d x} I^{1} \phi\right)(x)=\left(I^{\alpha} \phi\right)(x)= \\
\frac{d}{d x}\left(I^{1} I^{\alpha} \phi\right)(x)=\frac{d}{d x}\left(I^{\alpha} I^{1} \phi\right)(x)=\frac{d}{d x}\left(I^{\alpha} f\right)(x) .
\end{gathered}
$$
La dérivée fractionnaire de base de Caputo d'ordre $\alpha, 0<\alpha \leq 1$, est introduite comme suit :
\begin{equation}
\left(D_{C}^{\alpha} f\right)(x)=\left(I^{1-\alpha} \frac{d f}{d x}\right)(x), D_{C}^{\alpha}: X_{C}^{0} \rightarrow L_{1}(0,1)
\label{w13}
\end{equation}
Bien sûr, la dérivée fractionnaire de base de Caputo est identique à la dérivée fractionnaire de base de RiemannLiouville restreinte au domaine $X_{C}^{0}$ et donc ce n'est pas nouveau. Cependant, nous obtenons un nouvel opérateur par extension de son domaine ! L'opérateur (\ref{w13}) est bien défini, disons, sur l'espace $X_{C}^{1}=\mathrm{AC}([0,1])$
\begin{definition}
L'extension de la dérivée fractionnaire de base de Caputo $D_{C}^{\alpha} : X_{C}^{0} \rightarrow$ $L_{1}(0,1)$ au domaine $X_{C}^ {1}$ est appelé la dérivée fractionnaire de Caputo de $\alpha$, $0<\alpha \leq 1:$
\begin{equation}
\left(D_{C}^{\alpha} f\right)(x)=\left(I^{1-\alpha} \frac{d}{d x} f\right)(x), D_{C}^{\alpha}: X_{C}^{1} \rightarrow L_{1}(0,1)
\label{w14}
\end{equation}
\end{definition}
Évidemment, le noyau de la dérivée fractionnaire de Caputo coïncide avec le noyau de la dérivée du premier ordre :
\begin{equation}
\operatorname{Ker}\left(D_{C}^{\alpha}\right)=\left\{c_{1}, c_{1} \in \mathbb{R}\right\}
\label{w15}
\end{equation}
Pour les fonctions de $X_{C}^{1}$, il existe une connexion simple entre les dérivées fractionnaires de RiemannLiouville et de Caputo :
\begin{equation}
\left(D_{C}^{\alpha} f\right)(x)=\left(D_{R L}^{\alpha} f\right)(x)-\frac{f(0)}{\Gamma(1-\alpha)} x^{-\alpha}, x>0, f \in X_{C}^{1}
\label{w16}
\end{equation}
Comme dans le cas de la dérivée fractionnaire de Riemann-Liouville, le théorème fondamental de CF pour la dérivée fractionnaire de Caputo est valable sur un espace de fonctions encore plus grand
\begin{equation}
X_{F T}=\left\{f: I^{\alpha} f \in \mathrm{AC}([0,1]) \text { and }\left(I^{\alpha} f\right)(0)=0\right\},
\label{w17}
\end{equation}
c'est-à-dire la formule
\begin{equation}
\left(D_{C}^{\alpha} I^{\alpha} f\right)(x)=f(x), x \in[0,1], f \in X_{F T}
\label{w18}
\end{equation}
qui est vrai. Prouvons-le, pour une fonction $f$ de $X_{F T}$, la représentation
\begin{equation}
\left(I^{\alpha} f\right)(x)=\left(I^{1} \phi\right)(x), x \in[0,1]
\label{w19}
\end{equation}
est vrai avec une fonction $\phi \in L_{1}(0,1)$. On obtient alors la chaîne d'égalités suivante :
$$
\left(D_{C}^{\alpha} I^{\alpha} f\right)(x)=\left(I^{1-\alpha} \frac{d}{d x} I^{\alpha} f\right)(x)=\left(I^{1-\alpha} \frac{d}{d x} I^{1} \phi\right)(x)=\left(I^{1-\alpha} \phi\right)(x)
$$
Car pour $\phi \in L_{1}(0,1)$, l'intégrale fractionnaire $I^{1-\alpha} \phi$ appartient aussi à $L_{1}(0,1)$, on peut appliquer l'opérateur $I^{\alpha}$ à la dernière formule :
$$
\left(I^{\alpha}\left(D_{C}^{\alpha} I^{\alpha} f\right)\right)(x)=\left(I^{\alpha} I^{1-\alpha} \phi\right)(x)=\left(I^{1} \phi\right)(x)=\left(I^{\alpha} f\right)(x)
$$
L'intégrale fractionnaire de Riemann-Liouville est injective (Remarque $5.3$ ) et donc la formule (\ref{w18}) découle de la dernière relation.

Il est à noter que l'espace $X_{F T}$ défini par (\ref{w17}) peut également être caractérisé comme suit :
\begin{equation}
X_{F T}=I^{1-\alpha}\left(L_{1}(0,1)\right)\left(\forall f \in X_{F T} \exists \phi \in L_{1}(0,1): f(x)=\left(I^{1-\alpha} \phi\right)(x)\right)
\label{w20}
\end{equation}
\paragraph*{• Relation avec la dérivée de Riemann-Liouville\\}
Soit $p>0$ avec $n-1<p<n,\left(n \in \mathbb{N}^{*}\right)$, supposons que $f$ est une fonction telle que ${ }_{a}^{C} D_{t}^{p} f(t)$ et ${ }_{a}^{R} D_{t}^{p} f(t)$ existent alors 
$$
{ }^{C} D^{p} f(t)={ }^{R} D^{p} f(t)-\sum_{k=0}^{n-1} \frac{f^{(k)}(a)(t-a)^{k-p}}{\Gamma(k-p+1)}
$$
On déduit que si $f^{(k)}(a)=0$ pour $k=0,1,2, \ldots \ldots, n-1$, on aura
 $${ }^{C} D^{p} f(t)={ }^{R} D^{p} f(t).$$
\begin{exemple}{\color{blue}{La dérivée d’une fonction constante au sens de Caputo.}}\\
La dérivée d’une fonction constante au sens de Caputo est nulle
$$
{ }^{C} D^{p} C=0
$$
\end{exemple}
\begin{exemple}{\color{blue}{La dérivée de $f(t)=(t-a)^{\alpha}$ au sens de Caputo.}}\\
Soit $p$ un entier et $0 \leq n-1<p<n$ avec $\alpha>n-1$, alors on a
$$
f^{(n)}(\tau)=\frac{\Gamma(\alpha+1)}{\Gamma(\alpha-n+1)}(\tau-a)^{\alpha-n}
$$
d'où
$$
{ }^{C} D^{p}(t-a)^{\alpha}=\frac{\Gamma(\alpha+1)}{\Gamma(n-p) \Gamma(\alpha-n+1)} \int_{a}^{t}(t-\tau)^{n-p-1}(\tau-a)^{\alpha-n} d \tau
$$
effectuant le changement de variable $\tau=a+s(t-a)$ on obtient
$$
\begin{aligned}
{ }^{C} D^{p}(t-a)^{\alpha} &=\frac{\Gamma(\alpha+1)}{\Gamma(n-p) \Gamma(\alpha-n+1)} \int_{a}^{t}(t-\tau)^{n-p-1}(\tau-a)^{\alpha-n} d \tau \\
&=\frac{\Gamma(\alpha+1)}{\Gamma(n-p) \Gamma(\alpha-n+1)}(t-a)^{\alpha-p} \int_{a}^{1}(1-s)^{n-p-1} s^{\alpha-n} d s \\
&=\frac{\Gamma(\alpha+1) B(n-p, \alpha-n+1)}{\Gamma(n-p) \Gamma(\alpha-n+1)}(t-a)^{\alpha-p} \\
&=\frac{\Gamma(\alpha+1) \Gamma(n-p) \Gamma(\alpha-n+1)}{\Gamma(n-p) \Gamma(\alpha-n+1) \Gamma(\alpha-p+1)}(t-a)^{\alpha-p} \\
&=\frac{\Gamma(\alpha+1)}{\Gamma(\alpha-p+1)}(t-a)^{\alpha-p} .
\end{aligned}
$$
\end{exemple}
Pour les autres propriétés de la dérivée fractionnaire de Caputo, nous renvoyons les lecteurs à $\cite{Ref13}$.
\subsection{Approche de Hilfer}
La troisième famille connue des dérivées fractionnaires définies sur un intervalle fini qui remplit le théorème fondamental de CF est la famille des dérivées fractionnaires généralisées de Riemann-Liouville. Ils ont été introduits par Hilfer dans \cite{Ref14} et sont aujourd'hui appelés dérivés fractionnaires de Hilfer.

Le schéma de construction de la dérivée fractionnaire de Hilfer d'ordre $\alpha, 0<$ $\alpha \leq 1$ sur un intervalle fini est le même que celui employé pour la dérivée fractionnaire de Caputo. Nous commençons par définir un espace de base approprié de fonctions, où la dérivée du premier ordre commute avec une certaine intégrale fractionnaire de Riemann-Liouville. Soit un paramètre $\gamma_{1} \in \mathbb{R}$ satisfaire les conditions :
\begin{equation}
0 \leq \gamma_{1} \leq 1-\alpha
\label{w21}
\end{equation}
L'espace des fonctions pour la dérivée fractionnaire de Hilfer de base est défini comme suit :
\begin{equation}
X_{H}^{0}=\left\{f \in X^{0}: \frac{d}{d x} I^{\gamma} f=I^{\gamma_{1}} \frac{d f}{d x}\right\}
\label{w22}
\end{equation}
Comme dans le cas de l'espace $X_{C}^{0}$ pour la dérivée fractionnaire de Caputo, l'espace $X_{H}^{0}$ contient notamment les fonctions $f \in \mathrm{AC}( [0,1])$ qui satisfont la condition $f(0)=0$
La dérivée fractionnaire de Hilfer de base d'ordre $\alpha, 0<\alpha \leq 1$ et de type $\gamma_{1}, 0 \leq \gamma_{1} \leq$ $1-\alpha$ est introduite comme suit :
\begin{equation}
\left(D_{H}^{\alpha, \gamma_{1}} f\right)(x)=\left(I^{\gamma_{1}} \frac{d}{d x} I^{1-\alpha-\gamma_{1}} f\right)(x), D_{H}^{\alpha, \gamma_{1}}: X_{H}^{0} \rightarrow L_{1}(0,1)
\label{w23}
\end{equation}
Sur l'espace $X_{H}^{0}$, la dérivée fractionnaire de base de Hilfer est identique à la dérivée fractionnaire de base de Riemann-Liouville restreinte au domaine $X_{H}^{0}$ :
$$
\begin{gathered}
\left(D_{H}^{\alpha, \gamma_{1}} f\right)(x)=\left(I^{\gamma_{1}} \frac{d}{d x} I^{1-\alpha-\gamma_{1}} f\right)(x)=\frac{d}{d x}\left(I^{\gamma_{1}} I^{1-a-\gamma_{1}} f\right)(x)= \\
\frac{d}{d x}\left(I^{1-\alpha} f\right)(x)=\left(D_{R L}^{\alpha} f\right)(x), f \in X_{H}^{0}
\end{gathered}
$$
Cependant, le domaine de la dérivée fractionnaire de Hilfer de base peut être étendu au plus grand espace des fonctions :
\begin{equation}
X_{H}^{1}=\left\{f: I^{1-\alpha-\gamma} f \in \mathrm{AC}([0,1])\right\}
\label{w24}
\end{equation}
\begin{definition}
 L'extension de la dérivée fractionnaire de base de Hilfer $D_{H}^{\alpha} : X_{H}^{0} \rightarrow$ $L_{1}(0,1)$ au domaine $X_{H}^ {1}$ est appelé la dérivée fractionnaire de Hilfer de $\alpha, 0<$ $\alpha \leq 1$ et tapez $\gamma_{1}, 0 \leq \gamma_{1} \leq 1-\alpha : $
\begin{equation}
\left(D_{H}^{\alpha, \gamma_{1}} f\right)(x)=\left(I^{\gamma_{1}} \frac{d}{d x} I^{1-\alpha-\gamma_{1}} f\right)(x), D_{H}^{\alpha, \gamma_{1}}: X_{H}^{1} \rightarrow L_{1}(0,1)
\label{w25}
\end{equation}
\end{definition}

Le théorème fondamental de CF (théorème 5) pour la dérivée fractionnaire de Hilfer est valable sur l'espace $X_{F T}$ défini par (\ref{w17}) :
\begin{equation}
\left(D_{H}^{\alpha, \gamma_{1}} I^{\alpha} f\right)(x)=f(x), x \in[0,1], f \in X_{F T}, 0<\alpha \leq 1,0 \leq \gamma_{1} \leq 1-\alpha
\label{w26}
\end{equation}
Sa preuve suit les étapes de la preuve de la formule (\ref{w18}) pour la dérivée fractionnaire de Caputo. Nous commençons par la représentation (\ref{w19}) et la substituons dans la partie gauche de la formule (\ref{w26}) :
$$
\begin{gathered}
\left(D_{H}^{\alpha, \gamma_{1}} I^{\alpha} f\right)(x)=\left(I^{\Upsilon} \frac{d}{d x} I^{1-\alpha-\gamma} I^{\alpha} f\right)(x)=\left(I^{\gamma_{1}} \frac{d}{d x} I^{1-\alpha-\gamma_{1}} I^{1} \phi\right)(x)= \\
\left(I^{\gamma_{1}} \frac{d}{d x} I^{1} I^{1-a-\gamma_{1}} \phi\right)(x)=\left(I^{\gamma} I^{1-\alpha-\gamma} \phi\right)(x)=\left(I^{1-\alpha} \phi\right)(x)
\end{gathered}
$$
Le reste de la preuve est exactement le même que la preuve de la formule (\ref{w18}) pour la dérivée fractionnaire de Caputo que nous avons présentée dans la sous-section précédente.
\begin{remarque} 
Pour chaque type $\gamma_{1}, 0 \leq \gamma_{1} \leq 1-\alpha$, les dérivés de Hilfer $D_{H}^{\alpha, \gamma_{1}}$ des ordres $\ alpha, 0<\alpha \leq 1$ forment les familles à un paramètre des dérivées fractionnaires avec les dérivées fractionnaires de Riemann-Liouville, tandis que pour $\gamma_{1}=1-\alpha$ on obtient les dérivées fractionnaires de Caputo.
\end{remarque}
Le noyau de la dérivée fractionnaire de Hilfer peut être facilement calculé et on a :
\begin{equation}
\operatorname{Ker}\left(D_{H}^{\alpha, \gamma_{1}}\right)=\left\{c_{1} x^{\alpha+\gamma_{1}-1}, c_{1} \in \mathbb{R}\right\}
\label{w27}
\end{equation}
Pour les autres propriétés de la dérivée fractionnaire de Hilfer, nous renvoyons les lecteurs à $\cite{Ref14},\cite{Ref15}$
\subsection{Approche de Grünwald-Letnikov}
L’idée est de généraliser la définition classique de la dérivation entière d’une fonction à des ordres de dérivée arbitraires, donc on peut exprimer la dérivée d’ordre entier p (si p est positif ) et l’intégrale répétée (p) fois (si p est négatif) d’une fonction f par la formule suivante:
\begin{equation}
D^{p} f(t)=\lim _{h \rightarrow 0} h^{-p} \sum_{k=0}^{n}(-1)^{k}\left(\begin{array}{l}
p \\
k
\end{array}\right) f(t-k h), \text { avec }\left(\begin{array}{l}
p \\
k
\end{array}\right)=\frac{p(p-1) \ldots(p-k+1)}{k !}
\label{r1}
\end{equation}
La généralisation de cette formule pour $p$ non entier (avec $0 \leq n-1<p<n$ ) et comme :
\begin{equation}
\begin{aligned}
(-1)^{k}\left(\begin{array}{l}
p \\
k
\end{array}\right) &=\frac{-p(1-p) \ldots(k-p-1)}{k !} \\
&=\frac{\Gamma(k-p)}{\Gamma(k+1) \Gamma(-p)}
\end{aligned}
\label{r2}
\end{equation}
nous obtenons
\begin{equation}
{ }^{G} D^{p} f(t)=\lim _{h \rightarrow 0} h^{-p} \sum_{k=0}^{n} \frac{\Gamma(k-p)}{\Gamma(k+1) \Gamma(-p)} f(t-k h)
\label{r3}
\end{equation}
et
\begin{equation}
{ }^{G} D^{-p} f(t)=\lim _{h \rightarrow 0} h^{p} \sum_{k=0}^{n} \frac{\Gamma(k+p)}{\Gamma(k+1) \Gamma(p)} f(t-k h)
\label{r4}
\end{equation}
Si $f$ est de classe $C^{n}$, alors en utilisant l'intégration par parties on obtient:
\begin{equation}
{ }^{G} D^{-p} f(t)=\sum_{k=0}^{n-1} \frac{f^{(k)}(a)(t-a)^{k+p}}{\Gamma(k+p+1)}+\frac{1}{\Gamma(n+p)} \int_{a}^{t}(t-\tau)^{n+p-1} f^{(n)}(\tau) d \tau
\label{r5}
\end{equation}
aussi
\begin{equation}
{ }^{G} D^{p} f(t)=\sum_{k=0}^{n-1} \frac{f^{(k)}(a)(t-a)^{k-p}}{\Gamma(k-p+1)}+\frac{1}{\Gamma(n-p)} \int_{a}^{t}(t-\tau)^{n-p-1} f^{(n)}(\tau) d \tau
\label{r6}
\end{equation}
\begin{exemple}{\color{blue}{La dérivée d’une fonction constante au sens de Grünwald-Letnikov.}}\\
 En générale la dérivée d’une fonction constante au sens de Grünwald-Letnikov n’est
pas nulle ni constante.\\
Si $f(t)=c$ et $p$ non entier positif on a :
$$
f^{(k)}(t)=0 \text { pour } k=1,2, \ldots, n
$$
\begin{equation}
\begin{aligned}
{ }^{G} D^{p} f(t)=& \frac{c}{\Gamma(1-p)}(t-a)^{-p} \\
&+\sum_{k=1}^{n-1} \frac{f^{(k)}(a)(t-a)^{k-p}}{\Gamma(k-p+1)} \\
&+\frac{1}{\Gamma(n-p)} \int_{a}^{t}(t-\tau)^{n-p-1} f^{(n)}(\tau) d \tau \\
=& \frac{c}{\Gamma(1-p)}(t-a)^{-p} .
\end{aligned}
\label{r7}
\end{equation}
\end{exemple}
\begin{exemple}{\color{blue}{La dérivée de $f(t)=(t-a)^{\alpha}$ au sens de Grünwald-Letnikov.}}\\
Soit $p$ non entier et $0 \leq n-1<p<n$ avec $\alpha>n-1$ alors on a :
$$
f^{(k)}(a)=0 \text { pour } k=0,1, \ldots, n-1,
$$
et
$$
f^{(n)}(\tau)=\frac{\Gamma(\alpha+1)}{\Gamma(\alpha-n+1)}(\tau-a)^{\alpha-n}
$$
d'où
$$
{ }^{G} D^{p} f(t)=\frac{\Gamma(\alpha+1)}{\Gamma(n-p) \Gamma(\alpha-n+1)} \int_{a}^{t}(t-\tau)^{n-p-1}(\tau-a)^{-p} d \tau .
$$
En faisant le changement de variable $\tau=a+s(t-a)$ on trouve :
$$
\begin{aligned}
{ }^{G} D^{p}(t-a)^{\alpha} &=\frac{\Gamma(\alpha+1)}{\Gamma(n-p) \Gamma(\alpha-n+1)} \int_{a}^{t}(t-\tau)^{n-p-1}(\tau-a)^{\alpha-n} d \tau \\
&=\frac{\Gamma(\alpha+1)}{\Gamma(n-p) \Gamma(\alpha-n+1)}(t-a)^{\alpha-p} \int_{a}^{1}(1-s)^{n-p-1} s^{\alpha-n} d s \\
&=\frac{\Gamma(\alpha+1) B(\alpha-p, \alpha-n+1)}{\Gamma(n-p) \Gamma(\alpha-n+1)}(t-a)^{\alpha-p} \\
&=\frac{\Gamma(\alpha+1) \Gamma(n-p) \Gamma(\alpha-n+1)}{\Gamma(n-p) \Gamma(\alpha-n+1) \Gamma(\alpha-p+1)}(t-a)^{\alpha-p} \\
&=\frac{\Gamma(\alpha+1)}{\Gamma(\alpha-n+1)}(t-a)^{\alpha-p} .
\end{aligned}
$$
A titre d'exemple
$$
{ }^{G} D^{1 / 2} t=\frac{\Gamma(2)}{\Gamma(1.5)} \sqrt{t}=\frac{\sqrt{t}}{\Gamma(1.5)}
$$

\end{exemple}
\subsection{Approche de Hukuhara}
Rappelez-vous qu'un sous-ensemble flou de $\mathbb{R}$ est défini en termes d'une fonction d'appartenance qui assigne à chaque point $x \in \mathbb{R}$ un grade d'appartenance à l'ensemble flou. Une telle fonction d'appartenance :
$$
u: \mathbb{R} \rightarrow I=[0,1]
$$
est utilisé pour désigner l'ensemble flou correspondant. Dénoter par $\mathbb{F}$ l'ensemble de tous les ensembles flous de $\mathbb{R}$.
\subsubsection{Définitions et propriétés}
\begin{definition}\cite{Ref7}\\
 Soit $ x_{0} \in (a, b) $ et $ h $ tels que $ x_{0} + h \in (a, b) $, puis la dérivée généralisée de Hukuhara d'une fonction de valeur floue $ f: (a, b) \rightarrow E^{1} $ en $ x_{0} $ est défini comme
\begin{equation}
\lim _{h \rightarrow 0}\left\|\frac{f\left(x_{0}+h\right)-_{g} f\left(x_{0}\right)}{h}-_{g} f_{g}^{\prime}\left(x_{0}\right)\right\|=0
\label{1}
\end{equation}
Si $ f^{\prime}_{g} \left (x_{0} \right) \in E^{1} $ satisfaisant (\ref{1}) existe, on dit que $ f $ est généralisé Hukuhara différentiable (g-différentiable pour faire court) à $ x_{0} $.
\end{definition}
\begin{definition}\cite{Ref7}\\
 Soit $f:[a, b] \rightarrow E^{1}$ et $x_{0} \in(a, b)$, avec $\underline{f}(x, \alpha)$ et $\bar{f}(x, \alpha)$ tous deux différentiables à $ x_{0} $. Nous disons que:\\
1) $f$ est $[(i)-g]$-differentiable en $x_{0}$ si
\begin{equation}
f_{i, g}^{\prime}\left(x_{0}\right)=\left[\underline{f}^{\prime}(x, \alpha), \bar{f}^{\prime}(x, \alpha)\right]
\label{2}
\end{equation}
2) $f$ est $[(i i)-g]$-differentiable en $x_{0}$ si
\begin{equation}
f_{i i, g}^{\prime}\left(x_{0}\right)=\left[\bar{f}^{\prime}(x, \alpha), \underline{f}^{\prime}(x, \alpha)\right]
\label{3}
\end{equation}

\end{definition}
\begin{theorem}
 Soit $f: J \subset \mathbb{R} \rightarrow E^{1}$ et $g: J \rightarrow \mathbb{R}$ et $x \in J .$ Supposons que $ g (x) $ est une fonction différentiable à $ x $ et que la fonction à valeur floue $ f(x) $ est $ g $-différenciable à $ x $. Donc
$$
(f g)_{g}^{\prime}=\left(f^{\prime} g\right)_{g}+\left(f g^{\prime}\right)_{g}
$$
\end{theorem}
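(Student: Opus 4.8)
The plan is to prove the fuzzy Leibniz rule $(fg)_g' = (f'g)_g + (fg')_g$ at a point $x\in J$ by reducing everything to the level-set ($\alpha$-cut) representation and invoking the ordinary product rule for the real-valued endpoint functions $\underline f(\cdot,\alpha)$, $\bar f(\cdot,\alpha)$ together with the real differentiability of $g$. First I would fix $\alpha\in[0,1]$ and write $[f(x)]^\alpha = [\underline f(x,\alpha),\bar f(x,\alpha)]$; since $g(x)\in\mathbb R$ is an ordinary scalar, the $\alpha$-cut of the product $(fg)(x)$ is the interval obtained by scaling $[f(x)]^\alpha$ by $g(x)$, namely $[\min\{g(x)\underline f(x,\alpha),\,g(x)\bar f(x,\alpha)\},\max\{\cdots\}]$, which splits into the case $g(x)\ge 0$ (endpoints $g\underline f$, $g\bar f$) and the case $g(x)<0$ (endpoints $g\bar f$, $g\underline f$). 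In each case the endpoint functions of $fg$ are products of differentiable real functions, hence differentiable at $x$ with derivatives given by the classical Leibniz rule, e.g. $(g\underline f)'(x) = g'(x)\underline f(x,\alpha) + g(x)\underline f'(x,\alpha)$.

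Next I would compute the two terms on the right-hand side. Using the $gH$-derivative characterization of Definitions of $[(i)-g]$- and $[(ii)-g]$-differentiability (the two cases in \eqref{2} and \eqref{3}), the product $f'g$ is again a fuzzy-times-scalar product, so its $\alpha$-cut endpoints are $g(x)$ times the endpoints of $[f_g'(x)]^\alpha$, and similarly $fg'$ has $\alpha$-cut endpoints $g'(x)$ times the endpoints of $[f(x)]^\alpha$; the sum $(f'g)_g + (fg')_g$ is then taken in the $gH$-sense. The key step is to check that, after sorting the four candidate endpoint expressions by sign of $g(x)$ and by which of the two $gH$-differentiability cases holds for $f$, the min/max that defines the $\alpha$-cut of $(f'g)_g+(fg')_g$ matches termwise the derivative of the endpoint functions of $fg$ computed in the first paragraph. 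Concretely I would run the bookkeeping over the subcases $\{g(x)>0,\ g(x)<0\}\times\{f\text{ is }[(i)-g],\ f\text{ is }[(ii)-g]\}$ and verify the identity of $\alpha$-cuts for every $\alpha$; by the decomposition theorem (the analogue of Theorem on $\alpha$-cuts characterizing $E^1$), equality of all $\alpha$-cuts gives equality of the fuzzy numbers, which yields the claimed formula.

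The main obstacle I anticipate is precisely the case analysis around the sign of $g(x)$ and the monotonicity/ordering conditions needed for the $gH$-differences and $gH$-sums to exist as genuine fuzzy numbers: the $gH$-difference $u\Theta_g v$ only exists under the length/monotonicity conditions recorded in Proposition~15 (cases (a)/(b)), so I must confirm that the combinations arising here satisfy one of those conditions, and that the endpoint functions $w_\alpha^-$, $w_\alpha^+$ one obtains are indeed nondecreasing resp. nonincreasing in $\alpha$ with $w_1^-\le w_1^+$. A subtlety is that $g$ may change sign across $J$, but since differentiability is a pointwise statement at $x$ it suffices to treat $g(x)>0$, $g(x)<0$, and to handle $g(x)=0$ separately (there the product $fg$ has $\alpha$-cut $\{0\}$ near the point only if $g\equiv 0$, otherwise one argues by the limit definition \eqref{1} directly, using $\|\,\cdot\,\|$-continuity and the product rule for the scalar factor). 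Once the sign-and-case table is laid out, each entry is a one-line verification, so the proof is essentially a disciplined reduction to the real-variable Leibniz rule plus an appeal to the $\alpha$-cut decomposition.
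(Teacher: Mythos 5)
Your proposal takes a genuinely different route from the paper's proof, which never touches $\alpha$-cuts: the paper works directly with the limit definition of the $g$-derivative, inserts the intermediate term $f(x)g(x+h)$ into the difference quotient, splits by the triangle inequality for $\|\cdot\|=d(\cdot,\tilde 0)$ together with $\left\|\lambda u-_{g}\mu u\right\|=|\lambda-\mu|\,\|u\|$ (Proposition 4), and passes to the limit. Your level-set reduction, however, has two genuine gaps. First, the hypothesis only says that $f$ is $g$-differentiable in the metric sense of the limit (\ref{1}); this does not imply that the endpoint functions $\underline{f}(\cdot,\alpha)$, $\bar{f}(\cdot,\alpha)$ are differentiable at $x$ (the paper itself recalls later that $gH$/LgH-differentiability does not force differentiable endpoints; e.g. $f(t)=t\cdot u$ with $u$ a non-crisp fuzzy number is $g$-differentiable at $t=0$ while its endpoints have a corner there). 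So restricting to the $[(i)-g]$/$[(ii)-g]$ cases, whose very definition presupposes differentiable endpoints, does not cover the stated hypotheses.

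Second, and more seriously, the advertised ``one-line verification per entry'' fails in some entries of your sign table. Take $g(x)>0$, $g'(x)<0$ and $f$ $[(i)-g]$-differentiable: level-wise, $f'_g(x)g(x)$ has endpoints $g\underline{f}{}',\ g\bar{f}{}'$ and $f(x)g'(x)$ has endpoints $g'\bar{f},\ g'\underline{f}$, so their Minkowski sum has $\alpha$-cut $[\,g\underline{f}{}'+g'\bar{f},\ g\bar{f}{}'+g'\underline{f}\,]$, strictly wider (whenever $\underline{f}<\bar{f}$) than $[\,g\underline{f}{}'+g'\underline{f},\ g\bar{f}{}'+g'\bar{f}\,]$, which is what differentiating the endpoints of $fg$ produces. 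Concretely, with $[f(t)]^{\alpha}=[0,1+t]$ and $g(t)=1-t$ one gets $(fg)'_g(0)=\{0\}$ while $f'_g(0)g(0)+f(0)g'(0)=[-1,1]$. Hence the equality of $\alpha$-cuts you intend to check simply does not hold in those entries without an additional sign condition such as $g(x)g'(x)\ge 0$, so the case analysis cannot be completed as described and cannot yield the statement in the generality claimed; moreover the $g(x)=0$ case is deferred to ``the limit definition directly'', which is precisely the argument (the paper's norm-estimate proof) you would then have to supply in full. As written, the proposal does not close.
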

\begin{proof}
En utilisant la proposition 4, pour $ h $ assez petit on obtient :
$$\left\|\frac{f(x+h) g(x+h)-_{g} f(x) g(x)}{h}-_{g}\left(\left(f^{\prime}(x) g(x)\right)_{g}+\left(f(x) g^{\prime}(x)\right)_{g}\right)\right\|$$
$$ = \| \frac{f(x+h) g(x+h)-_{g} f(x) g(x+h)+f(x) g(x+h)}{h} 
-_{g} \frac{f(x) g(x)}{h}-_{g}\left(\left(f^{\prime}(x) g(x)\right)_{g}+\left(f(x) g^{\prime}(x)\right)_{g}\right) \| $$
$$
=\|  \frac{\left(f(x+h)-_{g} f(x)\right) g(x+h)+f(x)\left(g(x+h)-_{g} g(x)\right)}{h}
-_{g}\left(\left(f^{\prime}(x) g(x)\right)_{g}+\left(f(x) g^{\prime}(x)\right)_{g}\right) \| \\
$$
$$
\leq \left\|\frac{\left(f(x+h)-_{g} f(x)\right) g(x+h)}{h}-{ }_{g}\left(\left(f^{\prime}(x) g(x)\right)_{g H}\right)\right\|+\left\|\frac{\left(f(x)\left(g(x+h)-_{g} g(x)\right)\right.}{h}-_{g}\left(\left(f(x) g^{\prime}(x)\right)_{g H}\right)\right\| $$
\begin{equation}
\leq \left\|\frac{\left(f(x+h)-_{g} f(x)\right)}{h} g(x+h)-_{g}\left(\left(f^{\prime}(x) g(x)\right)_{g}\right)\right\|+\left\|f(x) \frac{\left(\left(g(x+h)-_{g} g(x)\right)\right.}{h}-_{g}\left(\left(f(x) g^{\prime}(x)\right)_{g}\right)\right\|
\label{4}
\end{equation}
qui complètent la preuve en passant à la limite.
\end{proof}
\begin{definition}\cite{Ref7}\\
On dit qu'un point $ x_{0} \in (a, b) $, est un point de commutation pour la différentiabilité de $ f $, si dans un voisinage $ V $ de $ x_{0} $ il existe des points $ x_{1} <x_{0} <x_{2} $ tel que:\\
1) type (1). En $x_{1}$ (\ref{2}) satisfaite tandis que (\ref{3}) ne satisfaite pas et à $x_{2}$ (\ref{3}) satisfaite et (\ref{2}) ne satisfaite, ou \\
2) type (2). En $x_{1}$ (\ref{3}) satisfaite tandis que (\ref{2}) ne satisfaite pas et à $x_{2}$ (\ref{2}) satisfaite et (\ref{3}) ne satisfaite pas.
\end{definition}
\begin{definition}
Soit $f:(a, b) \rightarrow E^{1}$. Nous disons que $f(x)$ est $g$-differentiable du 2ème ordre en $ x_{0} $ chaque fois que la fonction $ f(x) $ est g-différentiable de l'ordre $i, i=0,1$, à $x_{0},\left(\left(f\left(x_{0}\right)\right)^{(i)}_{g} \in E^{1}\right.$ ), de plus il n'y a pas de point de commutation sur $ (a, b) $. Alors il existe $(f)_{g}^{\prime \prime}\left(x_{0}\right) \in E^{1}$ tel que
$$
\lim _{h \rightarrow 0}\left\|\frac{f^{\prime}\left(x_{0}+h\right)-_{g} f^{\prime}\left(x_{0}\right)}{h}, f_{g}^{\prime \prime}\left(x_{0}\right)\right\|=0
$$
\end{definition}
\begin{definition}
Soit $f:[a, b] \rightarrow E^{1}$ et $f_{g}^{\prime}(x)$ soit $g$-differentiable à $x_{0} \in(a, b)$, de plus il n'y a pas de point de commutation sur $(a, b)$ et $\underline{f}(x, \alpha)$ et $\bar{f}(x, \alpha)$ tous deux différenciables en $x_{0}$. Nous disons que :
\begin{itemize}
\item[•] $f^{\prime}$ est $[(i)-g]$-differentiable en $x_{0}$ si
$$
f_{i, g}^{\prime \prime}\left(x_{0}\right)=\left[\underline{f}^{\prime \prime}(x, \alpha), \bar{f}^{\prime \prime}(x, \alpha)\right]
$$
\item[•] $f^{\prime}$ est $[(i i)-g]$-differentiable en $x_{0}$ si
$$
f_{i i, g}^{\prime \prime}\left(x_{0}\right)=\left[\bar{f}^{\prime \prime}(x, \alpha), \underline{f}^{\prime \prime}(x, \alpha)\right]
$$
\end{itemize}
\end{definition}
\begin{definition}
Soit $f:[a, b] \rightarrow E^{1}$. On dit que $ f (x) $ est un Riemann flou intégrable sur $I \in E^{1}$ si pour tout $\epsilon>0$, il existe $\delta>0$ tel que pour toute division $P=\{[u, v] ; \xi\}$ avec les normes $\Delta(P)<\delta$,on a
$$
d\left(\sum_{p}^{*}(v-u) f(\xi), I\right)<\epsilon
$$
où $\sum_{p}^{*}$ désigne la sommation floue. Nous choisissons d'écrire $I=\int_{a}^{b} f(x) d x$.

\end{definition}
\begin{theorem}
Si $f$ est $g$-différentiable sans point de commutation dans l'intervalle $[a, b]$ alors on a :
$$
\int_{a}^{b} f(t) d t=f(b)-_{g} f(a)
$$
\end{theorem}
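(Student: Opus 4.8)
Le plan est de ramener l'énoncé flou (qui n'est autre que le second théorème fondamental du calcul pour les fonctions à valeurs floues, $f$ étant $g$-différentiable et $f'$ désignant sa dérivée $gH$) au théorème fondamental classique appliqué aux fonctions extrémités $\underline{f}(\cdot,\alpha)$ et $\bar{f}(\cdot,\alpha)$, puis à reconnaître les intervalles obtenus comme les $\alpha$-coupes de $f(b)-_{g}f(a)$. D'abord, j'utiliserais l'hypothèse d'absence de point de commutation sur $[a,b]$ : d'après la définition du point de commutation, ceci force $f$ à être d'un seul type de différentiabilité sur tout $[a,b]$, c'est-à-dire soit $f$ est $[(i)-g]$-différentiable en tout $x\in[a,b]$, soit $f$ est $[(ii)-g]$-différentiable en tout $x\in[a,b]$. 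Je séparerais donc la preuve en ces deux cas et, dans chacun, je fixerais $\alpha\in[0,1]$ pour travailler au niveau des $\alpha$-coupes, en rappelant que l'intégrale de Riemann floue se calcule composante par composante, $\left[\int_a^b h(t)\,dt\right]^{\alpha}=\left[\int_a^b \underline{h}(t,\alpha)\,dt,\ \int_a^b \bar{h}(t,\alpha)\,dt\right]$, dès que les fonctions extrémités sont intégrables.

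Dans le premier cas, (\ref{2}) donne $[f'(t)]^{\alpha}=[\underline{f}'(t,\alpha),\bar{f}'(t,\alpha)]$, donc en particulier $\underline{f}'(t,\alpha)\le\bar{f}'(t,\alpha)$ pour tout $t$ et tout $\alpha$. En appliquant le théorème fondamental classique aux deux fonctions réelles dérivables $t\mapsto\underline{f}(t,\alpha)$ et $t\mapsto\bar{f}(t,\alpha)$, j'obtiendrais
\[
\left[\int_a^b f'(t)\,dt\right]^{\alpha}=\left[\int_a^b \underline{f}'(t,\alpha)\,dt,\ \int_a^b \bar{f}'(t,\alpha)\,dt\right]=\left[\underline{f}(b,\alpha)-\underline{f}(a,\alpha),\ \bar{f}(b,\alpha)-\bar{f}(a,\alpha)\right].
\]
L'intégration préservant l'inégalité $\underline{f}'\le\bar{f}'$, l'extrémité gauche de cet intervalle est $\le$ l'extrémité droite ; d'après la formule (\ref{13}) pour la différence $gH$ (branche (1) de (\ref{14}) avec $u=f(b)$, $v=f(a)$), cet intervalle est exactement $[f(b)-_{g}f(a)]^{\alpha}$. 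Le second cas est symétrique : par (\ref{3}) les extrémités de $[f'(t)]^{\alpha}$ sont $\bar{f}'(t,\alpha)$ et $\underline{f}'(t,\alpha)$ avec $\bar{f}'(t,\alpha)\le\underline{f}'(t,\alpha)$, d'où $\bar{f}(b,\alpha)-\bar{f}(a,\alpha)\le\underline{f}(b,\alpha)-\underline{f}(a,\alpha)$ après intégration, et l'intégrale a pour $\alpha$-coupe $[\bar{f}(b,\alpha)-\bar{f}(a,\alpha),\ \underline{f}(b,\alpha)-\underline{f}(a,\alpha)]$, qui est encore $[f(b)-_{g}f(a)]^{\alpha}$ d'après (\ref{13}), cette fois via la branche (2) de (\ref{14}).

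Ainsi, dans les deux cas, $\left[\int_a^b f'(t)\,dt\right]^{\alpha}=[f(b)-_{g}f(a)]^{\alpha}$ pour tout $\alpha\in[0,1]$ ; comme un nombre flou est entièrement déterminé par la famille de ses $\alpha$-coupes (théorème de décomposition et caractérisation des nombres flous par leurs $\alpha$-coupes établie plus haut), les deux nombres flous coïncident, ce qui donne l'identité annoncée. L'obstacle principal ne sera pas le calcul par niveaux lui-même mais la justification, de théorie de la mesure, uniforme en $\alpha$ : il faut vérifier que $f'$ est effectivement Riemann-floue-intégrable sur $[a,b]$ (pour que la définition de l'intégrale floue s'applique) et que les fonctions extrémités $\underline{f}'(\cdot,\alpha),\bar{f}'(\cdot,\alpha)$ sont intégrables avec la représentation par intervalles utilisée ci-dessus — c'est là qu'intervient réellement l'hypothèse « $f$ $g$-différentiable sur $[a,b]$ », qui fournit la dérivabilité de $\underline{f}(\cdot,\alpha)$ et $\bar{f}(\cdot,\alpha)$, et c'est là qu'il faut s'assurer que le type ($(i)$ ou $(ii)$) est le même pour tous les $\alpha$, ce que garantit précisément l'absence de point de commutation.
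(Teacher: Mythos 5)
Le mémoire énonce ce théorème sans en donner de démonstration (il est repris tel quel de la littérature sur la gH-différentiabilité, cf. \cite{Ref7}), il n'y a donc pas de preuve interne à laquelle comparer la vôtre. Votre argument est correct et correspond à la démonstration standard : vous corrigez à juste titre la coquille de l'énoncé (l'intégrande doit être $f'_{gH}$ et non $f$, comme le confirme d'ailleurs l'usage de ce théorème dans la formule d'intégration par parties donnée juste après) ; l'absence de point de commutation fixe un type unique de différentiabilité, (i) ou (ii), sur tout $[a,b]$ ; le théorème fondamental classique appliqué aux fonctions extrémités $\underline{f}(\cdot,\alpha)$ et $\bar{f}(\cdot,\alpha)$ donne les $\alpha$-coupes de l'intégrale floue ; et la formule min/max de la différence gH les identifie à celles de $f(b)-_{g}f(a)$, la conclusion par la caractérisation d'un nombre flou par la famille de ses $\alpha$-coupes étant légitime. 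Les réserves techniques que vous signalez sont les bonnes (intégrabilité Riemann-floue de $f'_{gH}$, uniformité du cas en $\alpha$ et en $t$) ; notez seulement que dans le cas (i) l'inégalité $\underline{f}'(t,\alpha)\le\bar{f}'(t,\alpha)$ est automatique puisque $[f'_{gH}(t)]^{\alpha}$ doit être un intervalle valide, ce qui rend rigoureux votre passage par l'intégration de cette inégalité, et que le repérage des branches doit se faire via la formule min/max (\ref{13}) plutôt que par les étiquettes de (\ref{14}), dont les conditions en longueur semblent interverties dans le texte.
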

\begin{theorem}
Soit $f(x)$  une fonction à valeur floue sur $(-\infty, \infty)$ et il est représenté par
 $$f(x, \alpha)=[\underline{f}(x, \alpha), \bar{f}(x, \alpha)]$$ 
 Pour tout fixe $\alpha \in[0,1] .$ Suppose que
$|\underline{f}(x, \alpha)|$ et $|\bar{f}(x, \alpha)|$ sont Riemann intégrables sur $(-\infty, \infty)$
pour tout $\alpha \in[0,1]$. Alors $f(x)$ est impropre floue Riemann intégrable sur $(-\infty, \infty)$ et l'intégrale floue de Riemann impropre est un nombre flou. De plus, nous avons
$$
\int_{-\infty}^{\infty} f(x) d x=\left[\int_{-\infty}^{\infty} \underline{f}(x, \alpha) d x, \int_{-\infty}^{\infty} \bar{f}(x, \alpha) d x\right]
$$
\end{theorem}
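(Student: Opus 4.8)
The plan is to produce the integral explicitly through its $\alpha$-cuts, to check that this family of intervals is the cut-family of a genuine fuzzy number by invoking the characterization theorem already recorded above (a nested family of nonempty compact convex sets closed under countable intersection along increasing sequences represents the cut-family of a unique $u\in E^1$), and finally to verify convergence, in the metric $d$, of the proper integrals $\int_a^b f\,dx$ to that fuzzy number. First I would set $g(x):=\max\{|\underline f(x,0)|,|\bar f(x,0)|\}$; since $[f(x)]^\alpha\subseteq[f(x)]^0$ for every $\alpha$, one gets $|\underline f(x,\alpha)|\le g(x)$ and $|\bar f(x,\alpha)|\le g(x)$ for all $x$ and all $\alpha$, and $g$ is improperly Riemann integrable on $\mathbb R$ because $g\le|\underline f(\cdot,0)|+|\bar f(\cdot,0)|$. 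In particular, on every compact $[a,b]$ the functions $\underline f(\cdot,\alpha),\bar f(\cdot,\alpha)$ are Riemann integrable, so by the finite-interval counterpart of the present theorem (writing the fuzzy Riemann sums $\sum_p^*(v-u)f(\xi)$ through their $\alpha$-cuts as ordinary Minkowski sums of intervals, which converge), $f$ is fuzzy Riemann integrable on $[a,b]$ with $\bigl[\int_a^b f\,dx\bigr]^\alpha=\bigl[\int_a^b\underline f(x,\alpha)\,dx,\ \int_a^b\bar f(x,\alpha)\,dx\bigr]$. I then define $L(\alpha):=\int_{-\infty}^\infty\underline f(x,\alpha)\,dx$ and $R(\alpha):=\int_{-\infty}^\infty\bar f(x,\alpha)\,dx$, both finite since the integrands are absolutely improperly Riemann integrable (dominated by $g$), and put $C_\alpha:=[L(\alpha),R(\alpha)]$.

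The next step is to verify the three hypotheses of the cut-characterization theorem for $\{C_\alpha\}_{\alpha\in[0,1]}$. Nonemptiness and compactness of $C_\alpha$ follow from $\underline f(x,\alpha)\le\bar f(x,\alpha)$ pointwise, hence after integration $L(\alpha)\le R(\alpha)$. The nesting $C_\beta\subseteq C_\alpha$ for $\alpha\le\beta$ comes from $[f(x)]^\beta\subseteq[f(x)]^\alpha$, i.e.\ $\underline f(x,\alpha)\le\underline f(x,\beta)\le\bar f(x,\beta)\le\bar f(x,\alpha)$, integrated in $x$. For the intersection property along an increasing sequence $\alpha_i\uparrow\alpha$, property (F3) for each fuzzy number $f(x)$ gives the pointwise monotone limits $\underline f(x,\alpha_i)\uparrow\underline f(x,\alpha)$ and $\bar f(x,\alpha_i)\downarrow\bar f(x,\alpha)$, and I must pass these through the improper integral to obtain $L(\alpha_i)\to L(\alpha)$ and $R(\alpha_i)\to R(\alpha)$, whence $\bigcap_i C_{\alpha_i}=C_\alpha$. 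The characterization theorem then yields a unique $u\in E^1$ with $[u]^\alpha=C_\alpha$ for every $\alpha\in[0,1]$.

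Finally I would identify $u$ with the improper fuzzy Riemann integral. For $a<0<b$ the proper integral $\int_a^b f\,dx$ is a fuzzy number with the $\alpha$-cut recorded above, so by the formula for $d$,
$$d\Bigl(\int_a^b f\,dx,\ u\Bigr)=\sup_{\alpha\in(0,1]}\max\Bigl\{\Bigl|\int_a^b\bar f(x,\alpha)\,dx-R(\alpha)\Bigr|,\ \Bigl|\int_a^b\underline f(x,\alpha)\,dx-L(\alpha)\Bigr|\Bigr\}.$$
Each of the two differences is bounded, uniformly in $\alpha$, by $\int_{-\infty}^a g(x)\,dx+\int_b^\infty g(x)\,dx$, which tends to $0$ as $a\to-\infty$ and $b\to+\infty$. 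Hence $d(\int_a^b f\,dx,\ u)\to0$, which is precisely the assertion that $f$ is improper fuzzy Riemann integrable on $(-\infty,\infty)$ with $\int_{-\infty}^\infty f\,dx=u$; and since $[u]^\alpha=[L(\alpha),R(\alpha)]$ by construction, the claimed identity follows.

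The step I expect to be the main obstacle is interchanging the monotone limit $\alpha_i\uparrow\alpha$ with the improper Riemann integral in the verification of (F3). Staying inside Riemann theory, I would split $\mathbb R=[-N,N]\cup(\mathbb R\setminus[-N,N])$, choose $N$ with $\int_{|x|>N}g<\varepsilon$ (a bound valid simultaneously for all $\alpha_i$ since $|\underline f(\cdot,\alpha_i)|\le g$), and on $[-N,N]$ apply the monotone convergence theorem for Riemann integrals, which is legitimate because the limit $\underline f(\cdot,\alpha)$ is itself Riemann integrable and every term is dominated by the Riemann integrable function $g$; alternatively, since absolute improper Riemann integrability entails Lebesgue integrability with the same value, I could invoke Lebesgue's dominated convergence theorem directly, with dominating function $g$.
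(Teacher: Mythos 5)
Le mémoire ne donne aucune démonstration de ce théorème : il est énoncé tel quel (résultat repris de la littérature sur l'intégrale de Riemann floue impropre), et le texte passe directement au lemme suivant. Il n'y a donc pas de « preuve du papier » à laquelle comparer votre proposition ; je l'évalue sur pièces. Votre schéma est la route standard : construire le candidat par ses $\alpha$-coupes $C_\alpha=[L(\alpha),R(\alpha)]$, vérifier (F1)–(F3) via le théorème de représentation rappelé au chapitre 2, traiter la continuité à gauche $L(\alpha_i)\to L(\alpha)$, $R(\alpha_i)\to R(\alpha)$ par domination (votre majorant $g=\max\{|\underline{f}(\cdot,0)|,|\bar{f}(\cdot,0)|\}$ est correct grâce à l'emboîtement des coupes, et le passage par l'intégrabilité de Lebesgue avec convergence dominée est légitime puisque l'hypothèse donne l'intégrabilité absolue), puis identifier $u$ comme limite au sens de $d$ des intégrales propres. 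Ces étapes-là sont solides et bien argumentées.

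Le point faible est ailleurs que là où vous l'annoncez. Vous invoquez « la version sur intervalle compact » du théorème comme acquise, en une parenthèse (« les sommes de Riemann floues \dots convergent »). Or, avec la définition du mémoire (sommes $\sum_{p}^{*}(v-u)f(\xi)$ et distance $d$ qui est un supremum sur $\alpha$), l'existence de $\int_a^b f\,dx$ avec la formule par coupes exige que les sommes de Riemann de $\underline{f}(\cdot,\alpha)$ et $\bar{f}(\cdot,\alpha)$ convergent \emph{uniformément en} $\alpha$ vers leurs intégrales ; l'intégrabilité de Riemann niveau par niveau ne le donne pas sans argument (il faut exploiter la structure emboîtée/monotone des coupes, ou citer précisément le résultat correspondant de la littérature). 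Toute votre étape finale d'identification en métrique $d$ repose sur ce lemme non démontré. Deux issues : soit vous prouvez ce lemme d'intervalle fini (c'est alors le vrai cœur technique, pas l'échange limite–intégrale de (F3), que vous traitez très bien), soit vous adoptez — comme le fait la source originale de ce théorème — la définition de l'intégrale floue impropre directement par ses $\alpha$-coupes, auquel cas vos deux premières étapes suffisent et la troisième devient superflue. Précisez aussi, dans ce cas, quelle définition d'« impropre floue Riemann intégrable » vous utilisez, puisque le mémoire n'en donne aucune.
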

À partir de ce théorème, nous pouvons discuter de l'intégrale impropre de Fuzzy Riemann.
\begin{lemme}
Soit $f: \mathbb{R} \times \mathbb{R}^{+} \rightarrow E^{1}$, donné par $f(x, t ; \alpha)=$
$[\underline{f}(x, t ; \alpha), \bar{f}(x, t ; \alpha)]$, et soit $a \in \mathbb{R}^{+}$\\
Si $\int_{a}^{\infty} \underline{f}(x, t ; \alpha) d t$ et $\int_{a}^{\infty} \bar{f}(x, t ; \alpha) d t$ convergent alors :
$$
\int_{a}^{\infty} f(x, t ; \alpha) d t \in E^{1}
$$
\end{lemme}
\begin{proof}
Utilisez simplement les conditions (1).
\end{proof}
\begin{theorem}
Soit $f: \mathbb{R} \times \mathbb{R}^{+} \rightarrow E^{1}$  une fonction à valeur floue telle que $f(x, t ; \alpha)=[f(x, t ; \alpha), \bar{f}(x, t ; \alpha)]$. Supposons que pour chaque $x \in[a, \infty)$, l'intégrale floue $\int_{c}^{\infty} f(x, t) d t$ est convergent et de plus $\int_{a}^{\infty} f(x, t) d x$ en tant que fonction de $t$ est convergent sur $[c, \infty) .$ Alors :
$$
\int_{c}^{\infty} \int_{a}^{\infty} f(x, t) d x d t=\int_{a}^{\infty} \int_{a}^{\infty} f(x, t) d t d x.
$$
\end{theorem}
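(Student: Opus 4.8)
Le plan est de ramener l'énoncé flou au théorème de Fubini classique en passant aux $\alpha$-coupes. Fixons $\alpha\in[0,1]$. D'après la représentation de l'intégrale de Riemann impropre floue établie dans le théorème précédent et d'après le lemme précédent, pour $t$ fixé l'intégrale $\int_{a}^{\infty} f(x,t)\,dx$ a pour $\alpha$-coupe $\left[\int_{a}^{\infty}\underline{f}(x,t;\alpha)\,dx,\ \int_{a}^{\infty}\bar{f}(x,t;\alpha)\,dx\right]$, et pour $x$ fixé l'intégrale $\int_{c}^{\infty} f(x,t)\,dt$ a pour $\alpha$-coupe $\left[\int_{c}^{\infty}\underline{f}(x,t;\alpha)\,dt,\ \int_{c}^{\infty}\bar{f}(x,t;\alpha)\,dt\right]$. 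En itérant une fois de plus et en réappliquant la même représentation à l'intégrale impropre extérieure, on obtient que les $\alpha$-coupes des deux membres de l'égalité à démontrer valent respectivement
$$\left[\int_{c}^{\infty}\int_{a}^{\infty}\underline{f}\,dx\,dt,\ \int_{c}^{\infty}\int_{a}^{\infty}\bar{f}\,dx\,dt\right]\quad\text{et}\quad\left[\int_{a}^{\infty}\int_{c}^{\infty}\underline{f}\,dt\,dx,\ \int_{a}^{\infty}\int_{c}^{\infty}\bar{f}\,dt\,dx\right].$$

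Ensuite, pour chaque $\alpha$ fixé, j'appliquerais le théorème de Fubini classique pour les intégrales de Riemann aux deux fonctions à valeurs réelles $(x,t)\mapsto\underline{f}(x,t;\alpha)$ et $(x,t)\mapsto\bar{f}(x,t;\alpha)$. Les hypothèses assurent que, pour chaque $x$, les intégrales intérieures convergent et que, en tant que fonctions de $t$, $\int_{a}^{\infty}\underline{f}(x,t;\alpha)\,dx$ et $\int_{a}^{\infty}\bar{f}(x,t;\alpha)\,dx$ sont convergentes sur $[c,\infty)$ : ce sont exactement les traductions réelles, via la représentation par $\alpha$-coupes, des hypothèses de l'énoncé. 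En travaillant d'abord sur les rectangles compacts $[a,X]\times[c,T]$, où le théorème de Fubini ordinaire s'applique, puis en faisant tendre $X\to\infty$ puis $T\to\infty$ à l'aide des convergences supposées, j'obtiens $\int_{c}^{\infty}\int_{a}^{\infty}\underline{f}\,dx\,dt=\int_{a}^{\infty}\int_{c}^{\infty}\underline{f}\,dt\,dx$ et l'identité analogue pour $\bar{f}$.

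En combinant ces deux étapes, les $\alpha$-coupes des nombres flous $\int_{c}^{\infty}\int_{a}^{\infty}f(x,t)\,dx\,dt$ et $\int_{a}^{\infty}\int_{c}^{\infty}f(x,t)\,dt\,dx$ coïncident pour tout $\alpha\in[0,1]$ ; par le théorème de décomposition — une famille d'intervalles vérifiant les propriétés (F1)--(F3) détermine un unique nombre flou — les deux membres sont égaux dans $E^{1}$, ce qui est l'assertion voulue. Le point délicat est l'étape intermédiaire : le théorème de Fubini pour les intégrales \emph{impropres} de Riemann ne découle pas automatiquement de la seule convergence ponctuelle des intégrales intérieures, de sorte que je l'invoquerais soit sous l'hypothèse supplémentaire d'intégrabilité de $|\underline{f}|$ et $|\bar{f}|$ (comme dans le théorème précédent, dont l'énoncé hérite implicitement), soit au moyen d'un argument de convergence dominée ou uniforme sur les rectangles exhaustifs ; vérifier ensuite que les intégrales itérées échangées se recollent encore en un véritable nombre flou (monotonie des bornes en $\alpha$, borne inférieure $\le$ borne supérieure, semi-continuité supérieure) est routinier, puisque l'intégration préserve l'ordre $\underline{f}\le\bar{f}$ et l'emboîtement des $\alpha$-coupes.
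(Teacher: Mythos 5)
Votre démarche est essentiellement celle du mémoire : la preuve du texte se réduit à « appliquer le théorème de Fubini--Tonelli aux deux fonctions $\underline{f}(x,t;\alpha)$ et $\bar{f}(x,t;\alpha)$ et utiliser les conditions (1) », c'est-à-dire exactement votre réduction aux $\alpha$-coupes, l'échange des intégrales itérées sur les extrémités, puis le recollement en un nombre flou via les conditions de niveau. Votre version ne fait que détailler ce que le texte laisse implicite, y compris la précaution (justifiée) sur l'intégrabilité absolue ou la domination nécessaire pour légitimer Fubini dans le cadre impropre.
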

\begin{proof}
Application du théorème de Fubini-Tonelli à ces deux fonctions $f(x, t ; \alpha)$ et $\bar{f}(x, t ; \alpha)$, utilisez les conditions (1).
\end{proof}
\begin{theorem}
 Supposons les deux, $f(x, t)$ et $\partial_{x_{g H}} f(x, t)$, sont floues continues dans $[a, b] \times[c, \infty)$. Supposons aussi que l'intégrale converge pour $x \in \mathbb{R}$, et l'intégrale $\int_{c}^{\infty} f(x, t) d t$ converge uniformément sur $[a, b]$. Alors $F$ est $g H$-différenciable sur $[a, b]$ et :
$$
F_{g H}^{\prime}(x)=\int_{c}^{\infty} \partial_{x_{g H}} f(x, t) d t
$$
\end{theorem}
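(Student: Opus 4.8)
The plan is to pass to the $\alpha$-cut (level-set) representation, reduce the statement to the classical theorem on differentiation of an improper integral depending on a parameter, and then reassemble the fuzzy number from its cuts. First I would put $F(x)=\int_{c}^{\infty}f(x,t)\,dt$ and use the improper fuzzy Riemann integral theorem stated above, together with the preceding lemma guaranteeing $\int_{c}^{\infty}f(x,t)\,dt\in E^{1}$, to record that $F$ is a well-defined fuzzy-number-valued function on $[a,b]$ with
$$[F(x)]_{\alpha}=\Big[\int_{c}^{\infty}\underline{f}(x,t;\alpha)\,dt,\ \int_{c}^{\infty}\bar{f}(x,t;\alpha)\,dt\Big],\qquad\alpha\in[0,1].$$

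Next, from the fuzzy continuity of $f$ and of $\partial_{x_{gH}}f$ on $[a,b]\times[c,\infty)$ I would extract that for each fixed $t$ the map $x\mapsto f(x,t)$ is $gH$-differentiable and, by the $[(i)-g]$ / $[(ii)-g]$ characterisation, that the scalar partials $\partial_{x}\underline{f}(x,t;\alpha)$ and $\partial_{x}\bar{f}(x,t;\alpha)$ exist and are jointly continuous; moreover the continuity of $\partial_{x_{gH}}f$ forces the \emph{type} of differentiability to be the same at every point, so there is no switching point and $[\partial_{x_{gH}}f(x,t)]_{\alpha}=[\partial_{x}\underline{f}(x,t;\alpha),\partial_{x}\bar{f}(x,t;\alpha)]$. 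I treat this case~(i) below, case~(ii) being symmetric.

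Then, for each fixed $\alpha$, I would apply the classical Leibniz rule for improper integrals to the two scalar functions $g_{\alpha}(x):=\int_{c}^{\infty}\underline{f}(x,t;\alpha)\,dt$ and $h_{\alpha}(x):=\int_{c}^{\infty}\bar{f}(x,t;\alpha)\,dt$: continuity of the integrands and of their $x$-partials, pointwise convergence of $\int_{c}^{\infty}f(x,t)\,dt$, and uniform convergence on $[a,b]$ are exactly the hypotheses needed, and they yield $g_{\alpha}'(x)=\int_{c}^{\infty}\partial_{x}\underline{f}(x,t;\alpha)\,dt$ and $h_{\alpha}'(x)=\int_{c}^{\infty}\partial_{x}\bar{f}(x,t;\alpha)\,dt$ on $[a,b]$. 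In case~(i) one has $\partial_{x}\underline{f}\le\partial_{x}\bar{f}$, so the min--max formula for the one-dimensional $gH$-difference shows that the $\alpha$-cut of the difference quotient $\big(F(x+h)-_{g}F(x)\big)/h$ is $\big[(g_{\alpha}(x+h)-g_{\alpha}(x))/h,\ (h_{\alpha}(x+h)-h_{\alpha}(x))/h\big]$, whose endpoints tend to $g_{\alpha}'(x)$ and $h_{\alpha}'(x)$ as $h\to 0$. Applying the improper fuzzy integral theorem to $\partial_{x_{gH}}f$ identifies $[g_{\alpha}'(x),h_{\alpha}'(x)]$ with $\big[\int_{c}^{\infty}\partial_{x_{gH}}f(x,t)\,dt\big]_{\alpha}$, and since a fuzzy number is determined by its $\alpha$-cuts this is the asserted formula $F_{gH}'(x)=\int_{c}^{\infty}\partial_{x_{gH}}f(x,t)\,dt$.

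The hard part is upgrading the $\alpha$-wise convergence of the difference quotient to convergence in the metric $d(u,v)=\sup_{\alpha\in(0,1]}d_{H}(u^{\alpha},v^{\alpha})$, which is what ``$F$ is $gH$-differentiable'' really demands. Here I would use the hypotheses in full strength: a mean-value bound $\big|(g_{\alpha}(x+h)-g_{\alpha}(x))/h-g_{\alpha}'(x)\big|\le\sup_{|\xi-x|\le|h|}\int_{c}^{\infty}|\partial_{x}\underline{f}(\xi,t;\alpha)-\partial_{x}\underline{f}(x,t;\alpha)|\,dt$ and its analogue for $\bar f$, the uniform convergence on $[a,b]$ of the improper integral of $\partial_{x_{gH}}f$, and the uniform continuity of $\partial_{x}\underline{f},\partial_{x}\bar{f}$ on compact portions of the slab together with compactness of the $0$-cut of $\partial_{x_{gH}}f$, so as to bound the error uniformly in $\alpha$; completeness of $(E^{1},d)$ then produces the limit. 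A secondary subtlety that must be settled carefully, before the one-dimensional $gH$-difference formula may be used in the single consistent form above, is precisely the no-switching-point claim, which has to be deduced cleanly from the fuzzy continuity of $\partial_{x_{gH}}f$.
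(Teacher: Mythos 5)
Your proposal is correct and follows essentially the same route as the paper: reduce to the $\alpha$-cut endpoint functions $\underline{f}(x,t;\alpha)$ and $\bar{f}(x,t;\alpha)$, apply the classical theorem on differentiating an improper parameter integral, and reassemble the fuzzy number via the representation conditions (the paper's ``condition (1)''). The paper's own proof is only a one-line sketch invoking dominated convergence on these endpoints, so the points you single out --- uniformity in $\alpha$ of the error estimate so as to get convergence in the metric $d$, and ruling out switching points from the continuity of $\partial_{x_{gH}}f$ --- are precisely the details it leaves implicit.
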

\begin{proof}
La continuité de $\partial_{x_{g H}} f(x, t)$ sur $[a, b]$ par le théorème de convergence domainée $\underline{f}(x, t ; \alpha)$ et $\bar{f}(x, t ; \alpha)$ et utiliser le condition (1).
\end{proof}
 D'après le théorème (7) on obtient
\begin{theorem}
 Soit $f:[a, b] \rightarrow E^{1}$ et $g:[a, b] \rightarrow \mathbb{R}$ sont deux fonctions différenciables $(f$ et $g H$-differentiable), alors :
$$
\int_{a}^{b} f_{g H}^{\prime}(x) g(x) d x=f(b) g(b)-_{g} f(a) g(a)-_{g} \int_{a}^{b} f(x) g^{\prime}(x) d x.
$$
\end{theorem}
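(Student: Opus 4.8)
The plan is to derive the formula by combining the product rule for the $gH$-derivative (Theorem~7) with the Newton--Leibniz formula for $gH$-differentiable fuzzy-valued functions recorded above, and then to isolate the wanted integral using the cancellation property of the generalized Hukuhara difference. Throughout I assume, as is implicit in the hypotheses, that the product $fg$ is $gH$-differentiable on $[a,b]$ with no switching point, so that the Newton--Leibniz formula applies to it.

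First I would set $h:=fg\colon[a,b]\to E^{1}$, the pointwise product of the fuzzy-valued map $f$ by the scalar function $g$. By Theorem~7 (the product rule), $h$ is $gH$-differentiable and
$$h_{gH}^{\prime}(x)=\bigl(f_{gH}^{\prime}(x)\,g(x)\bigr)_{gH}+\bigl(f(x)\,g^{\prime}(x)\bigr)_{gH},\qquad x\in[a,b].$$
Since $h$ has no switching point, the Newton--Leibniz formula (the theorem $\int_{a}^{b}F_{gH}^{\prime}(t)\,dt=F(b)-_{g}F(a)$ stated above, applied with $F=h$) gives
$$\int_{a}^{b}h_{gH}^{\prime}(x)\,dx=h(b)-_{g}h(a)=f(b)g(b)-_{g}f(a)g(a).$$

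Next I would invoke the additivity of the fuzzy Riemann integral, namely $\int_{a}^{b}(u+v)=\int_{a}^{b}u+\int_{a}^{b}v$ for integrable fuzzy-valued maps $u,v$. This is routine: one checks it level-set by level-set from the representation $\bigl[\int_{a}^{b}u\bigr]^{\alpha}=\bigl[\int_{a}^{b}\underline{u}(\cdot,\alpha),\ \int_{a}^{b}\bar{u}(\cdot,\alpha)\bigr]$, the identity $[u+v]^{\alpha}=[u]^{\alpha}+[v]^{\alpha}$ and the additivity of the classical integral; alternatively it follows directly from the $\epsilon$--$\delta$ definition, since a fuzzy Riemann sum of $u+v$ is the fuzzy sum of the corresponding Riemann sums of $u$ and of $v$. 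Applying this with $u=(f_{gH}^{\prime}g)_{gH}$ and $v=(fg^{\prime})_{gH}$, and writing $P:=\int_{a}^{b}f_{gH}^{\prime}(x)\,g(x)\,dx$ and $Q:=\int_{a}^{b}f(x)\,g^{\prime}(x)\,dx$, the previous display becomes $P+Q=f(b)g(b)-_{g}f(a)g(a)$.

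Finally I would extract $P$. Here the non-group structure of $E^{1}$ must be handled with care, since from $A+B=C$ one cannot in general conclude $A=C-_{g}B$; however the cancellation rule $(u+v)-_{g}v=u$ for fuzzy numbers (one of the basic properties of $-_{g}$ established earlier) gives exactly $(P+Q)-_{g}Q=P$, so that
$$P=\bigl(f(b)g(b)-_{g}f(a)g(a)\bigr)-_{g}Q,$$
which is the claimed identity. The main obstacle is the hypothesis-level point noted at the start: guaranteeing that $fg$ is $gH$-differentiable with no switching point, so that the Newton--Leibniz step telescopes correctly; granting that, the additivity of the fuzzy integral and the $-_{g}$ bookkeeping of the last step are routine.
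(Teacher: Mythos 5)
Votre démonstration est correcte et suit essentiellement la même voie que le mémoire, qui se contente d'indiquer que le résultat découle du théorème 7 (règle du produit) combiné à la formule de Newton--Leibniz floue énoncée juste avant : vous explicitez ce raccourci en intégrant $(fg)_{gH}^{\prime}$, en utilisant l'additivité de l'intégrale floue puis la propriété de simplification $(u+v)-_{g}v=u$ de la différence gH. Votre remarque sur l'hypothèse implicite (absence de point de commutation pour $fg$ afin que la formule de Newton--Leibniz s'applique) est pertinente et ne constitue pas un écart par rapport au texte.
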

\begin{remarque}
Si $f, g \in A^{E^{1}}$ avec $\lim _{|x| \rightarrow \infty} f(x)=0$,
$\lim _{|x| \rightarrow \infty} g(x)=0$ alors
$$
\int_{-\infty}^{\infty} f_{g H}^{\prime}(x) g(x) d x=\int_{-\infty}^{\infty} f(x) g^{\prime}(x) d x.
$$
\end{remarque}
\subsubsection{Dérivée fractionnaire floue généralisée}
Nous présentons les dérivés fractionnaires flous généralisés et leurs propriétés.

\begin{definition} 
Soit $f \in L^{E^{1}([a, b]) .}$ Le flou Riemann-Liouville intégral de fonction à valeur floue $\mathrm{f}$ est défini comme suit :
$$
I^{q} f(t)=\frac{1}{\Gamma(q)} \int_{a}^{t}(t-s)^{q-1} f(s) d s, \quad a<s<t, \quad 0<q<1
$$
\end{definition}
\begin{definition}
Soit $f(x, t ; \alpha)=[\underline{f}(x, t ; \alpha), \bar{f}(x, t ; \alpha)]$
 une fonction floue valorisée. L'intégrale floue de Riemann-Liouville de $ f $ est définie comme suit:
$$
{ }_{g H} D_{t}^{q} f(t, x ; \alpha)=\frac{1}{\Gamma(1-q)} \int_{a}^{t}(t-s)^{-q} f_{g H}^{\prime}(s) d s, \texttt{  a<s<t,\quad 0<q<1 }
$$
On dit aussi que $f$ est $[(i)-g H]-$differentiable en $t_{0}$ si
$$
{ }_{g H} D_{t}^{q} f(x, t ; \alpha)=\left[D^{q} \underline{f}(x, t ; \alpha), \bar{f}(x, t ; \alpha)\right].
$$
Et $f$ est $[(i i)-g H]$-differentiable en $t_{0}$ si
$$
{ }_{g H} D_{t}^{q} f(x, t ; \alpha)=\left[D^{q} \bar{f}(x, t ; \alpha), \underline{f}(x, t ; \alpha)\right].
$$
\end{definition}
\begin{lemme}
Soit $f \in A^{E^{1}}$ et $r \in(0,1)$,alors\\
1) Si $f$ est $[(i)-g H]$-differentiable en $t_{0}$ alors $D^{r} f$ est $[(i)-g H]$-differentiable en $t_{0}$.\\
2) Si $f$ est $[(i i)-g H]$-differentiable en $t_{0}$ alors $D^{r} f$ est $[(i i)-g H]$-differentiable en $t_{0}$
\end{lemme}
\begin{proof}
Noter que
$$
{ }_{g H} D^{q} f(t)=\frac{1}{\Gamma(1-q)} \int_{0}^{t}(t-s)^{-q} f_{g H}^{\prime}(s) d s.
$$
Depuis $\frac{1}{\Gamma(1-q)}(t-s)^{-q}$ est une quantité non négative chaque fois que $0<t<s$.
\begin{theorem}
Soit $f \in A^{E^{1}}$ et $q \in(1,2)$,alors
$$
{ }_{g H} D^{q} f(t)={ }_{g H} D^{q-1} f_{g H}^{\prime}(t).
$$
\end{theorem}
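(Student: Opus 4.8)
The plan is to set $p:=q-1\in(0,1)$, to note that $I^{1-p}=I^{2-q}$, and then to reduce the fuzzy identity to a componentwise statement on $\alpha$-cuts. If we extend the Caputo-type definition of the operator given just above the theorem (the one built from $f^{\prime}_{gH}$ inside the Riemann--Liouville kernel) to $q\in(1,2)$ in the obvious way, the order-$q$ fuzzy $gH$-derivative of $f$ is $I^{2-q}\big(f^{\prime\prime}_{gH}\big)$, while ${}_{gH}D^{q-1}\big(f^{\prime}_{gH}\big)=I^{1-p}\big((f^{\prime}_{gH})^{\prime}_{gH}\big)=I^{2-q}\big((f^{\prime}_{gH})^{\prime}_{gH}\big)$. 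So the theorem reduces to the two facts that $(f^{\prime}_{gH})^{\prime}_{gH}=f^{\prime\prime}_{gH}$ and that $I^{2-q}$ treats both resulting objects in the same way.

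First I would use the hypothesis $f\in A^{E^1}$ together with the definition of second-order $gH$-differentiability, which requires the absence of switching points on $[a,b]$: this guarantees that $g:=f^{\prime}_{gH}$ is itself $gH$-differentiable of one fixed type, $(i)$ or $(ii)$, on the whole interval, so that $g^{\prime}_{gH}=f^{\prime\prime}_{gH}$ is meaningful and the computation splits into two parallel cases. Second, in the $[(i)-gH]$ case one has $f^{\prime}_{gH}(t,\alpha)=[\underline{f}^{\prime}(t,\alpha),\bar{f}^{\prime}(t,\alpha)]$, so the lower and upper branches of $g$ are $\underline{g}=\underline{f}^{\prime}$ and $\bar{g}=\bar{f}^{\prime}$; by the Lemma immediately preceding the theorem, $g$ retains type $(i)$, whence $g^{\prime}_{gH}(t,\alpha)=[\underline{f}^{\prime\prime}(t,\alpha),\bar{f}^{\prime\prime}(t,\alpha)]=f^{\prime\prime}_{gH}(t,\alpha)$; the $[(ii)-gH]$ case is identical after interchanging $\underline{f}$ and $\bar{f}$ inside each bracket.

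Third, because the Riemann--Liouville kernel $\tfrac{1}{\Gamma(2-q)}(t-s)^{1-q}$ is non-negative for $a<s<t$ — exactly the observation used in the proof of the preceding Lemma — the fractional integral $I^{2-q}$ acts on a fuzzy quantity $w$ componentwise on its $\alpha$-cuts: $\big[I^{2-q}w\big]_\alpha=\big[I^{2-q}\underline{w}(\cdot,\alpha),\,I^{2-q}\bar{w}(\cdot,\alpha)\big]$. Applying this with $w=f^{\prime\prime}_{gH}$ and combining with the previous step, both ${}_{gH}D^q f(t)$ and ${}_{gH}D^{q-1}f^{\prime}_{gH}(t)$ have, for every $\alpha\in[0,1]$, the same $\alpha$-cut $\big[I^{2-q}\underline{f}^{\prime\prime}(\cdot,\alpha),\,I^{2-q}\bar{f}^{\prime\prime}(\cdot,\alpha)\big]$ (with the two branches swapped in the type-$(ii)$ case). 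Finally, by the characterization of fuzzy numbers through their level cuts — conditions (F1)--(F3), equivalently the decomposition theorem — equality of all $\alpha$-cuts forces equality of the two fuzzy quantities themselves, which is precisely ${}_{gH}D^q f(t)={}_{gH}D^{q-1}f^{\prime}_{gH}(t)$.

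The step I expect to be the main obstacle is not any estimate but the bookkeeping of the differentiability type: one must be sure that none of the three operations involved — forming $f^{\prime}_{gH}$, differentiating it a second time, and then applying $I^{2-q}$ — crosses a switching point or exchanges the roles of $\underline{f}$ and $\bar{f}$ in an inconsistent way, so that the $\alpha$-cut of ${}_{gH}D^q f$ is genuinely matched with the $\alpha$-cut of ${}_{gH}D^{q-1}f^{\prime}_{gH}$ rather than with its mirror image. Once the type is pinned down on all of $[a,b]$ by the no-switching-point hypothesis and the Lemma, the rest is the trivial identity $\Gamma(1-p)=\Gamma(2-q)$ together with $(\varphi^{\prime})^{\prime}=\varphi^{\prime\prime}$ for the crisp endpoint functions $\varphi\in\{\underline{f}(\cdot,\alpha),\bar{f}(\cdot,\alpha)\}$, so no further work is required.
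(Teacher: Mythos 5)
Your proposal is correct and follows essentially the same route as the paper: fix the $\alpha$-cuts, use the preceding lemma (preservation of $[(i)-gH]$ / $[(ii)-gH]$ type, which rests on the non-negativity of the Riemann--Liouville kernel) to split into the two differentiability cases, and reduce everything to the crisp identity $(\varphi')'=\varphi''$ applied componentwise to $\underline{f}(\cdot,\alpha)$ and $\bar{f}(\cdot,\alpha)$, reassembling via the level-cut characterization. Your write-up is in fact more complete than the paper's sketch, which stops after displaying the two cases and leaves the extension of the definition to $q\in(1,2)$ and the componentwise action of $I^{2-q}$ implicit.
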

\end{proof}
\begin{proof}
Nous fixons $f(t)=[f(t ; \alpha), \bar{f}(t ; \alpha)]$ et utilisez le lemme $(4)$\\
Si $f$ est $[(i)-$differentiable $]$ alors :
$$
f(t)^{\prime}=\left[f^{\prime}(t ; \alpha), \bar{f}^{\prime}(t ; \alpha)\right].
$$
Et
$$
D^{q-1} f(t)^{\prime}=\left[D^{q-1} f^{\prime}(t ; \alpha), D^{q-1} \bar{f}^{\prime}(t ; \alpha)\right].
$$
Si $f$ est $[(i)-$differentiable $]$ alors :
$$
f(t)^{\prime}=\left[\bar{f}^{\prime}(t ; \alpha), \underline{f}^{\prime}(t ; \alpha)\right].
$$
Et
$$
D^{q-1} f(t)^{\prime}=\left[D^{q-1} \bar{f}^{\prime}(t ; \alpha), D^{q-1} \underline{f}^{\prime}(t ; \alpha)\right]
$$
\end{proof}
\begin{proposition}
 Soit $f: L^{E^{1}}$. Si $D^{\gamma-1} f(t)=g(t)$, alors $f(t)=f(0)+t f_{g H}^{\prime}(0)+I^{\gamma-1} g(t).$
\end{proposition}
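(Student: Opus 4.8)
Le plan est d'appliquer l'intégrale fractionnaire floue $I^{\gamma-1}$ aux deux membres de l'hypothèse $D^{\gamma-1}f(t)=g(t)$ et de reconnaître dans $I^{\gamma-1}D^{\gamma-1}f$ le reste du développement de Taylor flou d'ordre $1$ de $f$. Je poserais $\beta=\gamma-1$ en me plaçant dans le cas $\beta\in(1,2)$, seul cas pour lequel la conclusion annoncée (faisant apparaître simultanément $f(0)$ et le terme $t\,f_{gH}'(0)$) est cohérente, et je supposerais, conformément aux définitions de la $g$-dérivabilité d'ordre $2$ utilisées plus haut, qu'il n'y a pas de point de commutation sur l'intervalle considéré.

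Je commencerais par ramener la dérivée d'ordre $\beta$ à un opérateur d'ordre $2-\beta$ appliqué à $f_{gH}''$. D'après le théorème qui précède (appliqué avec $q=\beta\in(1,2)$), $D^{\beta}f(t)={}_{gH}D^{\beta-1}f_{gH}'(t)$ ; puis, comme $\beta-1\in(0,1)$, la définition de la dérivée fractionnaire floue donne ${}_{gH}D^{\beta-1}f_{gH}'(t)=\bigl(I^{2-\beta}f_{gH}''\bigr)(t)$, de sorte que l'hypothèse devient $g=I^{2-\beta}f_{gH}''$. En appliquant alors $I^{\gamma-1}=I^{\beta}$ et la propriété de semi-groupe $I^{q_{1}}I^{q_{2}}=I^{q_{1}+q_{2}}$ des intégrales fractionnaires floues — qui résulte de la représentation par $\alpha$-coupes et du théorème de Fubini--Tonelli déjà invoqué dans ce chapitre — j'obtiendrais $I^{\beta}g=I^{\beta}I^{2-\beta}f_{gH}''=I^{2}f_{gH}''$.

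Il resterait à identifier $I^{2}f_{gH}''$ au reste de Taylor voulu. Pour cela j'appliquerais deux fois le théorème fondamental du calcul flou ($\int_{a}^{b}f_{gH}'=f(b)-_{g}f(a)$), licite puisque $f$ et $f_{gH}'$ sont $g$-différentiables sans point de commutation : une première intégration donne $\bigl(I^{1}f_{gH}''\bigr)(t)=f_{gH}'(t)-_{g}f_{gH}'(0)$, une seconde, en utilisant l'additivité de l'intégrale floue sur les $\alpha$-coupes, donne $\bigl(I^{2}f_{gH}''\bigr)(t)=\bigl(f(t)-_{g}f(0)\bigr)-_{g}\bigl(t\,f_{gH}'(0)\bigr)$, soit $f(t)=f(0)+t\,f_{gH}'(0)+\bigl(I^{2}f_{gH}''\bigr)(t)$ ; en recollant avec l'étape précédente on conclut $f(t)=f(0)+t\,f_{gH}'(0)+I^{\gamma-1}g(t)$. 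On peut aussi mener tout l'argument niveau par niveau : sous l'hypothèse $[(i)-gH]$ (resp. $[(ii)-gH]$), $D^{\gamma-1}$ et $I^{\gamma-1}$ agissent sur les bornes $\underline{f}(\cdot,\alpha),\bar{f}(\cdot,\alpha)$ des $\alpha$-coupes (éventuellement échangées), et l'on se ramène à l'identité classique $I^{\gamma-1}\,{}^{C}D^{\gamma-1}h(t)=h(t)-h(0)-t\,h'(0)$ valable pour $\gamma-1\in(1,2)$, avant de recoller les niveaux.

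Le point délicat ne réside pas dans le calcul formel mais dans la justification des manipulations en présence de la différence de Hukuhara généralisée : il faut contrôler le type de $g$-dérivabilité (cas $(i)$ ou cas $(ii)$) sur tout l'intervalle — d'où l'hypothèse d'absence de point de commutation — afin que les $\alpha$-coupes restent des intervalles bien formés après l'échange éventuel des bornes, vérifier que l'intégrale floue se comporte additivement vis-à-vis de $-_{g}$ comme utilisé ci-dessus, et s'assurer que la propriété de semi-groupe de $I^{q}$ subsiste dans le cadre flou. Une fois ces points acquis, la conclusion est immédiate.
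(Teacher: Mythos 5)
Votre démonstration est correcte dans son principe, mais votre voie principale diffère de celle du mémoire. Le texte procède directement par $\alpha$-coupes : il pose $f(t)=[\underline{f}(t;\alpha),\bar{f}(t;\alpha)]$, distingue les cas $[(i)]$- et $[(ii)]$-différentiable, scinde l'hypothèse $D^{\gamma-1}f=g$ en deux équations scalaires $D^{\gamma-1}\underline{f}(t;\alpha)=\underline{g}(t;\alpha)$, $D^{\gamma-1}\bar{f}(t;\alpha)=\bar{g}(t;\alpha)$ (bornes éventuellement échangées dans le cas (ii)), applique à chacune le résultat classique cité donnant $h(t)=h(0)+t\,h'(0)+I^{\gamma-1}\tilde{g}(t)$, puis recolle les niveaux — c'est exactement la variante « niveau par niveau » que vous proposez en fin de deuxième paragraphe, et votre lecture $\gamma-1\in(1,2)$ est bien celle imposée par le contexte (le théorème précédent porte sur $q\in(1,2)$). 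Votre route principale, elle, reste au niveau flou : réduction à $g=I^{2-\beta}f''_{gH}$ via $D^{\beta}f={}_{gH}D^{\beta-1}f'_{gH}$ et la définition de type Caputo, propriété de semi-groupe de $I^{q}$, puis double application du théorème fondamental flou. Elle est plus intrinsèque et rend transparent le rôle du développement de Taylor flou, mais elle coûte des hypothèses que le schéma du mémoire évite d'expliciter : $g$-différentiabilité du second ordre de $f$ sans point de commutation, et surtout la commutation de l'intégrale floue avec $-_{g}$ dans l'identité $\bigl(I^{2}f''_{gH}\bigr)(t)=\bigl(f(t)-_{g}f(0)\bigr)-_{g}t\,f'_{gH}(0)$, que vous signalez à juste titre comme le point délicat ; pour la justifier rigoureusement, il faut de toute façon fixer le type de différentiabilité et raisonner sur les bornes des $\alpha$-coupes, ce qui ramène au découpage en cas (i)/(ii) effectué par le mémoire. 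Les deux arguments aboutissent à la même formule finale.
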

\begin{proof}
Nous fixons $f(t)=[\underline{f}(t ; \alpha), \bar{f}(t ; \alpha)]$ et $g(t)=$
$[\underline{g}(t ; \alpha), \bar{g}(t ; \alpha)]$\\
1) Si $f$ est $[(i)$-differentiable] par théorème (13)
$$
\begin{array}{c}
D^{\gamma-1} f(t)=\left[D^{\gamma-1} \underline{f}(t ; \alpha), D^{\gamma-1} \bar{f}(t ; \alpha)\right] \\
=[g(t ; \alpha), \bar{g}(t ; \alpha)]
\end{array}
$$
Ce qui implique que
$$
\left\{\begin{array}{l}
D^{\gamma-1} \underline{f}(t ; \alpha)=\underline{g}(t ; \alpha) \\
D^{\gamma-1} \bar{f}(t ; \alpha)=\bar{g}(t ; \alpha)
\end{array}\right.
$$
Par \cite{Ref8} on a :
$$
\left\{\begin{array}{l}
\underline{f}(t ; \alpha)=\underline{f}(0 ; \alpha)+t \underline{f}^{\prime}(0 ; \alpha)+I^{\gamma-1} \underline{g}(t ; \alpha) \\
\bar{f}(t ; \alpha)=\bar{f}(0 ; \alpha)+t \bar{f}^{\prime}(0 ; \alpha)+I^{\gamma-1} \bar{g}(t ; \alpha)
\end{array}\right.
$$
dans le même si $f$ est $[(i i)-$differentiable] alors
$$
\left\{\begin{array}{l}
\underline{f}(t ; \alpha)=f(0 ; \alpha)+t \bar{f}^{\prime}(0 ; \alpha)+I^{\gamma-1} \underline{g}(t ; \alpha) \\
\bar{f}(t ; \alpha)=\bar{f}(0 ; \alpha)+t \underline{f}^{\prime}(0 ; \alpha)+I^{\gamma-1} \bar{g}(t ; \alpha)
\end{array}\right.
$$
Ainsi
$$
f(t)=f(0)+t f_{g H}^{\prime}(0)+I^{\gamma-1} g(t)
$$
\end{proof}
\section{Quelques propriétés des dérivées fractionnaires}
\subsection{Linéarité}
La diférentiation fractionnaire est une opération linéaire :
$$
D^{p}(\lambda f(t)+\mu g(t))=\lambda D^{p} f(t)+\mu D^{p} g(t)
$$

où $ D^{p} $ désigne n’importe quelle approche de dérivation considérée dans ce mémoire.

\subsection{Règle de Leibniz}
Pour $n$ entier on a :
$$
\frac{d^{n}}{d t^{n}}(f(t) . g(t))=\sum_{k=0}^{n}\left(\begin{array}{l}
n \\
k
\end{array}\right) f^{(k)}(t) g^{(n-k)}(t)
$$
La généralisation de cette formule nous donne :
$$
D^{p}(f(t) . g(t))=\sum_{k=0}^{n}\left(\begin{array}{l}
p \\
k
\end{array}\right) f^{(k)}(t) D^{p-k} g(t)-R_{n}^{p}(t)
$$
ou $n \geq p+1$ et $R_{n}^{p}(t)=\frac{1}{n ! \Gamma(-p)} \int_{a}^{t}(t-\tau)^{-p-1} g(\tau) d \tau \int_{\tau}^{t} f^{(n+1)}(\xi)(\tau-\xi)^{n} d \xi$. (on a
$\left.\lim _{n \rightarrow+\infty} R_{n}^{p}(t)=0\right)$
Si $f$ et $g$ avec toutes ses dérivées sont continues dans $[a, t]$ la formule devient:
$$
D^{p}(f(t) \cdot g(t))=\sum_{k=0}^{\infty}\left(\begin{array}{l}
p \\
k
\end{array}\right) f^{(k)}(t) D^{p-k} g(t)
$$
$D^{p}$ est la dérivée fractionnaire au sens de Grünwald-Letnikov et au sens de Riemann-
Liouville.
\section{Les équations différentielles floues}
Cette section présente diverses approches traitant de la définition d'une dérivée de fonctions floues de type 1 ou de type 2. La partie la plus importante de l'histoire des EDF´s est constituée de différentes définitions de dérivées floues. En effet, le concept de dérivée étant l'élément fondamental d'une équation différentielle, l'évolution des dérivées floues joue un rôle clé dans l'évolution des EDF´s. Les dérivées floues peuvent être classées en : dérivées floues d'ordre entier et d'ordre fractionnaire. Les dérivées floues d'ordre entier sont sous-classées en dérivées floues d'ordre entier des fonctions floues de type 1 (ou dérivées floues de type 1) et dérivées floues d'ordre entier des fonctions floues de type 2 (ou dérivées floues de type 2). De même, il existe des dérivés fractionnaires flous de type 1 et de type 2. Il est à noter que correspondant à chaque classe ou sous-classe de dérivés flous, des EDF´s peuvent être classés. Par exemple, ce que l'on peut appeler des équations différentielles fractionnaires floues de type 1 sont associées aux EDF´s dans lesquelles la dérivée est du type dérivées fractionnaires floues de type 1.
\subsection{Les équations différentielles floues d'ordre entier}
Bien que le terme d'équations différentielles floues soit apparu pour la première fois dans la littérature en 1978, les EDF´s, comme on les appelle de nos jours, ont été initiées en 1982 sur la base d'une définition d'une dérivée floue qui peut être appelée dérivée de Dubois-Prade. Par la suite, différentes définitions des dérivés flous ont été proposées parmi lesquelles le dérivé de Hukuhara (ou dérivé de Puri-Ralescu) présenté en 1983, le dérivé de Goetschel-Voxman en 1986, le dérivé de Seikkala en 1987 et le dérivé de Friedman-Ming-Kandel introduit en 1996, respectivement. Malgré le fait que toutes ces dérivées floues aient été présentées sous des formes différentes, il a été prouvé qu'elles sont équivalentes à condition que les fonctions floues soumises aux coupes $\alpha-$coupe inférieures et supérieures soient des fonctions continues, pour plus de détails voir \cite{Ref16}.

Parmi les dérivés flous mentionnés, les dérivés de Hukuhara et de Seikkala sont plus largement connus. La différence entre les définitions des dérivés de Hukuhara et de Seikkala est que le dérivé de Hukuhara (dérivé $H$) est, en substance, défini sur la base de ce qu'on appelle la différence de Hukuhara (différence-$H$), mais le dérivé de Seikkala est défini sur la base des dérivés du  inférieur et supérieur. $\alpha-$coupe de la fonction floue en question. L'existence et l'unicité de la solution pour les EDF´s sous dérivée H et dérivée de Seikkala ont été étudiées dans \cite{Ref17}.

Un grand nombre d'études menées sur les EDF´s, démontrent que les dérivés de Hukuhara et Seikkala, bien qu'équivalents, sont des définitions plus acceptables. Cependant, les résultats de la recherche ont révélé que ces dérivées souffrent d'un certain nombre de limitations majeures parmi lesquelles la plus grave est que le diamètre de la fonction floue à l'étude doit être nécessairement non décroissant. Une telle limitation fait que la solution obtenue d'un EDF, dans un grand nombre de cas, diffère de ce qui est réalisé intuitivement de la nature du système ou phénomène modélisé par le EDF. A titre d'illustration, le diamètre de la solution obtenue d'un EDF sous la forme $\dot{\tilde{x}}(t)=-\tilde{x}(t)$ dont la condition initiale est un nombre flou, augmente au fur et à mesure que le temps passe. C'est alors que nous nous attendons intuitivement à ce que le comportement naturel d'une telle équation différentielle montre que $\tilde{x}$ diminue avec le temps. En conclusion, considérer de telles définitions dans une EDF nécessite que le flou de la solution soit non décroissant, ce qui impose une grande restriction à leurs applications réelles.

Pour surmonter ce problème, en $1990$ puis avec plus de détails en 1997, il a été suggéré de considérer les EDF´s comme des inclusions différentielles floues. Presque simultanément, une approche alternative basée sur l'utilisation du principe d'extension de Zadeh (PEZ) pour traiter les EDF´s a été introduite en 1999. Bien que ces approches aient attiré une attention considérable et ont conduit à de nombreuses études remarquables sur les EDF´s, elles ne s'accompagnent pas d'une définition de dérivée floue. En clair, le concept de dérivée floue est en effet perdu dans les approches proposées. Un autre effort fait afin de surmonter le problème venant de l'application du dérivé de Hukuhara (ou de manière équivalente Seikkala), a été la présentation des dérivés de même ordre et d'ordre inverse qui ont été faits sur la base du dérivé de Seikkala. Il convient de souligner que cette approche a une relation étroite avec celles présentées pour les dérivées floues appelées dérivées de Hukuhara généralisées et dérivées de Seikkala généralisées qui seront expliquées dans la suite.

L'année 2004 est venue avec un point de départ pour faire évoluer le traitement des EDF´s en introduisant le concept de dérivé fortement généralisé de Hukuhara ($SGH$) qui a été présenté de manière plus complète en $2005$. La structure du dérivé $SGH$, en général et dans certaines conditions, présente deux formes de différentiabilité d'une fonction floue qui peuvent être appelées la première forme et la seconde forme de différentiabilité. La première forme coïncide avec le dérivé Hukuhara. Mais c'est la seconde forme de différentiabilité, si elle existe, qui répond à la question du diamètre non décroissant d'une fonction floue dérivable. En termes simples, si la fonction floue $\tilde{f}$ : $t \rightarrow \tilde{f}(t)$ est différentiable-$SGH$ sous la seconde forme, alors son diamètre est non croissant, c'est-à-dire $\frac{ d \mathcal{D}(\bar{f}(t))}{dt} \leq 0$.

Ainsi, la résolution du EDF $\dot{\tilde{x}}(t)=-\tilde{x}(t)$ dont la condition initiale est un nombre flou, au sens de la dérivée $SGH$ seconde forme, aboutit à une solution qui satisfait ce qui est intuitivement attendu de la nature de la structure de l'équation.

Cette approche s'accompagne également d'un concept intéressant appelé points de commutation qui sont les points dans un intervalle où se produit le basculement entre la première forme et la deuxième forme de différentiabilité. Ce concept a ouvert une porte à l'étude du comportement périodique de certains phénomènes dont les modèles mathématiques peuvent être considérés comme des EDF´s dans lesquels une définition de la dérivée floue a été présentée. Le résultat concernant l'existence et l'unicité des solutions d'un EDF sous SGH-dérivé donné dans \cite{Ref18} , montre qu'un premier ordre EDF, sous certaines conditions, a deux solutions qui peuvent être appelées la première forme et la deuxième forme solutions . Les solutions des première et deuxième formes sont associées aux concepts des première et deuxième formes de la différentiabilité. Bien que le dérivé de SGH ait marqué un tournant dans l'analyse des EDF´s et qu'un nombre considérable de recherches sur les EDF´s aient été effectuées sur la base d'un tel dérivé, il souffre de certaines lacunes dont les plus importantes sont décrites ci-dessous. Premièrement, puisque le dérivé $SGH$, en substance, a été introduit sur la base de la différence Hukuhara, l'existence d'un tel dérivé dépend de l'existence de la différence $H$. Néanmoins, dans la plupart des cas, la différence $H$ n'existe pas et les conditions d'existence d'une telle différence restreindraient considérablement l'applicabilité du dérivé $SGH$. Deuxièmement, la dérivée $SGH$ serait appliquée sur les fonctions floues avec des diamètres monotones. Plus précisément, pour prendre la dérivée d'une fonction floue au sens de la première forme de différentiabilité, le diamètre de la fonction floue doit être nécessairement non décroissant. De manière analogue, que le diamètre d'une fonction floue soit non croissant est une des conditions nécessaires de la différentiabilité de la fonction floue sous la seconde forme.

Le concept de dérivé-$\pi$ est une autre approche alternative qui a été introduite dans $2009$ Dans \cite{Ref19} , il a été déclaré que dans les conditions du théorème de représentation, le dérivée-$\pi$ d'une fonction floue existe. De plus, la solution obtenue d'une équation différentielle floue sous le concept de $\pi$-dérivé coïncide avec celle obtenue sous le concept de SGH-dérivé, sous certaines conditions.

Pour surmonter les limitations de la dérivée $SGH$, une dérivée de Hukuhara ($gH$) généralisée d'une fonction floue a été présentée en 2013. La dérivée gH a été définie sur la base de la différence de Hukuhara généralisée (différence $gH$) qui est un concept plus général que la différence H . Bien que l'existence de la gH-différence s'accompagne de moins de restrictions par rapport à la différence-$H$, il est possible que la différence-$gH$ de deux nombres flous n'existe pas. Par conséquent, on ne garantit pas l'existence d'une dérivée $gH$ d'une fonction floue. Malgré ce fait, le dérivé de la $gH$ aborde la deuxième limitation du dérivé de la $SGH$. C'est-à-dire que le diamètre d'une fonction floue différentiable-$gH$ n'a pas besoin d'être monotone. De plus, le concept de points de commutation a été mieux clarifié sur la base du dérivé $gH$.

En conséquence, l'étude des EDF´s sous le concept de gH-différentiabilité s'accompagne de beaucoup moins de restrictions par rapport à d'autres concepts. C'est peut-être l'une des raisons pour lesquelles un grand nombre d'études ont été menées sur la différentiabilité gH d'une fonction floue et les EDF´s équipés d'un tel concept. Concomitamment, afin d'aborder la question de l'existence de dérivé-$gH$ d'une fonction floue, le concept de dérivée généralisée (dérivé-$g$) basé sur la différence généralisée (différence-$g$) a été introduit dans $2013$. Bien qu'il ait été initialement affirmé que la différence-$g$ de deux nombres flous existe toujours, avec un contre-exemple présenté en 2015, il a été montré que ce n'était pas le cas. Par une petite modification dans la définition de différence-$g$, cependant, l'existence d'une telle différence de nombres flous peut être garantie. Il est à noter que sous certaines conditions, $g$-différentiabilité, $gH$-différentiabilité et $SGH$-différentiabilité deviennent des concepts équivalents. Notez que sur la base de la différence $gH$ au niveau de deux nombres flous, le concept de dérivé $gH$ au niveau du niveau (dérivé $LgH$) a également été défini dans \cite{Ref7} et approfondi dans \cite{Ref20}. Bien que la différentiabilité LgH d'une fonction floue soit un concept plus général que le concept de différentiabilité $gH$ et moins général que le concept de différentiabilité $g$, l'existence d'une dérivée $LgH$ d'une fonction floue n'est pas garantie. L'un des points concernant la caractéristique de la dérivée $gH$ et de la dérivée LgH qui doit être souligné est que l'existence de telles dérivées pour une fonction floue n'implique pas nécessairement que les extrémités de la fonction floue soient dérivables. Le développement de la théorie des EDF a été poursuivi, basé sur les dérivées d'ordre entier, par d'autres approches sur les dérivées floues, à savoir dérivé-$\hat{D}$, dérivé-$H_{2}$, dérivé interactif, le $gr $-derivative , et ainsi de suite qui seront discutés dans la suite. L'année 2013 est venue avec une approche pour l'étude des EDF dans laquelle la définition d'une dérivée floue a été tirée de la fuzzification de l'opérateur dérivé classique par l'utilisation du principe d'extension de Zadeh. Il existe une relation étroite entre cette approche et celle introduite pour les inclusions différentielles floues, et sous certaines hypothèses, les résultats obtenus par cette approche sont réduits à ceux obtenus par les inclusions différentielles floues. De plus, dans les conditions exprimées dans le théorème $3.17$ dans \cite{Ref21}, la dérivée $\hat{D}$ est équivalente à la dérivée $gH$. Ainsi, il est possible qu'un EDF de premier ordre ait plus d'une solution basée sur le concept de dérivé-$\hat{D}$.

En 2014, les équations différentielles floues de type 2 (EDFT2) impliquant des fonctions floues de type 2 et des nombres flous de type 2 ont été introduites, pour la première fois, dans \cite{Ref22}. Le concept de $H_{2}$-dérivé a été présenté pour traiter les EDFT2. Cette dérivée se présente sous la forme d'une dérivée $SGH$ et est définie sur la base de la différence H des nombres flous de type 2, c'est-à-dire la différence $H_{2}$. La principale raison de la présentation des EDFT2 vient du fait qu'une forme exacte d'un nombre flou de type 1 peut ne pas toujours être déterminée. Avec les notions de distribution de possibilité conjointe et d'opération arithmétique interactive considérées, la dérivée interactive d'une fonction floue a été introduite dans $2017$. Dans la dérivée interactive, la différence, dite différence interactive, a été définie sur la base du principe d'extension dit sup$J$. L'une des raisons de l'analyse des EDF par cette approche a été évoquée en raison de l'existence d'interactivités (ou de dépendances) possibles entre les variables d'un processus. Récemment, il a été prouvé que la dérivée $H$, la dérivée $gH$, la dérivée $\mathrm{g}$ et la dérivée $\pi$ sont des cas particuliers de la dérivée interactive.

L'année 2018 est arrivée avec une nouvelle approche pour l'analyse des EDF´s en introduisant un nouveau concept de dérivée floue appelée dérivée granulaire $(g r$-dérivé). La $g r$-dérivé a été définie sur la base de la notion de $g r$-différence. La principale différence entre cette approche et les autres est qu'elle utilise l'arithmétique des intervalles flous par mesure de distance relative (AIF-MDR) pour traiter les EDF´s. Un concept clé dans AIF-MDR est la fonction d'appartenance horizontale (FMH) basée sur laquelle les opérations sur les nombres flous sont définies. La principale raison pour laquelle $gr$-dérivé a été proposé est de surmonter les inconvénients des approches à savoir $H$-dérivé, $SGH$-dérivé, $\mathrm{gH}$-dérivé, $\mathrm{g}$-dérivé et $\pi $-dérivé qui utilise l'arithmétique floue des intervalles standard (AFIS) pour gérer les EDF. Les inconvénients seraient décrits comme : l'existence de la dérivée, l'incertitude monotone, la multiplicité des solutions, la propriété de doublement, l'incertitude symétrique autour du problème zéro (problème SUAZ) et le phénomène de comportement non naturel dans la modélisation (UBM).

La même année, la notion de dérivée de Seikkala généralisée (dérivé-$gS$) a été avancée dans \cite{Ref23}. Cette notion est, par essence, une combinaison de dérivées de même ordre et d'ordre inverse par l'utilisation d'opérateurs minimum et maximum. De plus, il a été prouvé que le dérivé gS est équivalent au dérivé $SGH$. Inspiré du calcul quantique et de la dérivée $q$, la dérivée $q$ floue généralisée de Hukuhara a été proposée dans $2019$ comme une combinaison de la dérivée $q$ et de la difference-$\mathrm{gH}$. Il est à noter qu'à partir d'une telle dérivée, une EDF du premier ordre, sous certaines conditions, pourrait avoir deux solutions.

Le tableau suivant montre certaines des dérivées floues expliquées dans cette section.

\begin{center} 
 \begin{table}[h]
\scalebox{0.67}{%
\centering
\begin{tabular}{|c|c|c|}
\hline
Dérivée floue & Définition & commentaires \\
\hline
La dérivé $H$    & $\begin{array}{ll} &\tilde{f}^{\prime}(t)=\lim\limits_{h \rightarrow 0^{+}} \frac{\tilde{f}(t+h) \underline{H} \tilde{f}(t)}{h}\\ &=\lim\limits_{h \rightarrow 0^{+}} \frac{\tilde{f}(t)^{\underline{H}} \tilde{f}(t-h)}{h} \end{array}$ & $\tilde{u}^{H} \bar{v}=\tilde{w} \Leftrightarrow \tilde{u}=\bar{v}+\tilde{w}$ \\

\hline 

Dérivé de Seikkala & {$\left[\tilde{f}^{\prime}(t)\right]^{\alpha}=\left[\underline{f^{\prime} \alpha}(t), \bar{f}^{\prime \alpha}(t)\right]$} &    \\

\hline 

Dérivés de même ordre et d'ordre inverse &  $ \begin{array}{ll} &(a)\left[\tilde{f}^{\prime \prime}(t)\right]^{\alpha}=\left[\underline{f}^{\prime \alpha}(t), \bar{f}^{\prime \alpha}(t)\right]\\ &
 (b)\left[\tilde{f}^{\prime}(t)\right]^{\alpha}=\left[\bar{f}^{\prime \alpha}(t), \underline{f}^{\prime \alpha}(t)\right]\end{array}$  &  (a) Same-order (b) Reverse-order \\
 
\hline

La dérivé $S G H$ & $\begin{aligned}&\tilde{f}^{\prime}(t) & =\lim\limits_{h \rightarrow 0^{+}} \frac{\tilde{f}(t+h) \underline{H} \tilde{f}(t)}{h}=\lim\limits_{h \rightarrow 0^{+}} \frac{\tilde{f}(t) \underline{H} \tilde{f}(t-h)}{h}  \\
&\tilde{f}^{\prime}(t) & =\lim\limits_{h \rightarrow 0^{+}} \frac{\tilde{f}(t) \underline{H} \tilde{f}(t+h)}{-h}=\lim\limits_{h \rightarrow 0^{+}} \frac{\tilde{f}(t-h) \underline{H} \tilde{f}(t)}{-h}\end{aligned}$ &  \\

\hline

La dérivé $\pi$ & $\tilde{f}^{\prime}(t)=\lim\limits_{h \rightarrow 0} \frac{\tilde{f}(t+h)^{\pi} \tilde{f}(t)}{h}$ & $\tilde{u}^{\pi} \tilde{v}=\tilde{w} \Leftrightarrow[\tilde{w}]^{\alpha}=\left[\min \left\{\underline{u}^{\alpha}-\underline{v}^{\alpha}\right\}, \max \left\{\bar{u}^{\alpha}-\bar{v}^{\alpha}\right\}\right]$ \\

\hline

La dérivé $g H$ & $\tilde{f}^{\prime}(t)=\lim\limits_{h \rightarrow 0} \frac{\tilde{f}(t+h)^{g H} \tilde{f}(t)}{h}$ & $\tilde{u}^{g H} \tilde{v}=\tilde{w} \Leftrightarrow(\tilde{u}=\tilde{v}+\tilde{w}$ or $\tilde{v}=\tilde{u}-\tilde{w})$ \\

\hline

La dérivé $G$ & $\tilde{f}^{\prime}(t)=\lim\limits_{h \rightarrow 0} \frac{\tilde{f}(t+h) \underline{g} \tilde{f}(t)}{h}$ & $\tilde{u} \underline{g} \tilde{v}=\tilde{w} \Leftrightarrow[\tilde{w}]^{\alpha}=c l\left(\operatorname{con} v \bigcup_{\beta \geq \alpha}\left([\tilde{u}]^{\beta} \stackrel{g H}{-}[\tilde{v}]^{\beta}\right)\right.$ \\

\hline

La dérivé $\hat{D}$ & $\tilde{f}^{\prime}(t)=\hat{D} \tilde{f}(t)$ & $\mu_{\tilde{D} \tilde{f}(t)}(y)=\left\{\begin{array}{ll}\sup\limits_{f(t) \in D-1_{y}} \mu_{f(t)}(f(t)) & \text { if } D^{-1} y \neq \emptyset \\
0 & \text { if } D^{-1} y=\emptyset  \end{array}\right.$ \\

\hline 

Dérivée interactive & $\tilde{f}^{\prime}(t)=\lim\limits_{h \rightarrow 0} \frac{\tilde{f}(t+h) \underline{J} \tilde{f}(t)}{h}$ & $\mu_{(i \hat{i}-\tilde{j})}(z)=\sup\limits_{x-y=z} \mu_{J}(x, y)$ \\

\hline

La dérivé $g S$ & $\left[\tilde{f}^{\prime}(t)\right]^{\alpha}=\left[\min \left\{\underline{f}^{\prime \alpha}(t), \bar{f}^{\prime \alpha}(t)\right\}, \max \left\{\underline{f}^{\prime \alpha}(t), \bar{f}^{\prime \alpha}(t)\right\}\right]$ & \\

\hline

La dérivé $g r$ & $\tilde{f}^{\prime}(t)=\lim_{h \rightarrow 0} \frac{\tilde{f}(t+h) \underline{g}{r} \tilde{f}(t)}{h}$ & $\begin{aligned}&\mathcal{H}(\tilde{u}) \triangleq \underline{u}^{\alpha}+\left(\bar{u}^{\alpha}-\underline{u}^{\alpha}\right) \beta_{u} \\&\tilde{u} \frac{g r}{v}=\tilde{w} \Leftrightarrow \mathcal{H}(\tilde{u})-\mathcal{H}(\tilde{v})=\mathcal{H}(\tilde{w})\end{aligned}$ \\

\hline
\end{tabular}}
\caption{Certains des dérivés flous.}
\end{table}
\end{center}

\subsection{Les équations différentielles floues d'ordre fractionnel}
L'idée d'étudier les équations différentielles floues d'ordre fractionnaire a été présentée pour la première fois dans $2010$ \cite{Ref24}. Il existe différentes définitions des dérivés fractionnaires classiques, notamment au sens de Riemann-Liouville, Caputo, Riemann-Liouville modifié, dérivé fractionnaire conforme, dérivé fractionnaire Caputo-Fabrizio, pour n'en citer que quelques-uns. Les combinaisons possibles de telles dérivées avec les concepts de dérivées floues et/ou de différences floues ont abouti à l'introduction de différentes définitions de dérivées fractionnaires floues sur la base desquelles des équations différentielles fractionnaires floues (EDFF) ont été examinées.

La dérivée fractionnaire floue de Riemann-Liouville au sens de dérivée H ; et l'existence et l'unicité de la solution pour une classe de EDFF à retard infini ont été présentées dans $2010$ \cite{Ref25} . En 2011, la dérivée fractionnaire floue de Riemann-Liouville au sens de dérivée de Seikkala a été proposée dans \cite{Ref26} ; et l'existence et l'unicité de la solution pour les EDFF´s avec des conditions initiales floues sous une telle dérivée ont été montrées dans \cite{Ref26}, \cite{Ref27}.

Une combinaison de dérivé fractionnel flou de Riemann-Liouville avec un dérivé de SGH Riemann-Liouville H-dérivé a été introduite en 2012. Il est à noter que bien qu'un tel dérivé ait été désigné comme RiemannLiouville H-dérivé dans la littérature, le dérivé a été défini dans la forme du dérivé SGH. De ce fait, similaire au dérivé SGH, en général, le dérivé H de Riemann-Liouville, dans certaines conditions, présente deux formes de différentiabilité d'une fonction floue. Ainsi, une FFDE d'ordre $\beta \in(0,1)$ sous une telle dérivée peut avoir deux solutions, c'est-à-dire une solution qui vient de la première forme de différentiabilité, et l'autre de la deuxième forme de différentiabilité. L'existence et l'unicité de la solution pour les EDFF utilisant la condition de type Krasnoselskii-Krein et la condition de type Nagumo ont été présentées dans \cite{Ref28}, \cite{Ref29}.

En 2012, les équations différentielles fractionnaires floues sous le concept de dérivée de Caputo en combinaison avec le dérivé SGH, c'est-à-dire le dérivé H de type Caputo, ont été étudiées dans deux représentations différentes dans \cite{Ref30} et \cite{Ref31}. La théorie de l'existence et de l'unicité de la solution pour les EDFF sous une telle dérivée présentée dans \cite{Ref31} montre qu'une EDFF d'ordre $\beta \in(0,1)$, sous certaines conditions, peut avoir deux solutions correspondant à la première et deuxième formes du dérivé H de type Caputo.

Il faut souligner qu'il existe quelques différences entre la dérivée de Caputo et la dérivée de Riemann-Liouville. Ce qui suit est deux différences plus importantes. Premièrement, la dérivée de Caputo d'une fonction constante est nulle, ce qui n'est pas le cas avec la dérivée de Riemann-Liouville. La deuxième différence concerne les conditions initiales d'une équation différentielle fractionnaire. Une équation différentielle fractionnaire sous le concept de dérivée de Riemann-Liouville implique des conditions initiales d'ordre fractionnaire qui ne se produisent pas sous le concept de dérivée de Caputo.

Avec une combinaison de dérivée de Riemann-Liouville et de dérivée de Goetschel-Voxman considérée, l'existence et l'unicité de la solution pour les EDFF ont été démontrées en 2013 \cite{Ref32}. Simultanément, la dérivée fractionnaire floue de Riemann-Liouville au sens de dérivée H a été définie dans \cite{Ref33} ; et en utilisant le théorème du point fixe de Schauder, l'existence de la solution d'un EDFF sous une telle dérivée a été étudiée dans \cite{Ref33}, \cite{Ref34}.

En 2014, des équations différentielles fractionnaires floues de type 2 (EDFFT2) sous le concept de dérivées fractionnaires floues de type 2 ont été établies dans \cite{Ref35}. Les EDFFT2 sont des EDFF dans lesquels des nombres flous de type 2 et des fonctions floues de type 2 sont impliqués. Les dérivées fractionnaires floues de type 2 ont été définies sous la forme des dérivées de Caputo et de Riemann-Liouville en combinaison avec $H_{2}$ -dérivé, c'est-à-dire Caputotype $H_{2}$-dérivé et Riemann-Liouville $H_{2 }$-dérivé. Ces dérivées, en général, sont sous la forme de dérivées SGH, et de ce fait, la théorie de l'existence et de l'unicité de la solution pour les EDFFT2 donnée dans \cite{Ref35} montre qu'un EDFFT2 d'ordre $\beta \in(0, 1)$ sous de telles dérivées peut avoir deux solutions qui peuvent être appelées la première forme et la deuxième forme solutions. La même année, les EDFF sous le concept de dérivée de Caputo floue généralisée (dérivée de Caputo gH) avec la théorie de l'existence et de l'unicité de leurs solutions par l'utilisation de la condition de Krasnoselskii-Krein ont été présentées dans \cite{Ref36}. Le dérivé gH de Caputo a été constitué par une combinaison du dérivé de Caputo et du dérivé $g H$.

Une combinaison de dérivée de Riemann-Liouville modifiée \cite{Ref37} et de $g$-dérivé pour les équations différentielles fractionnaires floues de type-1 et de type-2 a été introduite dans $2016$ \cite{Ref38}. La principale raison de présenter la dérivée fractionnaire floue de RiemannLiouville modifiée vient du fait que, contrairement à la dérivée gH de Caputo, elle n'exige pas que la fonction en question soit dérivable d'ordre supérieur. Par exemple, pour la dérivée fractionnaire d'ordre $\beta \in(0,1)$, la dérivée fractionnaire floue de Riemann-Liouville modifiée ne nécessite pas que la fonction en question soit dérivable au premier ordre. De plus, contrairement à la dérivée fractionnaire floue au sens de Riemann-Liouville, les conditions initiales apparaissent de la même manière que dans une équation différentielle d'ordre entier.

Les EDFF sous le concept de dérivé fractionnaire de Caputo-Fabrizio en combinaison avec un dérivé de SGH (Caputo-Fabrizio SGH-dérivé) ont été étudiés dans $2018$ \cite{Ref39}. La principale raison pour laquelle une telle dérivée a été introduite est que le noyau de la dérivée fractionnaire de Caputo-Fabrizio, contrairement aux dérivés de Riemann-Liouville et de Caputo, est non singulier. Il convient également de noter que l'idée d'appliquer la dérivée fractionnaire de Caputo-Fabrizio sur des équations différentielles fractionnaires incertaines où des fonctions à valeurs d'intervalle sont impliquées. Dans le même temps, en combinant les dérivées de Riemann-Liouville et Caputo avec la dérivée $g r$, c'est-à-dire la dérivée granulaire de Riemann-Liouville et la dérivée granulaire de Caputo, des dérivées fractionnaires floues granulaires ont émergé dans \cite{Ref40}. L'intégrale fractionnaire floue granulaire a également été présentée dans ce travail. La principale raison de l'analyse des EDFF sous des dérivés fractionnaires flous granulaires est le fait que l'étude des EDFF sous les notions de dérivés fractionnaires flous qui sont définis sur la base des approches dérivées SGH, dérivées gH et, en général, basées sur FSIA vient avec quelques restrictions. De telles restrictions telles que la multiplicité des solutions et le phénomène UBM peuvent être surmontées par l'utilisation de dérivés fractionnaires flous granulaires basés sur l'approche RDM-FIA. Il a également été montré qu'une EDFF d'ordre $\beta \in(0,1)$ sous le concept de dérivées fractionnaires floues granulaires n'a qu'une seule solution. Inspirées des dérivées fractionnaires floues granulaires, l'intégrale q-fractionnelle de Riemann-Liouville granulaire et la dérivée q-fractionnelle de Caputo granulaire ont été présentées dans \cite{Ref41} pour l'étude des EDFF à l'échelle du temps.

Caputo-Katugampola gH-dérivé et RiemannLiouville-Katugampola gH-dérivé comme généralisations de
Dérivé Caputo gH et dérivé H de Riemann-Liouville en utilisant le concept Katugampola de Katugampola et $\mathrm{gH}$- dérivés ont été introduits en 2019. L'existence et l'unicité de solutions pour les FFDE sous la dérivée gH de Caputo-Katugampola par l'utilisation d'approximations successives sous la condition de Lipschitz généralisée ont été montrées dans \cite{Ref42}. Plus précisément, similaire à d'autres définitions de dérivées floues établies par gH-dérivé ou $\mathrm{gH}$-difference, il a été prouvé que, sous certaines conditions, une EDFF d'ordre $\beta \in(0,1)$ a deux solutions, c'est-à-dire des solutions correspondant aux première et deuxième formes de différentiabilité.

Par une combinaison de la dérivée q-fractionnelle de Caputo, c'est-à-dire une dérivée fractionnaire provenant du calcul quantique, et de la dérivée gH, la dérivée q-fractionnelle de Caputo $\mathrm{gH}$ a été introduite en 2019 et l'existence et l'unicité des solutions pour Les EDFF ont été démontrés par des conditions de type Krasnoselskii-Krein. Étant donné que cette approche sous-tend également les approches basées sur FSIA, la résolution d'un EDFF, sous certaines conditions, nous apporte plus d'une solution.

En 2020, le concept du dérivé gH Atangana-Baleanu en tant que combinaison du dérivé fractionnaire Atangana-Baleanu \cite{Ref43} et du dérivé gH a été rapporté dans \cite{Ref44}. Une des raisons pour proposer une telle dérivée a été basée sur la propriété de non-localité et de non-singularité du nouveau noyau défini dans la dérivée fractionnaire d'Atangana-Baleanu. La même année, en employant la dérivée fractionnelle conformable \cite{Ref45} et la dérivée SGH, les EDFF ont été étudiées sous le concept de dérivé SGH conformable (ou dérivée conforme floue) dans \cite{Ref46}, \cite{Ref47} et, presque simultanément, dans \cite{Ref48}. Il a été dit que, contrairement à certaines autres définitions de dérivées fractionnaires, la dérivée fractionnaire conforme nous permet d'avoir une définition de la différentiabilité d'une fonction de la même manière que la définition d'une dérivée vient de la limite de la fonction.

\chapter{Etude d’un problème fractionnaire flou et les applications des EDF´s}
Les équations différentielles fractionnaires ont été d'un grand intérêt récemment. Elle est causée à la fois par le développement intensif de la théorie du calcul fractionnaire lui-même et par les applications, bien que les outils du calcul fractionnaire soient disponibles et applicables à divers domaines d'études.

Ces dernières années, les perturbations quadratiques des équations différentielles non linéaires ont attiré beaucoup d'attention. Nous appelons ces équations différentielles équations différentielles hybrides. Il y a eu de nombreux travaux sur la théorie des équations différentielles hybrides, Dhage et Lakshmikantham \cite{Ref8} ont discuté de l'équation différentielle hybride du premier ordre suivante :
\begin{equation}
\left\{\begin{array}{l}
\frac{\mathrm{d}}{\mathrm{d} t}\left[\frac{u(t)}{f(t, u(t))}\right]=g(t, u(t)), \quad \text { a.e. } t \in J, \\
u\left(t_{0}\right)=u_{0} \in \mathbb{R}
\end{array}\right.
\label{5}
\end{equation}
où $$f \in C(J \times \mathbb{R}, \mathbb{R} \backslash\{0\})$$ Et $$g \in \mathcal{C}(J \times \mathbb{R}, \mathbb{R}).$$
Ils ont établi l'existence et l'unicité des résultats et certaines inégalités différentielles fondamentales pour les équations différentielles hybrides initiant l'étude de la théorie de tels systèmes et ont prouvé en utilisant la théorie des inégalités, son existence de solutions extrêmes et un résultat de comparaison.

Le but de ce chapitre est d'étudier l'existence et l'unicité de solution de l'équation hybride fractionnaire floue analogue to \ref{5} .\cite{Ref10}

\section{Quelques résultats pour les équations différentielles hybrides}
On appelle le résultat qui établit l'existence de solution pour l'équation différentielle hybride du premier ordre (en bref EDH) à condition initiale. Ce résultat sera utile dans l'étude du problème flou correspondant

Nous considérons le problème de la valeur initiale (\ref{5}). 

Par une solution du EDH (\ref{5}) nous voulons dire une fonction $u \in A C(J, \mathbb{R})$ tel que :

(i) la fonction $t \mapsto \frac{u}{f(t, \mu)}$ est absolument continu pour chacun $u \in \mathbb{R}$, et

(ii) u satisfait les équations de (\ref{5}),

où $A C(J, \mathbb{R})$ est l'espace des fonctions réelles absolument continues définies sur $J=[0, T]$.

\begin{theorem}
Soit $S$ un sous-ensemble non vide, fermé, convexe et borné de l'algèbre de Banach $ X $ et soit $A: X \rightarrow X$ et $B: S \rightarrow X$ être deux opérateurs tels que

(a) $A$ est $\mathcal{D}$-Lipschitz avec $\mathcal{D}$-fonction $\psi$,

(b) $B$ est complètement continu,

(c) $x=A x B y \Rightarrow x \in S$ pour tout $y \in S$, et

(d) $M \psi(r)<r$, où $M=\|B(S)\|=\sup \{\|B x\|: x \in S\} .$

Puis l'équation de l'opérateur $A x B x=x$ a une solution dans $S .$

Nous considérons les hypothèses suivantes dans ce qui suit.

$\left(A_{0}\right)$ La fonction $x \mapsto \frac{x}{f(t, x)}$ augmente en $\mathbb{R}$ presque partout pour $t \in J$.

$\left(A_{1}\right)$ Il existe une constante $L>0$ tel que

\begin{equation}
|f(t, x)-f(t, y)| \leq L|x-y|
\label{a1}
\end{equation}
pour tout $t \in J$ et $x, y \in \mathbb{R}$.

$\left(A_{2}\right)$ Il existe une fonction $h \in L^{1}(J, \mathbb{R})$ tel que
$$
|g(t, x)| \leq h(t) \quad t \in J
$$
\end{theorem}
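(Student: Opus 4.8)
The statement is the Dhage hybrid fixed point theorem for Banach algebras, and the natural route is to combine a nonlinear contraction argument with Schauder's theorem. The plan is as follows. First, fix $y \in S$ and introduce the auxiliary map $A_y : X \to X$, $A_y(x) = (Ax)(By)$, the product being taken in the Banach algebra $X$. Since $A$ is $\mathcal{D}$-Lipschitz with $\mathcal{D}$-function $\psi$ and $\|By\| \le M$ for every $y \in S$, one obtains
\[
\|A_y(x_1) - A_y(x_2)\| = \|(Ax_1 - Ax_2)(By)\| \le M\,\psi\!\left(\|x_1 - x_2\|\right);
\]
by hypothesis (d) the map $r \mapsto M\psi(r)$ is again a $\mathcal{D}$-function and satisfies $M\psi(r) < r$ for $r > 0$, so $A_y$ is a nonlinear contraction on the complete space $X$ and, by the Boyd--Wong form of the contraction principle, has a unique fixed point; by hypothesis (c) this fixed point lies in $S$. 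This produces a well-defined map $N : S \to S$ assigning to $y$ the unique solution of $x = (Ax)(By)$, and a fixed point of $N$ is exactly a solution of $AxBx = x$ in $S$.

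Second, I would show $N$ is continuous. The basic estimate comes from writing $N(y_i) = (AN(y_i))(By_i)$ and subtracting: with $\delta = \|N(y_1) - N(y_2)\|$,
\[
\delta \le \|AN(y_1)\|\,\|By_1 - By_2\| + M\,\psi(\delta).
\]
The factor $\|AN(y_1)\|$ is bounded uniformly in $y_1$ because $N(y_1)$ ranges in the bounded set $S$ and $A$ is $\mathcal{D}$-Lipschitz; call the bound $C$. Then $\delta - M\psi(\delta) \le C\|By_1 - By_2\|$, and since $B$ is continuous, if $y_n \to y$ the right side tends to $0$; a subsequence argument (a limit $\delta^\ast > 0$ would give $\delta^\ast \le M\psi(\delta^\ast) < \delta^\ast$) forces $\delta \to 0$, i.e. $N$ is continuous.

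Third, and this is the step I expect to be the main obstacle, I would prove that $N(S)$ is relatively compact. Here one cannot argue that $A(S)$ is compact — $A$ is only $\mathcal{D}$-Lipschitz — so the strict inequality in (d) must be used quantitatively. Put $\phi(r) = r - M\psi(r)$, a continuous function that is strictly positive on $(0, \operatorname{diam}(S)]$; for $\varepsilon > 0$ set $m = \min_{\varepsilon \le r \le \operatorname{diam}(S)} \phi(r) > 0$. From $\phi(\delta) \le C\|By_1 - By_2\|$ one gets: $\|By_1 - By_2\| < m/C$ implies $\delta < \varepsilon$. Since $B$ is completely continuous, $B(S)$ is totally bounded; covering it by finitely many balls of radius $m/(2C)$ and pulling back through $N$ covers $N(S)$ by finitely many $\varepsilon$-balls, so $N(S)$ is totally bounded and, $X$ being complete, relatively compact. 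Finally, Schauder's fixed point theorem applied to the continuous map $N$ on the nonempty closed convex set $S$ with $\overline{N(S)}$ compact yields $x^\ast \in S$ with $N(x^\ast) = x^\ast$, hence $x^\ast = (Ax^\ast)(Bx^\ast)$, which completes the proof.
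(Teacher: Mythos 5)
Your proof is correct, but there is nothing in the paper to compare it with: this theorem is simply recalled as a known result of Dhage and Lakshmikantham \cite{Ref8} and no proof is given in the text (note also that the hypotheses $(A_0)$--$(A_2)$ appearing inside the theorem environment belong to the later application to the hybrid equation, not to the fixed point theorem itself, and you were right to ignore them). What you wrote is essentially the classical argument behind Dhage's hybrid fixed point theorem in a Banach algebra: for fixed $y\in S$ the map $x\mapsto Ax\,By$ satisfies $\|Ax_1By-Ax_2By\|\le M\,\psi(\|x_1-x_2\|)$ by submultiplicativity, hence is a Boyd--Wong nonlinear contraction thanks to (d) and has a unique fixed point, which lies in $S$ by (c); the induced map $N:S\to S$ is continuous via the estimate $\delta-M\psi(\delta)\le C\,\|By_1-By_2\|$, and your quantitative exploitation of the strict inequality in (d) --- taking $m=\min_{\varepsilon\le r\le \operatorname{diam}(S)}\bigl(r-M\psi(r)\bigr)>0$ and pulling back a finite $m/(2C)$-net of the totally bounded set $B(S)$ --- is a legitimate way to obtain total boundedness of $N(S)$, after which Schauder's theorem on the nonempty closed bounded convex set $S$ finishes the proof. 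Two small points to tidy: $M\psi$ is a $\mathcal{D}$-function because $\psi$ is one and $M\ge 0$ (hypothesis (d) only supplies the inequality $M\psi(r)<r$, not the $\mathcal{D}$-function property); and the uniform bound $C=\sup_{x\in S}\|Ax\|<\infty$ deserves one explicit line, e.g. $\|Ax\|\le\|Ax_0\|+\psi(\operatorname{diam}S)$ for a fixed $x_0\in S$ using the monotonicity of $\psi$, together with the trivial degenerate cases ($M=0$, or $\varepsilon>\operatorname{diam}(S)$). With these cosmetic additions the argument is complete.
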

 Dans la section suivante, nous considérons une équation différentielle floue qui est un analogue flou de (\ref{5}).
\section{Équation différentielle hybride floue}
Nous examinerons le probleme de valeur initiale :
\begin{equation}
\begin{array}{c}
\frac{d}{d t}\left[\frac{u(t)}{f(t, u(t))}\right]=g(t, u(t)) \quad t \in J \\
u(0)=u_{0} \in \mathbb{E}
\end{array}
\label{a2}
\end{equation}
Le principe d'extension de Zadeh conduit à la définition suivante de $f (t, u)$ et $g (t, u)$ quand sont des nombres flous
$$
\begin{array}{ll}
f(t, u)(y)=\sup \{u(x): y=f(t, x), & x \in \mathbb{R}\} \\
g(t, u)(y)=\sup \{u(x): y=g(t, x), & x \in \mathbb{R}\} .
\end{array}
$$
Il s'ensuit que

$$
\begin{array}{l}
{[f(t, u)]^{\alpha}=\left[\min \left\{f(t, x): x \in\left[u_{1}^{\alpha}, u_{2}^{\alpha}\right]\right\}, \max \left\{f(t, x): x \in\left[u_{1}^{\alpha}, u_{2}^{\alpha}\right]\right\}\right]} \\
{[g(t, u)]^{\alpha}=\left[\min \left\{g(t, x): x \in\left[u_{1}^{\alpha}, u_{2}^{\alpha}\right]\right\}, \max \left\{g(t, x): x \in\left[u_{1}^{\alpha}, u_{2}^{\alpha}\right]\right\}\right]}
\end{array}
$$

Pour $u \in \mathbb{E}$ avec $[u]^{\alpha}=\left[u_{1}^{\alpha}, u_{2}^{\alpha}\right], 0<\alpha \leq 1$. Nous appelons $u: J \rightarrow \mathbb{E}$ une solution floue de (\ref{a2}),

Si
$$
\left[\frac{d}{d t}\left[u(t) \div_{G} f(t, u(t))\right]\right]^{\alpha}=[g(t, u(t))]^{\alpha} \text { and }[u(0)]^{\alpha}=\left[u_{0}\right]^{\alpha}
$$

Pour tout $t \in J$ et $\alpha \in[0,1]$. Dénoter $\widetilde{f}=\left(f_{1}, f_{2}\right)$ et $\widetilde{g}=\left(g_{1}, g_{2}\right)$,
$$
\begin{array}{c}
f_{1}(t, u)=\min \left\{f(t, x): x \in\left[u_{1}, u_{2}\right]\right\}, f_{2}(t, u)=\max \left\{f(t, x): x \in\left[u_{1}, u_{2}\right]\right\} \text { and } \\
g_{1}(t, u)=\min \left\{g(t, x): x \in\left[u_{1}, u_{2}\right]\right\}, g_{2}(t, u)=\max \left\{g(t, x): x \in\left[u_{1}, u_{2}\right]\right\}(\text { resp }),
\end{array}
$$
Où $u=\left(u_{1}, u_{2}\right) \in \mathbb{R}^{2}$. Ainsi pour fixe $\alpha$ nous avons un problème de valeur initiale dans $\mathbb{R}^{2}$
\begin{equation}
\begin{array}{l}
\begin{aligned}
\frac{d}{d t}\left[\frac{u_{1}^{\alpha}(t)}{\widetilde{f}\left(t, u_{1}^{\alpha}(t), u_{2}^{\alpha}(t)\right)}\right] &=\widetilde{g}\left(t, u_{1}^{\alpha}(t), u_{2}^{\alpha}(t)\right) \\
u_{1}^{\alpha}(0) &=u_{01}^{\alpha}
\end{aligned}\\
\text { and }\\
\begin{aligned}
\frac{d}{d t}\left[\frac{u_{2}^{\alpha}(t)}{\widetilde{f}\left(t, u_{1}^{\alpha}(t), u_{2}^{\alpha}(t)\right)}\right] &=\tilde{g}\left(t, u_{1}^{\alpha}(t), u_{2}^{\alpha}(t)\right)\\
u_{2}^{\alpha}(0) &=u_{02}^{\alpha}
\end{aligned}
\end{array}
\label{3a}
\end{equation}
Si nous pouvons le résoudre (uniquement), nous n'avons qu'à vérifier que les intervalles $\left[u_{1}^{\alpha}(t), u_{2}^{\alpha}(t)\right], \alpha \in[0,1]$, définissons un nombre flou $u(t)$ dans $\mathbb{E}$. Depuis $f$ et $g$ sont supposés continuer et Caratheodory (resp), le problème de la valeur initiale (\ref{3a}) est équivalent à l'équation intégrale hybride non linéaire (EIH) suivante
\begin{equation}
u(t)=\tilde{f}(t, u(t))\left(\frac{u_{0}}{\widetilde{f}(0, u(0))}+\int_{0}^{l} \widetilde{g}(s, u(s)) d s\right)
\label{q3}
\end{equation}
\begin{theorem}
 Supposons que  $\operatorname{sign}(u(0))=\operatorname{sign}(u(t))$, pour tout $t \in J$, soit $z(t)=u \div_{G} f(t, u(t))$ et $0 \notin[f(t, u)]^{\alpha} \alpha \in[0,1]$ et
$$
r(t)=f(t, u(t)) \div_{G}\left(z(0)+\int_{0}^{t} g(s, u(s)) d s\right), \quad 0 \notin\left[z(0)+\int_{0}^{t} g(s, u(s))\right]^{\alpha}
$$
1. Si $z(0) * g(t, u)>0$ alors la fonction $u(t) \in A C\left((0, T], \mathbb{E}\right)$ est une solution floue de (\ref{a2})

2. \begin{itemize}
\item[•] Si $z(t)$ est $G_{i}$-division ,
\item[•] ou $z(t)$ est $G_{i}$-division et $r(t)$ est $G_{i}$-division
\item[•] ou si $z(t)$ est $G_{i}$-division et $z_{1}^{\alpha}(0) \leq 0 \leq z_{2}^{\alpha}(0)$
\end{itemize}
 Alors $u(t)$ est une solution floue.
\end{theorem}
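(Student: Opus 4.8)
L'idée centrale est de reconnaître que l'équation intégrale (\ref{q3}) exprime précisément que la fonction quotient $z(t) = u(t) \div_{G} f(t,u(t))$ satisfait l'équation intégrale floue
$$
z(t) = z(0) + \int_{0}^{t} g(s,u(s))\, ds, \qquad z(0) = \frac{u_{0}}{\widetilde{f}(0,u(0))}.
$$
Il suffit alors de dériver cette relation au sens $gH$ pour retrouver l'équation (\ref{a2}), puis de vérifier que les $\alpha$-coupes obtenues définissent bien un nombre flou et que la condition initiale est satisfaite. Tout le travail consiste donc à contrôler le \emph{type} de la $g$-division et de la $gH$-dérivée le long de $J$.

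Pour le point 1, je travaille au niveau des $\alpha$-coupes en posant $[z(t)]^{\alpha} = [z_{1}^{\alpha}(t), z_{2}^{\alpha}(t)]$ et en utilisant les formules des cas (i)/(ii) de la $g$-division pour exprimer $z$ à partir de $u$ et de $f$. La condition de signe $\operatorname{sign}(u(0)) = \operatorname{sign}(u(t))$ jointe à $z(0) * g(t,u) > 0$ garantit que la primitive $z(0) + \int_{0}^{t} g$ ne traverse jamais $0$ (en accord avec les hypothèses $0 \notin [f(t,u)]^{\alpha}$ et $0 \notin [z(0)+\int_{0}^{t} g]^{\alpha}$) et reste monotone dans le bon sens, de sorte que les bornes $z_{1}^{\alpha}$ et $z_{2}^{\alpha}$ conservent la monotonie en $\alpha$ exigée pour un nombre flou. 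J'applique ensuite le théorème fondamental du calcul flou (la relation $\int_{a}^{b} f_{gH}'(t)\, dt = f(b) -_{g} f(a)$, valable en l'absence de point de commutation) à la primitive $G(t) = \int_{0}^{t} g(s,u(s))\, ds$, ce qui donne $z_{gH}'(t) = g(t,u(t))$, soit exactement la condition de dérivée requise. L'appartenance $u \in AC((0,T],\mathbb{E})$ découle de l'absolue continuité de $t \mapsto \int_{0}^{t} g$ et de la continuité de $f$.

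Pour le point 2, la clé est le type de $g$-division ($G_{i}$ désignant le cas (i)). Si $z(t)$ est de $G_{i}$-division, alors $u(t) = f(t,u(t))\cdot z(t)$ au sens du produit direct, sans recours au cas opposé, et les bornes des $\alpha$-coupes de $u$ sont données par les produits usuels de l'arithmétique d'intervalles, ce qui assure un nombre flou valide. Les sous-cas (exiger en plus que $r(t)$ soit de $G_{i}$-division, ou bien que $z_{1}^{\alpha}(0) \leq 0 \leq z_{2}^{\alpha}(0)$) traitent respectivement la cohérence de la quantité auxiliaire $r(t) = f(t,u(t)) \div_{G} z(t)$, dont le type de division doit être suivi pour reconstruire $u$, et le cas où $z(0)$ enjambe $0$ — situation dans laquelle l'argument de signe du point 1 tombe en défaut, mais où l'hypothèse de type de division force encore l'absence de point de commutation. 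Dans chaque sous-cas, j'invoque la proposition sur la $g$-division (notamment $(AB)\div_{g} B = A$ et ses conditions d'existence) pour établir l'auto-cohérence de la représentation et conclure que la dérivation redonne la relation voulue.

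L'obstacle principal sera de montrer que, pour chaque $t$, la famille $\{[u_{1}^{\alpha}(t), u_{2}^{\alpha}(t)] : \alpha \in [0,1]\}$ définit effectivement un nombre flou, c'est-à-dire de vérifier les conditions (F1)--(F3) (croissance de $u_{1}^{\alpha}$, décroissance de $u_{2}^{\alpha}$ en $\alpha$, et emboîtement des coupes), ainsi que l'absence de point de commutation sur $(0,T]$ pour que le type de $gH$-différentiabilité reste constant et que le théorème fondamental s'applique sur tout l'intervalle. Les hypothèses sur le type de $g$-division et sur le signe sont précisément calibrées pour garantir ces propriétés de monotonie et d'emboîtement ; leur traduction en inégalités explicites sur les bornes, via les formules des cas (i)/(ii) de $\div_{g}$ issues des propositions sur les intervalles et les nombres flous, constitue l'essentiel du travail technique.
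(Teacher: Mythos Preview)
Your plan diverges from the paper's proof in its overall architecture, and it contains a real gap at the construction step.

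The paper proceeds in two concrete stages. \emph{Step 1} fixes $\alpha\in(0,1]$ and invokes the crisp hybrid existence result (Theorem~15, i.e.\ Dhage's fixed-point theorem under the Lipschitz condition $(A_1)$ on $f$ and the Carath\'eodory bound $(A_2)$ on $g$) to produce a solution $(u_1^\alpha,u_2^\alpha)$ of the $\mathbb{R}^2$-system (\ref{3a}); this gives explicit integral formulas for $u_1^\alpha(t)$ and $u_2^\alpha(t)$. \emph{Step 2} then verifies, by hand, the three conditions of the stacking theorem (Theorem~3 of Section~1.7): $u_1^\alpha(t)\le u_2^\alpha(t)$, nesting $[u_1^\beta,u_2^\beta]\subseteq[u_1^\alpha,u_2^\alpha]$ for $\alpha\le\beta$, and left-continuity in $\alpha$ via dominated convergence. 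The sign hypotheses split this into Case~I and Case~II, each handled by direct inequality manipulations on the explicit endpoint formulas. No fuzzy fundamental theorem is used at any point; once the level solutions are in hand, the differential relation is trivially satisfied at each level, and the only issue is whether the family of intervals is the $\alpha$-cut family of some $u(t)\in\mathbb{E}$.

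Your plan, by contrast, starts from the fuzzy integral relation $z(t)=z(0)+\int_0^t g(s,u(s))\,ds$ and proposes to differentiate it in the $gH$ sense. The gap is that this relation already presupposes a fuzzy-valued $u$; you never say where $u$ comes from. You do not invoke any crisp existence theorem at the level of $\alpha$-cuts, so there is nothing to plug into the integral and nothing to differentiate. The paper's Step~1 is precisely this missing ingredient. A secondary issue is that your control of ``no switching point'' via the sign condition $z(0)*g(t,u)>0$ is heuristic: in the paper this condition is never translated into a statement about $gH$-differentiability types, but is used only to order the explicit endpoint formulas and check the stacking inequalities directly. Your treatment of point~2 (the $G_i$-division sub-cases) is likewise too abstract compared with the paper's Case~II, which again reduces to an explicit inequality (\ref{ccc}) on the level data rather than an appeal to $(AB)\div_g B=A$.
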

\begin{proof}
Nous résolvons le problème de valeur initiale en $\mathbb{R}^{2}$
$$
\begin{array}{l}
\frac{d}{d t} z_{1}^{\alpha}=\min \left\{g(t, x): x \in\left[u_{1}^{\alpha}(t), u_{2}^{\alpha}(t)\right]\right\}, u_{1}^{\alpha}(0)=u_{01}^{\alpha} \\
\frac{d}{d t} z_{2}^{\alpha}=\max \left\{g(t, x): x \in\left[u_{1}^{\alpha}(t), u_{2}^{\alpha}(t)\right]\right\}, u_{2}^{\alpha}(0)=u_{02}^{\alpha}
\end{array}
$$
\paragraph*{Étape 1 :}
On peut supposer que $(\ref{a1})$ implique
\begin{equation}
\|f(t, x)-f(t, y)\| \leq L\|x-y\|, \quad \text { for all } t \in J, \quad x, y \in \mathbb{R}
\label{q1}
\end{equation}
où le $\|.\|$ est défini par $\|u\|=\max \left\{\left|u_{1}\right|,\left|u_{2}\right| .\right.$ Il est bien connu que (\ref{q1}) et les hypothèses sur $ g $ Théorème 15 garantissent l'existence et la dépendance continue de la solution de :
\begin{equation}
\left\{\begin{array}{l}
\frac{d}{d t}\left[\frac{u(t)}{\bar{f}(t, \mu(t))}\right]=\widetilde{g}(t, u(t)), \\
u(0)=u_{0}
\end{array}\right.
\label{q2}
\end{equation}
Et cela pour toute fonction continue $u_{0} \in \mathbb{R}^{2}$ on a $(\ref{q3})$.

En choisissant $u_{0}=\left(u_{01}^{\alpha}, u_{02}^{\alpha}\right)$ dans (\ref{q2}) nous obtenons une solution $u^{\alpha}(t)=\left(u_{1}^{\alpha}(t), u_{2}^{\alpha}(t)\right)$ à (\ref{q2}) pour tous $\alpha \in(0,1]$.
\paragraph*{Étape 2 :}
Nous montrerons que les intervalles $\left[u_{1}^{\alpha}(t), u_{2}^{\alpha}(t)\right], \alpha \in[0,1]$, définir un nombre flou $u(t) \in \mathbb{E} .$ Pour simplifier, supposons $[u(0)]^{\alpha} \leq 0,[f(t, u(t))]^{\alpha}>0$ et $[g(t, u(t))]^{\alpha}<0$ pour tout $\alpha \in[0,1]$ (La preuve pour les autres cas est similaire et omise), alors nous avons des cas de remorquage.
\begin{itemize}
\item[\color{blue}{Cas I:}] Par Eq. (\ref{3a}), nous avons les deux EDH suivantes avec des conditions initiales
\begin{equation}
\left\{\begin{array}{l}
\left.\frac{d}{d t}\left[\frac{u_{1}^{\alpha}(t)}{f_{2}^{\alpha}(t, u(t))}\right]=g_{1}^{\alpha}\left(t, u_{(} t\right)\right) \\
u_{1}^{\alpha}(0)=u_{01}^{\alpha}
\end{array}\right.
\label{4a}
\end{equation}
et
\begin{equation}
\left\{\begin{array}{l}
\frac{d}{d t}\left[\frac{u_{2}^{A}(t)}{f_{1}^{\alpha}(t, u(t))}\right]=g_{2}^{\alpha}(t, u(t)) \\
u_{2}^{\alpha}(0)=u_{02}^{\alpha}
\end{array}\right.
\label{5a}
\end{equation}
En conséquence à l'étape 1, nous en déduisons que, pour chaque $\alpha \in[0,1]$, la solution aux problèmes $ (\ref{4a})-(\ref{5a}) $ sont respectivement
$$
\begin{array}{l}
u_{1}^{\alpha}(t)=f_{2}^{\alpha}(t, u(t))\left[\frac{u_{1}^{\alpha}(0)}{f_{2}^{\alpha}(0, u(0))}+\int_{0}^{t} g_{2}^{\alpha}(s, u(s)) d s\right] \\
u_{2}^{\alpha}(t)=f_{1}^{\alpha}(t, u(t))\left[\frac{u_{2}^{\alpha}(0)}{f_{1}^{\alpha}(0, u(0))}+\int_{0}^{t} g_{1}^{\alpha}(s, u(s)) d s\right]
\end{array}
$$
On vérifie que $\left\{\left[u_{1}^{\alpha}(t), u_{2}^{\alpha}(t)\right], \alpha \in[0,1]\right\}$ représente l'ensemble de niveaux d'un ensemble flou $u(t)$ dans $\mathbb{E}$, pour chaque $t \in J$ fixé, en appliquant le théorème 3 (section 1.7). En effet, nous fixons $t \in J$ et vérifions la validité des trois conditions.

(1) : Tout d'abord, nous vérifions que $u_{1}^{\alpha}(t) \leq u_{2}^{\alpha}(t)$, pour chaque $\alpha \in[0,1]$ et $t \in J$, En effet, pour chacun $\alpha \in[0,1]$ et $t \in J$ on a  $f_{1}^{\alpha}(t, u(t)) \leq f_{2}^{\alpha}(t, u(t))$ et
$$
\frac{u_{1}^{\alpha}(0)}{f_{2}^{\alpha}(0, u(0))}+\int_{0}^{t} g_{1}^{\alpha}(s, u(s)) d s \leq \frac{u_{2}^{\alpha}(0)}{f_{1}^{\alpha}(0, u(0))}+\int_{0}^{t} g_{2}^{\alpha}(s, u(s)) d s
$$
Et par l'arithmétique classique, nous avons
$$
\begin{aligned}
u_{1}^{\alpha}(t) &=f_{2}^{\alpha}(t, u(t))\left[\frac{u_{1}^{\alpha}(0)}{f_{2}^{\alpha}(0, u(0))}+\int_{0}^{t} g_{1}^{\alpha}(s, u(s)) d s\right] \\
& \leq f_{1}^{\alpha}(t, u(t))\left[\frac{u_{2}^{\alpha}(0)}{f_{1}^{\alpha}(0, u(0))}+\int_{0}^{t} g_{2}^{\alpha}(s, u(s)) d s\right]=u_{2}^{\alpha}(t)
\end{aligned}
$$
(2) : Soit $0 \leq \alpha \leq \beta \leq 1$. Depuit $u_{0} \in \mathbb{E}$, on a $f_{2}^{\beta}(t, u(t)) \leq f_{2}^{\alpha}(t, u(t))$ et
$$
\frac{u_{1}^{\alpha}(0)}{f_{2}^{\alpha}(0, u(0))}+\int_{0}^{t} g_{1}^{\alpha}(s, u(s)) d s \leq \frac{u_{1}^{\beta}(0)}{f_{2}^{\beta}(0, u(0))}+\int_{0}^{t} g_{1}^{\beta}(s, u(s)) d s
$$
On en déduit que
$$
\begin{aligned}
u_{1}^{\alpha}(t) &=f_{2}^{\alpha}(t, u(t))\left[\frac{u_{1}^{\alpha}(0)}{f_{2}^{\alpha}(0, u(0))}+\int_{0}^{t} g_{1}^{\alpha}(s, u(s)) d s\right] \\
& \leq f_{2}^{\beta}(t, u(t))\left[\frac{u_{1}^{\beta}(0)}{f_{2}^{\beta}(0, u(0))}+\int_{0}^{t} g_{1}^{\beta}(s, u(s)) d s\right]=u_{1}^{\beta}(t)
\end{aligned}
$$
Et, de même, $f_{1}^{\alpha}(t, u(t)) \leq f_{1}^{\beta}(t, u(t))$ et
$$
\frac{u_{2}^{\beta}(0)}{f_{1}^{\beta}(0, u(0))}+\int_{0}^{t} g_{2}^{\beta}(s, u(s)) d s \leq \frac{u_{2}^{\alpha}(0)}{f_{1}^{\alpha}(0, u(0))}+\int_{0}^{t} g_{2}^{\alpha}(s, u(s)) d s
$$
Donc
$$
u_{2}^{\beta}(t)=f_{1}^{\beta}(t, u(t))\left[\frac{u_{2}^{\beta}(0)}{f_{1}^{\beta}(0, u(0))}+\int_{0}^{t} g_{2}^{\beta}(s, u(s)) d s\right]
$$
$$
\leq f_{1}^{\alpha}(t, u(t))\left[\frac{u_{2}^{\alpha}(0)}{f_{1}^{\alpha}(0, u(0))}+\int_{0}^{t} g_{2}^{\alpha}(s, u(s)) d s\right]=u_{2}^{\alpha}(t)
$$
Ce qui prouve que $\left[u_{1}^{\beta}(t), u_{2}^{\beta}(t)\right] \subseteq\left[u_{1}^{\alpha}(t), u_{2}^{\alpha}(t)\right]$.

(3) :Étant donné une suite non décroissante $\left\{\alpha_{i}\right\}$ dans $(0,1]$ telle que $\alpha_{i} \uparrow \alpha \in(0,1]$, nous prouvons que $\left[u_{1}^{\alpha}(t), u_{2}^{\alpha}(t)\right]=\bigcap_{i=1}^{\infty}\left[u_{1}^{\alpha_{i}}(t), u_{2}^{\alpha_{i}}(t)\right] .$

En effet, par le théorème de convergence dominée,
$$
\lim _{\alpha_{i} \uparrow \alpha} \int_{0}^{t} g_{1}^{\alpha_{i}}(s, u(s))=\int_{0}^{t} \lim _{\alpha_{i} \uparrow \alpha} g_{1}^{\alpha_{i}}(s, u(s)) d s=\int_{0}^{t} g_{1}^{\alpha}(s, u(s)) d s
$$
Et donc,
$$
\begin{aligned}
\lim _{\alpha_{i} \uparrow \alpha} u_{1}^{\alpha_{i}}(t) &=\lim _{\alpha_{i} \uparrow \alpha}\left(f_{2}^{\alpha_{i}}(t, u(t))\left[\frac{u_{1}^{\alpha_{i}}(0)}{f_{2}^{\alpha_{i}}(0, u(0))}+\int_{0}^{t} g_{1}^{\alpha_{i}}(s, u(s)) d s\right]\right) \\
&=\lim _{\alpha_{i} \uparrow \alpha}\left(f_{2}^{\alpha}(t, u(t))\left[\frac{u_{1}^{\alpha}(0)}{f_{2}^{\alpha}(0, u(0))}+\int_{0}^{t} g_{1}^{\alpha}(s, u(s)) d s\right]\right)=u_{1}^{\alpha}(t)
\end{aligned}
$$
D'où, $u(t) \in \mathbb{E}=\mathbb{R}_{\mathcal{F}}$.
\item[\color{blue}{Cas II:}]  Par Eq. (\ref{3a}), nous avons les deux EDH´s suivantes avec des conditions initiales
\begin{equation}
\left\{\begin{array}{l}
\left.\frac{d}{d t}\left[\frac{u_{2}^{\mu}(t)}{f_{1}^{\prime \prime}(t, u(t))}\right]=g_{1}^{\alpha}\left(t, u_{(} t\right)\right) \\
u_{2}^{\alpha}(0)=u_{02}^{\alpha}
\end{array}\right.
\label{c}
\end{equation}
et
\begin{equation}
\left\{\begin{array}{l}
\frac{d}{d t}\left[\frac{u_{1}^{x}(t)}{f_{2}^{\prime(t, u(t))}}\right]=g_{2}^{\alpha}(t, u(t)) \\
u_{1}^{\alpha}(0)=u_{01}^{\alpha}
\end{array}\right.
\label{cc}
\end{equation}
La solution aux problèmes $(\ref{c})-(\ref{cc})$ sont respectivement
$$
\begin{aligned}
u_{2}^{\alpha}(t) &=f_{1}^{\alpha}(t, u(t))\left[\frac{u_{2}^{\alpha}(0)}{f_{1}^{\alpha}(0, u(0))}+\int_{0}^{t} g_{1}^{\alpha}(s, u(s)) d s\right] \\
u_{1}^{\alpha}(t) &=f_{2}^{\alpha}(t, u(t))\left[\frac{u_{1}^{\alpha}(0)}{f_{2}^{\alpha}(0, u(0))}+\int_{0}^{t} g_{2}^{\alpha}(s, u(s)) d s\right]
\end{aligned}
$$
En appliquant l'étape 1 et nous considérons la situation où $0 \notin\left[z(0)+\int_{0}^{t} \widetilde{g}(s, u(s)) d s\right]^{\alpha}$,
\begin{equation}
\frac{f_{2}^{\alpha}(t, u(t))}{z_{1}(0)+\int_{0}^{t} g_{1}^{\alpha}(s, u(s)) d s} \leq \frac{f_{1}^{\alpha}(t, u(t))}{z_{2}(0)+\int_{0}^{t} g_{2}^{\alpha}(s, u(s)) d s}
\label{ccc}
\end{equation}
C'est à dire., $\left(u_{1}^{\alpha}(t) \leq u_{2}^{\alpha}(t)\right)$ de même en appliquant le théorème 3 (section 1.7), les détails du cas I sont analogues, et si la situation (\ref{ccc}) ne tient pas, c'est-à-dire, $\left(u_{2}^{\alpha}(t) \leq u_{1}^{\alpha}(t)\right)$, puis par théorème 3 (section 1.7), $ u(t)$ n'est pas une solution floue de (\ref{q2}).
\end{itemize}
\end{proof}
\begin{conclusion}
Nous avons étudié ce problème en applicant les concepts de division généralisée pour les nombres flous. La division G apliquée ici est un concept de division très général, étant également applicable dans la pratique. Développement de la théorie des équations différentielles hybrides avec condition floue impliquant leurs coupes de niveau compactes et convexes. La prochaine étape dans la direction de recherche proposée ici est d'étudier les équations différentielles fractionnaires floues hybrides avec la division G et leurs applications.
\end{conclusion}
\section{Applications des EDF´s}
Cette section présente certaines des applications proposées des EDF´s dans divers domaines scientifiques. Le point à souligner est que malgré tant de propositions, il n'y a pas de rapport sur les expérimentations réelles des applications des EDF´s. L'une des applications importantes des EDF´s proposées ces dernières années est tombée dans le champ de la théorie du contrôle. À cet égard, le contrôle optimal d'un système dynamique linéaire flou basé sur la différentiabilité gH et la différentiabilité SGH a été étudié dans \cite{Ref49}, \cite{Ref50}, \cite{Ref51}. La conception d'un contrôle de rétroaction optimal pour réguler un système dynamique linéaire flou avec une application proposée sur un Boeing 747 a été présentée dans \cite{Ref52} où les EDF´s ont été considérées sous le concept de $g r$-différentiabilité. le problème du contrôle optimal en temps flou par l'utilisation de la $g r$-différentiabilité a été étudié. Une analyse approfondie de la stabilité des systèmes dynamiques linéaires flous sous la notion de $g r$-différentiabilité a été rapportée dans \cite{Ref53}. Les critères de performance des systèmes dynamiques linéaires flous du second ordre et le contrôle de suivi flou des systèmes dynamiques linéaires flous sous le concept de $g r$-différentiabilité ont été étudiés, respectivement, dans \cite{Ref54}, \cite{Ref55}. A l'aide du concept de dérivée $g r$-Caputo, le problème flou du régulateur quadratique fractionnaire a été étudié dans \cite{Ref56} où une commande de rétroaction floue a été conçue. Pour une étude plus approfondie sur les applications des EDF´s d'ordre fractionnaire dans le contrôle et la contrôlabilité optimaux, se référer à \cite{Ref57}.

En effet, les applications des EDF´s ne se limitent pas à la théorie du contrôle, mais elles ont été proposées dans l'étude des biomathématiques, du diabète sucré, de la pression du liquide céphalo-rachidien, des processus hydrologiques, mouvement de l'eau dans une colonne horizontale , modèle épidémique, modèle de croissance démographique, loi de refroidissement de Newton, circuits électriques, et le cerveau tumeur. De plus, certaines des autres applications proposées des EDF´s peuvent être trouvées dans \cite{Ref58} pour un modèle prédateur-proie, dans \cite{Ref59} pour un modèle d'inventaire de production, dans \cite{Ref60} pour un investissement économique et dans \cite{Ref61} pour des feuilles de palmier à huile. Pour d'autres applications des EDF´s, se référer à \cite{Ref62}-\cite{Ref63}.

A l'aide d'une définition de la dérivée du premier ordre d'une fonction floue, les dérivées d'ordre supérieur de la fonction floue peuvent être déterminées, les EDF´s d'ordre supérieur sous diverses notions de dérivées floues ont été examinées dans la littérature. De plus, les équations aux dérivées partielles floues (EDPF) sous certains concepts de dérivées floues ont été étudiées dans de nombreuses études. Les EDF´s et EDPF´s d'ordre supérieur ont été exclus de cette enquête. En outre, il existe d'autres types d'équations différentielles incertaines telles que les équations différentielles floues aléatoires, les équations différentielles intégro floues et les équations intégro-différentielles fractionnaires floues.

Les équations différentielles à valeur d'intervalle peuvent être considérées comme un cas particulier d'équations différentielles floues où l'incertitude est considérée comme un intervalle dans lequel chaque membre a un degré complet d'appartenance à l'intervalle. De telles équations différentielles ont été traitées dans la littérature.
\chapter*{\begin{center}Conclusion et perspective \end{center}}
\addcontentsline{toc}{chapter}{Conclusion et perspective}
Nous avons, dans les pages qui précédent, présenter les sous-ensembles flous, les nombres flous et leur arithmétique. On a tenté d’introduire  les notions fondamentales de dérivation d’ordre fractionnaire. Nous avons présenté ses différentes définitions avec les approches de Riemann-Liouville, de Caputo, de Hilfer, de Grünwald-Letnikov et de Hukuhara, les équations différentielles floues d´ordre entier et d´ordre fractionnaires.

Nous avons étudié un problème fractionnaire flou qui se manifiste dans l´équation différentielle hybride, où nous donnons la forme explicite de la solution.  

Nous avons insisté sur les applications des EDF´s dans la théorie de contrôle et aussi dans les différents domaines scientifiques. 

Nous prévoyons dans le futur, en se basant sur la théorie des sous-ensembles floues et
les mêmes techniques mathématiques utilisés dans cette étude :
\begin{itemize}
\item[•] Étudier l'algèbre floue $(\mathcal{P},+, \odot, .)$ et donner ses propriétés,
\item[•] Étudier les conditions d'approximation d´un nombre flou,
\item[•] Donner un sens à la multiplication de deux nombres flous qui nous fait résoudre certaines équations différentielles aux valeurs initiales incertaines,
\item[•] Donner un sens à la différentiabilité  floue dans l'algèbre floue.
\item[•] Étudier les équations différentielles fractionnaires floues hybrides avec la division G et leurs applications.
\end{itemize}

\end{document}